\numberwithin{equation}{section}
\numberwithin{figure}{section}
\numberwithin{table}{section}
\newtheorem{theorem}{Theorem}[section]
\newtheorem{proposition}[theorem]{Proposition}
\newtheorem{lemma}[theorem]{Lemma}
\newtheorem{corollary}[theorem]{Corollary}
\newtheorem{conjecture}[theorem]{Conjecture}
\newtheorem*{theoremIntro}{Theorem}
\theoremstyle{definition}
\newtheorem{definition}[theorem]{Definition}
\theoremstyle{remark}
\newtheorem{remark}[theorem]{Remark}
\newtheorem{example}[theorem]{Example}
\newtheorem*{exampleIntro}{Example}
\DeclareMathOperator{\Proj}{\textnormal{Proj}}
\DeclareMathOperator{\Spec}{\textnormal{Spec}}
\DeclareMathOperator{\Ht}{\textnormal{Ht}}
\newcommand{\GF}{\textnormal{GF}}
\newcommand{\kk}{\mathbb{K}}
\newcommand{\PP}{\mathbb{P}}
\newcommand{\QQ}{\mathbb{Q}}
\newcommand{\RR}{\mathbb{R}}
\newcommand{\ZZ}{\mathbb{Z}}
\newcommand{\In}[1]{\textnormal{in}_{#1}}
\newcommand{\sat}{\textnormal{sat}}
\newcommand{\Hilb}[2]{\textnormal{\bf Hilb}_{#1}^{#2}}
\newcommand{\St}[2]{\textnormal{\bf St}_{#1}^{#2}}
\newcommand{\Mf}[1]{\textnormal{\bf Mf}_{#1}}
\newcommand{\SI}[2]{\textnormal{\textsf{S}}_{#1}^{#2}}
\newcommand{\HS}[2]{\textnormal{\textsf{H}}_{#1}^{#2}}
\newcommand{\Sk}[2]{\mathscr{G}_{#1}^{#2}}
\newcommand{\DG}[3]{\mathscr{G}_{#1}^{#2}(#3)}
\newcommand{\MC}{\mathcal{M}\hspace{-0.75pt}\mathcal{C}}
\newcommand{\SC}{\mathcal{S}\mathcal{C}}
\newcommand{\xx}{{\mathbf{x}}}
\newcommand{\ff}{{\mathbf{f}}}
\newcommand{\aaa}{{\underline{a}}}
\newcommand{\bbb}{{\underline{b}}}
\newcommand{\ccc}{{\underline{c}}}
\newcommand{\ooo}{{\underline{\omega}}}
\newcommand{\comp}[1]{{#1}^{\textsf{c}}}
\newcommand{\eu}[1]{\textsf{e}^{+}_{#1}}
\newcommand{\ed}[1]{\textsf{e}^{-}_{#1}}
\newcommand{\bs}{\quad$\blacksquare$}
\newcommand{\ssucceq}{\succeq\hspace{-2.5pt}\succeq}
\newcommand{\ssucc}{\succ\hspace{-2.5pt}\succ}
\newcommand{\xminus}[2]{\hspace{#2}\raisebox{-4.6pt}{\tikz{\node at (0,0) []{${-}$}; \node at (0,0.15) []{\tiny ${#1}$};}}\hspace{#2}}
\begin{document}

\title{The Gr\"obner fan of the Hilbert scheme}
\author[Y.~Kambe]{Yuta Kambe}
\address{Graduate School of Science and Engineering \\ Saitama University \\ Shimo-Okubo 255\\ Sakura-ku \\ Saitama-shi \\ 338-8570 \\ Japan.}
\email{\href{mailto:y.kambe.021@ms.saitama-u.ac.jp}{y.kambe.021@ms.saitama-u.ac.jp}}
\urladdr{\url{https://sites.google.com/view/yuta-kambe/}}

\author[P.~Lella]{Paolo Lella}
\address{Dipartimento di Matematica\\ Politecnico di Milano\\ 
         Piazza Leonardo da Vinci 32\\ 20133 Milano\\ Italy.}
\email{\href{mailto:paolo.lella@polimi.it}{paolo.lella@polimi.it}}
\urladdr{\url{http://www.paololella.it/}}

\subjclass[2010]{14C05, 13P10, 05E40}

\keywords{Hilbert scheme, strongly stable ideal, Gr\"obner degeneration, polyhedral fan, connectedness, irreducibility}

\thanks{The first author is supported by JSPS Research Fellowships for Young Scientists (DC2). The second author is supported by MIUR funds FFABR-LELLA-2018 and is member of GNSAGA (INdAM, Italy)}

\begin{abstract}
We give a notion of \lq\lq combinatorial proximity\rq\rq~among strongly stable ideals in a given polynomial ring with a fixed Hilbert polynomial. We show that this notion guarantees \lq\lq geometric proximity\rq\rq~of the corresponding points in the Hilbert scheme. We define a graph whose vertices correspond to strongly stable ideals and whose edges correspond to pairs of adjacent ideals. Every term order induces an orientation of the edges of the graph. This directed graph describes the behavior of the points of the Hilbert scheme under Gr\"obner degenerations with respect to the given term order.

Then, we introduce a polyhedral fan that we call \emph{Gr\"obner fan of the Hilbert scheme}. Each cone of maximal dimension corresponds to a different directed graph induced by a term order. This fan encodes several properties of the Hilbert scheme. We use these tools to present a new proof of the connectedness of the Hilbert scheme. Finally, we improve the technique introduced in the paper \lq\lq Double-generic initial ideal and Hilbert scheme\rq\rq~\cite{BCR-GG} (Bertone, Cioffi, Roggero, Ann.~Mat.~Pura Appl.~(4) \textbf{196}(1), 19--41, 2017) to give a lower bound on the number of irreducible components of the Hilbert scheme.
\end{abstract}

\maketitle

\tableofcontents

\section{Introduction}

The Hilbert scheme $\Hilb{p(t)}{n}$, parametrizing subschemes of $\PP^n$ with Hilbert polynomial $p(t)$, has been intensively studied since its definition and proof of existence by Grothendieck \cite{Grothendieck}. Nevertheless, very few comprehensive properties are known and lots of natural questions are still open. Among the known results, we mention connectedness \cite{HartshorneThesis,PeevaStillman}, the smoothness of the lexicographic point \cite{ReevesStillman} and the existence of bound on the \lq\lq distance\rq\rq~between irreducible components \cite{Reeves}. 

The problem of understanding the topological structure of the Hilbert scheme is usually complicated due to its unpredictable and mysterious behavior. Questions such \lq\lq how many irreducible components are there in $\Hilb{p(t)}{n}$?\rq\rq, \lq\lq how are the irreducible components related?\rq\rq, \lq\lq are the irreducible components rational?\rq\rq~are in most cases without a complete answer. More is known about some particular Hilbert schemes or some special sub-loci. The case of punctual Hilbert schemes has been studied continuously since the 70s (see \cite{IarroOverview} and references therein), and it is still under investigation nowadays \cite{CEVV,Huibregtse,JJ1,JJ2,RamkumarSammartano}. In the case of 1-dimensional subschemes of the projective space $\PP^3$ there is a remarkable variety of results (for instance about ACM curves, see \cite{Ellingsrud,Treger,FloystadRoggero,BCR-GG}).

In this context, a classical approach consists in trying to rephrase a global question in terms of a local question for a few, possibly finite, number of points of $\Hilb{p(t)}{n}$. For instance, under the right conditions, the rationality of an irreducible component can be deduced by the smoothness of a special point lying on it \cite[Corollary 6.10]{LR}, \cite[Theorem 6]{BCR-GG}. An efficient way to accomplish this task is to consider Gr\"obner degenerations to monomial ideals and in particular to generic initial ideals. Indeed, on one hand each irreducible component and each intersection of irreducible components of $\Hilb{p(t)}{n}$ contains at least one point corresponding to a generic initial ideal. On the other hand, generic initial ideals are Borel-fixed ideal, i.e.~invariant under the action of the Borel subgroup of $\textnormal{GL}_\kk(n+1)$ consisting of upper triangular matrices. Furthermore, in characteristic 0, Borel-fixed ideals enjoy additional combinatorial properties. Hence, Borel-fixed ideals are well distributed throughout the Hilbert scheme and have special properties that make them extremely effective.

This paper is strongly influenced by the theory of Gr\"obner strata and marked families (see \cite{LR2} and references therein). Given a Borel-fixed ideal $J$ and a term order $\Omega$, the Gr\"obner stratum $\St{J}{\Omega}$ is the scheme parametrizing the family of ideals with initial ideal $J$ with respect to $\Omega$. The marked scheme $\Mf{J}$ is the scheme parametrizing the family of ideals whose quotient algebras have the set of monomials not contained in $J$ as basis. These two types of families are flat, so that Gr\"obner strata and marked schemes describe subsets of the Hilbert scheme. These families can be used to parametrize open subsets of $\Hilb{p(t)}{n}$ (or of one of its irreducible component) or sub-loci corresponding to schemes with special properties (such as Hilbert function, type of resolution, \ldots).

However, if one is interested in studying the irreducible components of $\Hilb{p(t)}{n}$, the set of Borel-fixed ideals turns out to be redundant, in a sense clarified by the following example. 

\begin{exampleIntro}
Consider the Hilbert scheme $\Hilb{6t-3}{3}$ parametrizing 1-dimensional subschemes of $\PP^3$ of degree $6$ and arithmetic genus $4$. There are 3 irreducible components: 
\begin{itemize}
\item the first component has dimension $48$ and the general element is the union of a plane curve of degree $6$ and 6 isolated points; 
\item the second component has dimension $32$ and the general element is the union of a plane quintic and a line intersecting in one point, and 2 isolated points; 
\item the third component has dimension $24$ and the general element is a complete intersection of a quadric surface and a cubic surface.
\end{itemize}
By the theory of marked families, in order to parametrize an open subset of each irreducible component, we need at most 3 Borel-fixed ideals. In $\Hilb{6t-3}{3}$ there are 31 points corresponding to Borel-fixed ideals to choose from (see Example \ref{ex:6t-3 part 1}), whose algebraic and geometric properties are very diverse. First, such points are not equally distributed along the irreducible components. In fact, most of them lie exclusively on the first irreducible component. Second, there are smooth points, singular points lying on a single component and singular points that are in the intersection of 2 irreducible components and that are smooth if we restrict to any of them. Third, these points have different behavior with respect to Gr\"obner degenerations (see Example \ref{ex:6t-3 part 2}). 
\end{exampleIntro}

Two natural questions arise.
\begin{enumerate}
\item[\it (Q1)] Assume that the topological structure of the Hilbert scheme and the distribution between components of points corresponding to Borel-fixed ideals are known. Which ones are better suited for effective investigation?
\item[\it (Q2)] Suppose that one knows nothing about the Hilbert scheme, but the list of Borel-fixed ideals defining points on it. Is it possible to deduce information about the topological structure of $\Hilb{p(t)}{n}$?
\end{enumerate}

These two problems are discussed in the inspiring paper \lq\lq Double-generic initial ideal and Hilbert scheme\rq\rq~\cite{BCR-GG} by Bertone, Cioffi and Roggero. The double-generic initial ideal is a Borel-fixed ideal associated to an irreducible component of $\Hilb{p(t)}{n}$. Intuitively, it is the generic initial ideal of the ideal describing the generic element of the component. Hence, choosing the double-generic initial ideal among Borel-fixed ideals lying on a given component is a reasonable and natural option to answer \textit{(Q1)}. Still, there are some difficulties. First of all, the double-generic initial ideal is not intrinsically determined by an irreducible component, but it depends on the term order. Secondly, if we do not know a priori the list of Borel-fixed ideals defining points on a given irreducible components, we might not be able to detect the corresponding double-generic initial ideal with respect to a fixed term order (this makes it difficult to answer \textit{(Q2)}).

The definition of the double-generic initial ideal is based on a careful analysis of the action of the linear group on the generators of an ideal defining a point on the Hilbert scheme. Instead of the standard action of $\textnormal{GL}_\kk(n+1)$ on $\kk[x_0,\ldots,x_n]$ used for defining the generic initial ideal (see \cite[Chapter 15]{Eisenbud}), in \cite{BCR-GG} the group $\textnormal{GL}_{\kk}(n+1)$ acts on the elements $f_1 \wedge \cdots \wedge f_q$ of the exterior algebra $\bigwedge^q \kk[x_0,\ldots,x_n]_r$, where $\{f_1,\ldots,f_q\}$ is a basis of the homogeneous piece $I_r$ of an ideal $I$ defining a point on $\Hilb{p(t)}{n}$ for a sufficiently large $r$.

In this paper, we present a different approach based on the study of the combinatorial properties of Borel-fixed ideals. In particular, the combinatorics allow to better understand the behavior of the points of the Hilbert scheme under Gr\"obner degenerations (and thus also the dependence of double-generic initial ideal on the term order). We begin by studying the relative position of points corresponding to Borel-fixed ideals in the Hilbert scheme.

\begin{theoremIntro}[Definition \ref{def:borelAdjacent} and Theorem \ref{thm:mainDef}]
Let $J,J'\subset \kk[x_0,\ldots,x_n]$ be two saturated Borel-fixed ideals defining points on $\Hilb{p(t)}{n}$ and denote by $\mathfrak{J}$ and $\mathfrak{J}'$ the monomial bases of $J_r$ and $J_r'$ for $r$ sufficiently large. If the monomials in the sets $\mathfrak{J}\setminus\mathfrak{J}'$ and $\mathfrak{J}'\setminus \mathfrak{J}$ have the same linear syzygies, then there is a rational curve on $\Hilb{p(t)}{n}$ passing through the points defined by $J$ and $J'$, so that these points lie on a common irreducible component.
\end{theoremIntro}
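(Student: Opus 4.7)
The plan is to construct an explicit flat $1$-parameter family of ideals interpolating between $J$ and $J'$; by projectivity of $\Hilb{p(t)}{n}$, the induced morphism $\mathbb{A}^1 \to \Hilb{p(t)}{n}$ will extend uniquely to a morphism $\PP^1 \to \Hilb{p(t)}{n}$, producing a rational curve whose image is connected and irreducible, which forces $[J]$ and $[J']$ to lie on a common irreducible component.

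I would fix $r$ large enough (say, at least the Gotzmann number of $p(t)$) that both $J$ and $J'$ are $r$-regular, so that $\dim J_r = \dim J_r' = q := \binom{n+r}{n} - p(r)$ and each ideal is generated in degree $\leq r$. Writing $\mathfrak{J} \setminus \mathfrak{J}' = \{m_1,\ldots,m_k\}$ and $\mathfrak{J}' \setminus \mathfrak{J} = \{m_1',\ldots,m_k'\}$, the hypothesis fixes a bijection $m_i \leftrightarrow m_i'$ under which the spaces of linear syzygies coincide. For $t \in \mathbb{A}^1$ define
$$V(t) := \langle \mathfrak{J} \cap \mathfrak{J}' \rangle \;\oplus\; \langle t\,m_i + (1-t)\,m_i' : 1 \le i \le k \rangle \subset \kk[x_0,\ldots,x_n]_r,$$
and let $I(t) \subset \kk[x_0,\ldots,x_n]$ be the ideal generated by $V(t)$. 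Since $I(t)$ is generated in degree $r$, one has $I(t)_{r+k} = V(t) \cdot \kk[x_0,\ldots,x_n]_k$ for every $k \ge 0$; in particular $I(1)_{\ge r} = J_{\ge r}$ and $I(0)_{\ge r} = J'_{\ge r}$, so the fibres of the family at $t = 1, 0$ correspond to the points $[J]$ and $[J']$.

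The main obstacle is verifying flatness of $\{I(t)\}_{t \in \mathbb{A}^1}$, equivalently that $\dim \bigl(V(t) \cdot \kk[x_0,\ldots,x_n]_k\bigr)$ is independent of $t$ for every $k \ge 1$. A standard semi-continuity argument combined with $r$-regularity at $t = 0, 1$ reduces this to constancy in degree $r+1$, i.e., constancy of the kernel of the multiplication map $\kk[x_0,\ldots,x_n]_1 \otimes V(t) \to \kk[x_0,\ldots,x_n]_{r+1}$. The linear-syzygy hypothesis is precisely the compatibility condition that lifts every linear syzygy of $\mathfrak{J}$ to a $\kk[t]$-universal syzygy of $V(t)$: the matched pairing $m_i \leftrightarrow m_i'$ allows the same linear coefficients to yield a relation $\sum_i \ell_i\bigl(t m_i + (1-t) m_i'\bigr) + \sum_m \ell_m m$ that vanishes identically in $t$. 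This exhibits a $\kk[t]$-free submodule of syzygies of the correct rank, forcing the kernel dimension to be constant. Alternatively, one may observe that the family $\{I(t)\}$ traces a curve in the marked scheme $\Mf{J}$ for $t \neq 0$ and in $\Mf{J'}$ for $t \neq 1$, and invoke flatness of marked schemes directly from \cite{LR2}.
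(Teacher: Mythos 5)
Your construction coincides with the paper's: the ideal generated by $\mathfrak{J}\cap\mathfrak{J}'$ together with the pencil $t\,m_i+(1-t)\,m_i'$ is exactly the Borel deformation $I_{J,J'}$ of Theorem \ref{thm:mainDef} read on the affine chart $y_0\neq 0$, and the strategy (prove flatness by lifting syzygies via the marked-basis criterion of Proposition \ref{prop:liftSyzygies}, then map $\PP^1$ to $\Hilb{p(t)}{n}$ and use irreducibility of the image) is also the paper's. One small correction: the reduction to degree $r+1$ is Gotzmann's Persistence Theorem, not ``semi-continuity plus $r$-regularity at $t=0,1$''; semicontinuity of the rank of the multiplication map only bounds special fibres by the generic one, and knowing the rank at the two endpoints says nothing about generic $t$.

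The genuine gap is in the syzygy-lifting step. Since $J$ is generated in degree $r$ and $r$-regular, a basis of its syzygies is given by the Eliahou--Kervaire relations $x_i\,\xx^\bbb-x_h\,\xx^\ccc$ with $h=\min\xx^\bbb$, and these come in two kinds: those with both monomials in $\mathfrak{J}\setminus\mathfrak{J}'$ (or both in $\mathfrak{J}\cap\mathfrak{J}'$), and the \emph{mixed} ones with $\xx^\bbb\in\mathfrak{J}\setminus\mathfrak{J}'$ but $\xx^\ccc\in\mathfrak{J}\cap\mathfrak{J}'$. Your universal relation $\sum_i\ell_i\bigl(t m_i+(1-t)m_i'\bigr)+\sum_m\ell_m m$ only handles the first kind: vanishing identically in $t$ forces $\sum_i\ell_i m_i=\sum_i\ell_i m_i'$, i.e.\ the matching of syzygies internal to the difference sets. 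The mixed relations are where the hypothesis does its real work: lifting $x_i m_j-x_h c$ requires $\tfrac{x_i}{x_h}m_j'$ to be an honest monomial lying in $\mathfrak{J}\cap\mathfrak{J}'$, which in the paper follows from $\min\mathsf{E}(\xx^\aaa)=\min\mathsf{E}(\xx^{\aaa'})$ (Remark \ref{rk:propertiesAdjacent}), a consequence of the precise adjacency condition and of the equality of Hilbert polynomials, not of the internal syzygies matching. These are exactly the relations that obstruct flatness for non-adjacent pairs: in the paper's example with $J^\sat=(x_2^2,x_1x_2,x_1^5)$ and $J'^\sat=(x_2^3,x_1x_2^2,x_1^2x_2,x_1^3)$ the failure occurs at the mixed syzygy $x_1\cdot x_0^4x_1x_2-x_0\cdot x_0^3x_1^2x_2$. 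Without treating them you have no control on the rank of your $\kk[t]$-module of lifted syzygies, so the assertion ``of the correct rank'' is unsupported. The fallback via marked schemes is circular: every $I(t)$ is tautologically generated by a $J$-marked \emph{set}, and membership in $\Mf{J}$ is precisely the assertion that this set is a marked \emph{basis}, i.e.\ the flatness statement you are trying to prove.
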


As a consequence of this result, we introduce the Borel graph of $\Hilb{p(t)}{n}$ (Definition \ref{def:BorelGraph}) whose vertices correspond to Borel-fixed ideals and whose edges correspond to unordered pairs of ideals satisfying the hypothesis of the previous theorem. We underline that the rational curve passing through two Borel-fixed points is in fact the closure of a one-dimensional orbit of the action on $\Hilb{p(t)}{n}$ of the standard torus $T = (\kk^\ast)^{n+1}$ of $\PP^n$ (Remark \ref{rk:T-orbit}). Hence, the Borel graph turns out to be a subgraph of the $T$-graph of $\Hilb{p(t)}{n}$ \cite{AltmannSturmfels,HeringMaclagan} whose vertices correspond to monomial ideals and whose edges correspond to unordered pairs of ideals contained in the closure of a one-dimensional $T$-orbit (Remark \ref{rk:T-graph}).

Any term order induces an orientation of the edges of the Borel graph. We call degeneration graphs the directed graphs supported on the Borel graph induced by a term order. The name is motivated by the fact that this type of graphs encodes the behavior of the points in the neighborhood of a Borel-fixed ideal with respect to Gr\"obner degenerations (Proposition \ref{prop:grobnerStratumClosure}).

Then, we classify all the possible degeneration graphs, by means of a polyhedral fan that we call Gr\"obner fan of the Hilbert scheme (Definition \ref{def:groebnerFan} and Theorem \ref{thm:groebnerFan}). Each cone of maximal dimension corresponds to a different directed degeneration graph where the orientation of the edges is induced by some term order. Cones of lower dimension correspond to mixed graphs, where the orientation of the edges is induced by weight orders on the monomials.

For several degeneration graphs, we are able to construct a minimum spanning tree. This implies that the Borel graph is a connected graph (Corollary \ref{cor:spanning tree}) and gives a new strategy to prove the connectedness of the Hilbert scheme (see proofs of Hartshorne \cite{HartshorneThesis} and Peeva, Stillman \cite{PeevaStillman}).

\begin{theoremIntro}[Theorem \ref{thm:chainConnected}]
The Hilbert scheme is rationally chain connected.
\end{theoremIntro}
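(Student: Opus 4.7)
The strategy is to reduce rational chain connectedness to the connectedness of the Borel graph, which is provided by Corollary~\ref{cor:spanning tree}. Since the main Theorem stated in the introduction produces an actual rational curve on $\Hilb{p(t)}{n}$ between any two Borel-adjacent ideals, it suffices to rationally connect each closed point of the Hilbert scheme to some Borel-fixed point and then to concatenate the resulting curves with the chain of curves obtained from a path in the Borel graph.

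Let $[X], [Y] \in \Hilb{p(t)}{n}$ be closed points defined by saturated ideals $I_X, I_Y$, and fix any term order $\Omega$. For a sufficiently general $g \in \textnormal{GL}_\kk(n+1)$, the Gr\"obner degeneration of $g \cdot I_X$ with respect to $\Omega$ is a flat family over $\mathbb{A}^1$ whose fibre at $t = 1$ is $g \cdot I_X$ and whose fibre at $t = 0$ is $\Gin{\Omega}(I_X)$, which is Borel-fixed. By the universal property of the Hilbert scheme together with its properness, this family induces a morphism $\PP^1 \to \Hilb{p(t)}{n}$ whose image is a rational curve passing through $[g \cdot X]$ and $[\Gin{\Omega}(I_X)]$. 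Furthermore, since $\textnormal{GL}_\kk(n+1)$ is a rational variety, any rational curve in the group joining the identity to $g$ gives, via the orbit map $h \mapsto [h \cdot X]$, a rational curve on $\Hilb{p(t)}{n}$ passing through $[X]$ and $[g \cdot X]$. Hence $[X]$ is rationally chain connected to the Borel-fixed point $[J_X]$, where $J_X := \Gin{\Omega}(I_X)$; the same construction applied to $Y$ produces $[J_Y]$ Borel-fixed and chain connected to $[Y]$.

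By Corollary~\ref{cor:spanning tree}, the Borel graph of $\Hilb{p(t)}{n}$ is connected, so there exists a finite path of Borel-adjacent ideals $J_X = J^{(0)}, J^{(1)}, \ldots, J^{(k)} = J_Y$. The Theorem cited above supplies, for each $i$, a rational curve on $\Hilb{p(t)}{n}$ passing through $[J^{(i)}]$ and $[J^{(i+1)}]$. Concatenating all the rational curves connecting $[X] \leftrightarrow [J_X] = [J^{(0)}] \leftrightarrow \cdots \leftrightarrow [J^{(k)}] = [J_Y] \leftrightarrow [Y]$ yields the desired chain of rational curves joining $[X]$ to $[Y]$.

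The conceptual work is entirely absorbed into the preceding results, namely the construction of rational curves between Borel-adjacent ideals and the connectedness of the Borel graph. The only technical point in the reduction is that each piece of the chain must genuinely be the image of a morphism from $\PP^1$, rather than merely a rational map or a morphism from $\mathbb{A}^1$; this follows at once from the properness of $\Hilb{p(t)}{n}$. Thus the main obstacle has been shifted to trusting the Borel-graph machinery developed earlier in the paper.
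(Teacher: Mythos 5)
Your proof is correct and follows essentially the same route as the paper: degenerate an arbitrary point to a Borel-fixed point via the generic initial ideal, then chain Borel-fixed points together using the connectedness of the Borel graph and the rational curves supplied by the Borel deformations of Theorem \ref{thm:mainDef}. Your treatment is in fact slightly more careful on one point — you note that $\Gin{\Omega}(I_X)$ is the initial ideal of $g\cdot I_X$ for generic $g$ and insert an extra rational curve inside the $\textnormal{GL}_\kk(n+1)$-orbit connecting $[X]$ to $[g\cdot X]$, whereas the paper asserts directly a flat family with fibers $I_X$ and its gin; this does not change the substance of the argument.
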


In the degeneration graphs having a minimum spanning tree, there is a unique vertex with no incoming edge. Typically, this is not the case. Rather the number of vertices with no incoming edge in a degeneration graph can give interesting information about the topological structure of the Hilbert scheme (answering \textit{(Q2)}). Exploiting again properties of double-generic initial ideals (see \cite[Proposition 9]{BCR-GG}), we can give the following lower bound on the number of irreducible components of the Hilbert scheme.

\begin{theoremIntro}[Proposition \ref{prop:gg} and Conjecture \ref{conj:v1}] The number of irreducible components of $\Hilb{p(t)}{n}$ is at least the maximum number of vertices with no incoming edge in any degeneration graph.
\end{theoremIntro}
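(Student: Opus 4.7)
My plan is to reduce the bound to the existence, for any term order $\Omega$, of an injection from the set of vertices of the degeneration graph with no incoming edge (\emph{sources}) into the set of irreducible components of $\Hilb{p(t)}{n}$. Taking the maximum over all term orders then yields the claimed inequality. The bridge is the double-generic initial ideal $J_C^\Omega$ of \cite{BCR-GG}, which attaches to every pair $(C,\Omega)$ a Borel-fixed ideal supported on the irreducible component $C$, with the crucial injectivity property that distinct components give distinct double-generic initial ideals by \cite[Proposition 9]{BCR-GG}.

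The key lemma I would need to establish is the following: \emph{every source $J$ of the degeneration graph for $\Omega$ equals $J_C^\Omega$ for some irreducible component $C$ containing $[J]$.} Granted this, I would define $\phi\colon \{\text{sources}\}\to\{\text{components}\}$ by sending $J$ to such a $C$. The uniqueness part of \cite[Proposition 9]{BCR-GG} guarantees that $C$ is uniquely determined by $J$, so $\phi$ is well-defined, and the same uniqueness forces $\phi$ to be injective (if $\phi(J_1)=\phi(J_2)=C$, then $J_1=J_C^\Omega=J_2$). The desired bound $v_1(G)\leq \#\{\text{components}\}$ follows.

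For the key lemma I would argue by contradiction. Fix a source $J$ and an irreducible component $C$ containing $[J]$, and suppose $J\neq J_C^\Omega$. Since $J_C^\Omega$ is obtained as the initial ideal with respect to $\Omega$ of a very general point of $C$ after a generic change of coordinates, the Gröbner stratum $\St{J_C^\Omega}{\Omega}$ intersects $C$ in a dense open subset, so $[J]\in\overline{\St{J_C^\Omega}{\Omega}}$. Using the description of the closures of Gröbner strata in terms of iterated degenerations along Borel-adjacent ideals provided by Proposition \ref{prop:grobnerStratumClosure}, I would try to produce a Borel-fixed ideal $J'$ that is Borel-adjacent to $J$ and whose Gröbner stratum still contains $[J]$ in its closure. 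By that same proposition such a $J'$ yields an incoming edge $J'\to J$ of the degeneration graph, contradicting the assumption that $J$ is a source.

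The main obstacle lies exactly in this extraction step. A priori, the specialization from the generic point of $C$ to $[J]$ inside $\overline{\St{J_C^\Omega}{\Omega}}$ need not factor through a chain of Borel-adjacent ideals, and although the combinatorics of Borel-adjacency developed earlier give such chains in many concrete cases, a fully general argument requires additional control on how the cones of the Gröbner fan of the Hilbert scheme refine the sequence of intermediate degenerations. This is presumably the reason why the author phrases the final result both as a proposition (establishing the bound under the hypotheses where a Borel-adjacent intermediate ideal can be produced) and as a conjecture (for the bound in full generality).
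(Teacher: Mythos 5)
You have correctly located both the proven content and the gap, and your route is essentially the paper's. The unconditional part of the statement is Proposition \ref{prop:gg} (quoting \cite[Proposition 9]{BCR-GG}): the number of components is at least $\mathfrak{m}_{p(t)}^n(\Omega)=\left\vert \max_{\ssucceq_\Omega}\SI{p(t)}{n}\right\vert$, because every $\ssucceq_\Omega$-maximal ideal must coincide with the double-generic initial ideal of every component through it, and distinct maximal ideals therefore lie on disjoint sets of components. Proposition \ref{prop:BorelDef implies order} then shows $\max_{\ssucceq_\Omega}\SI{p(t)}{n}\subseteq\max_{\succeq_\Omega}\SI{p(t)}{n}$, i.e.~the $\ssucceq_\Omega$-maxima sit among the sources of the degeneration graph --- so a priori the source count is only an \emph{upper} bound for the proven lower bound, not itself a lower bound for the number of components. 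The missing step, which the paper isolates as Conjecture \ref{conj:v1}, is that for a suitable term order in each maximal cone of the Gr\"obner fan the two counts agree. Your ``key lemma'' --- every source is the double-generic initial ideal of a component through it --- is exactly the kind of statement that would close this gap, and you are right that the paper does not prove it; your final paragraph correctly explains why the result is labelled ``Proposition and Conjecture''.

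Two smaller points on your sketch of the key lemma. First, the claim that $\St{J_C^\Omega}{\Omega}$ meets $C$ in a dense open subset is not automatic: the double-generic initial ideal is computed after a \emph{generic change of coordinates}, so the Gr\"obner stratum in the fixed coordinates need not be dense in $C$; at best the union of its $\textnormal{GL}$-translates is. Second, even granting $[J]\in\overline{\mathbf{St}_{J_C^\Omega}^\Omega}$, Proposition \ref{prop:grobnerStratumClosure} only produces points of the closure of a Gr\"obner stratum \emph{from} Borel-adjacent pairs; it gives no converse description of that closure, and Remark \ref{rk:T-orbit} exhibits torus degenerations between strongly stable ideals that are not Borel deformations. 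So the obstruction you name in your last paragraph is genuine, and it is precisely the content left open by the conjecture.
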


In order to obtain the best estimate, one has to examine a finite number of degeneration graphs, one for each cone of maximal dimension of the Gr\"obner fan. For instance, in the case of the Hilbert scheme $\Hilb{6t-3}{3}$, the Gr\"obner fan has 268 cones of maximal dimension and the maximum number of vertices with no incoming edge in a degeneration graph is 3. Hence, in this case our method detects all the irreducible components of the Hilbert scheme and it also suggests three Borel-fixed ideals to consider to parametrize the components via marked families.

\medskip

\paragraph{\bf Organization}  In Section \ref{sec:preliminaries}, we discuss preliminaries about Hilbert schemes and Borel-fixed ideals in characteristic 0. In Section \ref{sec:Borel deformations}, we introduce a notion of combinatorial proximity of two Borel-fixed ideals with the same Hilbert polynomial and we show that it corresponds to geometric proximity on the Hilbert scheme. In Section \ref{sec:Groebner fan}, we classify the behavior of the points of the Hilbert scheme with respect to Gr\"obner degenerations by means of a polyhedral fan. In Section \ref{sec:applications}, we exploit the Gr\"obner fan to prove that the Hilbert scheme is rationally chain connected and to give an efficient method to compute a lower bound on the number of irreducible components of the Hilbert scheme.

\smallskip

\paragraph{\bf Software} We implemented the algorithms for using the tools developed in the paper in the \textit{Macaulay2} package \texttt{GroebnerFanHilbertScheme.m2}. The package is available at the web page \href{http://www.paololella.it/publications/kl/}{\tt www.paololella.it/publications/kl/} with a second file containing the scripts for computing the examples of the paper.

\smallskip

\paragraph{\bf Acknowledgements} We thank the referee for the careful reading of our manuscript and for valuable suggestions.

\section{Preliminaries}\label{sec:preliminaries}

Let $\kk[\xx]:=\kk[x_0,\ldots,x_n]$ be a polynomial ring in $n+1$ with coefficient in an algebraically closed field $\kk$ of characteristic 0. We denote by $\mathbb{T}^n$ the set of monomials of $\kk[\xx]$ and we describe them with the standard multi-index notation; namely, for any $\aaa =(a_0,\ldots,a_n)\in \ZZ_{\geqslant 0}^{n+1}$, $\xx^{\aaa}$ stands for $x_0^{a_0} \cdots x_n^{a_n}$. Whenever the multi-index $\aaa$ is in $ \ZZ^{n+1}$, $\xx^\aaa$ stands for the generalized monomial in $\kk(\xx) := \text{Frac}(\kk[\xx])$. We denote the set of generalized monomial by $\widehat{\mathbb{T}}^n$.

We think of $\kk[\xx]$ as the coordinate ring of the projective space $\PP^n = \Proj \kk[\xx]$. We consider the standard grading on $\kk[\xx]$ and we denote by $\vert \aaa\vert = a_0 + \cdots + a_n$ the total degree of a monomial $\xx^\aaa$. Given a positive integer $m$, we denote by  $\mathbb{T}^n_m$ the set of monomials of degree $m$, by $\kk[\xx]_m$ the homogeneous piece of degree $m$ of $\kk[\xx]$ and by $\kk[\xx]_{\geqslant m}$ the direct sum $\bigoplus_{t \geqslant m} \kk[\xx]_{t}$. Every ideal $I\subset \kk[\xx]$ is always assumed to be homogeneous, $I_m = I \cap \kk[\xx]_m$ denotes the homogeneous piece of degree $m$ and $I_{\geqslant m} = I \cap \kk[\xx]_{\geqslant m}$ denotes the truncated ideal in degree $m$.

\smallskip

For a subscheme $X \subset \PP^n$, we denote by $I_X \subset \kk[\xx]$ the unique saturated ideal such that $X = \Proj \kk[\xx]/I_X$ and by $p_X(t)$ its \emph{Hilbert polynomial}, that is the unique numerical polynomial such that $p_X(t) = \dim_{\kk} (\kk[\xx]/I_X)_t = \dim_{\kk} \kk[\xx]_t/(I_X)_t$ for $t$ large enough. By a little abuse of notation, we refer to the Hilbert polynomial $p_I(t)$ of an ideal $I$ as the Hilbert polynomial of its quotient ring $\kk[\xx]/I$. We refer to the unique numerical polynomial $q_I(t)$ such that $\dim_{\kk} I_{t} = q_I(t), t \gg 0$  as \emph{volume polynomial} of the ideal $I$. By definition, $q_I(t) = \binom{t+n}{n} - p_I(t)$ for $t$ sufficiently large.

Given a Hilbert polynomial $p(t) \in \QQ[t]$, we study the Hilbert scheme $\Hilb{p(t)}{n}$ representing the contravariant Hilbert functor $\underline{\mathbf{Hilb}}_{p(t)}^n: (\text{$\kk$-schemes})^\circ \to (\text{Sets})$. This functor associates to a scheme $Z$ over $\kk$ the set
\[
\underline{\mathbf{Hilb}}_{p(t)}^n(Z) = \left\{\begin{tikzcd}[row sep=tiny,cells={inner ysep=2pt}] Y\arrow[rd] \arrow[r,hook] & \PP^n \times_\kk Z \arrow[d] \\ & Z \end{tikzcd} \ \middle\vert\ \begin{array}{l}Y \to Z \text{~flat morphism whose fibers over}\\ \text{points have Hilbert polynomial~}p(t) \end{array} \right\}
\]
and to a morphism of schemes $f: X \to Z$ the map $\underline{\mathbf{Hilb}}_{p(t)}^n(f): \underline{\mathbf{Hilb}}_{p(t)}^n(Z) \to \underline{\mathbf{Hilb}}_{p(t)}^n(X)$
\[
Y \to Z \in \underline{\mathbf{Hilb}}_{p(t)}^n(Z)\qquad \longmapsto\qquad Y \times_{Z} X \to X \in \underline{\mathbf{Hilb}}_{p(t)}^n(X).
\]
For all schemes $Z$, there is a 1-to-1 correspondence between the set $\underline{\mathbf{Hilb}}_{p(t)}^n(Z)$ and the set of morphisms $\textnormal{Mor}(Z,\Hilb{p(t)}{n})$ from $Z$ to the Hilbert scheme. For a scheme $X \in \underline{\mathbf{Hilb}}_{p(t)}^n(\Spec \kk)$, we denote by $[X] \in \Hilb{p(t)}{n}$ the corresponding $\kk$-rational point (the image of the corresponding morphism $\Spec \kk \to \Hilb{p(t)}{n}$).

The Hilbert functor has been introduced by Grothendieck  \cite{Grothendieck}, who first proved its representability. The Hilbert scheme is classically constructed as a subscheme of a suitable Grassmannian and eventually as subscheme of a projective space through the corresponding Pl\"ucker embedding. We recall briefly the idea of the construction, because it motivates the setting of this paper (for more details see \cite{BayerThesis,IarrobinoKanev,HaimanSturmfels,BLMR}).

Every Hilbert polynomial $p(t)$ has a unique decomposition as finite sum of binomial coefficients
\begin{equation*}
p(t) = \tbinom{t+a_1}{a_1} + \tbinom{t+a_2-1}{a_2} + \cdots + \tbinom{t+{a_i}-i+1}{a_i} + \cdots + \tbinom{t+a_r-r+1}{a_r},\qquad a_1 \geqslant \cdots \geqslant a_r \geqslant 0.
\end{equation*}
The first coefficient $a_1$ equals the degree of $p(t)$, i.e.~the dimension of the schemes parametrized by $\Hilb{p(t)}{n}$, and the number of summands $r$ is called \emph{Gotzmann number} of $p(t)$. Gotzmann's Regularity Theorem \cite{Gotzmann} says that the saturated ideal $I_X$ of a scheme $[X] \in \Hilb{p(t)}{n}$ is $r$-regular, so that we can associated to every scheme $X \subset \PP^n$ with Hilbert polynomial $p(t)$ the vector space $\kk[\xx]_r/(I_X)_r$ of dimension $p(r)$ (or equivalently the vector space $(I_X)_r$ of dimension $q(r)$). This result explains the idea of embedding $\Hilb{p(t)}{n}$ in the Grassmannian $\mathbf{Gr}(p(r),\kk[\xx]_r)$ of $p(r)$-dimensional quotients of the vector space $\kk[\xx]_r$. The closed condition describing the Hilbert scheme as subscheme of the Grassmannian is given by a second crucial result by Gotzmann.  Gotzmann's Persistence Theorem \cite{Gotzmann} states that an ideal $I$, generated by polynomials of degree $r$ and such that $\kk[\xx]_r/I_r$ has dimension $p(r)$, has Hilbert polynomial $p(t)$ if, and only if, the quotient $\kk[\xx]_{r+1}/I_{r+1}$ has dimension $p(r+1)$.

\smallskip

Lots of investigations about Hilbert schemes are conducted with the help of the theory of Gr\"obner bases (and generalizations). In fact, the procedure of associating to any ideal $I\subset \kk[\xx]$ the initial ideal $\text{in}_{\Omega}(I)$ (for some term order $\Omega$) can be described in terms of a flat family over the affine line $\mathbb{A}^1$ (see for instance \cite[Theorem 15.17]{Eisenbud}). The generic fiber is projectively equivalent to $I$, while the special fiber is $\text{in}_{\Omega}(I)$. 

 When working with term orders and initial ideals, we need to fix an order on variables. We use the order $x_0 <  \cdots < x_n$, so that the minimum index $\min \xx^\aaa$ and maximum index $\max \xx^\aaa$ of a variable appearing in a monomial $\xx^\aaa$ correspond to the minimum and maximum variables. As the orders we consider on monomials have to be multiplicative orders, the choice $x_0 < \cdots < x_n$ induces a partial order on the set of monomials of a given degree:
 \[
 x_i > x_j \qquad\Longrightarrow\qquad \xx^\aaa = x_i \cdot \xx^{\underline{c}} > x_j \cdot \xx^{\underline{c}} = \xx^\bbb.
 \] 
 We refer to this order as \emph{Borel order} and we denote it by $\geq_B$. Each graded term order is a refinement of $\geq_B$.
 
 \begin{definition}\label{def elementary move}
For $i<n$ and $j>0$, we define the $i$-th increasing elementary move  and the $j$-th decreasing elementary move as the maps
\begin{equation}\label{eq:elemMoves}
\begin{array}{r c c c}
\eu{i}: & \widehat{\mathbb{T}}^n & \to & \widehat{\mathbb{T}}^n \\
& \xx^\aaa & \mapsto & \frac{x_{i+1}}{x_i} \xx^\aaa
\end{array}
\qquad\text{and}\qquad
\begin{array}{r c c c}
\ed{j}: & \widehat{\mathbb{T}}^n & \to & \widehat{\mathbb{T}}^n \\
& \xx^\aaa & \mapsto & \frac{x_{j-1}}{x_j} \xx^\aaa
\end{array}.
\end{equation}
We say that an elementary move is admissible for a monomial $\xx^\aaa \in \mathbb{T}^n$ if also the image is a monomial in $\mathbb{T}^n$. Compositions $\eu{i} \circ \ed{i+1}$ and $\ed{j}\circ \eu{j-1}$ give the identity $\textsf{id}:  \widehat{\mathbb{T}}^n \to  \widehat{\mathbb{T}}^n$.
\end{definition}
 
We can interpret the Borel order $\geq_B$ as the transitive closure of the relations
\[
 \xx^\aaa >_B \xx^\bbb \qquad \Longleftrightarrow \qquad \xx^\aaa = \eu{i}(\xx^\bbb), \text{~for some $i$},
\]
and use these elementary relations to visualize the order among monomials (see Figure \ref{fig:BorelOrder3VarDeg3}). By definition $\xx^\aaa >_B \xx^\bbb$ means that there is sequence of (admissible) elementary moves $\eu{i_1},\ldots,\eu{i_s}$ such that
\[
\begin{split}
 \xx^\aaa = \eu{i_1}(\xx^{\ccc_1}) &{}=   \eu{i_1}\circ \eu{i_2}(\xx^{\ccc_2}) = \ldots = \eu{i_1}\circ \cdots \circ \eu{i_{s-1}}(\xx^{\ccc_{s-1}}) =  \eu{i_1}\circ \cdots \circ \eu{i_{s}}(\xx^\bbb)\\
&{} \Rightarrow\quad \xx^\aaa = \frac{x_{i_1+1}}{x_{i_1}}\cdot\ldots\cdot\frac{x_{i_s+1}}{x_{i_s}}\xx^\bbb.
 \end{split}
\]
Even though the product in $\kk(\xx)$ is commutative, we notice that if we change the order of application of the elementary moves we may lose the admissibility at each step. Next lemma shows that a composition of moves that is overall admissible for $\xx^\bbb$ can be always decomposed in a composition of moves admissible at each step.

For a monomial $\xx^\aaa \in \mathbb{T}^n$, we denote by $\vert \aaa \vert_i$ the sum $a_i + \cdots + a_n$, i.e.~the degree of the part of $\xx^\aaa$ in $\kk[x_i,\ldots,x_n]$. Obviously, $\vert \aaa \vert_0 = \vert \aaa \vert$.
\begin{lemma}\label{lem:BorelOrder}
Let $\xx^\aaa$ and $\xx^\bbb$ be two monomials in $\mathbb{T}^n$.
\begin{equation}
\xx^\aaa \geq_B \xx^\bbb \qquad\Longleftrightarrow\qquad \vert \aaa \vert_i \geqslant \vert \bbb \vert_i,~\forall\ i=0,\ldots,n.
\end{equation}
\end{lemma}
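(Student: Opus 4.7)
The plan is to prove both implications by analyzing how the statistics $|\aaa|_i$ behave under elementary moves, with the forward direction being almost immediate and the backward direction proved by a short induction.

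For the forward direction, I would compute directly how the increasing move $\eu{i}$ affects the quantities $|\aaa|_k$. Since $\eu{i}$ decreases $a_i$ by one and increases $a_{i+1}$ by one, we see that $|\aaa|_k$ is preserved for every $k\neq i+1$ and increases by one when $k=i+1$. Hence each admissible elementary move weakly increases all the statistics $|\cdot|_k$, and so does any finite composition of admissible moves; by transitivity, $\xx^\aaa \geq_B \xx^\bbb$ forces $|\aaa|_k \geqslant |\bbb|_k$ for all $k=0,\dots,n$ (with equality at $k=0$ since the total degree is preserved).

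For the backward direction I would use induction on the nonnegative integer
\[
N(\aaa,\bbb) \;=\; \sum_{k=1}^n \bigl(|\aaa|_k - |\bbb|_k\bigr).
\]
If $N(\aaa,\bbb)=0$, the equalities $|\aaa|_k = |\bbb|_k$ for all $k$ give $a_k = |\aaa|_k - |\aaa|_{k+1} = b_k$ (with the convention $|\aaa|_{n+1}=0$), so $\xx^\aaa = \xx^\bbb$. Otherwise, let $k$ be the largest index with $|\aaa|_k > |\bbb|_k$. By the maximality of $k$ we have $|\aaa|_{k+1} = |\bbb|_{k+1}$ (or $k=n$ and the term is $0$), and subtracting gives $a_k > b_k \geqslant 0$. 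Hence the decreasing move $\ed{k}$ is admissible on $\xx^\aaa$, producing a monomial $\xx^{\aaa'} \in \mathbb{T}^n$ with $|\aaa'|_k = |\aaa|_k-1 \geqslant |\bbb|_k$ and $|\aaa'|_j = |\aaa|_j$ for $j \neq k$. In particular the inequalities against $\bbb$ persist and $N(\aaa',\bbb) = N(\aaa,\bbb)-1$. The induction hypothesis yields an admissible chain $\xx^{\aaa'} \geq_B \xx^\bbb$, and since $\xx^\aaa = \eu{k-1}(\xx^{\aaa'})$ is automatically admissible (its image $\xx^\aaa$ lies in $\mathbb{T}^n$), we conclude $\xx^\aaa \geq_B \xx^{\aaa'} \geq_B \xx^\bbb$.

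The only delicate point is guaranteeing admissibility at each single step of the constructed chain, which is exactly the content flagged by the authors in the paragraph preceding the lemma. The choice of the \emph{largest} index $k$ with $|\aaa|_k > |\bbb|_k$ is precisely what forces $a_k \geqslant 1$ and so makes the move $\ed{k}$ admissible; any other choice could produce intermediate elements lying only in $\widehat{\mathbb{T}}^n$. I expect this to be the main (and really only) subtle point of the argument.
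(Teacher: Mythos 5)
Your proof is correct, and it takes a somewhat different route from the paper's in the $(\Leftarrow)$ direction. The paper writes down in one shot the explicit composition $\textsf{E}$ obtained by applying $\eu{0}$ exactly $\vert\aaa\vert_1-\vert\bbb\vert_1$ times, then $\eu{1}$ exactly $\vert\aaa\vert_2-\vert\bbb\vert_2$ times, and so on up to $\eu{n-1}$, starting from $\xx^\bbb$; it leaves to the reader the (routine but nontrivial) check that each intermediate step stays in $\mathbb{T}^n$. You instead run an induction on $N(\aaa,\bbb)=\sum_{k\geqslant 1}(\vert\aaa\vert_k-\vert\bbb\vert_k)$, peeling one decreasing move off the $\xx^\aaa$ side at each step, and your choice of the \emph{largest} index $k$ with $\vert\aaa\vert_k>\vert\bbb\vert_k$ is exactly what forces $a_k>b_k\geqslant 0$ and hence admissibility of $\ed{k}$ — so your argument makes explicit the admissibility verification that the paper's one-liner elides, at the cost of being less constructive about the final chain. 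Two micro-remarks: your forward direction (tracking how a single $\eu{i}$ shifts the statistics $\vert\cdot\vert_k$) is just a move-by-move restatement of the paper's aggregate computation $\vert\aaa\vert_i=c_i+\vert\bbb\vert_i$; and in the backward direction you should note that the selected index satisfies $k\geqslant 1$ (so that $\ed{k}$ is actually defined), which follows from $\vert\aaa\vert_0=\vert\bbb\vert_0$ under the standing convention that $\geq_B$ only compares monomials of the same degree — the same implicit assumption the paper's proof relies on.
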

\begin{proof}
($\Rightarrow$) $\xx^\aaa \geq_B \xx^\bbb$ implies that $\xx^\aaa = (\frac{x_1}{x_0})^{c_1} \cdots (\frac{x_{n}}{x_{n-1}})^{c_{n}} \xx^\bbb = x_0^{-c_1} x_1^{c_1-c_2} \cdots x_{n-1}^{c_{n-1}-c_{n}} x_n^{c_{n}} \xx^\bbb$, so that
\[
\begin{split}
 \vert \aaa \vert_0 &{} = \vert \aaa \vert = \vert \bbb \vert = \vert \bbb \vert_0,\\
\vert \aaa \vert_i & {} = (c_i - c_{i+1}) + (c_{i+1} - c_{i+2}) + \cdots + (c_{n-1}-c_{n}) + c_{n} + \vert \bbb \vert_i = {} \\
&{} = c_{i} + \vert \bbb \vert_i \geqslant \vert \bbb \vert_i, \quad i = 1,\ldots,n.
\end{split}
\]
($\Leftarrow$) We have $\xx^\aaa = \textsf{E} (\xx^\bbb)$, where $\textsf{E}$ is the composition of elementary moves
\[
\underbrace{\eu{n-1} \circ \cdots \circ \eu{n-1}}_{\vert \aaa \vert_n - \vert \bbb \vert_n\text{~times}} \circ  \quad \cdots \quad \circ \underbrace{\eu{1} \circ \cdots \circ \eu{1}}_{\vert \aaa \vert_2 - \vert \bbb \vert_2\text{~times}} \circ \underbrace{\eu{0} \circ \cdots \circ \eu{0}}_{\vert \aaa \vert_1 - \vert \bbb \vert_1\text{~times}} . \qedhere
\]
\end{proof}

\begin{example}
Consider the monomials $x_0^2 x_1 = \xx^{(2,1,0)}$, $x_0 x_2^2 = \xx^{(1,0,2)}$ and $x_1^3 = \xx^{(0,3,0)}$ in $\mathbb{T}^2_3$. We have
\[
\vert (1,0,2) \vert_0 = \vert (2,1,0) \vert_0 = 3,\ \vert (1,0,2) \vert_1 = 2 > 1 =  \vert (2,1,0) \vert_1,\  \vert (1,0,2) \vert_2 = 2 > 0 =  \vert (2,1,0) \vert_2,
\]
 so that $ x_0 x_2^2 >_B x_0^2 x_1$ and $ x_0 x_2^2 = \eu{1} \circ \eu{1} \circ \eu{0} (x_0^2 x_1)$, while $x_0 x_2^2$ and $x_1^3$ are not comparable with respect to $\geq_B$, as
\[
\vert (1,0,2) \vert_0 = \vert (0,3,0) \vert_0 = 3,\ \vert (1,0,2) \vert_1 =  2 < 3 =  \vert (0,3,0) \vert_1,\  \vert (1,0,2) \vert_2 = 2 > 0 =  \vert (0,3,0) \vert_2.
\]
\end{example}

In the context of Hilbert schemes we are particularly interested in \emph{generic initial ideals}, that is initial ideal in generic coordinates.  Galligo \cite{Galligo} proved that generic initial ideals are monomial ideals fixed by the action of the Borel subgroup of upper triangular matrices of the projective linear group and thus called \emph{Borel-fixed} ideals. When the characteristic of the base field is $0$, the notion of Borel-fixed ideal coincides with the notion of strongly stable ideal. This type of ideals is characterized by the following combinatorial property.

\begin{definition}\label{def strongly stable}
An ideal $J \subset \kk[\xx]$ is called \emph{strongly stable} if
\begin{itemize}
\item[(1)] $J$ is a monomial ideal;
\item[(2)] $\xx^{\aaa} \geq_B \xx^\bbb$ and $\xx^\bbb \in J$ imply $\xx^{\aaa} \in J$.
\end{itemize}
\end{definition}

By the definition, the set of monomials of degree $m$ of a strongly stable ideal $J$ is a subset of $\mathbb{T}^n_m$ closed with respect to increasing elementary moves. Such a set is often call \emph{Borel set} of $\mathbb{T}^n_m$. From now on, when considering a strongly stable ideal, we focus on the set of monomials of degree equal to the Gotzmann number of its Hilbert polynomial. This set plays a crucial role throughout the paper, so that we introduce some special notation. 
We write in superscript \lq\lq$\sat$\rq\rq~to denote a \emph{saturated} strongly stable ideal and, given a saturated ideal $J^\sat$, we denote with $J$ (same letter, no superscript) the truncation $J^\sat_{\geqslant r}$, where $r$ is the Gotzmann number of the Hilbert polynomial of $J^\sat$. Furthermore, given a saturated ideal $J^\sat$ or its truncation $J$, we denote with the same letter in fraktur alphabet $\mathfrak{J}$ the set of its monomials of degree $r$, i.e.~$J = (\mathfrak{J})$.

 For any set $\mathfrak{A}$, we denote by $\vert \mathfrak{A} \vert$ its cardinality and for any pair of sets $\mathfrak{A},\mathfrak{B}$, we write $\mathfrak{A}\setminus\mathfrak{B}$ meaning $\mathfrak{A} \setminus (\mathfrak{A}\cap\mathfrak{B})$. For a subset $\mathfrak{A} \subset \mathbb{T}^n_m$, we consider the partition $\mathfrak{A}_0 \sqcup \cdots \sqcup \mathfrak{A}_n$, where $\mathfrak{A}_i = \{ \xx^\aaa \in \mathfrak{A}\ \vert\ \min \xx^\aaa = i\}$, and $\mathfrak{A}_{\geqslant i}$ stands for the set $\mathfrak{A}_i \sqcup \cdots \sqcup \mathfrak{A}_n = \{\xx^\aaa \in \mathfrak{A}\ \vert\ \min \xx^\aaa \geqslant i\}$. Moreover,  we denote by $\comp{\mathfrak{A}}$ the complementary set $\mathbb{T}^n_m\setminus \mathfrak{A}$.

We briefly recall the deep relation between the combinatorics of a strongly stable ideal and its Hilbert polynomial (see \cite{Mall1,BigattiRobbiano,Efficient,AL} for details). From now on, $r$ is for the Gotzmann number of the Hilbert polynomial of any strongly stable ideal $J$ we consider. We denote the volume polynomial of $J$ by $q(t)$. The set $\mathfrak{J}$ is a basis of the vector space $J_r$, i.e.~it consists of $q(r)$ distinct monomials of degree $r$. For any $m > r$, the monomial basis of $J_m$ can be decomposed as follows 
\[
( \mathfrak{J}_n \cdot \kk[\xx]_{m-r}) \sqcup (\mathfrak{J}_{n-1} \cdot \kk[x_0,\ldots,x_{n-1}]_{m-r})  \sqcup \cdots \sqcup (\mathfrak{J}_1 \cdot \kk[x_0,x_1]_{m-r})\sqcup( \mathfrak{J}_0\cdot \kk[x_0]_{m-r}),
\]
where $\mathfrak{J}_i \cdot \kk[x_0,\ldots,x_i]_{m-r}$ stands for the set of monomials $\xx^\aaa \cdot \xx^\ccc$ with $\xx^\ccc \in \kk[x_0,\ldots,x_i]_{m-r}$ and $\xx^\aaa \in \mathfrak{J}_i$. Consequently, one has
\[
q(t) = \sum_{i=0}^n \vert \mathfrak{J}_i\vert \binom{i + t - r}{i} \quad\Longrightarrow\quad p(t) = \binom{n+t}{n} - \sum_{i=0}^n \vert \mathfrak{J}_i\vert \binom{i + t - r}{i}.
\]
 Furthermore, set $\Delta^0 p(t) = p(t)$ and $\Delta^{k} p(t) =\Delta^{k-1} p(t) - \Delta^{k-1} p(t-1)$ for $k>0$, one deduces
 \[
\vert\mathfrak{J}_{\geqslant i} \vert = \sum_{k=i}^n \vert \mathfrak{J}_k \vert = \binom{n+r-i}{n-i} - \Delta^i p(r) \quad\Longrightarrow\quad \vert \mathfrak{J}_i\vert = \binom{n+r-i-1}{n-i-1} - \Delta^i p(r) + \Delta^{i+1}p(r).
 \]
Hence, for any pair of strongly stable ideals $J,J'\subset \kk[\xx]$ with Hilbert polynomial $p(t)$, it holds $\vert \mathfrak{J}_i\vert = \vert \mathfrak{J}_i'\vert$ for all $i=0,\ldots,n$.  This property has been used for designing the algorithm computing the set of saturated strongly stable ideals in $\kk[\xx]$ with a given Hilbert polynomial introduced in \cite{CLMR} and improved in \cite{Efficient,AL}. Another algorithm was known since \cite{Reeves} and has been taken up more recently in \cite{MN}. We denote by $\SI{p(t)}{n}$ the set of strongly stable ideals $J = (\mathfrak{J}) \subset \kk[x_0,\ldots,x_n]$ with Hilbert polynomial $p(t)$.

 \begin{example}\label{ex:borelSets}
 Consider the saturated strongly stable ideal $J^{\sat} = (x_2^2,x_1x_2,x_1^2) \subset \kk[x_0,x_1,x_2]$. Its Hilbert polynomial is $p(t)=3$ with Gotzmann number $3$. In Figure \ref{fig:BorelOrder3VarDeg3}, there is the subset $\mathfrak{J} \subset \mathbb{T}^2_3$.  As $\Delta^i p(t) = 0$, for all $i>0$, we have
 \[
 \vert \mathfrak{J}_0 \vert = \binom{4}{2} - 3 = 3,\quad  \vert \mathfrak{J}_1  \vert = \binom{3}{1} = 3,\quad  \vert \mathfrak{J}_2  \vert = \binom{2}{0} = 1.
 \]
 There is a second saturated strongly stable ideal with Hilbert polynomial $p(t)=3$, the lexicographic ideal $L^\sat = (x_2,x_1^3)$. In this case, we have
 \[
 \mathfrak{L}_0 = \left\{ x_0^2 x_2, x_0 x_1 x_2, x_0 x_2^2\right\}, \quad \mathfrak{L}_1 = \{ x_1 x_2^2, x_1^2 x_2 , x_1^3\},\quad \mathfrak{L}_2 = \{x_2^3\}.
  \]
  \end{example}

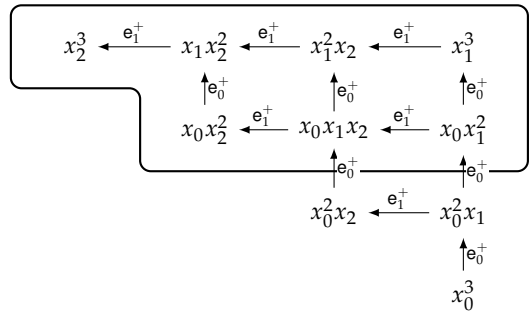
\begin{figure}[!ht]
\begin{center}
\begin{tikzpicture}[xscale=1.7,yscale=-1.1]

\draw [thick,rounded corners] (-0.5,-0.5) -- (3.5,-0.5) -- (3.5,1.5) -- (0.5,1.5) -- (0.5,0.5) -- (-0.5,0.5) -- cycle;

\node (003) at (0,0) [] {\footnotesize$x_2^3$};
\node (012) at (1,0) [] {\footnotesize$x_1x_2^2$};
\node (021) at (2,0) [] {\footnotesize$x_1^2x_2$};
\node (030) at (3,0) [] {\footnotesize$x_1^3$};

\draw [latex-] (003) --node[above,inner sep=1pt]{\tiny $\eu{1}$} (012);
\draw [latex-] (012) --node[above,inner sep=1pt]{\tiny $\eu{1}$} (021);
\draw [latex-] (021) --node[above,inner sep=1pt]{\tiny $\eu{1}$} (030);

\node (102) at (1,1) [] {\footnotesize$x_0 x_2^2$};
\node (111) at (2,1) [] {\footnotesize$x_0 x_1 x_2$};
\node (120) at (3,1) [] {\footnotesize$x_0 x_1^2$};

\draw [latex-] (012) --node[right,inner sep=1pt]{\tiny $\eu{0}$} (102);
\draw [latex-] (021) --node[right,inner sep=1pt]{\tiny $\eu{0}$} (111);
\draw [latex-] (030) --node[right,inner sep=1pt]{\tiny $\eu{0}$} (120);

\draw [latex-] (102) --node[above,inner sep=1pt]{\tiny $\eu{1}$} (111);
\draw [latex-] (111) --node[above,inner sep=1pt]{\tiny $\eu{1}$} (120);

\node (201) at (2,2) [] {\footnotesize$x_0^2  x_2$};
\node (210) at (3,2) [] {\footnotesize$x_0^2 x_1$};

\draw [latex-] (111) --node[right,inner sep=0pt,fill=white,xshift=1pt]{\tiny $\eu{0}$} (201);
\draw [latex-] (120) --node[right,inner sep=0pt,fill=white,xshift=1pt]{\tiny $\eu{0}$} (210);

\draw [latex-] (201) --node[above,inner sep=1pt]{\tiny $\eu{1}$} (210);

\node (300) at (3,3) [] {\footnotesize$x_0^3$};

\draw [latex-] (210) --node[right,inner sep=1pt]{\tiny $\eu{0}$} (300);

\end{tikzpicture}
\caption{The Borel order $\geq_B$ on the set of monomials $\mathbb{T}^2_{3}$ and the Borel set $\mathfrak{J}$ corresponding to the ideal $J^\sat = (x_2^2, x_1x_2, x_1^2) \subset \kk[x_0,x_1,x_2]$.}
\label{fig:BorelOrder3VarDeg3}
\end{center}
\end{figure}

Each component and each intersection of components of the Hilbert scheme contains at least a point corresponding to a scheme $\Proj \kk[\xx]/J$ defined by a strongly stable ideal $J$. For this reason, it has been natural to look for flat families of ideals \lq\lq centered\rq\rq~at a strongly stable ideal to study the Hilbert scheme. In this context, a key notion is that of \emph{marked family} of ideals (see \cite{CR,BCLR,LR} and references therein for a detailed treatment of the topic). Given a strongly stable ideal $J = (\mathfrak{J})$ generated in degree $r$ equal to the Gotzmann number of its Hilbert polynomial, a \emph{monic reduced $J$-marked set} is a set of polynomials of the shape
\begin{equation}\label{eq:markedset}
\left\{ \mathbf{f}_{\aaa} := \xx^\aaa + \sum_{\xx^\bbb \in \comp{\mathfrak{J}}} c_{\aaa,\bbb}\, \xx^\bbb \ \middle\vert\ \xx^\aaa \in \mathfrak{J},\ c_{\aaa,\bbb}\in \kk \right\}.
\end{equation}
Each polynomial $\ff_{\aaa}$ in the collection contains only the monomial $\xx^\aaa$ belonging to $J$. Such monomial has to be monic, it is called \emph{head term} of $\mathbf{f}_{\aaa}$ and it is denoted by $\Ht(\ff_{\aaa})$. 
This set of polynomials resembles a reduced Gr\"obner basis, but we underline  that in general the marking is not given by a term order, i.e.~$\Ht(\ff_{\aaa})$ might not be the leading term with respect to any term order.

Among all the $J$-marked sets, we are interested in those defining ideals sharing properties with the fixed monomial ideal $J$ (as in the case of a Gr\"obner basis and the corresponding initial ideal). A marked set $F$ is called \emph{marked basis} if the monomials of degree $m$ not contained in $J$ form a basis of the vector space $\kk[\xx]_m/(F)_m$ for all $m \geqslant r$. In particular, the ideal defined by a $J$-marked basis has the same Hilbert polynomial of $J$.

\begin{proposition}[{\cite[Theorem 2.11]{LR2}}]\label{prop:liftSyzygies}
Given a strongly stable ideal $J = (\mathfrak{J}) \subset \kk[\xx]$, a $J$-marked set $F$ is a $J$-marked basis if, and only if, all syzygies among monomials in $\mathfrak{J}$ lift to syzygies among polynomials in $F$.
\end{proposition}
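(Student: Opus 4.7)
The plan is to prove both implications via a reduction procedure against the marked set $F$, with termination guaranteed by a linear refinement of the Borel order together with the strongly stable property of $\mathfrak{J}$. Concretely, since $\mathfrak{J}$ is a Borel set, every tail $\mathbf{f}_\aaa - \xx^\aaa$ is supported on $\comp{\mathfrak{J}}_r$, and by strong stability no monomial of $\comp{\mathfrak{J}}_r$ can be $\geq_B$ any monomial of $\mathfrak{J}_r$, so the tails are strictly smaller than $\xx^\aaa$ in any linear refinement of $\geq_B$. The reduction step rewrites a monomial $\xx^\bbb$ divisible by some $\xx^\aaa \in \mathfrak{J}$, say $\xx^\bbb = \xx^\ccc \cdot \xx^\aaa$, as $\xx^\ccc \cdot \mathbf{f}_\aaa - \xx^\ccc \cdot (\mathbf{f}_\aaa - \xx^\aaa)$ and treats $\xx^\ccc \cdot \mathbf{f}_\aaa$ as already accounted for in the ideal $(F)$.

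For the ``only if'' direction, I would assume $F$ is a $J$-marked basis, so that $\kk[\xx]_m = (F)_m \oplus \langle \comp{\mathfrak{J}}_m\rangle$ for every $m \geqslant r$. Given a syzygy $\sum_{\xx^\aaa \in \mathfrak{J}} g_\aaa \xx^\aaa = 0$, I form $h := \sum_\aaa g_\aaa \mathbf{f}_\aaa \in (F)$ and run the reduction procedure to rewrite $h$ as $\sum_\aaa g'_\aaa \mathbf{f}_\aaa + h_\mathsf{tail}$ with $h_\mathsf{tail} \in \langle \comp{\mathfrak{J}}\rangle$. The starting syzygy guarantees that the head contributions appearing along the reduction cancel, so the marked-basis decomposition forces $h_\mathsf{tail} = 0$, and the resulting identity $\sum_\aaa g'_\aaa \mathbf{f}_\aaa = 0$ is a lifting of the original syzygy.

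For the ``if'' direction, I would verify that $\comp{\mathfrak{J}}_m$ is a $\kk$-basis of $\kk[\xx]_m/(F)_m$ for every $m \geqslant r$. Spanning follows directly from the reduction procedure: termination in the chosen linear extension of $\geq_B$ rewrites every polynomial modulo $(F)$ as one supported on $\comp{\mathfrak{J}}_m$. For linear independence, suppose $h \in \langle \comp{\mathfrak{J}}_m\rangle \cap (F)_m$ is nonzero and write $h = \sum_\aaa g_\aaa \mathbf{f}_\aaa$. The head contributions of $\sum_\aaa g_\aaa \xx^\aaa$ must cancel, producing a syzygy among the monomials in $\mathfrak{J}$; by hypothesis this syzygy lifts to some $\sum_\aaa g''_\aaa \mathbf{f}_\aaa = 0$ with matching leading coefficients. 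Subtracting this expression from the original one strictly decreases the maximal head-monomial appearing in the coefficients, and Noetherian descent on the well-founded order forces $h = 0$.

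The main obstacle I anticipate is justifying termination of the reduction procedure in the absence of a genuine term order. The crucial input is Lemma \ref{lem:BorelOrder}: the Borel order is well-founded on the finite set $\mathbb{T}^n_m$ and admits a linear extension, and the strongly stable property of $\mathfrak{J}$ ensures that each reduction step strictly reduces the maximum of the $\mathfrak{J}$-monomials appearing in the expression. Additional care is required in degrees $m > r$, since multiplying a tail monomial $\xx^\dd \in \comp{\mathfrak{J}}_r$ by $\xx^\ccc$ may produce new monomials lying in $\mathfrak{J}_m$; one must verify that such new monomials remain strictly below the monomial originally removed, and again this depends on strong stability. This is the standard delicate point in the theory of marked families treated in \cite{CR,BCLR,LR2}.
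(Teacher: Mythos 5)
Your overall architecture --- characterize marked bases via a reduction procedure modulo $F$ and prove both implications by rewriting $\sum_\aaa g_\aaa\mathbf{f}_\aaa$ into a normal form supported on $\comp{\mathfrak{J}}$ --- is the standard route to this statement (which the paper itself does not prove but imports from the marked-families literature). The genuine gap sits exactly where you write ``additional care is required'', and the mechanism you propose for closing it does not work. The inference ``no monomial of $\comp{\mathfrak{J}}_r$ is $\geq_B$ any monomial of $\mathfrak{J}_r$, hence the tails are strictly smaller than $\xx^\aaa$ in \emph{any} linear refinement of $\geq_B$'' is a non sequitur: Borel-incomparable monomials can be placed in either order by a linear refinement. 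One can certainly \emph{choose} a linear extension of $\geq_B$ on $\mathbb{T}^n_r$ putting all of $\mathfrak{J}$ above all of $\comp{\mathfrak{J}}$; but if such an ordering came from a term order $\Omega$, then $J$ would be the $\Omega$-hilb-segment ideal in the sense of Definition \ref{def:hilbSegment}, which most strongly stable ideals are not. Hence the degree-$r$ ordering cannot be propagated multiplicatively to degree $m>r$, and that is precisely where it is needed: a reduction step in degree $m$ replaces $\xx^\ccc\xx^\aaa$ by monomials $\xx^\ccc\xx^\bbb$ with $\xx^\bbb\in\comp{\mathfrak{J}}_r$, some of which lie again in $J_m$, and for an arbitrary choice of which $\mathbf{f}_{\aaa'}$ reduces which monomial there is no order-theoretic reason that your measure (``the maximum of the $\mathfrak{J}$-monomials appearing'') decreases. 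Termination of the reduction is the actual content of the theorem, not a routine check.

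The missing ingredient is the canonical decomposition recalled in Section \ref{sec:preliminaries}: $J_m=\bigsqcup_{i}\mathfrak{J}_i\cdot\kk[x_0,\ldots,x_i]_{m-r}$, so every $\xx^\gamma\in J_m$ has a \emph{unique} writing $\xx^\gamma=\xx^\ccc\cdot\xx^\aaa$ with $\xx^\aaa\in\mathfrak{J}$ and $\max\xx^\ccc\leqslant\min\xx^\aaa$. The reduction must always use this distinguished divisor, and one then shows, using strong stability and Lemma \ref{lem:BorelOrder}, that when a product $\xx^\ccc\xx^\bbb$ with $\xx^\bbb\in\comp{\mathfrak{J}}_r$ falls back into $J_m$, its canonical decomposition is strictly smaller for a suitable well-founded measure built from this decomposition. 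This Noetherianity statement is proved in \cite{CR,BCLR} and is the step your argument would have to reproduce; once it is in place, both of your implications go through essentially as written.
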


Obviously, we can restrict to a basis of the syzygies of $J$ and since we are dealing with strongly stable ideals, it is natural to look at the Eliahou-Kervaire syzygies \cite{EK}. Furthermore, the ideal $J = (\mathfrak{J})$ is generated in degree $r$ and $r$-regular, so that the Eliahou-Kervaire syzygies of $J$ are linear.
Let $\xx^{\bbb} \in \mathfrak{J}$ be a generator of $J$ with $\min \xx^{\bbb} = h$. In the Eliahou-Kervaire resolution of $J$, $\xx^{\bbb}$ appears in syzygies of the type
\begin{equation*}
x_i \cdot \xx^{\bbb} - x_h \cdot \xx^{\aaa} = 0,\qquad i = h+1,\ldots,n.
\end{equation*}
Notice that
\[
 \xx^{\aaa}= \frac{x_i}{x_h}   \xx^{\bbb} =  \frac{x_i}{x_{i-1}} \cdots \frac{x_{h+1}}{x_h}   \xx^{\bbb}\quad\Rightarrow\quad   \xx^{\aaa} >_B  \xx^{\bbb}.
\]

The assumption that the head term is monic makes natural to extend the definition of marked set and marked basis to polynomial rings $A[\xx]$ with coefficient in any $\kk$-algebra $A$. Given a strongly stable ideal $J = (\mathfrak{J})$, we define the covariant \emph{marked family functor} 
\[
\underline{\mathbf{Mf}}_{J}: (\text{$\kk$-Algebras}) \to (\text{Sets}).
\] This functor associates to a $\kk$-algebra $A$ the family of ideals in $A[\xx]$ generated by a $J$-marked basis
\[
\underline{\mathbf{Mf}}_{J}(A) = \left\{ I = (F) \in A[\xx]\ \middle\vert\ F \text{~is a $J$-marked basis}\right\}
\]
and to a morphism of $\kk$-algebras $f: A \to B$ the map $\underline{\mathbf{Mf}}_{J}(f): \underline{\mathbf{Mf}}_{J}(A) \to \underline{\mathbf{Mf}}_{J}(B)$
\[
(I\subset A[\xx]) \in \underline{\mathbf{Mf}}_{J}(A)\qquad \longmapsto\qquad (I\otimes_A B  \subset B[\xx] )  \in \underline{\mathbf{Mf}}_{J}(B).
\]
The functor $\underline{\mathbf{Mf}}_J$ is representable \cite[Theorem 2.6]{LR2} and the representing scheme is called \emph{$J$-marked scheme} and denoted by $\mathbf{Mf}_J$.  Moreover, the inclusion $\underline{\mathbf{Mf}}_J \to \underline{\mathbf{Hilb}}_{p(t)}^n$ given by
\[
(I\subset A[\xx]) \in \underline{\mathbf{Mf}}_{J}(A)\qquad \longmapsto\qquad \Proj A[\xx]/I \to \Spec A \in \underline{\mathbf{Hilb}}_{p(t)}^n(\Spec A),
\]
 where $p(t)$ is the Hilbert polynomial of $J$, realizes the marked family functor as open subfunctor of the Hilbert functor. Hence, $\mathbf{Mf}_J$ turns out to be an open subscheme of the Hilbert scheme $\Hilb{p(t)}{n}$. For an ideal $I \in \underline{\mathbf{Mf}}_{J}(\kk)$, we denote by $[I]$ the corresponding point in $\mathbf{Mf}_J$ or in $\Hilb{p(t)}{n}$.

In order to study the family of ideals having initial ideal $J$ with respect to a given term order $\Omega$, we consider monic reduced $(J,\Omega)$-marked sets, namely sets of polynomials of the shape
\begin{equation}\label{eq:Omegamarkedset}
\left\{ \mathbf{f}_{\aaa} := \xx^\aaa + \sum_{\begin{subarray}{c} \xx^\bbb \in \comp{\mathfrak{J}}\\ \xx^\bbb <_\Omega \xx^\aaa \end{subarray}} c_{\aaa,\bbb}\, \xx^\bbb \ \middle\vert\ \xx^\aaa \in \mathfrak{J},\ c_{\aaa,\bbb}\in \kk \right\}.
\end{equation}
In this case, for each $\ff_{\aaa}$ the head term $\xx^\aaa$ is the leading term $\In{\Omega}(\ff_\aaa)$ with respect to the term order $\Omega$.
If a $(J,\Omega)$-marked set is a marked basis, then it is indeed the reduced Gr\"obner basis of the ideal $I$ with respect to $\Omega$ and $J = \In{\Omega}(I)$. Also in this case, we define a covariant functor $\underline{\mathbf{St}}_{J}^{\Omega}: (\text{$\kk$-Algebras}) \to (\text{Sets})$ \cite[Section 5]{LR2}. This functor is called \emph{Gr\"obner functor} and associates to a $\kk$-algebra $A$ the family of ideals in $A[\xx]$ generated by a $(J,\Omega)$-marked basis
\[
\underline{\mathbf{St}}_{J}^{\Omega}(A) = \big\{ I = (F) \in A[\xx]\ \big\vert\ F \text{~is a $(J,\Omega)$-marked basis}\big\}
\]
and to a morphism of $\kk$-algebras $f: A \to B$ the map $\underline{\mathbf{St}}_{J}^{\Omega}(f):\underline{\mathbf{St}}_{J}^{\Omega}(A) \to \underline{\mathbf{St}}_{J}^{\Omega}(B)$
\[
(I\subset A[\xx]) \in \underline{\mathbf{St}}_{J}^{\Omega}(A)\qquad \longmapsto\qquad (I\otimes_A B  \subset B[\xx] )  \in \underline{\mathbf{St}}_{J}^{\Omega}(B).
\]
The functor $\underline{\mathbf{St}}_J^\Omega$ is a closed subfunctor of $\underline{\mathbf{Mf}}_J$ and it is representable \cite[Theorem 5.3]{LR2}. Hence, in general the representing scheme $\mathbf{St}_J^\Omega$, called \emph{Gr\"obner stratum},  is a locally closed subscheme of the Hilbert scheme $\Hilb{p(t)}{n}$. 

A Gr\"obner stratum can describe an open subset of the Hilbert scheme if the inclusion $\underline{\mathbf{St}}_{J}^{\Omega} \hookrightarrow \underline{\mathbf{Mf}}_J$ is in fact a bijection. This happens with a special class of strongly stable ideals and with particular term orders.
\begin{definition}[{\cite[Definition 6.5]{LR}\cite[Definition 3.7]{CLMR}}]\label{def:hilbSegment}
Let $\Omega$ be a term order. We say that an ideal $J \in \SI{p(t)}{n}$ is the \emph{$\Omega$-hilb-segment ideal} if
\[
  \xx^\aaa >_\Omega \xx^\bbb,\qquad\forall\ \xx^\aaa \in \mathfrak{J},\ \forall\ \xx^\bbb \in \comp{\mathfrak{J}}.
\]
\end{definition}
This definition generalizes the notion of lexsegment ideals. In fact, the unique lexicographic ideal $L \in \SI{p(t)}{n}$ is the $\mathtt{DegLex}$-hilb-segment ideal. If $J\in\SI{p(t)}{n}$ is the $\Omega$-hilb-segment ideal, then the Gr\"obner stratum $\mathbf{St}_J^{\Omega}$ coincides with the marked scheme $\mathbf{Mf}_J$ and it is an open subset of $\Hilb{p(t)}{n}$.

\section{Borel deformations}\label{sec:Borel deformations}

In this section, we investigate the relative position of points of the Hilbert scheme corresponding to strongly stable ideals. In particular, we determine a combinatorial condition for two strongly stable ideals $J,J' \in \SI{p(t)}{n}$ to define points on a common irreducible component of $\Hilb{p(t)}{n}$. In next section, we discuss their behavior with respect to Gr\"obner degenerations. 

\begin{definition}\label{def:borelAdjacent}
We say that two strongly stable ideals $J,J' \subset \kk[\xx]$ with the same Hilbert polynomial are \emph{Borel adjacent} if the following conditions hold:
\begin{enumerate}
\item[(1)] \begin{itemize}
\item[-] $\mathfrak{J} \setminus \mathfrak{J}'$ has a Borel maximum $\xx^\aaa$ (i.e. a maximum with respect to the Borel order);
\item[-] $\mathfrak{J}' \setminus \mathfrak{J}$ has a Borel maximum $\xx^{\aaa'}$;
\end{itemize}
\item[(2)] there is a set $\mathcal{E}_{J,J'}$ made of the identity $\textsf{id}: \widehat{\mathbb{T}}^n \to \widehat{\mathbb{T}}^n$ and compositions of elementary decreasing moves $\ed{i_1}\circ \cdots \circ \ed{i_s}$ with such that
\[
\mathfrak{J} \setminus \mathfrak{J}' = \{ \textsf{E}(\xx^\aaa) \ \vert\ \textsf{E} \in \mathcal{E}_{J,J'} \} \qquad\text{and}\qquad
\mathfrak{J}' \setminus \mathfrak{J} = \{ \textsf{E}(\xx^{\aaa'} ) \ \vert\ \textsf{E} \in \mathcal{E}_{J,J'} \}.
\]
\end{enumerate}
\end{definition}

\begin{example}[Borel adjacent ideals]\label{ex:BorelAdjacent} [BA1 -- Figure \ref{fig:examplesBorelAdjacent}{\sc\subref{fig:examplesBorelAdjacentBA1}}] Consider the ideals $L^\sat = (x_2,x_1^3)$ and $J^\sat = (x_2^2,x_1x_2,x_1^2)$ introduced in Example \ref{ex:borelSets}. We have
\[
\mathfrak{J} \setminus \mathfrak{L} = \{ x_0 x_1^2\},\qquad \mathfrak{L} \setminus \mathfrak{J} = \{ x_0^2 x_2\}\quad\text{and}\quad\mathcal{E}_{J,L} = \{\textsf{id}\}.
\]
Conditions (1) and (2) are obviously satisfied, so that $L$ and $J$ are Borel adjacent.

\smallskip

[BA2 -- Figure \ref{fig:examplesBorelAdjacent}{\sc\subref{fig:examplesBorelAdjacentBA2}}]  In the polynomial ring $\kk[x_0,x_1,x_2]$, consider the lexicographic ideal $L^\sat = (x_2,x_1^5)$ and the ideal $J^\sat = (x_2^2,x_1^2 x_2,x_1^3)$. The sets 
\[
\mathfrak{L}\setminus\mathfrak{J} = \{ x_0^3 x_1 x_2, x_0^4 x_2\}\qquad\text{and}\qquad \mathfrak{J}\setminus\mathfrak{L} = \{x_0 x_1^4, x_0^2 x_1^3 \}
\]
have Borel maxima $x_0^3 x_1 x_2$ and $x_0 x_1^4$. Moreover,
\[
\mathfrak{L}\setminus\mathfrak{J} = \left\{ \textsf{id}(x_0^3 x_1 x_2), \ed{1}(x_0^3 x_1 x_2) \right\} \qquad\text{and}\qquad \mathfrak{J}\setminus\mathfrak{L} = \left\{ \textsf{id}(x_0 x_1^4), \ed{1}(x_0 x_1^4) \right\}
\]
so that $L$ and $J$ are Borel adjacent.

\smallskip

[BA3] Consider the ideals $J^\sat = (x_3^2,x_2x_3,x_2^2)$ and $J'^\sat = (x_3^2, x_2x_3,x_1x_3,x_2^3)$ defining points in the Hilbert scheme $\Hilb{3t+1}{3}$. The Gotzmann number of $p(t) = 3t+1$ is 4. We have
\[
\mathfrak{J} \setminus \mathfrak{J}'=\{x_1^2x_2^2, x_0x_1 x_2^2, x_0^2 x_2^2\} \quad\text{and}\quad\mathfrak{J}' \setminus \mathfrak{J}= \{x_1^3x_3, x_0x_1^2 x_3, x_0^2 x_1 x_3\}.
\]
The Borel maximum of $\mathfrak{J} \setminus \mathfrak{J}'$ is $x_1^2 x_2^2$, the Borel maximum of $\mathfrak{J}' \setminus \mathfrak{J}$ is $x_1^3x_3$. The set of compositions of decreasing elementary moves satisfying condition (2) is $\mathcal{E}_{J,J'} = \{ \textsf{id}, \ed{1},$ $ \ed{1}\circ\ed{1} \}$.
\end{example}

\begin{figure}[!ht]
\begin{center}
\subfloat[][Example \ref{ex:BorelAdjacent} -- BA1]{\label{fig:examplesBorelAdjacentBA1}
\begin{tikzpicture}[xscale=1.1,yscale=-0.7]

\begin{scope}[]
\draw [black,rounded corners,pattern color=black!35,pattern=north east lines] (-0.5,-0.5) -- (3.5,-0.5) -- (3.5,1.5) -- (0.5,1.5) -- (0.5,0.5) -- (-0.5,0.5) -- cycle;
\end{scope}
\begin{scope}[]
\draw [black,rounded corners,pattern color=black!35,pattern=north west lines] (-0.5,-0.5) -- (3.5,-0.5) -- (3.5,0.5) -- (2.5,0.5) -- (2.5,2.5) -- (1.5,2.5) -- (1.5,1.5) -- (0.5,1.5) -- (0.5,0.5)-- (-0.5,0.5) -- cycle;
\end{scope}

\node (003) at (0,0) [inner sep=1pt] {\tiny $x_2^3$};
\node (012) at (1,0) [inner sep=1pt] {\tiny $x_1x_2^2$};
\node (021) at (2,0) [inner sep=1pt] {\tiny $x_1^2x_2$};
\node (030) at (3,0) [inner sep=1pt] {\tiny $x_1^3$};

\draw [latex-] (003) -- (012);
\draw [latex-] (012) -- (021);
\draw [latex-] (021) -- (030);

\node (102) at (1,1) [inner sep=1pt] {\tiny $x_0 x_2^2$};
\node (111) at (2,1) [inner sep=1pt] {\tiny $x_0 x_1 x_2$};
\node (120) at (3,1) [inner sep=1pt] {\tiny $x_0 x_1^2$};

\draw [latex-] (012) -- (102);
\draw [latex-] (021) -- (111);
\draw [latex-] (030) -- (120);

\draw [latex-] (102) --(111);
\draw [latex-] (111) -- (120);

\node (201) at (2,2) [inner sep=1pt] {\tiny $x_0^2  x_2$};
\node (210) at (3,2) [inner sep=1pt] {\tiny $x_0^2 x_1$};

\draw [latex-] (111) -- (201);
\draw [latex-] (120) --(210);

\draw [latex-] (201) -- (210);

\node (300) at (3,3) [inner sep=1pt] {\tiny $x_0^3$};

\node at (3,5) [inner sep=1pt] {\tiny \phantom{ $x_0^5$}};
\draw [latex-] (210) -- (300);
\end{tikzpicture}
}
\hspace{2cm}
\subfloat[][Example \ref{ex:BorelAdjacent} -- BA2]{\label{fig:examplesBorelAdjacentBA2}
\begin{tikzpicture}[xscale=-1.1,yscale=-0.7]
\draw [black,rounded corners,pattern color=black!35,pattern=north west lines] (-0.5,-0.5) -- (5.5,-0.5) -- (5.5,0.5) -- (4.5,0.5) -- (4.5,1.5) -- (3.5,1.5) -- (3.5,2.5) -- (2.5,2.5) -- (2.5,3.5) -- (1.5,3.5) -- (1.5,4.5) -- (0.5,4.5) -- (0.5,0.5) -- (-0.5,0.5) -- cycle;
\draw [black,rounded corners,pattern color=black!35,pattern=north east lines] (-0.5,-0.5) -- (5.5,-0.5) -- (5.5,0.5) -- (4.5,0.5) -- (4.5,1.5) -- (3.5,1.5) -- (3.5,2.5) -- (2.5,2.5) -- (2.5,3.5) -- (1.5,3.5)  -- (1.5,2.5) -- (-0.5,2.5) -- cycle;

 \node (005) at (5,0) [inner sep=1pt] {\tiny $x_2^{5} $};
  \node (014) at (4,0) [inner sep=1pt] {\tiny $x_1x_2^{4} $};
  \draw [-latex] (014) -- (005);
  \node (023) at (3,0) [inner sep=1pt] {\tiny $x_1^{2} x_2^{3} $};
  \draw [-latex] (023) -- (014);
  \node (032) at (2,0) [inner sep=1pt] {\tiny $x_1^{3} x_2^{2} $};
  \draw [-latex] (032) -- (023);
  \node (041) at (1,0) [inner sep=1pt] {\tiny $x_1^{4} x_2$};
  \draw [-latex] (041) -- (032);
  \node (050) at (0,0) [inner sep=1pt] {\tiny $x_1^{5} $};
  \draw [-latex] (050) -- (041);
  \node (104) at (4,1) [inner sep=1pt] {\tiny $x_0x_2^{4} $};
  \draw [-latex] (104) -- (014);
  \node (113) at (3,1) [inner sep=1pt] {\tiny $x_0x_1x_2^{3} $};
  \draw [-latex] (113) -- (023);
  \draw [-latex] (113) -- (104);
  \node (122) at (2,1) [inner sep=1pt] {\tiny $x_0x_1^{2} x_2^{2} $};
  \draw [-latex] (122) -- (032);
  \draw [-latex] (122) -- (113);
  \node (131) at (1,1) [inner sep=1pt] {\tiny $x_0x_1^{3} x_2$};
  \draw [-latex] (131) -- (041);
  \draw [-latex] (131) -- (122);
  \node (140) at (0,1) [inner sep=1pt] {\tiny $x_0x_1^{4} $};
  \draw [-latex] (140) -- (050);
  \draw [-latex] (140) -- (131);
  \node (203) at (3,2) [inner sep=1pt] {\tiny $x_0^{2} x_2^{3} $};
  \draw [-latex] (203) -- (113);
  \node (212) at (2,2) [inner sep=1pt] {\tiny $x_0^{2} x_1x_2^{2} $};
  \draw [-latex] (212) -- (122);
  \draw [-latex] (212) -- (203);
  \node (221) at (1,2) [inner sep=1pt] {\tiny $x_0^{2} x_1^{2} x_2$};
  \draw [-latex] (221) -- (131);
  \draw [-latex] (221) -- (212);
  \node (230) at (0,2) [inner sep=1pt] {\tiny $x_0^{2} x_1^{3} $};
  \draw [-latex] (230) -- (140);
  \draw [-latex] (230) -- (221);
  \node (302) at (2,3) [inner sep=1pt] {\tiny $x_0^{3} x_2^{2} $};
  \draw [-latex] (302) -- (212);
  \node (311) at (1,3) [inner sep=1pt] {\tiny $x_0^{3} x_1x_2$};
  \draw [-latex] (311) -- (221);
  \draw [-latex] (311) -- (302);
  \node (320) at (0,3) [inner sep=1pt] {\tiny $x_0^{3} x_1^{2} $};
  \draw [-latex] (320) -- (230);
  \draw [-latex] (320) -- (311);
  \node (401) at (1,4) [inner sep=1pt] {\tiny $x_0^{4} x_2$};
  \draw [-latex] (401) -- (311);
  \node (410) at (0,4) [inner sep=1pt] {\tiny $x_0^{4} x_1$};
  \draw [-latex] (410) -- (320);
  \draw [-latex] (410) -- (401);
  \node (500) at (0,5) [inner sep=1pt] {\tiny $x_0^{5} $};
  \draw [-latex] (500) -- (410);
\end{tikzpicture}
}
\caption{Examples of Borel adjacent pairs of strongly stable ideals in the polynomial ring $\kk[x_0,x_1,x_2]$.}
\label{fig:examplesBorelAdjacent}
\end{center}
\end{figure}

\begin{example}[non Borel adjacent ideals]\label{ex:nonBorelAdjacent} [nBA1 -- Figure \ref{fig:examplesBorelAdjacent}{\sc\subref{fig:examplesNonBorelAdjacentBA1}}] In the polynomial ring $\kk[x_0,x_1,x_2]$, consider the ideals $J^\sat = (x_2^2,x_1^2 x_2,x_1^6)$ and $J'^\sat = (x_2^3,x_1x_2^2,x_1^3 x_2, x_1^4)$ with Hilbert polynomial $p(t)=8$ (the Gotzmann number is also $8$). We have
\[
\mathfrak{J} \setminus \mathfrak{J}'= \{x_0^6 x_2^2, x_0^5 x_1^2 x_2\}\quad\text{and}\quad\mathfrak{J}' \setminus \mathfrak{J}= \{ x_0^3 x_1^5, x_0^4 x_1^4\}.
\]
The monomials in $\mathfrak{J} \setminus \mathfrak{J}'$ are not comparable with respect to $\geq_B$, since $\vert (6,0,2) \vert_1 = 2 < 3 =  \vert (5,2,1) \vert_1$ and $\vert (6,0,2) \vert_2 = 2 > 1 = \vert (5,2,1) \vert_2$ (Lemma \ref{lem:BorelOrder}). Hence,  $\mathfrak{J} \setminus \mathfrak{J}'$ has two maximal elements and does not satisfy condition (1).

\smallskip

 [nBA2 -- Figure \ref{fig:examplesBorelAdjacent}{\sc\subref{fig:examplesNonBorelAdjacentBA2}}] In the polynomial ring $\kk[x_0,x_1,x_2]$, consider the ideals $J^\sat = (x_2^2,x_1 x_2,x_1^5)$ and $J'^\sat = (x_2^3,x_1x_2^2,$ $x_1^2 x_2, x_1^3)$ with Hilbert polynomial $p(t)=6$ (the Gotzmann number is also $6$). We have
\[
\mathfrak{J} \setminus \mathfrak{J}'= \{x_0^4 x_2^2, x_0^4 x_1 x_2\}\quad\text{and}\quad\mathfrak{J}' \setminus \mathfrak{J}= \{ x_0^2 x_1^4, x_0^3 x_1^3\}.
\]
Condition (1) is satisfied because both $\mathfrak{J} \setminus \mathfrak{J}'$ and $\mathfrak{J}' \setminus \mathfrak{J}$ have the Borel maximum  ($x_0^4 x_2^2$ and $x_0^2 x_1^4$ respectively). Whereas, condition (2) can not be satisfied as
\[
x_0^4 x_1 x_2 = \ed{2}(x_0^4 x_2^2)\qquad\text{and}\qquad x_0^3 x_1^3 = \ed{1}(x_0^2 x_1^4).
\]

\smallskip

[nBA3] Consider the lexicographic ideal $L^\sat = (x_3,x_2^4,x_1x_2^3) \subset \kk[x_0,x_1,x_2,x_3]$ defining a point on the Hilbert scheme $\Hilb{3t+1}{3}$ and the ideal $J^\sat = (x_3^2, x_2 x_3, x_2^2)$ already introduced in Example \ref{ex:BorelAdjacent} [BA3]. We have
\[
\mathfrak{L} \setminus \mathfrak{J} = \{ x_1^3 x_3,  x_0 x_1^2 x_3, x_0^2 x_1 x_3, x_0^3 x_3 \} \qquad\text{and}\qquad \mathfrak{J}\setminus \mathfrak{L} = \{ x_1^2 x_2^2, x_0 x_1 x_2^2, x_0^2 x_2^2, x_0 x_2^3\}.
\]
The set $\mathfrak{L} \setminus \mathfrak{J}$ has Borel maximum $x_1^3 x_3$, while $\mathfrak{J}\setminus \mathfrak{L}$ has two maximal elements: $x_1^2 x_2^2$ and $x_0 x_2^3$. Hence, $L$ and $J$ are not Borel adjacent.
\end{example}

\begin{figure}[!ht]
\begin{center}
\subfloat[][Example \ref{ex:nonBorelAdjacent} -- nBA1]{\label{fig:examplesNonBorelAdjacentBA1}
\begin{tikzpicture}[xscale=-1.2,yscale=-0.7]
\draw [black,rounded corners,pattern color=black!35,pattern=north west lines] (-0.5,-0.5) -- (8.5,-0.5) -- (8.5,0.5) -- (7.5,0.5) -- (7.5,1.5) -- (6.5,1.5) -- (6.5,2.5) -- (5.5,2.5) -- (5.5,3.5) -- (4.5,3.5) -- (4.5,4.5) -- (3.5,4.5) -- (3.5,5.5) -- (2.5,5.5) -- (2.5,6.5) -- (1.5,6.5) -- (1.5,5.5) -- (0.5,5.5) -- (0.5,2.5) -- (-0.5,2.5)-- cycle;
\draw [black,rounded corners,pattern color=black!35,pattern=north east lines] (-0.5,-0.5) -- (8.5,-0.5) -- (8.5,0.5) -- (7.5,0.5) -- (7.5,1.5) -- (6.5,1.5) -- (6.5,2.5) -- (5.5,2.5) -- (5.5,3.5) -- (4.5,3.5) -- (4.5,4.5) -- (3.5,4.5) -- (3.5,5.5) -- (1.5,5.5) -- (1.5,4.5) --  (-0.5,4.5)-- cycle;

\node (008) at (8,0) [inner sep=1pt] {\tiny $x_2^{8} $};
  \node (017) at (7,0) [inner sep=1pt] {\tiny $x_1x_2^{7} $};
  \draw [-latex] (017) -- (008);
  \node (026) at (6,0) [inner sep=1pt] {\tiny $x_1^{2} x_2^{6} $};
  \draw [-latex] (026) -- (017);
  \node (035) at (5,0) [inner sep=1pt] {\tiny $x_1^{3} x_2^{5} $};
  \draw [-latex] (035) -- (026);
  \node (044) at (4,0) [inner sep=1pt] {\tiny $x_1^{4} x_2^{4} $};
  \draw [-latex] (044) -- (035);
  \node (053) at (3,0) [inner sep=1pt] {\tiny $x_1^{5} x_2^{3} $};
  \draw [-latex] (053) -- (044);
  \node (062) at (2,0) [inner sep=1pt] {\tiny $x_1^{6} x_2^{2} $};
  \draw [-latex] (062) -- (053);
  \node (071) at (1,0) [inner sep=1pt] {\tiny $x_1^{7} x_2$};
  \draw [-latex] (071) -- (062);
  \node (080) at (0,0) [inner sep=1pt] {\tiny $x_1^{8} $};
  \draw [-latex] (080) -- (071);
  \node (107) at (7,1) [inner sep=1pt] {\tiny $x_0x_2^{7} $};
  \draw [-latex] (107) -- (017);
  \node (116) at (6,1) [inner sep=1pt] {\tiny $x_0x_1x_2^{6} $};
  \draw [-latex] (116) -- (026);
  \draw [-latex] (116) -- (107);
  \node (125) at (5,1) [inner sep=1pt] {\tiny $x_0x_1^{2} x_2^{5} $};
  \draw [-latex] (125) -- (035);
  \draw [-latex] (125) -- (116);
  \node (134) at (4,1) [inner sep=1pt] {\tiny $x_0x_1^{3} x_2^{4} $};
  \draw [-latex] (134) -- (044);
  \draw [-latex] (134) -- (125);
  \node (143) at (3,1) [inner sep=1pt] {\tiny $x_0x_1^{4} x_2^{3} $};
  \draw [-latex] (143) -- (053);
  \draw [-latex] (143) -- (134);
  \node (152) at (2,1) [inner sep=1pt] {\tiny $x_0x_1^{5} x_2^{2} $};
  \draw [-latex] (152) -- (062);
  \draw [-latex] (152) -- (143);
  \node (161) at (1,1) [inner sep=1pt] {\tiny $x_0x_1^{6} x_2$};
  \draw [-latex] (161) -- (071);
  \draw [-latex] (161) -- (152);
  \node (170) at (0,1) [inner sep=1pt] {\tiny $x_0x_1^{7} $};
  \draw [-latex] (170) -- (080);
  \draw [-latex] (170) -- (161);
  \node (206) at (6,2) [inner sep=1pt] {\tiny $x_0^{2} x_2^{6} $};
  \draw [-latex] (206) -- (116);
  \node (215) at (5,2) [inner sep=1pt] {\tiny $x_0^{2} x_1x_2^{5} $};
  \draw [-latex] (215) -- (125);
  \draw [-latex] (215) -- (206);
  \node (224) at (4,2) [inner sep=1pt] {\tiny $x_0^{2} x_1^{2} x_2^{4} $};
  \draw [-latex] (224) -- (134);
  \draw [-latex] (224) -- (215);
  \node (233) at (3,2) [inner sep=1pt] {\tiny $x_0^{2} x_1^{3} x_2^{3} $};
  \draw [-latex] (233) -- (143);
  \draw [-latex] (233) -- (224);
  \node (242) at (2,2) [inner sep=1pt] {\tiny $x_0^{2} x_1^{4} x_2^{2} $};
  \draw [-latex] (242) -- (152);
  \draw [-latex] (242) -- (233);
  \node (251) at (1,2) [inner sep=1pt] {\tiny $x_0^{2} x_1^{5} x_2$};
  \draw [-latex] (251) -- (161);
  \draw [-latex] (251) -- (242);
  \node (260) at (0,2) [inner sep=1pt] {\tiny $x_0^{2} x_1^{6} $};
  \draw [-latex] (260) -- (170);
  \draw [-latex] (260) -- (251);
  \node (305) at (5,3) [inner sep=1pt] {\tiny $x_0^{3} x_2^{5} $};
  \draw [-latex] (305) -- (215);
  \node (314) at (4,3) [inner sep=1pt] {\tiny $x_0^{3} x_1x_2^{4} $};
  \draw [-latex] (314) -- (224);
  \draw [-latex] (314) -- (305);
  \node (323) at (3,3) [inner sep=1pt] {\tiny $x_0^{3} x_1^{2} x_2^{3} $};
  \draw [-latex] (323) -- (233);
  \draw [-latex] (323) -- (314);
  \node (332) at (2,3) [inner sep=1pt] {\tiny $x_0^{3} x_1^{3} x_2^{2} $};
  \draw [-latex] (332) -- (242);
  \draw [-latex] (332) -- (323);
  \node (341) at (1,3) [inner sep=1pt] {\tiny $x_0^{3} x_1^{4} x_2$};
  \draw [-latex] (341) -- (251);
  \draw [-latex] (341) -- (332);
  \node (350) at (0,3) [inner sep=1pt] {\tiny $x_0^{3} x_1^{5} $};
  \draw [-latex] (350) -- (260);
  \draw [-latex] (350) -- (341);
  \node (404) at (4,4) [inner sep=1pt] {\tiny $x_0^{4} x_2^{4} $};
  \draw [-latex] (404) -- (314);
  \node (413) at (3,4) [inner sep=1pt] {\tiny $x_0^{4} x_1x_2^{3} $};
  \draw [-latex] (413) -- (323);
  \draw [-latex] (413) -- (404);
  \node (422) at (2,4) [inner sep=1pt] {\tiny $x_0^{4} x_1^{2} x_2^{2} $};
  \draw [-latex] (422) -- (332);
  \draw [-latex] (422) -- (413);
  \node (431) at (1,4) [inner sep=1pt] {\tiny $x_0^{4} x_1^{3} x_2$};
  \draw [-latex] (431) -- (341);
  \draw [-latex] (431) -- (422);
  \node (440) at (0,4) [inner sep=1pt] {\tiny $x_0^{4} x_1^{4} $};
  \draw [-latex] (440) -- (350);
  \draw [-latex] (440) -- (431);
  \node (503) at (3,5) [inner sep=1pt] {\tiny $x_0^{5} x_2^{3} $};
  \draw [-latex] (503) -- (413);
  \node (512) at (2,5) [inner sep=1pt] {\tiny $x_0^{5} x_1x_2^{2} $};
  \draw [-latex] (512) -- (422);
  \draw [-latex] (512) -- (503);
  \node (521) at (1,5) [inner sep=1pt] {\tiny $x_0^{5} x_1^{2} x_2$};
  \draw [-latex] (521) -- (431);
  \draw [-latex] (521) -- (512);
  \node (530) at (0,5) [inner sep=1pt] {\tiny $x_0^{5} x_1^{3} $};
  \draw [-latex] (530) -- (440);
  \draw [-latex] (530) -- (521);
  \node (602) at (2,6) [inner sep=1pt] {\tiny $x_0^{6} x_2^{2} $};
  \draw [-latex] (602) -- (512);
  \node (611) at (1,6) [inner sep=1pt] {\tiny $x_0^{6} x_1x_2$};
  \draw [-latex] (611) -- (521);
  \draw [-latex] (611) -- (602);
  \node (620) at (0,6) [inner sep=1pt] {\tiny $x_0^{6} x_1^{2} $};
  \draw [-latex] (620) -- (530);
  \draw [-latex] (620) -- (611);
  \node (701) at (1,7) [inner sep=1pt] {\tiny $x_0^{7} x_2$};
  \draw [-latex] (701) -- (611);
  \node (710) at (0,7) [inner sep=1pt] {\tiny $x_0^{7} x_1$};
  \draw [-latex] (710) -- (620);
  \draw [-latex] (710) -- (701);
  \node (800) at (0,8) [inner sep=1pt] {\tiny $x_0^{8} $};
  \draw [-latex] (800) -- (710);
\end{tikzpicture}
}

\subfloat[][Example \ref{ex:nonBorelAdjacent} -- nBA2]{\label{fig:examplesNonBorelAdjacentBA2}
\begin{tikzpicture}[xscale=-1.2,yscale=-0.7]

\begin{scope}[]
\draw [black,rounded corners,pattern color=black!35,pattern=north east lines] (-0.5,-0.5) -- (6.5,-0.5) -- (6.5,0.5) -- (5.5,0.5) -- (5.5,1.5) -- (4.5,1.5) -- (4.5,2.5) -- (3.5,2.5) -- (3.5,3.5) -- (2.5,3.5) -- (2.5,4.5) -- (0.5,4.5)  --(0.5,1.5) --(-0.5,1.5) -- cycle;
\end{scope}
\begin{scope}[]
\draw [black,rounded corners,pattern color=black!35,pattern=north west lines] (-0.5,-0.5) -- (6.5,-0.5) -- (6.5,0.5) -- (5.5,0.5) -- (5.5,1.5) -- (4.5,1.5) -- (4.5,2.5) -- (3.5,2.5) -- (3.5,3.5) -- (-0.5,3.5) -- cycle; 
\end{scope}

\node (006) at (6,0) [inner sep=1pt] {\tiny $x_2^{6} $};
  \node (015) at (5,0) [inner sep=1pt] {\tiny $x_1x_2^{5} $};
  \draw [-latex] (015) -- (006);
  \node (024) at (4,0) [inner sep=1pt] {\tiny $x_1^{2} x_2^{4} $};
  \draw [-latex] (024) -- (015);
  \node (033) at (3,0) [inner sep=1pt] {\tiny $x_1^{3} x_2^{3} $};
  \draw [-latex] (033) -- (024);
  \node (042) at (2,0) [inner sep=1pt] {\tiny $x_1^{4} x_2^{2} $};
  \draw [-latex] (042) -- (033);
  \node (051) at (1,0) [inner sep=1pt] {\tiny $x_1^{5} x_2$};
  \draw [-latex] (051) -- (042);
  \node (060) at (0,0) [inner sep=1pt] {\tiny $x_1^{6} $};
  \draw [-latex] (060) -- (051);
  \node (105) at (5,1) [inner sep=1pt] {\tiny $x_0x_2^{5} $};
  \draw [-latex] (105) -- (015);
  \node (114) at (4,1) [inner sep=1pt] {\tiny $x_0x_1x_2^{4} $};
  \draw [-latex] (114) -- (024);
  \draw [-latex] (114) -- (105);
  \node (123) at (3,1) [inner sep=1pt] {\tiny $x_0x_1^{2} x_2^{3} $};
  \draw [-latex] (123) -- (033);
  \draw [-latex] (123) -- (114);
  \node (132) at (2,1) [inner sep=1pt] {\tiny $x_0x_1^{3} x_2^{2} $};
  \draw [-latex] (132) -- (042);
  \draw [-latex] (132) -- (123);
  \node (141) at (1,1) [inner sep=1pt] {\tiny $x_0x_1^{4} x_2$};
  \draw [-latex] (141) -- (051);
  \draw [-latex] (141) -- (132);
  \node (150) at (0,1) [inner sep=1pt] {\tiny $x_0x_1^{5} $};
  \draw [-latex] (150) -- (060);
  \draw [-latex] (150) -- (141);
  \node (204) at (4,2) [inner sep=1pt] {\tiny $x_0^{2} x_2^{4} $};
  \draw [-latex] (204) -- (114);
  \node (213) at (3,2) [inner sep=1pt] {\tiny $x_0^{2} x_1x_2^{3} $};
  \draw [-latex] (213) -- (123);
  \draw [-latex] (213) -- (204);
  \node (222) at (2,2) [inner sep=1pt] {\tiny $x_0^{2} x_1^{2} x_2^{2} $};
  \draw [-latex] (222) -- (132);
  \draw [-latex] (222) -- (213);
  \node (231) at (1,2) [inner sep=1pt] {\tiny $x_0^{2} x_1^{3} x_2$};
  \draw [-latex] (231) -- (141);
  \draw [-latex] (231) -- (222);
  \node (240) at (0,2) [inner sep=1pt] {\tiny $x_0^{2} x_1^{4} $};
  \draw [-latex] (240) -- (150);
  \draw [-latex] (240) -- (231);
  \node (303) at (3,3) [inner sep=1pt] {\tiny $x_0^{3} x_2^{3} $};
  \draw [-latex] (303) -- (213);
  \node (312) at (2,3) [inner sep=1pt] {\tiny $x_0^{3} x_1x_2^{2} $};
  \draw [-latex] (312) -- (222);
  \draw [-latex] (312) -- (303);
  \node (321) at (1,3) [inner sep=1pt] {\tiny $x_0^{3} x_1^{2} x_2$};
  \draw [-latex] (321) -- (231);
  \draw [-latex] (321) -- (312);
  \node (330) at (0,3) [inner sep=1pt] {\tiny $x_0^{3} x_1^{3} $};
  \draw [-latex] (330) -- (240);
  \draw [-latex] (330) -- (321);
  \node (402) at (2,4) [inner sep=1pt] {\tiny $x_0^{4} x_2^{2} $};
  \draw [-latex] (402) -- (312);
  \node (411) at (1,4) [inner sep=1pt] {\tiny $x_0^{4} x_1x_2$};
  \draw [-latex] (411) -- (321);
  \draw [-latex] (411) -- (402);
  \node (420) at (0,4) [inner sep=1pt] {\tiny $x_0^{4} x_1^{2} $};
  \draw [-latex] (420) -- (330);
  \draw [-latex] (420) -- (411);
  \node (501) at (1,5) [inner sep=1pt] {\tiny $x_0^{5} x_2$};
  \draw [-latex] (501) -- (411);
  \node (510) at (0,5) [inner sep=1pt] {\tiny $x_0^{5} x_1$};
  \draw [-latex] (510) -- (420);
  \draw [-latex] (510) -- (501);
  \node (600) at (0,6) [inner sep=1pt] {\tiny $x_0^{6} $};
  \draw [-latex] (600) -- (510);
\end{tikzpicture}
}
\caption{Examples of non Borel adjacent pairs of strongly stable ideals in the polynomial ring $\kk[x_0,x_1,x_2]$.}
\label{fig:examplesNonBorelAdjacent}
\end{center}
\end{figure}

After giving some examples of Borel adjacent ideals, we make explicit some properties that are in a sense hidden in the definition.
\begin{remark}\label{rk:propertiesAdjacent}
\begin{enumerate}[\it (i)]
\item\label{rk:propertiesAdjacent_i} Any pair of strongly stable ideals $J,J' \in \SI{p(t)}{n}$ such that $\vert \mathfrak{J}\setminus \mathfrak{J}'\vert = \vert\mathfrak{J}'\setminus\mathfrak{J} \vert = 1$ is Borel adjacent.
 
\item\label{rk:propertiesAdjacent_ii} The fact that $\mathfrak{J}$ and $\mathfrak{J}'$ are closed under the action of increasing elementary moves implies that  also sets $\mathfrak{J} \setminus \mathfrak{J}'$ and $ \mathfrak{J}' \setminus \mathfrak{J}$ are. Indeed, if $\xx^\aaa$ is the Borel maximum of $\mathfrak{J}\setminus\mathfrak{J}'$, $\xx^{\bbb}$ is another monomial in $\mathfrak{J} \setminus \mathfrak{J}'$ and $\xx^\ccc$ is a monomial such that $\xx^{\aaa} \geq_B \xx^{\ccc} \geq_B \xx^{\bbb}$, then $\xx^\ccc \in \mathfrak{J}\setminus\mathfrak{J}'$ (the same holds for $\mathfrak{J}'\setminus \mathfrak{J}$).

\item\label{rk:propertiesAdjacent_iii} The set $\mathcal{E}_{J,J'}$ represents a bijection between the sets $ \mathfrak{J} \setminus \mathfrak{J}'$ and $\mathfrak{J}' \setminus \mathfrak{J}$. One has 
\begin{equation*}
\min \textsf{E}(\xx^\aaa) = \min \textsf{E}(\xx^{\aaa'}),\quad\forall\ \textsf{E} \in \mathcal{E}_{J,J'}\qquad \text{(in particular $\min \xx^\aaa = \min \xx^{\aaa'}$).}
\end{equation*}
In fact, assume $\min \xx^\aaa > \min \xx^{\aaa'}$. By definition of Borel order, we have $\min \textsf{E}(\xx^{\aaa'}) \leqslant \min \xx^{\aaa'} < \min \xx^{\aaa}$ for all $\textsf{E} \in \mathcal{E}_{J,J'}$. But this is not possible because $J$ and $J'$ have the same Hilbert polynomial and $\vert\mathfrak{J}_i\vert =\vert\mathfrak{J}'_i\vert$ for all $i=0,\ldots,n$. This implies that $\vert(\mathfrak{J}\setminus\mathfrak{J}')_i\vert =\vert(\mathfrak{J}'\setminus\mathfrak{J})_i\vert$ for all $i=0,\ldots,n$. 

\item\label{rk:propertiesAdjacent_iv} The monomials $\xx^{\bbb'} \in \mathfrak{J}'\setminus\mathfrak{J}$ are on the \lq\lq outer border\rq\rq~of $\mathfrak{J}$. For any admissible move $\eu{i}$ the monomial $\eu{i}(\xx^{\bbb'})$ either remains in $\mathfrak{J}'\setminus\mathfrak{J}$ or comes into $\mathfrak{J}$. 
The monomials $\xx^{\bbb} \in \mathfrak{J}\setminus\mathfrak{J}'$ are on the \lq\lq inner border\rq\rq~of $\mathfrak{J}$. For any admissible move $\ed{j}$ the monomial $\ed{j}(\xx^{\bbb})$ is either contained in $\mathfrak{J}$ or exits in $\mathfrak{J}'$.

\item\label{rk:propertiesAdjacent_v} The previous remark suggests how to search for Borel adjacent ideals to a given ideal $J$. 
\begin{itemize}
\item[\underline{\it Step 1.}] Determine the set of maximal elements in $\comp{\mathfrak{J}}$ with respect to the Borel order $\geq_B$.

\item[\underline{\it Step 2.}] For each maximal element $\xx^\aaa$ with $\min \xx^\aaa = k$, determine the set of minimal elements in $\mathfrak{J}_{\geqslant k}$ with respect to $\geq_B$.

\item[\underline{\it Step 3.}] For each minimal element $\xx^\bbb \in \mathfrak{J}_{\geqslant k}$ not comparable with $\xx^\aaa$ with respect to $\geq_B$, consider the set $\mathfrak{E} = \{ \xx^\ccc \in \mathfrak{J} \ \vert\ \xx^\ccc \leq_B \xx^\bbb \}$ and the corresponding set of moves $\mathcal{E}$ such that $\mathfrak{E} = \{ \textsf{E}(\xx^\bbb)\ \vert\ \textsf{E} \in \mathcal{E}\}$. $\mathfrak{E}$ describes a \lq\lq inner border\rq\rq~of $\mathfrak{J}$.

\item[\underline{\it Step 4.}] Compute $\mathfrak{F} =  \{ \textsf{E}(\xx^{\aaa})\ \vert\ \textsf{E} \in \mathcal{E}\}$ and check whether $\mathfrak{F}$ describes an \lq\lq outer border\rq\rq~of $\mathfrak{J}$. This means that $(\dagger)$ all the moves $\textsf{E} \in \mathcal{E}$ are admissible for $\xx^{\aaa}$ and $(\ddagger)$ for $\xx^{\ccc} \in \mathfrak{F}$, $\eu{h}(\xx^{\ccc})$ is either in $\mathfrak{F}$ or in $\mathfrak{J}$ for every admissible $\eu{h}$. If this is the case, then $\mathfrak{J} \setminus \mathfrak{E} \cup \mathfrak{F}$ describes the set of generators in degree $r$ of a strongly stable ideal $J'$ with the same Hilbert polynomial of $J$.
\end{itemize}
Notice that in general this procedure does not exhaust the list of all Borel adjacent ideals to $J$. (see for instance Example \ref{ex:BorelAdjacent} [BA2]).
\end{enumerate}
\end{remark}

\bigskip

Next theorem shows that our definition of \lq\lq combinatorial proximity\rq\rq~of strongly stable ideals carries also a \lq\lq geometric proximity\rq\rq~meaning.

\begin{theorem}\label{thm:mainDef}
Let $J,J' \subset \kk[\xx]$ be two Borel adjacent strongly stable ideals. Let $\xx^\aaa$ and $\xx^{\aaa'}$ be the Borel maxima of $\mathfrak{J} \setminus \mathfrak{J}'$ and $\mathfrak{J}'\setminus\mathfrak{J}$. The bi-homogenous ideal $I_{J,J'} \subset \kk[y_0,y_1][\xx]$ generated by the polynomials 
\begin{equation}\label{eq:idealDeformation}
\left(\mathfrak{J} \cap \mathfrak{J}'\right) \cup \left\{ y_0 \mathsf{E}(\xx^\aaa)+y_1 \mathsf{E}(\xx^{\aaa'})\ \middle\vert\ \mathsf{E} \in \mathcal{E}_{J,J'}\right\}
\end{equation}
defines a flat family $\pi : X_{J,J'}  \subset \PP^1 \times_\kk \PP^n \to \mathbb{P}^1$, where $X_{J,J'} = \Proj  \kk[y_0,y_1][\xx]/I_{J,J'}$ and $\pi$ is the restriction to $X_{J,J'}$ of the standard projection $ \PP^1 \times_\kk \PP^n \to \mathbb{P}^1$. Moreover, the fiber $X_{J,J'}\vert_{[1:0]}$ is the scheme $\Proj \left( \kk[\xx]/J \right)$ and the fiber  $X_{J,J'}\vert_{[0:1]}$ is  $\Proj \left(\kk[\xx]/J'\right)$. We call this family \emph{Borel deformation} of $J$ and $J'$.
\end{theorem}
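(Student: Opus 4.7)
The plan is to prove flatness of $\pi$ by showing that the Hilbert polynomial of every fiber equals $p(t)$, the common Hilbert polynomial of $J$ and $J'$. The two distinguished fibers are immediate: substituting $(y_0,y_1) = (1,0)$ into the generators kills the $y_1$-terms and recovers the set $\mathfrak{J}$, so $X_{J,J'}\vert_{[1:0]} = \Proj\kk[\xx]/J$; the case $[0:1]$ is symmetric.

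For the remaining fibers I parametrize the open chart $\{y_0 \neq 0\} \subset \PP^1$ by $c = y_1/y_0$ and consider the specialization
\[
F_c = \left(\mathfrak{J} \cap \mathfrak{J}'\right) \cup \left\{ \textsf{E}(\xx^\aaa) + c\, \textsf{E}(\xx^{\aaa'})\ \middle\vert\ \textsf{E} \in \mathcal{E}_{J,J'} \right\}.
\]
By condition (2) of Definition \ref{def:borelAdjacent}, each head term $\textsf{E}(\xx^\aaa)$ lies in $\mathfrak{J}$ while each tail $\textsf{E}(\xx^{\aaa'})$ lies in $\mathfrak{J}'\setminus\mathfrak{J} \subset \comp{\mathfrak{J}}$, so $F_c$ is a $J$-marked set in the sense of \eqref{eq:markedset}. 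A symmetric treatment on $\{y_1\neq 0\}$ viewing the specialization as a $J'$-marked set covers $[0:1]$ and ensures compatibility on the overlap. By Proposition \ref{prop:liftSyzygies}, it then suffices to check that every Eliahou--Kervaire syzygy of $\mathfrak{J}$ lifts to $F_c$.

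Fix such a syzygy $x_i\xx^\bbb - x_h\xx^\ccc = 0$ with $\xx^\bbb \in \mathfrak{J}$, $h = \min \xx^\bbb$, and $\xx^\ccc = (x_i/x_h)\xx^\bbb$. I distinguish four cases according to the membership of $\xx^\bbb$ and $\xx^\ccc$ in $\mathfrak{J}\cap\mathfrak{J}'$ or $\mathfrak{J}\setminus\mathfrak{J}'$. The configuration $\xx^\bbb \in \mathfrak{J}\cap\mathfrak{J}'$, $\xx^\ccc \in \mathfrak{J}\setminus\mathfrak{J}'$ is ruled out by Borel-closedness of $\mathfrak{J}'$ since $\xx^\ccc >_B \xx^\bbb$ forces $\xx^\ccc\in\mathfrak{J}'$. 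When both monomials lie in $\mathfrak{J}\cap\mathfrak{J}'$ the identity $x_i\ff_\bbb - x_h\ff_\ccc = 0$ is trivial. When both lie in $\mathfrak{J}\setminus\mathfrak{J}'$, writing $\xx^\bbb = \textsf{E}_\bbb(\xx^\aaa)$ and $\xx^\ccc = \textsf{E}_\ccc(\xx^\aaa)$, the identity $\xx^\bbb = (x_h/x_i)\xx^\ccc$ in $\widehat{\mathbb{T}}^n$ forces the multiplicative operators $\textsf{E}_\bbb$ and $(x_h/x_i)\circ\textsf{E}_\ccc$ to coincide; evaluating on $\xx^{\aaa'}$ gives $x_i\textsf{E}_\bbb(\xx^{\aaa'}) = x_h\textsf{E}_\ccc(\xx^{\aaa'})$, hence again $x_i\ff_\bbb - x_h\ff_\ccc = 0$. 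In the remaining case $\xx^\bbb \in \mathfrak{J}\setminus\mathfrak{J}'$, $\xx^\ccc \in \mathfrak{J}\cap\mathfrak{J}'$, direct computation yields $x_i\ff_\bbb - x_h\ff_\ccc = c\, x_i\textsf{E}_\bbb(\xx^{\aaa'})$. Setting $\xx^{\bbb''} := \textsf{E}_\bbb(\xx^{\aaa'})$, which has $\min = h$ by Remark \ref{rk:propertiesAdjacent}(iii), the monomial $\xx^{\ccc''} := (x_i/x_h)\xx^{\bbb''}$ is well defined and lies in $\mathfrak{J}'$ by Borel-closedness. I would then argue $\xx^{\ccc''} \in \mathfrak{J}\cap\mathfrak{J}'$: were $\xx^{\ccc''} \in \mathfrak{J}'\setminus\mathfrak{J}$, condition (2) would supply $\textsf{E}\in\mathcal{E}_{J,J'}$ with $\textsf{E}(\xx^{\aaa'}) = \xx^{\ccc''}$; as multiplicative operators $\textsf{E}$ and $(x_i/x_h)\circ\textsf{E}_\bbb$ must then agree, yielding $\textsf{E}(\xx^\aaa) = (x_i/x_h)\xx^\bbb = \xx^\ccc \in \mathfrak{J}\setminus\mathfrak{J}'$, contradicting $\xx^\ccc \in \mathfrak{J}\cap\mathfrak{J}'$. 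Hence $\xx^{\ccc''}$ is the head term of an element of $F_c$, and $x_i\ff_\bbb - x_h\ff_\ccc - c\, x_h\, \xx^{\ccc''} = 0$ is the desired lift.

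Constancy of the fiber Hilbert polynomial implies flatness of $\pi: X_{J,J'} \to \PP^1$. I expect the principal technical obstacle to be the last case of the syzygy analysis: it is precisely the interplay between the decreasing-move structure of $\mathcal{E}_{J,J'}$ and the bijectivity imposed by condition (2) that prevents $\xx^{\ccc''}$ from escaping into $\mathfrak{J}'\setminus\mathfrak{J}$, and this is exactly the combinatorial constraint that makes Borel adjacency translate into the geometric existence of the deformation.
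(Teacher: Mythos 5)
Your proof is correct and follows essentially the same route as the paper: restrict to the two affine charts of $\PP^1$, recognize the restricted ideal as generated by a $J$-marked (resp.\ $J'$-marked) set, and verify flatness by lifting the Eliahou--Kervaire syzygies of $J$ through the same case analysis. The only notable difference is that in the mixed case you justify explicitly why $\xx^{\ccc''}=(x_i/x_h)\mathsf{E}_\bbb(\xx^{\aaa'})$ must lie in $\mathfrak{J}\cap\mathfrak{J}'$ (via the bijection encoded by $\mathcal{E}_{J,J'}$ and a contradiction with $\xx^\ccc\in\mathfrak{J}\cap\mathfrak{J}'$), a containment the paper's proof asserts without argument.
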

\begin{proof}
The second part of the statement is straightforward from the definition of the ideal $I_{J,J'}$. In fact,
\[
\{ \mathsf{E}(\xx^\aaa)\ \vert\ \mathsf{E} \in \mathcal{E}_{J,J'}\} = \mathfrak{J}\setminus\mathfrak{J}'\quad\text{and}\quad \{ \mathsf{E}(\xx^{\aaa'})\ \vert\ \mathsf{E} \in \mathcal{E}_{J,J'}\} = \mathfrak{J}'\setminus\mathfrak{J}.
\]

Let us consider the standard affine open cover of $\PP^1$ made up of $U_0 = \PP^1 \setminus \{[0:1]\}$ and $U_1 = \PP^1 \setminus \{[1:0]\}$. We prove the flatness of $X_{J,J'}$ over $\PP^1$ by showing that both families $X_{J,J'}\vert_{U_0} \to U_0$ and  $X_{J,J'}\vert_{U_1} \to U_1$ are flat.

Over $U_0$, we can rewrite the ideal $I_{J,J'}$ as the ideal $I_{J,\widehat{J'}} \subset \kk[T][\xx],\ T = y_1/y_0$, generated by the $J$-marked set
\[
\left(\mathfrak{J} \cap \mathfrak{J}'\right)  \cup \left\{ \mathsf{E}(\xx^\aaa)+T\mathsf{E}(\xx^{\aaa'})\ \middle\vert\ \mathsf{E} \in \mathcal{E}_{J,J'}\right\}.
\]
In order to prove that the family is flat, we show that Eliahou-Kervaire syzygies of the ideal $J$ lift to syzygies among the elements of the $J$-marked set (Proposition \ref{prop:liftSyzygies}).

Let $\xx^{\bbb} \in \mathfrak{J}\cap\mathfrak{J}'$. Any monomial $\xx^\ccc$ appearing in an Eliahou-Kervaire syzygy
\[
x_i \xx^\bbb - x_h \xx^\ccc = 0,\qquad i > h = \min \xx^\bbb
\]
is also contained in $\mathfrak{J}\cap\mathfrak{J}'$, because $\xx^\ccc >_B \xx^\bbb$. Then, $\xx^\bbb$ and $\xx^\ccc$ are both generators of $J$ and $I_{J,\widehat{J'}}$ and the syzygy among them trivially lifts.

\smallskip

Now, consider $\xx^\bbb \in \mathfrak{J}\setminus\mathfrak{J'}$. A monomial $\xx^\ccc$ involved in the syzygy $x_i \xx^\bbb - x_h \xx^\ccc = 0,\ i > h = \min \xx^\bbb$ can either belong to $\mathfrak{J}\setminus\mathfrak{J'}$ or to $\mathfrak{J}\cap\mathfrak{J}'$.

\textit{Case 1:} $\xx^\ccc \in \mathfrak{J}\setminus\mathfrak{J'}$. Let $\textsf{E},\widetilde{\mathsf{E}}$ be the elements of $\mathcal{E}_{J,J'}$ such that 
\[
\xx^\bbb = \textsf{E}(\xx^\aaa) = \Ht\big(\textsf{E}(\xx^\aaa) + T \textsf{E}(\xx^{\aaa'})\big)\qquad\text{and}\qquad\xx^\ccc = \widetilde{\textsf{E}}(\xx^\aaa) = \Ht\big(\widetilde{\textsf{E}}(\xx^\aaa) + T \widetilde{\textsf{E}}(\xx^{\aaa'})\big).
\]
From the syzygy $x_i \textsf{E}(\xx^\aaa) - x_h  \widetilde{\textsf{E}}(\xx^\aaa)$, one has
\[
\widetilde{\textsf{E}}(\xx^\aaa)  = \frac{x_i}{x_h} \textsf{E}(\xx^\aaa)= \eu{i-1}\circ\cdots\circ \eu{h} \circ \textsf{E} (\xx^\aaa)\quad \Rightarrow \quad \widetilde{\textsf{E}} = \eu{i-1}\circ\cdots\circ \eu{h} \circ \textsf{E}.
\]
As $\min \textsf{E}(\xx^\aaa) = \min \textsf{E}(\xx^{\aaa'})$, the composition $\eu{i-1}\circ\cdots \eu{h} $ is also admissible for $\textsf{E}(\xx^{\aaa'})$ and
\[
 \eu{i-1}\circ\cdots \eu{h} \circ \textsf{E}(\xx^{\aaa'}) = \widetilde{\textsf{E}}(\xx^{\aaa'}) \quad \Rightarrow\quad \widetilde{\textsf{E}}(\xx^{\aaa'}) = \frac{x_i}{x_h} \textsf{E}(\xx^{\aaa'}),
\]
so that the syzygy between the generators $\xx^\bbb$ and $\xx^\ccc$ of $J$ lifts to the syzygy
\[
x_i \big(\textsf{E}(\xx^\aaa) + T \textsf{E}(\xx^{\aaa'})\big) - x_h \big(\widetilde{\textsf{E}}(\xx^\aaa) + T \widetilde{\textsf{E}}(\xx^{\aaa'})\big) = x_i \textsf{E}(\xx^\aaa) - x_h \widetilde{\textsf{E}}(\xx^\aaa) + T \big( x_i \textsf{E}(\xx^{\aaa'}) - x_h \widetilde{\textsf{E}}(\xx^{\aaa'}) \big) = 0
\]
between the generators $\textsf{E}(\xx^\aaa) + T \textsf{E}(\xx^{\aaa'})$ and $\widetilde{\textsf{E}}(\xx^\aaa) + T \widetilde{\textsf{E}}(\xx^{\aaa'})$ of $I_{J,\widehat{J}'}$.
\smallskip

\textit{Case 2:} $\xx^\ccc \in \mathfrak{J}\cap\mathfrak{J'}$. Let $\textsf{E}$ be the element of $\mathcal{E}_{J,J'}$ such that $\xx^\bbb = \textsf{E}(\xx^\aaa)$. As $\min \textsf{E}(\xx^\aaa) = \min \textsf{E}(\xx^{\aaa'})$, the product $\frac{x_i}{x_h} \textsf{E}(\xx^{{\aaa'}})$ is a standard monomial $\xx^{\ccc'}$ and is contained in $\mathfrak{J}\cap\mathfrak{J}$. Hence, the syzygy $x_i \xx^\bbb - x_h \xx^\ccc = 0$ lifts to the syzygy
\[
x_i\big(\textsf{E}(\xx^\aaa) + T \textsf{E}(\xx^{\aaa'})\big) - x_h \xx^\ccc - (x_h T) \xx^{\ccc'} = 0.
\]

The proof of the flatness of $X_{J,J'}\vert_{U_1} \to U_1$ follows the same argument exchanging the role of $J$ and $J'$.
\end{proof}

\begin{remark}\label{rk:T-orbit}
We notice that the ideal $I_{J,J'}\subset \kk[y_0,y_1][\xx]$ describing a Borel deformation is homogeneous with respect to the non-standard grading
\[
\begin{array}{rccc}
\deg_{\underline{c}}:&\mathbb{T}^n & \longrightarrow & \ZZ^{n+1}/\underline{c}\ZZ \\
& \xx^{\underline{v}} & \longmapsto &  \underline{v}
\end{array}\qquad\text{where~} \underline{c} = \aaa - \aaa'.
\]
In fact, for any element $\textsf{E} \in \mathcal{E}_{J,J'}$, let $\tfrac{\xx^{\underline{p}}}{\xx^{\underline{q}}}$ be the generalized monomial associated to $\textsf{E}$, i.e.~$\textsf{E}(\xx^\aaa) = \tfrac{\xx^{\underline{p}}}{\xx^{\underline{q}}} \xx^{\aaa}$ and $\textsf{E}(\xx^{\aaa'}) = \tfrac{\xx^{\underline{p}}}{\xx^{\underline{q}}} \xx^{\aaa'}$. One has
\[
\deg_{\underline{c}} \textsf{E}(\xx^\aaa) - \deg_{\underline{c}} \textsf{E}(\xx^{\aaa'}) = (\underline{p} + \aaa - \underline{q} ) - (\underline{p}+\aaa'-\underline{q}) = \aaa - \aaa' = \underline{0} \in \ZZ^{n+1}/\underline{c}\ZZ.
\]
Note that by Remark \ref{rk:propertiesAdjacent} and Lemma \ref{lem:BorelOrder}, there exist $i$ and $j$ such that $c_i > 0$ and $c_j < 0$.
Under these assumptions, Hering and Maclagan prove that the ideal $I_{J,J'}$ identifies a one-dimen- sional orbit $\mathcal{O} \subset \Hilb{p(t)}{n}$ of the action induced on the Hilbert scheme by the standard torus $T = (\kk^{\ast})^{n+1}$ of $\PP^n$ such that $\overline{\mathcal{O}} = \mathcal{O} \cup \{[J],[J']\}$  \cite[Proposition 2.5]{HeringMaclagan}. Therefore, every rational curve describing a Borel deformation turns out to be the closure of a $T$-orbit. 

One may wonder whether also the contrary is true, i.e.~whether the closure of a one-dimen- sional $T$-orbit always corresponds to a Borel deformation. This is not the case and we exhibit two types of counterexamples.
\begin{enumerate}[\it (i)]
\item {\em The closure of a one-dimensional $T$-orbit does not contain a pair of strongly stable ideals.} For instance, in the case of the Hilbert scheme $\Hilb{2}{2}$, there are 18 one-dimensional $T$-orbits (see \cite[Section 5.2]{HeringMaclagan}). However, none of them corresponds to a Borel deformation. In fact, chosen an order on the variables there exists a unique strongly stable ideal.
\item {\em The closure of a one-dimensional $T$-orbit contains a pair of strongly stable ideals that are not Borel adjacent.} Consider the strongly stable ideals $J = (x_2^4,x_1 x_2^3,x_1^2 x_2^2,x_1^5 x_2, x_1^6)_{\geqslant 14}$ and $J' = (x_2^3,x_1^3 x_2^2,x_1^4 x_2,x_1^7)_{\geqslant 14}$ in $\kk[x_0,x_1,x_2]$ defining points on $\Hilb{14}{2}$. The ideal generated by 
\[
\left( \mathfrak{J} \cap \mathfrak{J}' \right) \cup \big\{y_0\, x_0^{11} x_2^3 + y_1\, x_0^{10} x_1^2 x_2^2, y_0\, x_0^{9} x_1^4 x_2  + y_1\, x_0^{8} x_1^6 \big\}
\]
is homogeneous with respect to the grading $\mathbb{T}^2 \to \ZZ^3/(1,-2,1)\ZZ$ and identifies a one-dimensional $T$-orbit. Hence, such ideal defines a flat family that describes a rational curve on $\Hilb{p(t)}{n}$ passing through the points $[J]$ and $[J']$. However, $J$ and $J'$ are not Borel adjacent, as 
\[
\mathfrak{J} \setminus \mathfrak{J}' = \big\{x_0^{11} x_2^3, x_0^9 x_1^4 x_2\big\}\qquad\text{and}\qquad\mathfrak{J}'\setminus \mathfrak{J} =\big\{x_0^{10} x_1^2 x_2^2, x_0^8 x_1^6\big\}
\]
do not satisfy condition (1) of Definition \ref{def:borelAdjacent}.\bs
\end{enumerate}
\end{remark}

\begin{corollary}\label{cor:markedSchemeClosure}
Let $J,J' \subset \kk[\xx]$ be two Borel adjacent strongly stable ideals. The points $[J],[J'] \in \Hilb{p(t)}{n}$ are contained in a common irreducible component. Moreover,
 $[J']$ is contained in the closure $\overline{\mathbf{Mf}_J} \subset  \Hilb{p(t)}{n}$ and $[J]$ is contained in the closure $\overline{\mathbf{Mf}_{J'}}\subset  \Hilb{p(t)}{n}$.
\end{corollary}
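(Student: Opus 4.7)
The plan is to read off the corollary almost immediately from Theorem~\ref{thm:mainDef}. The flat family $\pi : X_{J,J'} \to \PP^1$ constructed there is an element of $\underline{\mathbf{Hilb}}_{p(t)}^n(\PP^1)$, so by the universal property of the Hilbert scheme it is classified by a morphism $\varphi : \PP^1 \to \Hilb{p(t)}{n}$ satisfying $\varphi([1:0]) = [J]$ and $\varphi([0:1]) = [J']$. Since $\PP^1$ is irreducible, its image $\varphi(\PP^1)$ is an irreducible subset of $\Hilb{p(t)}{n}$ and therefore sits inside a single irreducible component, which already proves the first assertion.

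For the closure statements, I would revisit the construction in the proof of Theorem~\ref{thm:mainDef}. Over the affine chart $U_0 = \PP^1 \setminus \{[0:1]\} = \Spec\kk[T]$ with $T = y_1/y_0$, the ideal $I_{J,J'}$ restricts to the ideal $I_{J,\widehat{J'}} \subset \kk[T][\xx]$ generated by the $J$-marked set
\[
\left(\mathfrak{J} \cap \mathfrak{J}'\right) \cup \left\{ \mathsf{E}(\xx^\aaa) + T\,\mathsf{E}(\xx^{\aaa'})\ \middle\vert\ \mathsf{E} \in \mathcal{E}_{J,J'}\right\},
\]
and the syzygy-lifting argument just given shows, via Proposition~\ref{prop:liftSyzygies}, that this is actually a $J$-marked basis. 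Hence $X_{J,J'}|_{U_0} \to U_0$ is a $\kk[T]$-point of $\underline{\mathbf{Mf}}_J$, so the restriction $\varphi|_{U_0}$ factors through the open subscheme $\mathbf{Mf}_J \hookrightarrow \Hilb{p(t)}{n}$.

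It now suffices to take closures. Because $U_0$ is dense in $\PP^1$ and $\varphi$ is continuous, we have $[J'] = \varphi([0:1]) \in \overline{\varphi(U_0)} \subseteq \overline{\mathbf{Mf}_J}$, which is the first closure claim. Swapping the roles of $J$ and $J'$ and working over $U_1 = \PP^1 \setminus \{[1:0]\}$ yields symmetrically $[J] \in \overline{\mathbf{Mf}_{J'}}$.

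I do not foresee a real obstacle: all the substantive work (flatness and the lifting of Eliahou--Kervaire syzygies) was done in Theorem~\ref{thm:mainDef}. The only point that requires a brief comment is the passage from \emph{$J$-marked set} to \emph{$J$-marked basis} over $U_0$, which is guaranteed by Proposition~\ref{prop:liftSyzygies} together with the flatness proof already established, so that the morphism $\varphi|_{U_0}$ genuinely lands in the open subscheme $\mathbf{Mf}_J$ and not merely in $\Hilb{p(t)}{n}$.
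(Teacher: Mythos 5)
Your argument is correct and follows the same route as the paper: the family $X_{J,J'}\to\PP^1$ of Theorem~\ref{thm:mainDef} gives a morphism $\varphi:\PP^1\to\Hilb{p(t)}{n}$ whose irreducible image forces $[J]$ and $[J']$ into a common component, and the observation that $I_{J,\widehat{J}'}$ is a $J$-marked basis over $U_0=\Spec\kk[T]$ (i.e.\ an $\mathbb{A}^1$-point of $\underline{\mathbf{Mf}}_J$) yields $[J']\in\overline{\mathbf{Mf}_J}$, with the symmetric statement over $U_1$. Your explicit remark that $[J']=\varphi([0:1])$ lies in $\overline{\varphi(U_0)}$ by density and continuity just spells out a step the paper leaves implicit.
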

\begin{proof}
By Theorem \ref{thm:mainDef}, we have $X_{J,J'} \to \PP^1 \in \underline{\mathbf{Hilb}}_{p(t)}^n(\PP^1)$. Hence, there exists a morphism of schemes $\varphi_{I,J}: \PP^1 \to \Hilb{p(t)}{n}$ such that $X_{J,J'}$ is the pullback of the universal family $\mathcal{U}_{p(t)}^n  \to \Hilb{p(t)}{n}$. The image $\varphi_{I,J}(\PP^1)$ is contained in a unique irreducible component of $ \Hilb{p(t)}{n}$ and contains the points $[J] = \varphi_{J,J'}([1:0])$ and $[J'] = \varphi_{J,J'}([0:1])$.

The second part of the statement is a consequence of the observation that $I_{J,\widehat{J}'} \in \underline{\mathbf{Mf}}_J(\mathbb{A}^1)$, where $I_{J,\widehat{J}'}$ is the ideal defining the restriction of the family $X_{J,J'}\vert_{U_0} \to  U_0$, with $U_0 = \PP^1\setminus \{[0:1]\}$. The same holds for $I_{\widehat{J},J'} \in  \underline{\mathbf{Mf}}_{J'}(\mathbb{A}^1)$.
\end{proof}

\begin{example}
Consider the Borel adjacent strongly stable ideals $J^\sat = (x_3^2,x_2x_3,x_2^2)$ and $J'^\sat = (x_3^2, x_2x_3,x_1x_3,x_2^3)$ in $\kk[x_0,x_1,x_2,x_3]$ introduced in Example \ref{ex:BorelAdjacent} [BA3]. The ideal $I_{J,J'}$ in the polynomial ring $\kk[y_0,y_1][x_0,x_1,x_2,x_3]$ defining the Borel deformation $X_{J,J'} \to \PP^1$ described in Theorem \ref{thm:mainDef} is generated by the polynomials
\[
\big(\mathfrak{J} \cap \mathfrak{J}' \big) \cup \left\{ y_0\, x_1^2x_2^2 + y_1\, x_1^3x_3, y_0\, x_0x_1 x_2^2 + y_1\, x_0x_1^2 x_3, y_0\, x_0^2 x_2^2 + y_1\, x_0^2 x_1 x_3 \right\},
\]
where $\mathfrak{J} \cap \mathfrak{J}'$ contains the monomials of degree $4$ of the intersection ideal $J^\sat \cap J'^\sat = (x_3^3,x_2x_3,$ $x_2^3)$. Let us consider the restriction $X_{J,J'}\vert_{\mathcal{U}_0} \to \mathbb{A}^1$, where $\mathcal{U}_0 = \PP^1 \setminus \{[0:1]\}$, and the associated $J$-marked basis 
\[
\big(\mathfrak{J} \cap \mathfrak{J}' \big) \cup \left\{ x_1^2x_2^2 +T x_1^3x_3,  x_0x_1 x_2^2 + T x_0x_1^2 x_3, x_0^2 x_2^2 +T x_0^2 x_1 x_3 \right\}.
\]
The Eliahou-Kervaire syzygies of $J$ that we lift are
\begin{align*}
& x_3 \cdot x_1^2 x_2^2 - x_1 \cdot x_1 x_2^2 x_3 = 0 && \leadsto && x_3 (x_1^2x_2^2 +T x_1^3x_3) - x_1 \cdot x_1 x_2^2 x_3 - T x_1 \cdot x_1^2 x_3^2 = 0,\\
& x_2 \cdot x_1^2 x_2^2 - x_1 \cdot x_1 x_2^3 = 0 && \leadsto && x_2 (x_1^2x_2^2 +T x_1^3x_3) - x_1 \cdot x_1 x_2^3 - T x_1 \cdot x_1^2 x_2 x_3 = 0,\\
& x_3 \cdot x_0x_1 x_2^2 - x_0 \cdot x_1 x_2^2 x_3 = 0 && \leadsto && x_3 ( x_0x_1 x_2^2 + T x_0x_1^2 x_3) - x_0 \cdot x_1 x_2^2 x_3 - Tx_0 \cdot x_1^2 x_3^2 = 0,\\
& x_2 \cdot x_0x_1 x_2^2 - x_0 \cdot x_1 x_2^3 = 0 && \leadsto && x_2 ( x_0x_1 x_2^2 + T x_0x_1^2 x_3) - x_0 \cdot x_1 x_2^3 - Tx_0 \cdot x_1^2 x_2 x_3 = 0,\\
& x_1 \cdot x_0x_1 x_2^2 - x_0 \cdot x_1^2 x_2^2 = 0 && \leadsto && x_1 (x_0x_1 x_2^2 + T x_0x_1^2 x_3 ) - x_0 (x_1^2x_2^2 +T x_1^3x_3) = 0,\\
& x_3 \cdot x_0^2 x_2^2 - x_0 \cdot x_0 x_2^2 x_3 = 0 && \leadsto && x_3 (x_0^2 x_2^2 +T x_0^2 x_1 x_3 ) - x_0 \cdot  x_0 x_2^2 x_3 - Tx_0 \cdot x_0 x_1 x_3^2 = 0,\\
& x_2 \cdot  x_0^2 x_2^2- x_0 \cdot x_0 x_2^3 = 0 && \leadsto && x_2 (x_0^2 x_2^2 +T x_0^2 x_1 x_3 ) - x_0 \cdot   x_0 x_2^3 - Tx_0 \cdot x_0 x_1 x_2 x_3= 0,\\
& x_1 \cdot x_0^2 x_2^2- x_0 \cdot  x_0 x_1 x_2^2= 0 && \leadsto && x_1 ( x_0^2 x_2^2 +T x_0^2 x_1 x_3) - x_0 (x_0x_1 x_2^2 + T x_0x_1^2 x_3) = 0.
\end{align*}
\end{example}

\begin{example}
In the polynomial ring $\kk[x_0,x_1,x_2]$, consider the strongly stable ideals $J^\sat = (x_2^2,x_1 x_2,x_1^5)$ and $J'^\sat = (x_2^3,x_1x_2^2,$ $x_1^2 x_2, x_1^3)$ introduced in Example \ref{ex:BorelAdjacent} [nBA2]. The sets
\[
\mathfrak{J} \setminus \mathfrak{J}'= \{x_0^4 x_2^2, x_0^4 x_1 x_2\}\quad\text{and}\quad\mathfrak{J}' \setminus \mathfrak{J}= \{ x_0^2 x_1^4, x_0^3 x_1^3\}
\]
do not satisfy the requirements of the definition of Borel adjacency. Let us show that Eliahou-Kervaire syzygies can not be lifted. Consider the pairing $x_0^4 x_2^2 \leftrightarrow x_0^2 x_1^4$, $x_0^4 x_1 x_2 \leftrightarrow  x_0^3 x_1^3$ (the other pairing leads to analogous problems) and the associated $J$-marked set
\[
\big(\mathfrak{J} \cap \mathfrak{J}'\big) \cup \{ x_0^4 x_2^2 + T x_0^2 x_1^4, x_0^4 x_1 x_2 + T x_0^3 x_1^3  \}.
\]
The syzygy $x_1 \cdot  x_0^4 x_1 x_2 - x_0 \cdot x_0^3 x_1^2 x_2 = 0$ does not lift. In fact,
\[
x_1 (x_0^4 x_1 x_2 + T x_0^3 x_1^3) - x_0  \cdot x_0^3 x_1^2 x_2= T x_0^3 x_1^4 \neq 0\quad\forall\ T \neq 0
\]
and for $T\neq 0$ the ideal defined by the marked set has Hilbert polynomial $p(t) = 5$.
\end{example}

\begin{remark}\label{rk:veronese}
Consider the projective embedding of the Hilbert scheme as subscheme of the Grassmannian via Pl\"ucker coordinates:
\[
\Hilb{p(t)}{n} \subset \mathbf{Gr}\big(p(r),\kk[\xx]_r\big) \stackrel{\mathcal{P}}{\hookrightarrow} \PP^N,\qquad N = \binom{\tbinom{n+r}{n}}{p(r)}-1.
\]
Furthermore, let $J,J' \in \SI{p(t)}{n}$ be two Borel adjacent ideals and let $\varphi_{J,J'}: \PP^1 \to \Hilb{p(t)}{n}$ the morphism of scheme given by the family $X_{J,J'} \to \PP^1 \in \underline{\mathbf{Hilb}}_{p(t)}^n(\PP^1)$. The composition $\mathcal{P}\circ \varphi_{J,J'}$ is a Veronese embedding of degree $d = \vert \mathfrak{J}\setminus\mathfrak{J'}\vert$, so that the image $\mathcal{P}\circ \varphi_{J,J'}(\PP^1)$ is a rational normal curve lying on $\Hilb{p(t)}{n}$. In fact, Pl\"ucker coordinates of $ \mathbf{Gr}\big(p(r),\kk[\xx]_r\big)$ can be indexed by the set of sets of $q(r)$ monomials of degree $r$. Given a $\kk$-rational point $[X] \in \Hilb{p(t)}{n}$, its Pl\"ucker coordinates are (up to a sign) the $q(r)$-minors of the $q(r)\times \binom{n+r}{r}$ matrix representing a basis of $(I_X)_{r}$. If we consider the set of generators \eqref{eq:idealDeformation} of the ideal $I_{J,J'}$, for any closed point $[y_0:y_1] \in \PP^1$, the $q(r)$-minors are either 0 or a monomial $y_0^{v_0} y_1^{v_1}$ with $v_0 + v_1 = \vert\mathfrak{J}\setminus\mathfrak{J}' \vert = d$. And this corresponds exactly to the Veronese embedding of $\PP^1$ of degree $d$. \bs
\end{remark}

\begin{example}
Consider the Borel adjacent ideals $L = (x_2,x_1^3)$ and $J = (x_2^2,x_1 x_2, x_1^2)$ introduced in Example \ref{ex:BorelAdjacent} [BA1]. The Hilbert scheme $\Hilb{3}{2}$ can be seen as subscheme of the Grassmannian $\mathbf{Gr}(3,\kk[x_0,x_1,x_2]_3)$. Via Pl\"ucker embedding, $\mathbf{Gr}(3,\kk[x_0,x_1,x_2]_3)$ is a subscheme of $\PP^{119}$ (see \cite[Section 7.4]{BLMR} for the equations). The Pl\"ucker coordinates (up to a sign) of the Borel deformation $X_{J,L}$ are the minors of order $7$ of the matrix (where $\centerdot$ stands for $0$)
\begin{center}
\begin{tikzpicture}[yscale=1.35,xscale=1.05]
\begin{scope}[shift={(-2.15,1.14)}] \node at (0,0) [rotate=45] {\tiny $x_2^3$}; \end{scope}
\begin{scope}[shift={(-1.625,1.225)}] \node at (0,0) [rotate=45] {\tiny $x_1x_2^2$}; \end{scope}
\begin{scope}[shift={(-1.15,1.225)}] \node at (0,0) [rotate=45] {\tiny $x_1^2x_2$}; \end{scope}
\begin{scope}[shift={(-0.75,1.14)}] \node at (0,0) [rotate=45] {\tiny $x_1^3$}; \end{scope}
\begin{scope}[shift={(-0.2,1.225)}] \node at (0,0) [rotate=45] {\tiny $x_0x_2^2$}; \end{scope}
\begin{scope}[shift={(0.375,1.29)}] \node at (0,0) [rotate=45] {\tiny $x_0 x_1 x_2$}; \end{scope}
\begin{scope}[shift={(0.8125,1.225)}] \node at (0,0) [rotate=45] {\tiny $x_0x_1^2$}; \end{scope}
\begin{scope}[shift={(1.39,1.225)}] \node at (0,0) [rotate=45] {\tiny $x_0^2x_2$}; \end{scope}
\begin{scope}[shift={(1.9,1.225)}] \node at (0,0) [rotate=45] {\tiny $x_0^2x_1$}; \end{scope}
\begin{scope}[shift={(2.25,1.14)}] \node at (0,0) [rotate=45] {\tiny $x_0^3$}; \end{scope}
\node at (0,0) [] {\scriptsize $\left[
\begin{array}{ cccccccccc }
1 & \centerdot & \centerdot & \centerdot & \centerdot & \centerdot & \centerdot & \centerdot & \centerdot & \centerdot \\
\centerdot & 1 & \centerdot & \centerdot & \centerdot & \centerdot & \centerdot & \centerdot & \centerdot & \centerdot \\
\centerdot & \centerdot & 1 & \centerdot & \centerdot & \centerdot & \centerdot & \centerdot & \centerdot & \centerdot \\
\centerdot & \centerdot & \centerdot & 1 & \centerdot & \centerdot & \centerdot & \centerdot & \centerdot & \centerdot \\
\centerdot & \centerdot & \centerdot & \centerdot & 1 & \centerdot & \centerdot & \centerdot & \centerdot & \centerdot \\
\centerdot & \centerdot & \centerdot & \centerdot & \centerdot & 1 & \centerdot & \centerdot & \centerdot & \centerdot \\
\centerdot & \centerdot & \centerdot & \centerdot & \centerdot & \centerdot & y_0 & y_1 & \centerdot & \centerdot \\
\end{array} \right]$};
\end{tikzpicture}
\end{center}
so that the image of the morphism $\PP^1 \xrightarrow{\mathcal{P}\circ \varphi_{J,L}} \PP^{119}$ is a straight line.
\end{example}

\section{The Gr\"obner fan}\label{sec:Groebner fan}

In this section, we look at the whole set of Borel deformations. In particular, we investigate how Borel deformations are related to Gr\"obner deformations.

\begin{definition}\label{def:BorelGraph}
We call \emph{Borel graph} of the Hilbert scheme $\Hilb{p(t)}{n}$ the undirected graph $\mathscr{G}_{p(t)}^n$ whose 
\begin{itemize}
\item vertices $V(\mathscr{G}_{p(t)}^n)$ correspond to strongly stable ideals in $\SI{p(t)}{n}$;
\item edges $E(\mathscr{G}_{p(t)}^n)$ correspond to unordered pairs $\{J,J'\}$ of Borel adjacent strongly stable ideals. 
\end{itemize}
To describe an edge of $\mathscr{G}_{p(t)}^n$, we write $[J_{\aaa} {-} J'_{\aaa'}]$ in order to add the information that $\xx^\aaa$ and $\xx^{\aaa'}$ are the Borel maxima of $\mathfrak{J}\setminus\mathfrak{J}'$ and $\mathfrak{J}'\setminus\mathfrak{J}$.
\end{definition}

\begin{example}
(1) Consider the Hilbert scheme $\Hilb{5}{2}$ parametrizing $0$-dimensional schemes of degree $5$ in the projective plane $\PP^2$. There are 3 strongly stable ideals in $\SI{5}{2}$ and the Borel graph $\Sk{5}{2}$ is a complete graph $K_3$ (see Figure \ref{fig:skeleton}{\sc\subref{fig:skeletonPoints}}).

\smallskip

(2) Consider the Hilbert scheme $\Hilb{3t+1}{3}$ parametrizing $1$-dimensional schemes of degree $3$ and arithmetic genus $0$ in $\PP^3$. There are 3 strongly stable ideals in $\SI{3t+1}{3}$ and the Borel graph $\Sk{3t+1}{2}$ has two edges (see Figure \ref{fig:skeleton}{\sc\subref{fig:skeletonCurves}}).
 \end{example}

\begin{remark}\label{rk:T-graph}
In \cite{AltmannSturmfels} Altmann and Sturmfels define the $T$-graph of a (multigraded) Hilbert scheme as the undirected graph whose vertices correspond to monomial ideals and whose edges correspond to pairs of ideals contained in the closure of a one-dimensional $T$-orbit of the action of the standard torus $T=(\kk^\ast)^{n+1}$ of $\PP^n$. Remark \ref{rk:T-orbit} implies that the Borel graph $\Sk{p(t)}{n}$ is a proper subgraph of the $T$-graph of $\Hilb{p(t)}{n}$. \bs
\end{remark}

\captionsetup[subfloat]{position=bottom}
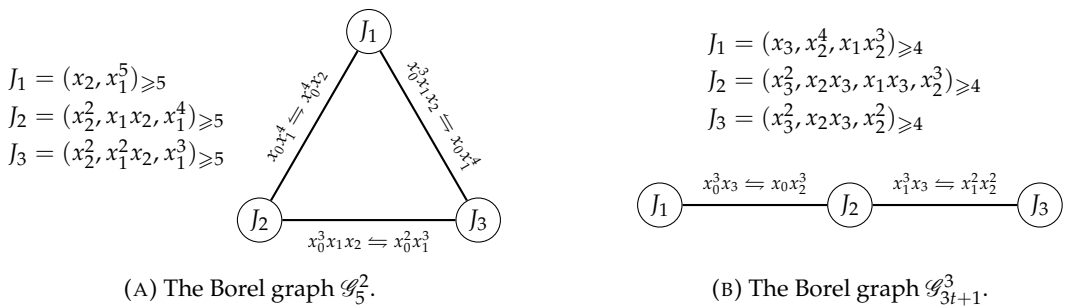
\begin{figure}[!ht]
\begin{center}
\subfloat[][The Borel graph $\Sk{5}{2}$.]{\label{fig:skeletonPoints}
\begin{tikzpicture}
\node at (0,0) [] {\parbox{3cm}{\footnotesize $J_1 = (x_2,x_1^5)_{\geqslant 5}$}};
\node at (0,-0.5) [] {\parbox{3cm}{\footnotesize $J_2 = (x_2^2,x_1x_2,x_1^4)_{\geqslant 5}$}};
\node at (0,-1) [] {\parbox{3cm}{\footnotesize $J_3 = (x_2^2,x_1^2x_2,x_1^3)_{\geqslant 5}$}};

\begin{scope}[shift={(3.25,0)},scale=1.25]
\node (1) at (0,0.5) [circle,draw,inner sep=2pt] {\footnotesize $J_1$};
\node (2) at (-1.15,-1.5) [circle,draw,inner sep=2pt] {\footnotesize $J_2$};
\node (3) at (1.15,-1.5) [circle,draw,inner sep=2pt] {\footnotesize $J_3$};

\draw [thick] (1) --node[rotate=60,above]{\tiny $x_0 x_1^4 \leftrightharpoons x_0^4 x_2$} (2);
\draw [thick] (1) --node[rotate=-60,above]{\tiny $x_0^3 x_1 x_2 \leftrightharpoons x_0 x_1^4$} (3);
\draw [thick] (2) --node[below]{\tiny $x_0^3 x_1 x_2 \leftrightharpoons x_0^2 x_1^3$} (3);
\end{scope}
\end{tikzpicture}
}
\hspace{1.5cm}
\subfloat[][The Borel graph $\Sk{3t+1}{3}$.]{\label{fig:skeletonCurves}
\begin{tikzpicture}

\node at (0,0) [] {\parbox{3.75cm}{\footnotesize $J_1 = (x_3,x_2^4,x_1 x_2^3)_{\geqslant 4}$}};
\node at (0,-0.5) [] {\parbox{3.75cm}{\footnotesize $J_2 = (x_3^2,x_2 x_3, x_1 x_3,x_2^3)_{\geqslant 4}$}};
\node at (0,-1) [] {\parbox{3.75cm}{\footnotesize $J_3 = (x_3^2,x_2x_3,x_2^2)_{\geqslant 4}$}};

\begin{scope}[shift={(-2.5,-2.15)},scale=1.25]
\node (1) at (0,0) [circle,draw,inner sep=2pt] {\footnotesize $J_1$};
\node (2) at (2,0) [circle,draw,inner sep=2pt] {\footnotesize $J_2$};
\node (3) at (4,0) [circle,draw,inner sep=2pt] {\footnotesize $J_3$};
\node at (2,-0.5) [] {};
\draw [thick] (1) --node[above]{\tiny $x_0^3 x_3 \leftrightharpoons x_0 x_2^3$} (2);
\draw [thick] (2) --node[above]{\tiny $x_1^3 x_3 \leftrightharpoons x_1^2 x_2^2$} (3);
\end{scope}
\end{tikzpicture}
}
\caption{The Borel graph of the Hilbert schemes $\Hilb{5}{2}$ and $\Hilb{3t+1}{3}$.}
\label{fig:skeleton}
\end{center}
\end{figure}

In order to investigate properties of an undirected graph (such as connectedness, maximum distance between nodes, \ldots), it is often preferable to assign orientation of the edges and look at it as a directed graph. A natural way to decide the direction of an edge $[J_{\aaa}{-}J'_{\aaa'}]$ is to compare the Borel maxima with a term order on $\mathbb{T}^n$.

Let $J$ and $J'$ be two Borel adjacent ideals, let $\xx^\aaa$ and $\xx^{\aaa'}$ be the Borel maxima of $\mathfrak{J}\setminus\mathfrak{J}'$ and $\mathfrak{J}'\setminus\mathfrak{J}$ and let $\mathcal{E}_{J,J'}$ be the set of compositions of decreasing moves such that
\[
\mathfrak{J}\setminus\mathfrak{J}' = \{ \textsf{E}(\xx^\aaa)\ \vert\ \textsf{E} \in \mathcal{E}_{J,J'}\}\qquad\text{and}\qquad \mathfrak{J}'\setminus\mathfrak{J} = \{ \textsf{E}(\xx^{\aaa'})\ \vert\ \textsf{E} \in \mathcal{E}_{J,J'}\}.
\]
Moreover, consider a term order $\Omega$ and assume that $\xx^\aaa >_{\Omega} \xx^{\aaa'}$. For any element $\textsf{E} \in \mathcal{E}_{J,J'}$, let $\tfrac{\xx^{\underline{p}}}{\xx^{\underline{q}}}$ be the generalized monomial associated to $\textsf{E}$, i.e.~$\textsf{E}(\xx^\aaa) = \tfrac{\xx^{\underline{p}}}{\xx^{\underline{q}}} \xx^{\aaa}$ and $\textsf{E}(\xx^{\aaa'}) = \tfrac{\xx^{\underline{p}}}{\xx^{\underline{q}}} \xx^{\aaa'}$. As $\Omega$ is a multiplicative order, one has
\[
\xx^\aaa >_{\Omega} \xx^{\aaa'}\quad\Rightarrow\quad \textsf{E}(\xx^\aaa) = \tfrac{\xx^{\underline{p}}}{\xx^{\underline{q}}} \xx^{\aaa} >_{\Omega} \tfrac{\xx^{\underline{p}}}{\xx^{\underline{q}}} \xx^{\aaa'} =  \textsf{E}(\xx^{\aaa'}),\qquad\forall\ \textsf{E} \in \mathcal{E}_{J,J'}.
\]
This means that head terms of the polynomials in the $J$-marked basis that generates the ideal $I_{J,\widehat{J'}}$
\[
\left(\mathfrak{J} \cap \mathfrak{J}'\right)  \cup \left\{ \mathsf{E}(\xx^\aaa)+T\mathsf{E}(\xx^{\aaa'})\ \middle\vert\ \mathsf{E} \in \mathcal{E}_{J,J'}\right\}.
\]
are in fact leading terms: $\textsf{E}(\xx^\aaa) = \Ht\big(\mathsf{E}(\xx^\aaa)+T\mathsf{E}(\xx^{\aaa'})\big) = \In{\Omega} \big(\mathsf{E}(\xx^\aaa)+T\mathsf{E}(\xx^{\aaa'})\big),\ \forall\ \textsf{E} \in \mathcal{E}_{J,J'}$. Hence, the $J$-marked basis represents the reduced Gr\"obner basis of $I_{J,\widehat{J'}}$ with respect to $\Omega$ and $J = \In{\Omega}(I_{J,\widehat{J'}})$.

\begin{proposition}\label{prop:grobnerStratumClosure}
Let $\Omega$ be a term order and let $J,J' \subset \kk[\xx]$ be two Borel adjacent strongly stable ideals such that the Borel maximum of $\mathfrak{J}\setminus\mathfrak{J}'$ is greater than the Borel maximum of $\mathfrak{J}'\setminus\mathfrak{J}$ with respect to $\Omega$. The point $[J'] \in \Hilb{p(t)}{n}$ is contained in the closure $\overline{\mathbf{St}_J^\Omega} \subset  \Hilb{p(t)}{n}$.
\end{proposition}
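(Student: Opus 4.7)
The plan is to use the morphism $\varphi_{J,J'}: \PP^1 \to \Hilb{p(t)}{n}$ arising from the Borel deformation of Theorem \ref{thm:mainDef}, and to show that the restriction $\varphi_{J,J'}\vert_{U_0}$, with $U_0 = \PP^1 \setminus \{[0:1]\} \cong \mathbb{A}^1 = \Spec \kk[T]$, factors through the locally closed subscheme $\mathbf{St}_J^\Omega \subset \Hilb{p(t)}{n}$. Once this is established, continuity of $\varphi_{J,J'}$ yields
\[
[J'] = \varphi_{J,J'}([0:1]) \in \varphi_{J,J'}\big(\overline{U_0}\big) \subseteq \overline{\varphi_{J,J'}(U_0)} \subseteq \overline{\mathbf{St}_J^\Omega},
\]
which is the desired conclusion.

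To verify the factorization, I would rely directly on the computation carried out in the paragraph immediately preceding the statement. Over $U_0$ the ideal defining the family is
\[
I_{J,\widehat{J'}} = \left(\mathfrak{J}\cap\mathfrak{J}'\right) \cup \big\{\mathsf{E}(\xx^{\aaa}) + T\,\mathsf{E}(\xx^{\aaa'}) \ \big\vert\ \mathsf{E} \in \mathcal{E}_{J,J'}\big\} \ \subset\ \kk[T][\xx],
\]
and it was already observed that the hypothesis $\xx^{\aaa} >_\Omega \xx^{\aaa'}$, combined with the multiplicativity of $\Omega$, implies $\mathsf{E}(\xx^{\aaa}) >_\Omega \mathsf{E}(\xx^{\aaa'})$ for every $\mathsf{E} \in \mathcal{E}_{J,J'}$. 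Thus each generator has the shape required by \eqref{eq:Omegamarkedset}: its head term equals its $\Omega$-leading term and lies in $\mathfrak{J}$, while the only other monomial present lies in $\comp{\mathfrak{J}}$ and is strictly smaller with respect to $\Omega$. By Theorem \ref{thm:mainDef}, this generating set is already a $J$-marked basis, so it is in fact a $(J,\Omega)$-marked basis over $\kk[T]$. Therefore $I_{J,\widehat{J'}} \in \underline{\mathbf{St}}_J^\Omega(\kk[T])$, which, via the representability of $\underline{\mathbf{St}}_J^\Omega$ by $\mathbf{St}_J^\Omega$, corresponds precisely to a morphism $U_0 \to \mathbf{St}_J^\Omega$ whose composition with the embedding $\mathbf{St}_J^\Omega \hookrightarrow \Hilb{p(t)}{n}$ coincides with $\varphi_{J,J'}\vert_{U_0}$.

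There is no serious obstacle here: the Borel-adjacency machinery, together with the preparatory observation about how $\Omega$-comparisons propagate through $\mathcal{E}_{J,J'}$, has essentially done all the work. The only thing to double-check is the formal step from \emph{the generators form a $(J,\Omega)$-marked basis over $\kk[T]$} to \emph{the family is a point of $\underline{\mathbf{St}}_J^\Omega(\kk[T])$}; this is immediate from the definition of the Gr\"obner functor recalled in Section \ref{sec:preliminaries}. The conclusion then follows formally from continuity of $\varphi_{J,J'}$ and the fact that the fiber of the family over $[0:1]$ is $\Proj(\kk[\xx]/J')$, so $\varphi_{J,J'}([0:1]) = [J']$.
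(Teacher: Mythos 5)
Your proposal is correct and coincides with the paper's own argument: the paper also observes that the hypothesis $\xx^{\aaa} >_{\Omega} \xx^{\aaa'}$ propagates through $\mathcal{E}_{J,J'}$ to make the $J$-marked basis of $I_{J,\widehat{J'}}$ a $(J,\Omega)$-marked (reduced Gr\"obner) basis, so that $I_{J,\widehat{J'}} \in \underline{\mathbf{St}}_{J}^{\Omega}(\mathbb{A}^1)$, and then concludes by the same closure argument used for Corollary \ref{cor:markedSchemeClosure}. No gaps.
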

\begin{proof}
We use the same argument of the proof of Corollary \ref{cor:markedSchemeClosure} starting from the observation that  $I_{J,\widehat{J'}} \in \underline{\mathbf{St}}_{J}^\Omega(\mathbb{A}^1)$, where $I_{J,\widehat{J}'}$ is the ideal defining the restriction of the family $X_{J,J'}\vert_{U_0} \to  U_0$, with $U_0 = \PP^1\setminus \{[0:1]\}$. 
\end{proof}

In geometric terms, the proposition says that the ideal $J'$ can be deformed to some ideal $\widetilde{J'}$ such that $\In{\Omega}(\widetilde{J'}) = J$, while there is no deformation $\widetilde{J}$ of $J$ such that $\In{\Omega}(\widetilde{J}) = J'$. From this perspective, we can say that $J'$ is more special or degenerate than $J$ with respect to the term order $\Omega$. For this reason, whenever a point $[J']$ is contained in the closure of a Gr\"obner stratum $\mathbf{St}_J^{\Omega}$, we say that $J'$ is a $\Omega$-degeneration of $J$.

\begin{definition}
Consider a term order $\Omega$. 
We call \emph{$\Omega$-degeneration graph} of the Hilbert scheme $\Hilb{p(t)}{n}$ the directed graph $\mathscr{G}^{n}_{p(t)}(\Omega)$ whose
\begin{itemize}
\item vertices $V\big(\DG{p(t)}{n}{\Omega}\big)$ correspond to strongly stable ideals in $\SI{p(t)}{n}$;
\item edges $E\big(\mathscr{G}_{p(t)}^n(\Omega)\big)$ correspond to ordered pairs $(J,J')$ of Borel adjacent ideals such that $J'$ is a $\Omega$-degeneration of $J$.
\end{itemize}
To describe an edge of $\DG{p(t)}{n}{\Omega}$, we write $[J_{\aaa} {\xrightarrow{\text{\tiny$\Omega$}}} J'_{\aaa'}]$ meaning that $\xx^\aaa$ is the Borel maximum of $\mathfrak{J}\setminus\mathfrak{J}'$, $\xx^{\aaa'}$ is the Borel maximum of $\mathfrak{J}'\setminus\mathfrak{J}$ and $\xx^\aaa >_{\Omega} \xx^{\aaa'}$.
\end{definition}

An immediate consequence of the definition is that every $\Omega$-degeneration graph is a direct acyclic graph (namely, a graph with no oriented cycles). In fact, from the point of view of the generators of the ideals, an edge $[J_{\aaa} {\xrightarrow{\text{\tiny$\Omega$}}} J'_{\aaa'}] \in E\big(\DG{p(t)}{n}{\Omega}\big)$ corresponds to the replacement of some monomials of $\mathfrak{J}$ with smaller monomials with respect to $\Omega$. Consequently, there can not be proper oriented paths in $\DG{p(t)}{n}{\Omega}$ with same initial and final vertex (see Figure \ref{fig:degGraph} for an example).

\begin{figure}[!ht]
\begin{center}
\subfloat[][The degeneration graph $\DG{5}{2}{\mathtt{DegLex}}$.]{\label{fig:degGraphPoints}
\begin{tikzpicture}[scale=1.25,>=angle 60]

\node at (-2.75,0) [] {};
\node at (2.75,0) [] {};
\node (1) at (0,0.5) [circle,draw,inner sep=2pt] {\footnotesize $J_1$};
\node (2) at (-1.15,-1.5) [circle,draw,inner sep=2pt] {\footnotesize $J_2$};
\node (3) at (1.15,-1.5) [circle,draw,inner sep=2pt] {\footnotesize $J_3$};

\draw [thick,->] (1) --node[rotate=60,above]{\tiny $x_0 x_1^4 \leftrightharpoons x_0^4 x_2$} (2);
\draw [thick,->] (1) --node[rotate=-60,above]{\tiny $x_0^3 x_1 x_2 \leftrightharpoons x_0 x_1^4$} (3);
\draw [thick,->] (2) --node[below]{\tiny $x_0^3 x_1 x_2 \leftrightharpoons x_0^2 x_1^3$} (3);
\end{tikzpicture}
}
\hspace{0.5cm}
\subfloat[][The degeneration graph $\DG{3t+1}{3}{\mathtt{RevLex}}$.]{\label{fig:degGraphCurves}
\begin{tikzpicture}[scale=1.25,>=angle 60]
\node at (-0.75,0) [] {};
\node at (4.75,0) [] {};

\node (1) at (0,0) [circle,draw,inner sep=2pt] {\footnotesize $J_1$};
\node (2) at (2,0) [circle,draw,inner sep=2pt] {\footnotesize $J_2$};
\node (3) at (4,0) [circle,draw,inner sep=2pt] {\footnotesize $J_3$};
\node at (2,-0.5) [] {};
\draw [thick,<-] (1) --node[above]{\tiny $x_0^3 x_3 \leftrightharpoons x_0 x_2^3$} (2);
\draw [thick,<-] (2) --node[above]{\tiny $x_1^3 x_3 \leftrightharpoons x_1^2 x_2^2$} (3);
\end{tikzpicture}
}
\caption{The degeneration graphs of $\Hilb{5}{2}$ with respect to the graded lexicogaphic order and of $\Hilb{3t+1}{3}$ with respect to the graded reverse lexicographic order.}
\label{fig:degGraph}
\end{center}
\end{figure}
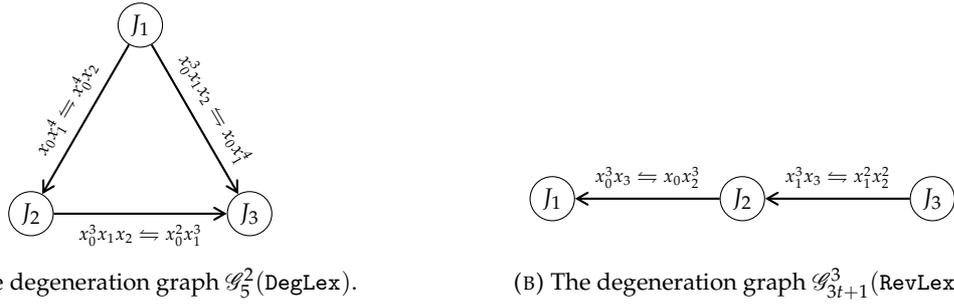

Direct acyclic graphs describe orders of finite sets.
\begin{definition}\label{def:degeneration order}
Let $\Omega$ be a term order and consider the set $\SI{p(t)}{n}$. We denote by $\succeq_\Omega$ the partial order on $\SI{p(t)}{n}$ defined by
\begin{equation*}
J \succeq_\Omega J' \quad \Longleftrightarrow \quad \text{there is a path in $\DG{p(t)}{n}{\Omega}$ with initial vertex $J$ and final vertex $J'$.}
\end{equation*}
\end{definition}

\begin{example}\label{ex:idealsOrder}
Consider the Hilbert schemes $\Hilb{5t-2}{3}$ parametrizing 1-dimensional schemes of degree $5$ and arithmetic genus $3$ in $\PP^3$. The set $\SI{5t-2}{3}$ contains 7 ideals and there are 12 pairs of Borel adjacent ideals, so that the Borel graph $\Sk{5t-2}{3}$ has 7 vertices and 12 edges. In Figure \ref{fig:idealsOrder}, the edges of  $\Sk{5t-2}{3}$ are oriented according to the graded reverse lexicographic order. with respect to the order $\succeq_{\mathtt{RevLex}}$, the set $\SI{5t-2}{3}$ has two maximal elements (ideals $J_6$ and $J_7$ are not comparable) and the minimum (the lexicographic ideal $J_1$).
\end{example}

\begin{figure}[!ht]
\begin{center}
\begin{tikzpicture}

\begin{scope}[shift={(9,2)}]
\node at (0,0) [] {\parbox{4.75cm}{\footnotesize $J_1 = (x_3,x_2^{6},x_1^{3} x_2^{5})_{\geqslant 8}$}};
\node at (0,-0.5) [] {\parbox{4.75cm}{\footnotesize $J_2 = (x_3,x_2^{7},x_1x_2^{6},x_1^{2} x_2^{5})_{\geqslant 8}$}};
\node at (0,-1) [] {\parbox{4.75cm}{\footnotesize $J_3 = (x_3^2, x_2x_3,x_1x_3,x_2^{6},x_1^2 x_2^{5})_{\geqslant 8}$}};
\node at (0,-1.5) [] {\parbox{4.75cm}{\footnotesize $J_4 = (x_3^{2},x_2x_3,x_1^{2}x_3,x_2^{6},x_1x_2^{5})_{\geqslant 8}$}};
\node at (0,-2) [] {\parbox{4.75cm}{\footnotesize $J_5 = (x_3^2, x_2x_3,x_1^{3} x_3,x_2^5)_{\geqslant 8}$}};
\node at (0,-2.5) [] {\parbox{4.75cm}{\footnotesize $J_6 = (x_3^{2},x_2^{2}x_3,x_1x_2x_3,x_1^{2} x_3,x_2^{5})_{\geqslant 8}$}};
\node at (0,-3) [] {\parbox{4.75cm}{\footnotesize $J_7 = (x_3^2,x_2x_3,x_2^{4})_{\geqslant 8}$}};
\end{scope}

\begin{scope}[scale=0.8,shift={(0,0)},>=angle 60]
\node (0) at (-0.5,1) [draw,circle,inner sep=0pt,minimum size=0.6cm] {\footnotesize $J_1$};
\node (1) at (1.5,-2) [draw,circle,inner sep=0pt,minimum size=0.6cm] {\footnotesize $J_2$};
\node (2) at (2,1) [draw,circle,inner sep=0pt,minimum size=0.6cm] {\footnotesize $J_3$};
\node (3) at (3.25,-0.5) [draw,circle,inner sep=0pt,minimum size=0.6cm] {\footnotesize $J_4$};
\node (4) at (3.5,3) [draw,circle,inner sep=0pt,minimum size=0.6cm] {\footnotesize $J_5$};
\node (5) at (5.5,-1) [draw,circle,inner sep=0pt,minimum size=0.6cm] {\footnotesize $J_6$};
\node (6) at (6,2) [draw,circle,inner sep=0pt,minimum size=0.6cm] {\footnotesize $J_7$};

  \draw [->,thick] (1) -- (0);
  \draw [->,thick] (2) -- (1);
  \draw [->,thick] (2) -- (0);
  \draw [->,thick] (3) -- (0);
  \draw [->,thick] (3) -- (2);
  \draw [->,thick] (4) -- (0);
  \draw [->,thick] (4) -- (2);
  \draw [->,thick] (4) -- (3);
  \draw [->,thick] (5) -- (1);
  \draw [->,thick] (5) -- (3);
  \draw [->,thick] (5) -- (4);
  \draw [->,thick] (6) -- (4);

\end{scope}

\end{tikzpicture}
\caption{The $\mathtt{RevLex}$-degeneration graph of the Hilbert scheme $\Hilb{5t-2}{3}$.}
\label{fig:idealsOrder}
\end{center}
\end{figure}
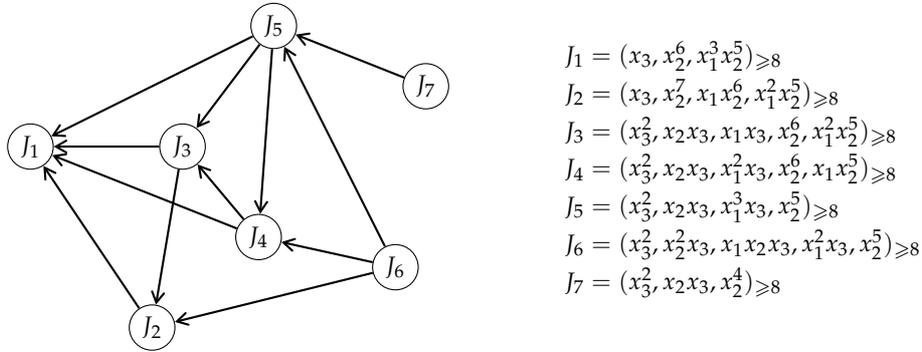
Now, we classify all the possible partial orders on the set $\SI{p(t)}{n}$ defined via degeneration graph. Namely, we classify which directed graphs supported on the the Borel graph of $\Hilb{p(t)}{n}$ can be induced by a term order. We are inspired by the classification of all Gr\"obner bases of a given ideal by means of the Gr\"obner fan (see \cite{MoraRobbiano,SturmfelsGB-CP,FJT}).  We start enlarging the set of monomials orders to weight orders. Given a  vector ${\ooo} \in \RR^{n+1}$, we denote by $\geq_{\ooo}$ the partial order defined by
\[
   \xx^\aaa \geq_{\ooo} \xx^\bbb \qquad \Longleftrightarrow \qquad   \langle \aaa , \ooo \rangle \geqslant \langle \bbb , \ooo\rangle,
\]
where $\langle \cdot, \cdot \rangle$ stands for the standard scalar product.

As $\geq_{\ooo}$ is a partial order on the monomials, it may happen that for an edge $[J_{\aaa} {-} J'_{\aaa'}] \in E(\mathscr{G}_{p(t)}^n)$ of the Borel graph, it holds $\langle \aaa , \ooo \rangle = \langle \aaa' , \ooo\rangle$. This means that a weight order $\geq_{\ooo}$ does not determine the orientation of all edges of $\mathscr{G}_{p(t)}^n$. In such cases, we associate to $\ooo$ a mixed graph.

\begin{definition}
Consider a vector $\ooo\in\RR^{n+1}$.
We call \emph{$\ooo$-degeneration graph} of the Hilbert scheme $\Hilb{p(t)}{n}$ the mixed graph $\mathscr{G}^{n}_{p(t)}(\ooo)$ whose
\begin{itemize}
\item vertices $V\big(\DG{p(t)}{n}{\ooo}\big)$ correspond to strongly stable ideals in $\SI{p(t)}{n}$;
\item undirected edges $E_{\textnormal{u}}\big(\mathscr{G}_{p(t)}^n(\ooo)\big)$ correspond to unordered pairs $\{J,J'\}$ of Borel adjacent ideals such that $\langle \aaa,\ooo\rangle = \langle \aaa',\ooo\rangle$, where $\xx^\aaa$ and $\xx^{\aaa'}$ are the Borel maxima of $\mathfrak{J}\setminus\mathfrak{J}'$ and $\mathfrak{J}'\setminus\mathfrak{J}$; 
\item directed edges $E_{\textnormal{d}}\big(\mathscr{G}_{p(t)}^n(\ooo)\big)$ correspond to ordered pairs $(J,J')$ of Borel adjacent ideals such that $\langle \aaa,\ooo\rangle > \langle \aaa',\ooo\rangle$.
\end{itemize}
To describe an undirected edge of $\mathscr{G}_{p(t)}^n(\ooo)$, we write $[J_{\aaa} \xminus{\ooo}{-2pt} J'_{\bbb}]$, while to describe a directed edge, we write $[J_{\aaa} {\xrightarrow{\text{\tiny$\ooo$}}} J'_{\bbb}]$.
\end{definition}

\begin{remark}
Consider the action of the one-dimensional torus $\kk^\ast$ on $\kk[\xx]$  with weights $\ooo$ defined by
\begin{equation}\label{eq:torusAction}
t \centerdot \xx^\bbb= t^{-\langle \bbb,\ooo\rangle} \xx^\bbb,\qquad\forall\ \xx^\bbb \in \mathbb{T}^n,\ \forall\ t \in \kk^\ast
\end{equation}
and  an undirected edge $[J_{\aaa} \xminus{\ooo}{-2pt} J'_{\aaa'}] \in E_{\textnormal{u}}\big(\mathscr{G}^{n}_{p(t)}(\ooo)\big)$. The ideal $I_{J,J'}$ describing the Borel deformation of $J$ and $J '$ is homogeneous with respect to the $\ZZ$-grading $\xx^\bbb \mapsto \langle \bbb,\ooo\rangle$. Therefore, the rational curve in $\Hilb{p(t)}{n}$ defined by $I_{J,J'}$ is point-wise fixed by the torus action induced by \eqref{eq:torusAction} on the Hilbert scheme.

Instead if we consider a directed edge $[J_{\aaa}{\xrightarrow{\text{\tiny$\ooo$}}} J'_{\aaa'}] \in E_{\textnormal{d}}\big(\mathscr{G}^{n}_{p(t)}(\ooo)\big)$, the ideal $I_{J,J'}$
corresponds to a degeneration using the one-parameter torus action, i.e.
 \[
 J = \lim_{t \to 0} ( t\centerdot I) \qquad \text{and}\qquad  J'=\lim_{t \to \infty}( t\centerdot I)
 \]
 where $I$ is the ideal of the generic fiber of the family $I_{J,J'}$.
 \bs
\end{remark}

\begin{remark}
 Notice that the Borel graph $\Sk{p(t)}{n}$ of $\Hilb{p(t)}{n}$ turns out to coincide with the degeneration graph given by the weight vector $(1,\ldots,1)$. \bs
\end{remark}

\captionsetup[subfloat]{position=bottom,width=0.5\textwidth}
\begin{figure}[!ht]
\begin{center}
\subfloat[][The degeneration graph $\mathscr{G}_5^2\big((0,1,3)\big)$.]{\label{fig:weightDegGraphPoints}
\begin{tikzpicture}[scale=1.25,>=angle 60]

\node at (-2.75,0) [] {};
\node at (2.75,0) [] {};
\node (1) at (0,0.5) [circle,draw,inner sep=2pt] {\footnotesize $J_1$};
\node (2) at (-1.15,-1.5) [circle,draw,inner sep=2pt] {\footnotesize $J_2$};
\node (3) at (1.15,-1.5) [circle,draw,inner sep=2pt] {\footnotesize $J_3$};

\draw [thick,<-] (1) --node[rotate=60,above]{\tiny $x_0 x_1^4 \leftrightharpoons x_0^4 x_2$} (2);
\draw [thick,-] (1) --node[rotate=-60,above]{\tiny $x_0^3 x_1 x_2 \leftrightharpoons x_0 x_1^4$} (3);
\draw [thick,->] (2) --node[below]{\tiny $x_0^3 x_1 x_2 \leftrightharpoons x_0^2 x_1^3$} (3);
\end{tikzpicture}
}
\hspace{0.5cm}
\subfloat[The degeneration graph $\mathscr{G}_{3t+1}^{3}\big((0,1,2,3)\big)$.]{\label{fig:weightDegGraphCurves}
\begin{tikzpicture}[scale=1.25,>=angle 60]
\node at (-0.75,0) [] {};
\node at (4.75,0) [] {};

\node (1) at (0,0) [circle,draw,inner sep=2pt] {\footnotesize $J_1$};
\node (2) at (2,0) [circle,draw,inner sep=2pt] {\footnotesize $J_2$};
\node (3) at (4,0) [circle,draw,inner sep=2pt] {\footnotesize $J_3$};
\node at (2,-0.5) [] {};
\draw [thick,<-] (1) --node[above]{\tiny $x_0^3 x_3 \leftrightharpoons x_0 x_2^3$} (2);
\draw [thick,-] (2) --node[above]{\tiny $x_1^3 x_3 \leftrightharpoons x_1^2 x_2^2$} (3);
\end{tikzpicture}
}
\caption{Examples of degeneration graphs induced by weight orders.}
\label{fig:weightDegGraph}
\end{center}
\end{figure}
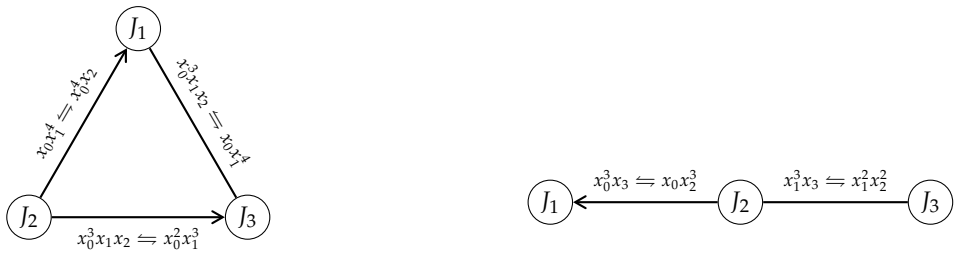

A natural equivalence relation on $\RR^{n+1}$ can be given considering mixed graphs supported on the Borel graph $\mathscr{G}_{p(t)}^n$:
\begin{equation}\label{eq:eqRelation}
\ooo \sim \underline{\sigma} \qquad\Longleftrightarrow\qquad \mathscr{G}_{p(t)}^n(\ooo) = \DG{p(t)}{n}{\underline{\sigma}}. 
\end{equation}

As the number of vertices of the graphs is finite, the number of equivalence classes is finite. A second immediate remark is that equivalence classes are convex cones with vertex in the origin. In fact, consider two vectors $\ooo,\ooo' $ in the same equivalence class $\mathcal{C} \subseteq \RR^{n+1}$. They induce the same orientation of all edges of the Borel graph $\Sk{p(t)}{n}$. Namely, for an edge $[J_{\aaa}{-}J'_{\aaa'}] \in E(\Sk{p(t)}{n})$, we have either $\langle \aaa -\aaa',\ooo \rangle = \langle \aaa - \aaa',\ooo' \rangle = 0$ or $\langle \aaa -\aaa',\ooo \rangle \cdot \langle \aaa - \aaa',\ooo' \rangle > 0$. In the first case, we have an undirected edge and
\[
\langle\aaa - \aaa',T\ooo+(1-T)\ooo' \rangle = 0,\ \forall\ T \qquad \langle\aaa - \aaa',c\ooo\rangle= 0,\ \forall\ c.
\]
In the second case, we have a directed edge. Assuming $\langle \aaa -\aaa',\ooo \rangle> 0$ and $\langle \aaa - \aaa',\ooo' \rangle > 0$, we obtain
\[
\langle\aaa - \aaa',T\ooo+(1-T)\ooo' \rangle > 0,\ \forall\ T \in [0,1]\quad\text{and}\quad \langle\aaa - \aaa',c\ooo\rangle> 0,\ \forall\ c >0.
\]
Hence, vectors $T \ooo + (1-T)\ooo'$ and $c \ooo$ are in $\mathcal{C}$ for every $T \in [0,1]$ and every  $c>0$. 

\begin{lemma}\label{lem:openPolyhedral}
Each equivalence class of vectors is a convex polyhedral cone relatively open, that is open in a suitable affine subspace of $\mathbb{R}^{n+1}$.
\end{lemma}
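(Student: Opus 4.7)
The plan is to write each equivalence class under $\sim$ as the solution set of an explicit finite system of homogeneous linear relations in $\RR^{n+1}$, separating the equalities from the strict inequalities, and then read off that this set is the relative interior of a polyhedral cone.

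First I would observe that the mixed graph $\DG{p(t)}{n}{\ooo}$ depends on $\ooo$ only through the sign pattern of the finite family of linear forms
\[
\ell_{\aaa,\aaa'}(\underline{v}) = \langle \aaa - \aaa',\underline{v}\rangle,
\]
indexed by the edges $[J_\aaa {-} J'_{\aaa'}] \in E(\Sk{p(t)}{n})$. Accordingly, fixing a representative $\ooo$, I would partition the edge set of the Borel graph into
\[
E^+ = \{[J_\aaa {-} J'_{\aaa'}] : \ell_{\aaa,\aaa'}(\ooo) > 0\},\quad E^0 = \{[J_\aaa {-} J'_{\aaa'}] : \ell_{\aaa,\aaa'}(\ooo) = 0\},\quad E^- = \{[J_\aaa {-} J'_{\aaa'}] : \ell_{\aaa,\aaa'}(\ooo) < 0\}.
\]
Swapping the roles of the two Borel maxima flips the sign of $\ell_{\aaa,\aaa'}$, so tracking $E^+$ and $E^0$ is enough to determine the orientation of every edge (and hence the whole mixed graph).

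Next I would describe the equivalence class $\mathcal{C}$ of $\ooo$ as
\[
\mathcal{C} = \bigl\{\underline{\sigma}\in\RR^{n+1}\ \big\vert\ \ell_{\aaa,\aaa'}(\underline{\sigma}) = 0 \text{ for all edges in } E^0,\ \ell_{\aaa,\aaa'}(\underline{\sigma}) > 0 \text{ for all edges in } E^+\bigr\}.
\]
Let $L \subset \RR^{n+1}$ be the linear subspace cut out by the equalities coming from $E^0$. Inside $L$, the set $\mathcal{C}$ is carved out by finitely many strict linear inequalities. Replacing $>$ with $\geqslant$ produces a closed convex polyhedral cone $\overline{\mathcal{C}} \subset L$ in the standard sense, and $\mathcal{C}$ is precisely its relative interior, hence relatively open in the affine subspace $L$.

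There is no serious obstacle here: once the equivalence class is encoded as a sign pattern of finitely many linear forms, both the polyhedral cone structure and the relative openness in $L$ follow from elementary facts on finite systems of homogeneous linear (in)equalities. The only point requiring care is to pick the correct generating set of linear forms, namely one $\ell_{\aaa,\aaa'}$ per edge of $\Sk{p(t)}{n}$, and to split them cleanly into equalities (those indexed by $E^0$) and strict inequalities (those indexed by $E^+$), so that no orientation constraint is overlooked.
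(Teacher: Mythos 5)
Your proposal is correct and follows essentially the same route as the paper: both encode the equivalence class as the solution set of the finite system consisting of one strict inequality $\langle \aaa-\aaa',\underline{\sigma}\rangle>0$ per oriented edge and one equality per unoriented edge, and then observe that this set is open in the linear subspace cut out by the equalities. Your extra bookkeeping with the partition $E^+,E^0,E^-$ and the remark that $\mathcal{C}$ is the relative interior of the closed cone obtained by relaxing the inequalities are harmless refinements of the same argument.
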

\begin{proof}
Fix a mixed graph $\mathscr{G}$ supported on the Borel graph of $\Hilb{p(t)}{n}$ and let $\underline{W}=(W_0,\ldots,W_n)$ be the coordinates of $\RR^{n+1}$. For every directed edge $[J_{\aaa} {\to} J'_{\aaa'}] \in E_{\textnormal{d}}(\mathscr{G})$ consider the inequality
\begin{equation}\label{eq:coneIneq}
\langle \aaa - \aaa', \underline{W} \rangle > 0,
\end{equation}
while for every undirected edge $[J_{\aaa} {-} J'_{\aaa'}] \in E_{\textnormal{u}}(\mathscr{G})$ consider the equality
\begin{equation}\label{eq:coneEq}
\langle \aaa-\aaa', \underline{W} \rangle = 0.
\end{equation}
The solutions (if there are) of the system made of inequalities \eqref{eq:coneIneq} and equalities \eqref{eq:coneEq} represent the equivalence class of vectors $\ooo \in \RR^{n+1}$ such that $\mathscr{G}_{p(t)}^n(\ooo) = \mathscr{G}$. The set of solutions is an open subset of the affine subspace of $\RR^{n+1}$ defined by equations \eqref{eq:coneEq}.
\end{proof}

For any term order $\Omega$, all variables $x_i$ are greater than $1$. Since for a weight vector $\ooo$, $x_i >_{\ooo} 1$ if and only if $\omega_i > 0$, we restrict to the positive orthant $\RR^{n+1}_{> 0}$. Notice that there is no loss of information, because each equivalence class $\mathcal{C}$ intersects the positive orthant. In fact, consider vectors $\ooo\in\mathcal{C}$ and $\ooo'=\ooo + c(1,\ldots,1)$. As we are in the homogeneous context, $\ooo$ and $\ooo'$ induces the same weight order. For all $\xx^\aaa,\xx^\bbb$ s.t.~$\vert \aaa \vert = \vert \bbb \vert$,
\[
\begin{split}
\langle  \aaa-\bbb,\ooo'\rangle &{}= \langle \aaa-\bbb, \ooo + c(1,\ldots,1)\rangle = \langle \aaa-\bbb,\ooo \rangle +c \langle\aaa, (1,\ldots,1)\rangle - c \langle\bbb,(1,\ldots,1) \rangle = {}\\
&{} = \langle \aaa-\bbb, \ooo\rangle -c \vert\aaa\vert + c\vert\bbb\vert = \langle  \aaa-\bbb,\ooo\rangle. 
\end{split}
\]
For $c$ sufficiently large, $\underline{\omega}'$ is contained in $\mathcal{C} \cap \RR_{>0}^{n+1}$.

Furthermore, in the definition of Borel-fixed ideals, we need to fix an order among variables and our choice is $x_0 < \cdots < x_n$. Consequently, we are interested in weight vectors $\ooo$ such that $x_0 <_{\ooo} x_1 < _{\ooo} \cdots <_{\ooo} x_n$ (this guarantees that  the weight order $\geq_{\ooo}$ refines the Borel order $\geq_B$). Therefore, we further restrict to the open polyhedral cone
\[
 \mathcal{W} = \left\{ \ooo \in \RR_{> 0}^{n+1}\ \vert\ \omega_{i} < \omega_{i+1},\ i=0,\ldots,n-1\right\}.
\]

We introduce the following notation for the equivalence classes:
\[
\begin{split}
 \text{for a term order~}\Omega,\qquad& \mathcal{C}_{p(t)}^{n}(\Omega) = \left\{ \underline{\sigma} \in \mathcal{W}\ \middle\vert\ \DG{p(t)}{n}{\underline{\sigma}} = \DG{p(t)}{n}{\Omega}\right\},\\ 
\text{for a vector~} \ooo \in \mathcal{W},\qquad& \mathcal{C}_{p(t)}^{n}(\ooo) = \left\{ \underline{\sigma} \in \mathcal{W}\ \middle\vert\ \DG{p(t)}{n}{\underline{\sigma}} = \DG{p(t)}{n}{\ooo}\right\}. 
\end{split}
\]

\begin{definition}\label{def:groebnerFan}
The \emph{Gr\"obner fan} $\GF(\Hilb{p(t)}{n})$ of the Hilbert scheme $\Hilb{p(t)}{n}$ is the set of the closures of all equivalence classes of the relation \eqref{eq:eqRelation} with their proper faces, intersected with the closure $\overline{\mathcal{W}}$ of the cone $\mathcal{W}$.
\end{definition}

\begin{theorem}\label{thm:groebnerFan}
The Gr\"obner fan $\GF(\Hilb{p(t)}{n})$ is a polyhedral fan.
\end{theorem}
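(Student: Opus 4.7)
The plan is to realize $\GF(\Hilb{p(t)}{n})$ as the restriction to $\overline{\mathcal{W}}$ of the fan associated to a finite central hyperplane arrangement in $\RR^{n+1}$. For each edge $e=[J_{\aaa}{-}J'_{\aaa'}]$ of the Borel graph $\Sk{p(t)}{n}$, set $\underline{v}_e := \aaa - \aaa'$ and let $H_e = \{\underline{W} \in \RR^{n+1} : \langle \underline{v}_e, \underline{W}\rangle = 0\}$. Since $\Sk{p(t)}{n}$ has finitely many edges, $\mathcal{H}=\{H_e\}_e$ is a finite central hyperplane arrangement.

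First I would verify that the equivalence classes of $\sim$ coincide with the relatively open faces of $\mathcal{H}$. By construction, the mixed graph $\DG{p(t)}{n}{\ooo}$ is determined entirely by the sign vector $\bigl(\mathrm{sign}\langle \underline{v}_e, \ooo\rangle\bigr)_{e}$: an edge $e$ is undirected when $\langle \underline{v}_e, \ooo\rangle = 0$ and is oriented according to the sign otherwise. Hence $\ooo \sim \underline{\sigma}$ if and only if $\ooo$ and $\underline{\sigma}$ lie in the same relatively open sign cell of $\mathcal{H}$. This refines Lemma \ref{lem:openPolyhedral}: each equivalence class is explicitly cut out by a conjunction of equalities $\langle \underline{v}_e, \underline{W}\rangle = 0$ (one for every $e \in E_{\textnormal{u}}(\DG{p(t)}{n}{\ooo})$) and strict inequalities $\langle \underline{v}_e, \underline{W}\rangle > 0$ (one for every directed edge).

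Next I would invoke the classical fact that the closures of the open chambers of $\mathcal{H}$ together with all their faces form a polyhedral fan $\mathcal{F}(\mathcal{H})$. Each cone in $\mathcal{F}(\mathcal{H})$ is polyhedral, being defined by a conjunction of conditions of the form $\langle \underline{v}_e, \underline{W}\rangle \trianglelefteq 0$, $=0$, or $\trianglerighteq 0$; the collection is closed under taking faces, because a face is obtained by promoting some of the weak inequalities to equalities; and the intersection of any two cones is again of this form, hence a common face. Therefore the closures of the equivalence classes together with their faces already constitute a polyhedral fan in $\RR^{n+1}$.

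Finally, I would intersect with $\overline{\mathcal{W}}$, which is itself a closed polyhedral cone cut out by $0 \leq W_0 \leq W_1 \leq \cdots \leq W_n$. For each $\mathcal{C} \in \mathcal{F}(\mathcal{H})$ the intersection $\mathcal{C} \cap \overline{\mathcal{W}}$ is polyhedral, and the fan axioms are preserved because $(\mathcal{C}_1\cap\overline{\mathcal{W}})\cap(\mathcal{C}_2\cap\overline{\mathcal{W}}) = (\mathcal{C}_1\cap\mathcal{C}_2)\cap\overline{\mathcal{W}}$ is a face of each, and any face of $\mathcal{C}\cap\overline{\mathcal{W}}$ is obtained by activating some defining equalities coming from either $\mathcal{C}$ or from $\overline{\mathcal{W}}$, hence is again of the form $\mathcal{C}'\cap\overline{\mathcal{W}}$ with $\mathcal{C}'\in\mathcal{F}(\mathcal{H})$. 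The resulting collection is exactly $\GF(\Hilb{p(t)}{n})$.

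The main technical obstacle is the last point: one must be careful to check that truncating a polyhedral fan by a polyhedral cone yields a polyhedral fan in which faces of intersections are themselves intersections in the collection. Everything else is a translation between the combinatorial language of degeneration graphs and the geometric language of sign cells of a hyperplane arrangement.
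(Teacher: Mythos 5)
Your proof is, at its core, the same argument the paper gives: both reduce to showing that the closures of the equivalence classes of \eqref{eq:eqRelation} form a fan in all of $\RR^{n+1}$ and then restrict to $\overline{\mathcal{W}}$. The difference is one of packaging. The paper verifies the two fan axioms by hand --- face-closedness by reinterpreting the passage to a face of $\overline{\mathcal{C}}$ as the un-orienting of the edges whose inequalities become equalities, and the common-face property by noting that $\overline{\mathcal{C}}_1\cap\overline{\mathcal{C}}_2$ is a finite union of common faces which, being convex, must be a single face. You instead observe that the equivalence classes are precisely the relatively open cells of the central arrangement $\mathcal{H}=\{H_e\}_{e\in E(\Sk{p(t)}{n})}$, because the mixed graph $\DG{p(t)}{n}{\ooo}$ is equivalent data to the sign vector $\bigl(\mathrm{sign}\langle \underline{v}_e,\ooo\rangle\bigr)_e$, and you then cite the classical fact that the closed cells of a central hyperplane arrangement form a polyhedral fan. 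The identification is accurate, it buys the first two axioms essentially for free, and it makes Lemma \ref{lem:openPolyhedral} explicit; but it is a repackaging of the same geometry rather than a different route.

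The one step that needs more care --- in your write-up and, to be fair, in the paper's --- is the restriction to $\overline{\mathcal{W}}$, which you rightly flag as the main obstacle but then resolve incorrectly. Your claim that every face of $\mathcal{C}\cap\overline{\mathcal{W}}$ is again of the form $\mathcal{C}'\cap\overline{\mathcal{W}}$ with $\mathcal{C}'\in\mathcal{F}(\mathcal{H})$ fails for faces obtained by activating an inequality defining $\overline{\mathcal{W}}$: such a face has the form $\mathcal{C}\cap\mathcal{W}'$ for a proper face $\mathcal{W}'$ of $\overline{\mathcal{W}}$, and this need not equal $\mathcal{C}'\cap\overline{\mathcal{W}}$ for any cell $\mathcal{C}'$ unless the walls of $\mathcal{W}$ are unions of cells of $\mathcal{H}$. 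This already happens for $\Hilb{5}{2}$: the wall $\omega_0=0$ cuts through full-dimensional cells of $\mathcal{H}$, so the facet of a maximal cone spanned by $(0,1,3)$ and $(0,1,4)$ is not of the form $\overline{\mathcal{C}}\cap\overline{\mathcal{W}}$, and the collection $\{\overline{\mathcal{C}}\cap\overline{\mathcal{W}}\}$ is not literally closed under taking faces. What your argument does establish correctly is that any two cones $\mathcal{C}_1\cap\overline{\mathcal{W}}$ and $\mathcal{C}_2\cap\overline{\mathcal{W}}$ meet in a common face (a supporting hyperplane of $\mathcal{C}_1$ cutting out $\mathcal{C}_1\cap\mathcal{C}_2$ also supports $\mathcal{C}_1\cap\overline{\mathcal{W}}$); adjoining all faces of all these intersections then yields a genuine polyhedral fan, and that is how the collection $\GF(\Hilb{p(t)}{n})$ must be read for the statement to hold. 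With that reading your proof is complete.
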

\begin{proof}
By Lemma \ref{lem:openPolyhedral}, we know that $\GF(\Hilb{p(t)}{n})$ is a collection of convex polyhedral cones. In order to prove that $\GF(\Hilb{p(t)}{n})$ is a polyhedral fan, we need to show that
\begin{enumerate}[(i)]
\item for every cone $\mathcal{C} \in \GF(\Hilb{p(t)}{n})$, all its faces are contained in $\GF(\Hilb{p(t)}{n})$;
\item for every pair of cones $\mathcal{C}_1,\mathcal{C}_2$, the intersection $\mathcal{C}_1 \cap \mathcal{C}_2$ is a face of $\mathcal{C}_1$ and a face of $\mathcal{C}_2$.
\end{enumerate}
Moreover, as the intersection of a fan with a unique polyhedral cone is still a fan, we show that the set of the closures of all equivalence classes in $\RR^{n+1}$ is a polyhedral fan.

Consider a face $\mathcal{F}$ of the closure $\overline{\mathcal{C}}$ of the equivalence class $\mathcal{C}$. This means that some of the inequalities defining $\overline{\mathcal{C}}$ become equalities when defining $\mathcal{F}$. From the point of view of degeneration graphs, passing from $\mathcal{C}$ to the relative interior of $\mathcal{F}$ means to remove the orientations from edges associated to those inequalities that become equalities. Hence, all the interior points of $\mathcal{F}$ induce the same degeneration graph, i.e.~$\mathcal{F} \subset \overline{\mathcal{C}}'$ for some equivalence class $\mathcal{C}'$. In fact, equality $\mathcal{F} = \overline{\mathcal{C}}'$ holds, because outside of $\mathcal{F}$ the degeneration graph is different. This proves (i).

To prove (ii), consider two cones $\overline{\mathcal{C}}_1$, $\overline{\mathcal{C}}_2$ and the intersection $\mathcal{P} = \overline{\mathcal{C}}_1 \cap \overline{\mathcal{C}}_2$. In the previous paragraph, we showed that every $\ooo'' \in \mathcal{P}$ in contained in a the cone $\overline{\mathcal{C}}''$ that is a face of both $\overline{\mathcal{C}}_1$ and $\overline{\mathcal{C}}_2$. Hence, $\mathcal{P}$ is a finite union of common faces. However, $\mathcal{P}$ is convex and a finite union of cones can only be convex if the union is a singleton. Hence, $\mathcal{P}$ is a common face of $\overline{\mathcal{C}}_1$ and $\overline{\mathcal{C}}_2$.
\end{proof}

If $n=2$, we represent a Gr\"obner fan through the intersection of the fan with the plane $\omega_0 + \omega_1 +\omega_2 = 1$ (each equivalence class intersects such plane).

If $n=3$, we consider the intersection of the fan with the plane $\omega_0 = 0,\  \omega_1 +\omega_2 + \omega_3 = 1$. For each $\ooo \neq (1,\ldots,1) \in \overline{\mathcal{W}}$, the point 
\[
\ooo' = \frac{1}{\omega_3+\omega_2+\omega_1 - 3\omega_0}\left( \ooo - \omega_0 (1,\ldots,1)\right) \in \overline{\mathcal{W}}
\]
lies on the plane and the weight order $\geq_{\ooo'}$ coincides with the weight order $\geq_{\ooo}$. Hence, all equivalence classes are represented except the one corresponding to the Borel graph.

\begin{example}
Consider the Hilbert scheme $\Hilb{5}{2}$ whose Borel graph $\Sk{5}{2}$ is depicted in Figure \ref{fig:skeleton}{\sc\subref{fig:skeletonPoints}}. The orientation of edges depends on the sign of 
\begin{itemize}
\item $[J_1{-}J_2]$ $x_0^4 x_2 \leftrightharpoons x_0 x_1^4$\vspace*{-2pt}
\[
 (4W_0 + W_2) - (W_0 + 4 W_1) = 3W_0 - 4W_1 + W_2\ \gtreqqless\ 0,\vspace*{-2pt}
\]
\item $[J_1{-}J_3]$ $x_0^3 x_1 x_2 \leftrightharpoons x_0 x_1^4$\vspace*{-2pt}
\[
(3W_0 + W_1 + W_2) - (W_0 + 4W_1) = 2W_0 - 3W_1 + W_2\ \gtreqqless\ 0, \vspace*{-2pt}
\]
\item $[J_2{-}J_3]$ $x_0^3 x_1 x_2 \leftrightharpoons x_0^2 x_1^3$\vspace*{-2pt}
\[
(3W_0 + W_1 + W_2) - (2W_0 + 3 W_1) = W_0 - 2W_1 + W_2\ \gtreqqless\ 0.\vspace*{-2pt}
\]
\end{itemize}
It turns out that the fan $\GF(\Hilb{5}{2})$ has 4 cones of maximal dimension and there are $8$ different degeneration graphs (see Figure \ref{fig:gf}).
\end{example}

\begin{figure}[!ht]
\begin{center}
\begin{tikzpicture}[scale=7.5]

              \node at (-.866,-.5) [left] {\tiny $(0,0,1)$};
              \node at (0,-.5) [right] {\tiny $(0,1,1)$};
              \node at (-.288667,-.5) [below,xshift=0.1cm] {\tiny $(0,1,2)$};
              \node at (-.433,-.5) [below,xshift=0.15cm] {\tiny  $(0,1,3)$};
              \node at (-.5196,-.5) [below,xshift=-0.15cm] {\tiny  $(0,1,4)$};
              \node at (0,0) [right] {\tiny $(1,1,1)$};

	      \draw [-] (-.866,-.5) -- (0,-.5) -- (0,0) -- cycle;
              \draw [-] (-.866,-.5) -- (-.5196,-.5) -- (0,0) -- cycle;
              \draw [-] (-.433,-.5) -- (-.5196,-.5) -- (0,0) -- cycle;
              \draw [-] (-.288667,-.5) -- (-.433,-.5) -- (0,0) -- cycle;
              \draw [-] (0,-.5) -- (-.288667,-.5) -- (0,0) -- cycle;

      \end{tikzpicture}
      
      \bigskip
      
\begin{tikzpicture}[scale=3,>=angle 60]
\begin{scope}[shift={(0,0)}]

        \draw [thin,draw=black!50] (-.433,-.5) -- (-.5196,-.5) -- (0,0) -- cycle; 
        \draw [thin,draw=black!50] (-.288667,-.5) -- (-.433,-.5) -- (0,0) -- cycle; 
        \draw [thin,draw=black!50] (0,-.5) -- (-.288667,-.5) -- (0,0) -- cycle; 
        \draw [fill=black!50,draw=black!50] (-.866,-.5) -- (-.5196,-.5) -- (0,0) -- cycle; 
        \draw [thick] (-.5196,-.5)  -- (-.866,-.5) -- (0,0);
        \draw [thick,dashed] (-.5196,-.5)  --  (0,0);
        
        \node (1) at (-0.433,-.65) [circle,draw,inner sep=1pt] {\tiny $J_1$};
        \node (2) at (-0.656,-1.05) [circle,draw,inner sep=1pt] {\tiny $J_2$};
        \node (3) at (-0.203,-1.05) [circle,draw,inner sep=1pt] {\tiny $J_3$};
        \draw [->] (1) -- (2);
        \draw [->] (1) -- (3);
        \draw [->] (2) -- (3);
\end{scope}

\begin{scope}[shift={(1.25,0)}]

        \draw [thin,draw=black!50,fill=black!50] (-.433,-.5) -- (-.5196,-.5) -- (0,0) -- cycle; 
        \draw [thin,draw=black!50] (-.288667,-.5) -- (-.433,-.5) -- (0,0) -- cycle; 
        \draw [thin,draw=black!50] (0,-.5) -- (-.288667,-.5) -- (0,0) -- cycle; 
        \draw [thin,draw=black!50] (-.866,-.5) -- (-.5196,-.5) -- (0,0) -- cycle; 
	\draw[thick] (-.433,-.5) -- (-.5196,-.5);
	\draw[thick,dashed]  (-.433,-.5) -- (0,0);
	\draw[thick,dashed]  (-.5196,-.5) -- (0,0);
        
        \node (1) at (-0.433,-.65) [circle,draw,inner sep=1pt] {\tiny $J_1$};
        \node (2) at (-0.656,-1.05) [circle,draw,inner sep=1pt] {\tiny $J_2$};
        \node (3) at (-0.203,-1.05) [circle,draw,inner sep=1pt] {\tiny $J_3$};
        \draw [<-] (1) -- (2);
        \draw [->] (1) -- (3);
        \draw [->] (2) -- (3);
\end{scope}

\begin{scope}[shift={(2.5,0)}]

        \draw [thin,draw=black!50] (-.433,-.5) -- (-.5196,-.5) -- (0,0) -- cycle; 
        \draw [thin,draw=black!50,fill=black!50] (-.288667,-.5) -- (-.433,-.5) -- (0,0) -- cycle; 
        \draw [thin,draw=black!50] (0,-.5) -- (-.288667,-.5) -- (0,0) -- cycle; 
        \draw [thin,draw=black!50] (-.866,-.5) -- (-.5196,-.5) -- (0,0) -- cycle; 
	\draw[thick] (-.433,-.5) -- (-.288667,-.5);
	\draw[thick,dashed]  (-.433,-.5) -- (0,0);
	\draw[thick,dashed]  (-.288667,-.5) -- (0,0);
        
        \node (1) at (-0.433,-.65) [circle,draw,inner sep=1pt] {\tiny $J_1$};
        \node (2) at (-0.656,-1.05) [circle,draw,inner sep=1pt] {\tiny $J_2$};
        \node (3) at (-0.203,-1.05) [circle,draw,inner sep=1pt] {\tiny $J_3$};
        \draw [<-] (1) -- (2);
        \draw [<-] (1) -- (3);
        \draw [->] (2) -- (3);
\end{scope}

\begin{scope}[shift={(3.75,0)}]

        \draw [thin,draw=black!50] (-.433,-.5) -- (-.5196,-.5) -- (0,0) -- cycle; 
        \draw [thin,draw=black!50] (-.288667,-.5) -- (-.433,-.5) -- (0,0) -- cycle; 
        \draw [thin,draw=black!50,fill=black!50] (0,-.5) -- (-.288667,-.5) -- (0,0) -- cycle; 
        \draw [thin,draw=black!50] (-.866,-.5) -- (-.5196,-.5) -- (0,0) -- cycle; 
	\draw[thick] (-.288667,-.5) -- (0,-.5) -- (0,0);
	\draw[thick,dashed]  (-.288667,-.5) -- (0,0);
        
        \node (1) at (-0.433,-.65) [circle,draw,inner sep=1pt] {\tiny $J_1$};
        \node (2) at (-0.656,-1.05) [circle,draw,inner sep=1pt] {\tiny $J_2$};
        \node (3) at (-0.203,-1.05) [circle,draw,inner sep=1pt] {\tiny $J_3$};
        \draw [<-] (1) -- (2);
        \draw [<-] (1) -- (3);
        \draw [<-] (2) -- (3);
\end{scope}

\end{tikzpicture}

\hspace{0.75cm}

\begin{tikzpicture}[scale=3,>=angle 60]
\begin{scope}[shift={(0,0)}]

        \draw [thin,draw=black!50] (-.433,-.5) -- (-.5196,-.5) -- (0,0) -- cycle; 
        \draw [thin,draw=black!50] (-.288667,-.5) -- (-.433,-.5) -- (0,0) -- cycle; 
        \draw [thin,draw=black!50] (0,-.5) -- (-.288667,-.5) -- (0,0) -- cycle; 
        \draw [draw=black!50] (-.866,-.5) -- (-.5196,-.5) -- (0,0) -- cycle; 
       \draw [thick] (-.5196,-.5)  --  (0,0);
        \draw [thin] (0,0) circle (0.5pt);
        
        \node (1) at (-0.433,-.65) [circle,draw,inner sep=1pt] {\tiny $J_1$};
        \node (2) at (-0.656,-1.05) [circle,draw,inner sep=1pt] {\tiny $J_2$};
        \node (3) at (-0.203,-1.05) [circle,draw,inner sep=1pt] {\tiny $J_3$};
        \draw [-] (1) -- (2);
        \draw [->] (1) -- (3);
        \draw [->] (2) -- (3);
\end{scope}

\begin{scope}[shift={(1.25,0)}]

        \draw [thin,draw=black!50] (-.433,-.5) -- (-.5196,-.5) -- (0,0) -- cycle; 
        \draw [thin,draw=black!50] (-.288667,-.5) -- (-.433,-.5) -- (0,0) -- cycle; 
        \draw [thin,draw=black!50] (0,-.5) -- (-.288667,-.5) -- (0,0) -- cycle; 
        \draw [thin,draw=black!50] (-.866,-.5) -- (-.5196,-.5) -- (0,0) -- cycle; 
	\draw[thick]  (-.433,-.5) -- (0,0);
        \draw [thin] (0,0) circle (0.5pt);
        
        \node (1) at (-0.433,-.65) [circle,draw,inner sep=1pt] {\tiny $J_1$};
        \node (2) at (-0.656,-1.05) [circle,draw,inner sep=1pt] {\tiny $J_2$};
        \node (3) at (-0.203,-1.05) [circle,draw,inner sep=1pt] {\tiny $J_3$};
        \draw [<-] (1) -- (2);
        \draw [-] (1) -- (3);
        \draw [->] (2) -- (3);
\end{scope}

\begin{scope}[shift={(2.5,0)}]

        \draw [thin,draw=black!50] (-.433,-.5) -- (-.5196,-.5) -- (0,0) -- cycle; 
        \draw [thin,draw=black!50] (-.288667,-.5) -- (-.433,-.5) -- (0,0) -- cycle; 
        \draw [thin,draw=black!50] (0,-.5) -- (-.288667,-.5) -- (0,0) -- cycle; 
        \draw [thin,draw=black!50] (-.866,-.5) -- (-.5196,-.5) -- (0,0) -- cycle; 
        	\draw[thick]  (-.288667,-.5) -- (0,0);
        \draw [thin] (0,0) circle (0.5pt);
        
        \node (1) at (-0.433,-.65) [circle,draw,inner sep=1pt] {\tiny $J_1$};
        \node (2) at (-0.656,-1.05) [circle,draw,inner sep=1pt] {\tiny $J_2$};
        \node (3) at (-0.203,-1.05) [circle,draw,inner sep=1pt] {\tiny $J_3$};
        \draw [<-] (1) -- (2);
        \draw [<-] (1) -- (3);
        \draw [-] (2) -- (3);
\end{scope}

      \begin{scope}[shift={(3.75,0)}]

        \draw [thin,draw=black!50] (-.433,-.5) -- (-.5196,-.5) -- (0,0) -- cycle; 
        \draw [thin,draw=black!50] (-.288667,-.5) -- (-.433,-.5) -- (0,0) -- cycle; 
        \draw [thin,draw=black!50] (0,-.5) -- (-.288667,-.5) -- (0,0) -- cycle; 
        \draw [thin,draw=black!50] (-.866,-.5) -- (-.5196,-.5) -- (0,0) -- cycle; 
        \draw [thin,fill] (0,0) circle (0.4pt);
        
        \node (1) at (-0.433,-.65) [circle,draw,inner sep=1pt] {\tiny $J_1$};
        \node (2) at (-0.656,-1.05) [circle,draw,inner sep=1pt] {\tiny $J_2$};
        \node (3) at (-0.203,-1.05) [circle,draw,inner sep=1pt] {\tiny $J_3$};
        \draw [-] (1) -- (2);
        \draw [-] (1) -- (3);
        \draw [-] (2) -- (3);
\end{scope}

      \end{tikzpicture}

\caption{The Gr\"obner fan of the Hilbert scheme $\Hilb{5}{2}$ and the possible degeneration graphs.}
\label{fig:gf}
\end{center}
\end{figure}
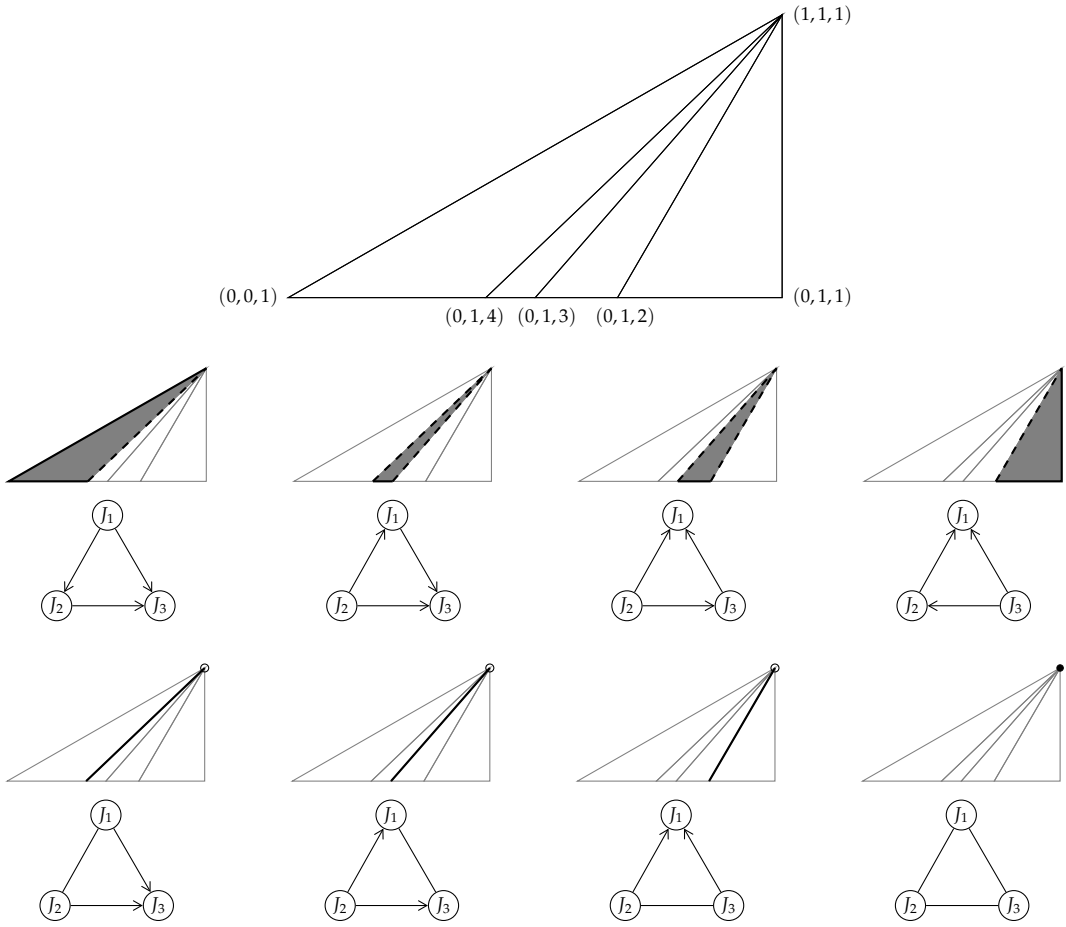

\captionsetup[subfloat]{position=bottom,width=0.7\textwidth}
\begin{table}[!ht]
\begin{center}
\subfloat[][Examples of computation of Gr\"obner fans of Hilbert schemes parametrizing 0-dimensional subschemes in $\PP^2$, $\PP^3$ and $\PP^4$.]{
\begin{tikzpicture}[yscale=-0.8,xscale=1.4]
\node at (1.75,0) [] {\parbox{1cm}{\centering\scriptsize vertices\\[-2pt] of $\Sk{d}{n}$}};
\node at (3.25,0) [] {\parbox{1cm}{\centering\scriptsize  edges\\[-2pt] of $\Sk{d}{n}$}};
\node at (3.25,-0.8) [] {\small Borel graph};
\node at (4.75,0) [] {\parbox{1.5cm}{\centering\scriptsize approximate \\[-2.5pt] cpu time }};

\node at (6.25,0) [] {\parbox{2cm}{\centering\scriptsize maximal cones\\[-2pt] of $\GF(\Hilb{d}{n})$}};
\node at (7.75,0) [] {\parbox{2cm}{\centering\scriptsize extremal rays\\[-2pt] of $\GF(\Hilb{d}{n})$}};
\node at (9.25,0) [] {\parbox{2cm}{\centering\scriptsize approximate \\[-2.5pt] cpu time }};
\node at (7.75,-0.85) [] {\small Gr\"obner fan};

\draw[thick] (1,-0.5) -- (10,-0.5);
\draw[thick] (-0.5,0.5) -- (10,0.5);

\draw[] (-0.5,1.3) -- (10,1.3);
\draw[] (-0.5,2.1) -- (10,2.1);
\draw[] (-0.5,2.9) -- (10,2.9);
\draw[] (-0.5,3.7) -- (10,3.7);
\draw[] (-0.5,4.5) -- (10,4.5);
\draw[] (-0.5,5.3) -- (10,5.3);
\draw[] (-0.5,6.1) -- (10,6.1);
\draw[] (-0.5,6.9) -- (10,6.9);
\draw[thick] (-0.5,7.7) -- (10,7.7);

\draw[thick] (1,-0.5) -- (1,7.7);
\draw[thick] (5.5,-0.5) -- (5.5,7.7);
\draw[thick] (10,-0.5) -- (10,7.7);
\draw[thick] (-0.5,0.5) -- (-0.5,7.7);

\node at (0.25,0.9) [] {\parbox{1.5cm}{\footnotesize $\PP^2$, \hfill $d=5$}};
\node at (1.75,0.9) [] {\small $3$};
\node at (3.25,0.9) [] {\small $3$};
\node at (4.75,0.9) [] {\small $0.009\, s$};
\node at (6.25,0.9) [] {\small $4$};
\node at (7.75,0.9) [] {\small $6$};
\node at (9.25,0.9) [] {\small $0.07\, s$};

\node at (0.25,1.7) [] {\parbox{1.5cm}{\footnotesize $\PP^3$, \hfill$d=5$}};
\node at (1.75,1.7) [] {\small $4$};
\node at (3.25,1.7) [] {\small $5$};
\node at (4.75,1.7) [] {\small $0.02\, s$};
\node at (6.25,1.7) [] {\small $10$};
\node at (7.75,1.7) [] {\small $12$};
\node at (9.25,1.7) [] {\small $0.1\, s$};

\node at (0.25,2.5) [] {\parbox{1.5cm}{\footnotesize $\PP^4$, \hfill$d=5$}};
\node at (1.75,2.5) [] {\small $5$};
\node at (3.25,2.5) [] {\small $6$};
\node at (4.75,2.5) [] {\small $0.06\, s$};
\node at (6.25,2.5) [] {\small $11$};
\node at (7.75,2.5) [] {\small $14$};
\node at (9.25,2.5) [] {\small $0.2\, s$};

\node at (0.25,3.3) [] {\parbox{1.5cm}{\footnotesize $\PP^2$, \hfill$d=8$}};
\node at (1.75,3.3) [] {\small $6$};
\node at (3.25,3.3) [] {\small $10$};
\node at (4.75,3.3) [] {\small $0.04\, s$};
\node at (6.25,3.3) [] {\small $8$};
\node at (7.75,3.3) [] {\small $10$};
\node at (9.25,3.3) [] {\small $0.09\, s$};

\node at (0.25,4.1) [] {\parbox{1.5cm}{\footnotesize $\PP^3$, \hfill$d=8$}};
\node at (1.75,4.1) [] {\small $12$};
\node at (3.25,4.1) [] {\small $31$};
\node at (4.75,4.1) [] {\small $0.4\, s$};
\node at (6.25,4.1) [] {\small $70$};
\node at (7.75,4.1) [] {\small $55$};
\node at (9.25,4.1) [] {\small $1.8\, s$};

\node at (0.25,4.9) [] {\parbox{1.5cm}{\footnotesize $\PP^4$, \hfill$d=8$}};
\node at (1.75,4.9) [] {\small $16$};
\node at (3.25,4.9) [] {\small $45$};
\node at (4.75,4.9) [] {\small $4.3\, s$};
\node at (6.25,4.9) [] {\small $310$};
\node at (7.75,4.9) [] {\small $162$};
\node at (9.25,4.9) [] {\small $15.5\, s$};

\node at (0.25,5.7) [] {\parbox{1.5cm}{\footnotesize $\PP^2$, \hfill$d=11$}};
\node at (1.75,5.7) [] {\small $12$};
\node at (3.25,5.7) [] {\small $33$};
\node at (4.75,5.7) [] {\small $0.2\, s$};
\node at (6.25,5.7) [] {\small $14$};
\node at (7.75,5.7) [] {\small $16$};
\node at (9.25,5.7) [] {\small $0.3\, s$};

\node at (0.25,6.5) [] {\parbox{1.5cm}{\footnotesize $\PP^3$, \hfill$d=11$}};
\node at (1.75,6.5) [] {\small $32$};
\node at (3.25,6.5) [] {\small $134$};
\node at (4.75,6.5) [] {\small $12.1\, s$};
\node at (6.25,6.5) [] {\small $259$};
\node at (7.75,6.5) [] {\small $186$};
\node at (9.25,6.5) [] {\small $28.5\, s$};

\node at (0.25,7.3) [] {\parbox{1.5cm}{\footnotesize $\PP^4$,\hfill $d=11$}};
\node at (1.75,7.3) [] {\small $50$};
\node at (3.25,7.3) [] {\small $235$};
\node at (4.75,7.3) [] {\small $320\, s$};
\node at (6.25,7.3) [] {\small $3678$};
\node at (7.75,7.3) [] {\small $1761$};
\node at (9.25,7.3) [] {\small $1131\, s$};

\end{tikzpicture}
}

\subfloat[][Examples of computation of Gr\"obner fans of Hilbert schemes parametrizing 1-dimensional subschemes in $\PP^3$.]{\label{tab:gfan cpuTime curves}
\begin{tikzpicture}[yscale=-0.8,xscale=1.4]
\node at (1.75,0) [] {\parbox{1cm}{\centering\scriptsize vertices\\[-2pt] of $\Sk{d}{n}$}};
\node at (3.25,0) [] {\parbox{1cm}{\centering\scriptsize  edges\\[-2pt] of $\Sk{d}{n}$}};
\node at (3.25,-0.8) [] {\small Borel graph};
\node at (4.75,0) [] {\parbox{1.5cm}{\centering\scriptsize approximate \\[-2.5pt] cpu time }};

\node at (6.25,0) [] {\parbox{2cm}{\centering\scriptsize maximal cones\\[-2pt] of $\GF(\Hilb{d}{n})$}};
\node at (7.75,0) [] {\parbox{2cm}{\centering\scriptsize extremal rays\\[-2pt] of $\GF(\Hilb{d}{n})$}};
\node at (9.25,0) [] {\parbox{2cm}{\centering\scriptsize approximate \\[-2.5pt] cpu time }};
\node at (7.75,-0.85) [] {\small Gr\"obner fan};

\draw[thick] (1,-0.5) -- (10,-0.5);
\draw[thick] (-0.5,0.5) -- (10,0.5);

\draw[] (-0.5,1.3) -- (10,1.3);
\draw[] (-0.5,2.1) -- (10,2.1);
\draw[] (-0.5,2.9) -- (10,2.9);
\draw[] (-0.5,3.7) -- (10,3.7);
\draw[thick] (-0.5,4.5) -- (10,4.5);

\draw[thick] (1,-0.5) -- (1,4.5);
\draw[thick] (5.5,-0.5) -- (5.5,4.5);
\draw[thick] (10,-0.5) -- (10,4.5);
\draw[thick] (-0.5,0.5) -- (-0.5,4.5);

\node at (0.25,0.9) [] {\parbox{1.85cm}{\footnotesize $p(t) = 3t+1$}};
\node at (1.75,0.9) [] {\small $3$};
\node at (3.25,0.9) [] {\small $2$};
\node at (4.75,0.9) [] {\small $0.008\, s$};
\node at (6.25,0.9) [] {\small $3$};
\node at (7.75,0.9) [] {\small $7$};
\node at (9.25,0.9) [] {\small $0.03\, s$};

\node at (0.25,1.7) [] {\parbox{1.85cm}{\footnotesize $p(t) = 4t$}};
\node at (1.75,1.7) [] {\small $4$};
\node at (3.25,1.7) [] {\small $4$};
\node at (4.75,1.7) [] {\small $0.02\, s$};
\node at (6.25,1.7) [] {\small $5$};
\node at (7.75,1.7) [] {\small $9$};
\node at (9.25,1.7) [] {\small $0.04\, s$};

\node at (0.25,2.5) [] {\parbox{1.85cm}{\footnotesize $p(t) = 5t-2$}};
\node at (1.75,2.5) [] {\small $7$};
\node at (3.25,2.5) [] {\small $12$};
\node at (4.75,2.5) [] {\small $0.1\, s$};
\node at (6.25,2.5) [] {\small $18$};
\node at (7.75,2.5) [] {\small $19$};
\node at (9.25,2.5) [] {\small $0.3\, s$};

\node at (0.25,3.3) [] {\parbox{1.85cm}{\footnotesize $p(t) = 6t-3$}};
\node at (1.75,3.3) [] {\small $31$};
\node at (3.25,3.3) [] {\small $110$};
\node at (4.75,3.3) [] {\small $14\, s$};
\node at (6.25,3.3) [] {\small $268$};
\node at (7.75,3.3) [] {\small $186$};
\node at (9.25,3.3) [] {\small $22\, s$};

\node at (0.25,4.1) [] {\parbox{1.85cm}{\footnotesize $p(t) = 7t-5$}};
\node at (1.75,4.1) [] {\small $112$};
\node at (3.25,4.1) [] {\small $651$};
\node at (4.75,4.1) [] {\small $588\, s$};
\node at (6.25,4.1) [] {\small $1204$};
\node at (7.75,4.1) [] {\small $806$};
\node at (9.25,4.1) [] {\small $542\, s$};

\end{tikzpicture}
}
\caption{Examples of computation of Borel graphs and Gr\"obner fans. The code is implemented with {\it Macaulay2} \cite{M2} in the package \texttt{GroebnerFanHilbertScheme.m2} available at  the web page \href{http://www.paololella.it/publications/kl/}{\tt www.paololella.it/publications/kl/}. 
 The algorithms have been run on a MacBook Pro with an Intel Core i5 dual-core 2.9 GHz processor. }
\label{tab:gfan cpuTime}	
\end{center}
\end{table}

\begin{proposition}\label{prop:maxConesTO}
For every term order $\Omega$, there exists $\ooo \in \mathcal{W}$ such that
\[
\mathcal{C}_{p(t)}^{n}(\Omega) = \mathcal{C}_{p(t)}^{n}(\ooo).
\]
Furthermore, $\mathcal{C}_{p(t)}^{n}(\Omega)$ is an open polyhedral cone of maximal dimension.
\end{proposition}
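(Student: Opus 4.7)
The plan is to reduce the statement to the classical fact that every term order can be represented by a real weight vector on any prescribed finite set of monomial comparisons (see Mora--Robbiano or Sturmfels, Gr\"obner Bases and Convex Polytopes). Since the Borel graph $\Sk{p(t)}{n}$ has only finitely many edges, the term order $\Omega$ only interacts with the construction of $\DG{p(t)}{n}{\Omega}$ through a finite list of comparisons, and any weight vector that agrees with $\Omega$ on this list will induce the same degeneration graph.

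Concretely, I would proceed as follows. Let $S$ be the finite set of pairs $(\aaa,\aaa') \in \ZZ_{\geqslant 0}^{n+1}\times \ZZ_{\geqslant 0}^{n+1}$ such that $[J_{\aaa}\xrightarrow{\Omega} J'_{\aaa'}]$ is a directed edge of $\DG{p(t)}{n}{\Omega}$, and enlarge $S$ by adding the pairs $(\uu{e}_{i+1},\uu{e}_i)$ for $i = 0,\ldots,n-1$, where $\uu{e}_i$ denotes the standard basis vector. Since $\Omega$ is a term order compatible with $x_0 <\cdots < x_n$, we have $\xx^{\aaa} >_\Omega \xx^{\aaa'}$ for every pair in $S$. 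By the classical representability result, there exists $\ooo \in \QQ_{>0}^{n+1}$ with $\langle \aaa - \aaa',\ooo\rangle > 0$ for every $(\aaa,\aaa') \in S$. The added pairs $(\uu{e}_{i+1},\uu{e}_i)$ force $\omega_0 < \cdots < \omega_n$, i.e.~$\ooo \in \mathcal{W}$. By construction, $\DG{p(t)}{n}{\ooo}$ has the same directed edges as $\DG{p(t)}{n}{\Omega}$, and both have no undirected edges (see next paragraph), so $\DG{p(t)}{n}{\ooo} = \DG{p(t)}{n}{\Omega}$; hence $\mathcal{C}_{p(t)}^n(\Omega) = \mathcal{C}_{p(t)}^n(\ooo)$.

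For the dimensional statement, I use the fact that $\Omega$ is a \emph{total} order on monomials. For every edge $[J_{\aaa}{-}J'_{\aaa'}]$ of the Borel graph, the Borel maxima $\xx^\aaa$ and $\xx^{\aaa'}$ are distinct monomials of the same degree, so $\Omega$ compares them strictly; thus $\DG{p(t)}{n}{\Omega}$ contains no undirected edges, and the weight $\ooo$ built above satisfies $\langle \aaa-\aaa',\ooo\rangle \neq 0$ for every edge. In the description of $\mathcal{C}_{p(t)}^n(\Omega) = \mathcal{C}_{p(t)}^n(\ooo)$ given in the proof of Lemma~\ref{lem:openPolyhedral}, no equality condition of the form \eqref{eq:coneEq} appears; the cone is cut out of $\mathcal{W}$ purely by strict linear inequalities \eqref{eq:coneIneq}. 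Consequently $\mathcal{C}_{p(t)}^n(\Omega)$ is a nonempty open subset of the open cone $\mathcal{W} \subset \RR^{n+1}$, hence an open polyhedral cone of maximal dimension $n+1$.

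The only nontrivial point is the first step, namely the existence of a weight vector in $\QQ_{>0}^{n+1}$ that realises $\Omega$ on a prescribed finite set of monomial comparisons together with the constraints $\omega_0 < \cdots < \omega_n$. This is where the known theory of Gr\"obner fans of ideals is invoked verbatim: it follows, for instance, from Robbiano's classification of term orders via real matrices and a standard perturbation/approximation argument to replace the first row of such a matrix by a single strictly positive rational vector that preserves the sign of finitely many linear functionals. No additional input specific to the Hilbert scheme is needed.
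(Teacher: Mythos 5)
Your proof is correct and follows the same strategy as the paper: both reduce the statement to exhibiting a single weight vector in $\mathcal{W}$ realizing $\Omega$ on the finitely many comparisons attached to the edges of the Borel graph (together with the variable comparisons $x_i < x_{i+1}$), after which the equivalence class is nonempty, cut out by strict inequalities only, and hence an open full-dimensional polyhedral cone. The only difference is that where you invoke the classical representability of term orders by weight vectors on finite sets of monomials as a black box, the paper proves exactly that fact from scratch, constructing $\ooo$ explicitly as a positive combination $\lambda_0 R_0 + \cdots + \lambda_n R_n$ of the rows of an order matrix $M_\Omega$ (with the $\lambda_i$ chosen inductively and a final shift by a multiple of $(1,\ldots,1)$ to land in $\mathcal{W}$); this constructive version is what the subsequent examples use to compute representative weight vectors for $\mathtt{DegLex}$ and $\mathtt{RevLex}$.
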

\begin{proof}
The $\Omega$-degeneration graph $\DG{p(t)}{n}{\Omega}$ is a directed graph. Thus, the equivalence class $\mathcal{C}_{p(t)}^{n}(\Omega)$ is defined only by strict inequalities:
\[
\mathcal{C}_{p(t)}^{n}(\Omega) = \left\{ \underline{\sigma} \in \mathcal{W}\ \middle\vert\ \langle \aaa - \aaa', \underline{\sigma} \rangle > 0,\ \forall\  [J_{\aaa}{\xrightarrow{\text{\tiny$\Omega$}}} J'_{\aaa'}] \in E_{\textnormal{d}}\big(\DG{p(t)}{n}{\Omega}\big)\right\}.
\]
The statement is proved if we can show that $\mathcal{C}_{p(t)}^{n}(\Omega)$ is not empty. In order to prove the claim, we recall that every term order $\Omega$ can be described by means of a rational full rank $(n+1)\times (n+1)$ matrix $M_\Omega$ (see \cite{RobbianoKreuzer1,RobbianoTO}) satisfying the following property:
\[
\xx^\aaa \geq_\Omega \xx^\bbb \qquad \Longleftrightarrow\qquad \begin{array}{l} \xx^\aaa = \xx^\bbb\quad \text{or}\\ \text{first non zero entry of~} M_\Omega(\aaa-\bbb)^T \text{~is positive.}\end{array}
\]
Let $R_0,\ldots,R_n$ be the rows of a matrix $M_\Omega$ representing the term order $\Omega$. We construct an element of $\mathcal{C}_{p(t)}^{n}(\Omega)$ as a linear combination $\lambda_0 R_0 + \cdots + \lambda_n R_n$.
Consider sets $E_i,\ i = 0,\ldots,n$ defined by  
\[
E_i = \left\{ [J_\aaa {\xrightarrow{\text{\tiny$\Omega$}}} J'_{\aaa'}] \in  E_{\textnormal{d}}\big(\DG{p(t)}{n}{\Omega}\big)\ \middle\vert\ \langle  \aaa - \aaa',R_i  \rangle > 0 \text{~and~} \langle  \aaa - \aaa',R_j  \rangle = 0 \text{~for~} j<i \right\},
\] 
and $X_i,\ i = 0,\ldots,n$ defined by
\[
X_i = \left\{ x_k\ \middle\vert\ R_{i,k} > R_{i,k-1}\text{~and~} R_{j,k} = R_{j,k-1}\text{~for~} j < i \right\}.
\]
Sets $\{E_i\}_{i=0,\ldots,n}$ represent a partition of the set of edges of $\DG{p(t)}{n}{\Omega}$ and sets $\{X_i\}_{i=0,\ldots,n}$ represent a partition of the set of variables $\{x_1,\ldots,x_n\}$. The set $X_i$ contains the variables $x_k$ such that the $i$-th row of $M_\Omega$ is the row giving the order relation $x_k >_{\Omega} x_{k-1}$.
Then, let $s = \max \{i\ \vert\ E_i \neq \emptyset\text{~or~} X_i \neq 0\}$ and set $\lambda_s = 1, \lambda_i = 0,\ i > s$. Assuming to have fixed a value for the last $n-i$ coefficients $\lambda_{i+1},\ldots,\lambda_n$ ($i < s$), we choose  
\[
\lambda_i = \begin{cases}
0,& \text{if~} E_i = \emptyset, X_i=\emptyset,\\
\lambda_i' + 1,&  \text{if~}  E_i\neq\emptyset, X_i=\emptyset,\\
\lambda_i'' + 1,&  \text{if~}  E_i=\emptyset,  X_i\neq\emptyset,\\
\max\{\lambda_i' ,\lambda_i''\}+1,&  \text{if~}  E_i\neq\emptyset, X_i\neq\emptyset,
\end{cases}
\]
 where
\[
\begin{split}
\lambda_i' &{}= \max \left\{  -\frac{1}{\langle \aaa - \aaa', R_i \rangle} \sum_{j = i+1}^n \lambda_j \langle \aaa - \aaa', R_j \rangle\  \middle\vert\ [J_\aaa {\xrightarrow{\text{\tiny$\Omega$}}} J'_{\aaa'}] \in E_i \right\},\\
\lambda_i'' &{}= \max \left\{  -\frac{1}{R_{i,k} - R_{i,k-1}} \sum_{j = i+1}^n \lambda_j (R_{j,k}-R_{j,k-1})\ \middle\vert\ x_k \in X_i \right\}.
\end{split}
\]
Then, consider $\ooo = \lambda_0 R_0 + \cdots + \lambda_n R_n$.  For every edge $[J_\aaa {\xrightarrow{\text{\tiny$\Omega$}}} J'_{\aaa'}] \in E_{\textnormal{d}}\big(\DG{p(t)}{n}{\Omega}\big)$, it holds $\langle \aaa - \aaa',\ooo\rangle > 0$. In fact, the edge $[J_\aaa {\xrightarrow{\text{\tiny$\Omega$}}} J'_{\aaa'}]$ belongs to $E_i$ for some $i$, so that
\[
\langle \aaa - \aaa', \ooo \rangle = \Big\langle \aaa - \aaa', \sum_{j=0}^n \lambda_j R_j\Big\rangle = \sum_{j=0}^n \lambda_j  \langle \aaa - \aaa', R_j\rangle =  \sum_{j=i}^n \lambda_j  \langle \aaa - \aaa', R_j\rangle.
\]
By the choice of $\lambda_i$, we have
\[
\lambda_i > \lambda_i' \geqslant- \frac{1}{\langle \aaa - \aaa', R_i \rangle} \sum_{j = i+1}^n \lambda_j \langle \aaa - \aaa', R_j \rangle
\]
and, since $\langle \aaa - \aaa', R_i \rangle > 0$,
\[
\lambda_i \langle \aaa - \aaa', R_i \rangle > - \sum_{j = i+1}^n \lambda_j \langle \aaa - \aaa', R_j \rangle \qquad \Longleftrightarrow \qquad \sum_{j=i}^n \lambda_j  \langle \aaa - \aaa', R_j\rangle > 0.
\]

Moreover, $\ooo$ satisfies inequalities $\omega_k > \omega_{k-1}$ for $k=1,\ldots,n$:
\[
\omega_k - \omega_{k-1} = \sum_{j=0}^n \lambda_j (R_{j,k}-R_{j,k-1}) =  \sum_{j=i}^n \lambda_j (R_{j,k}-R_{j,k-1}),\ \forall\ x_k \in X_i
\]
and by the choice of $\lambda_i$, we have
\[
\lambda_i > \lambda_i'' \geqslant -\frac{1}{R_{i,k} - R_{i,k-1}} \sum_{j = i+1}^n \lambda_j (R_{j,k}-R_{j,k-1}).
\]
If $\omega_0 > 0$, then $\ooo \in \mathcal{C}_{p(t)}^n(\Omega)$, otherwise if $\omega_0 \leqslant 0$, $\ooo + (1-\omega_0)(1,\ldots,1) \in \mathcal{C}_{p(t)}^n(\Omega)$.
\end{proof}

The previous statement can be easily reversed. Given an equivalence class $\mathcal{C}$ of maximal dimension, we can produce a term order $\Omega$ such that $\mathcal{C} = \mathcal{C}_{p(t)}^n(\Omega)$ as follows. Pick $\ooo \in \mathcal{C}$ and define the term order $\Omega$ as follows
\begin{equation}\label{eq:TOfromWO}
\xx^\aaa \geq_{\Omega} \xx^\bbb \qquad\Longleftrightarrow \qquad \begin{array}{l} \langle \aaa , \ooo\rangle > \langle \bbb , \ooo\rangle \quad \text{or}\\  \langle \aaa , \ooo\rangle = \langle \bbb , \ooo\rangle \text{~and~} \xx^\aaa \geq_{\Lambda} \xx^\bbb\end{array}
\end{equation}
where $\Lambda$ is an arbitrary term order used as a \lq\lq tie breaker\rq\rq. 

\begin{remark}
In general, a cone of codimension $k$ (not contained in the boundary of $\mathcal{W}$) corresponds to the closure of an equivalence class $\mathcal{C}_{p(t)}^n(\ooo)$ such that the $\ooo$-degeneration graph has at least $k$ undirected edges.
\bs
\end{remark}

\begin{example}
Consider the Hilbert scheme $\Hilb{3t+1}{3}$ and its $\mathtt{RevLex}$-degeneration graph represented in Figure \ref{fig:degGraph}{\sc\subref{fig:degGraphCurves}}. A matrix describing the graded reverse lexicographic order for the polynomial ring $\kk[x_0,x_1,x_2,x_3]$, with the choice $x_0 < x_1 < x_2 < x_3$, is
\[
M_{\mathtt{RevLex}} = \left[\begin{array}{cccc}
1 & 1 & 1 & 1 \\ -1 & 0 & 0 & 0 \\ 0 & -1 & 0 & 0 \\ 0 & 0  & -1 & 0
\end{array}\right].
\]
Going through the lines of the proof of Proposition \ref{prop:maxConesTO}, we consider partitions
\begin{align*}
&E_0 = \emptyset, && E_1 = \{ [J_2 {\to} J_1]\}, && E_2 = \{[J_3 {\to} J_2]\}, && E_3 = \emptyset, \\
&X_0 = \emptyset, && X_1 = \{x_1\}, && X_2 =\{x_2\} , && X_3 = \{x_3\},
\end{align*}
and we start setting $\lambda_3 = 1$. Then, we have 
\[
\begin{split}
&\lambda_2 = \max\{\lambda_2',\lambda_2''\}+1= \max\{2,1\}+1 = 3,\\
&\lambda_1 = \max\{\lambda_1',\lambda_1''\}+1= \max\left\{\tfrac{3}{2},3\right\}+1 = 4,\\
&\lambda_0 = 0.\\
\end{split}
\]
We obtain $\ooo = 4(-1,0,0,0) + 3 (0,-1,0,0) + (0,0,-1,0) = (-4,-3,-1,0)$ and 
\[
\mathcal{C}_{3t+1}^3(\mathtt{RevLex}) =\mathcal{C}_{3t+1}^3\big((1,2,4,5)\big) = \mathcal{C}_{3t+1}^3\big((0,1,3,4)\big).
\]

In the case of the graded lexicographic order, the edges of the $\mathtt{DegLex}$-degeneration graph are $[J_1 {\to} J_2]$ and $[J_2 {\to} J_3]$, a matrix representing the term order is
\[
M_{\mathtt{DegLex}} = \left[\begin{array}{cccc}
1 & 1 & 1 & 1 \\ 0 & 0 & 0 & 1 \\ 0 & 0 & 1 & 0 \\ 0 & 1  & 0 & 0
\end{array}\right]
\]
and partitions are
\begin{align*}
&E_0 = \emptyset, && E_1 = \{ [J_1 {\to} J_2],[J_2 {\to} J_3]\}, && E_2 = \emptyset, && E_3 = \emptyset, \\
&X_0 = \emptyset, && X_1 = \{x_3\}, && X_2 =\{x_2\} , && X_3 = \{x_1\}.
\end{align*}
We have $\lambda_3 = 1$, $\lambda_2 = \lambda_2''+1 = 2$, $\lambda_1 = \max\{\lambda_1',\lambda_1''\}+1 = \max\{6,2\}+1 = 7$ and $\lambda_0 = 0$, so that
\[
\mathcal{C}_{3t+1}^3(\mathtt{DegLex}) =\mathcal{C}_{3t+1}^3\big((1,2,3,8)\big) = \mathcal{C}_{3t+1}^3\big((0,1,2,7)\big).
\]
\end{example}

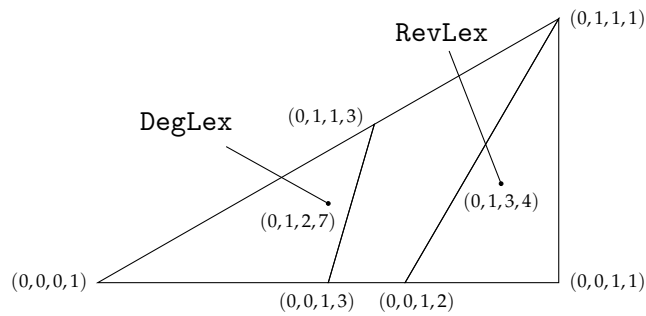
\begin{figure}[!ht]
\begin{center}
\begin{tikzpicture}[scale=7]

	\node at (-.866,-.5) [left] {\tiny $(0,0,0,1)$}; 
        \node at (0,-.5) [right] {\tiny $(0,0,1,1)$}; 
        \node at (-.288667,-.5) [below,xshift=0.15cm] {\tiny $(0,0,1,2)$}; 
        \node at (-.433,-.5) [below,xshift=-0.15cm] {\tiny $(0,0,1,3)$}; 
        \node at (0,0) [right] {\tiny $(0,1,1,1)$}; 
        \node at (-.3464,-.2) [left,yshift=0.1cm] {\tiny $(0,1,1,3)$}; 

        \draw [-] (-.866,-.5) -- (-.433,-.5) -- (-.3464,-.2) -- cycle; 
        \draw [-] (-.288667,-.5) -- (-.433,-.5) --  (-.3464,-.2) --  (0,0) -- cycle; 
        \draw [-] (0,-.5) -- (-.288667,-.5) -- (0,0) -- cycle; 
	
	\draw[fill] (-.433, -.35) circle (0.1 pt);
	\draw[fill]  (-.10825, -.3125) circle (0.1 pt);
	
	\node at (-.433, -.35) [xshift=-0.4cm,yshift=-0.25cm] {\tiny $(0,1,2,7)$};
	\node at (-.10825, -.3125) [yshift=-0.25cm] {\tiny $(0,1,3,4)$};
	
	\node (lex) at (-0.7,-0.2) [] {$\mathtt{DegLex}$};
	\draw (lex) -- (-.433, -.35);
	
	\node (rlex) at (-0.22,-0.025) [] {$\mathtt{RevLex}$};
	\draw (rlex) -- (-.10825, -.3125);
\end{tikzpicture}
\caption{The Gr\"obner fan of the Hilbert scheme $\Hilb{3t+1}{3}$.}
\label{fig:maxConesTO}
\end{center}
\end{figure}

\section{Applications}\label{sec:applications}

In this last section, we use the machinery of Gr\"obner fans to study geometric properties of the Hilbert scheme such as connectedness and irreducibility. We start recalling the definition of a partial order among sets of a fixed number of monomials of a given degree induced by a term order. 

\begin{definition}[{\cite[Definition 6, Proposition 5]{BCR-GG}}]\label{def:ssucc}
Let $\Omega$ be a term order in $\kk[\xx]$ and let $\mathcal{M}_q^r$ be the collection of sets of $q$ monomials of degree $r$. We denote by $\ssucceq_{\Omega}$ the partial order on $\mathcal{M}_{q}^r$ defined as follows: given two sets $\mathfrak{A} = \{ \xx^{\aaa_1},\ldots,\xx^{\aaa_{q}}\},\ \xx^{ \aaa_\ell} >_{\Omega} \xx^{ \aaa_{\ell+1}},\ \ell=1,\ldots,q-1$ and $\mathfrak{B} = \{ \xx^{\bbb_1},\ldots,\xx^{\bbb_{q}}\},\ \xx^{\bbb_\ell} >_{\Omega} \xx^{\bbb_{\ell+1}},\ \ell=1,\ldots,q-1$ in $\mathcal{M}_q^r$ 
\[
\mathfrak{A} \ssucceq_\Omega \mathfrak{B}\qquad\Longleftrightarrow\qquad  \xx^{\aaa_\ell} \geq_\Omega \xx^{\bbb_\ell},\ \forall\ \ell=1,\ldots,q.
\]
We write $\mathfrak{A} \ssucc_\Omega \mathfrak{B}$ if at least one of the inequalities $\xx^{\aaa_\ell} \geq_\Omega \xx^{\bbb_\ell}$ is strict.
\end{definition}

For every ideal $J \in \SI{p(t)}{n}$, the monomial basis $\mathfrak{J}$ of $J_r$ is contained in the set $\mathcal{M}_{q}^r$ with $q = q(r) = \binom{n+r}{n}-p(r)$. Therefore, the order $\ssucceq_\Omega$ induces a partial order on $\SI{p(t)}{n}$:
\begin{equation}\label{eq:ssucceq}
J \ssucceq_\Omega J'\qquad\Longleftrightarrow\qquad \mathfrak{J} \ssucceq_\Omega \mathfrak{J}'.
\end{equation}
Both orders $\succeq_\Omega$ and $\ssucceq_{\Omega}$ are determined by the term order $\Omega$ and they are far from being unrelated. We now explain the relation and we exploit it to deduce properties of the Hilbert scheme. 

\begin{lemma}\label{lem:addMonomials}
Let $\mathfrak{A},\mathfrak{B}$ be two subsets in $\mathcal{M}_q^r$ such that $\mathfrak{A} \ssucceq_\Omega \mathfrak{B}$. For any pair of monomials $\xx^\aaa,\xx^\bbb$ of degree $r$  such that $\xx^\aaa \notin \mathfrak{A}$, $\xx^\bbb \notin \mathfrak{B}$ and $\xx^\aaa \geq_{\Omega} \xx^\bbb$, 
\[
\mathfrak{A} \cup \{\xx^\aaa\} \ssucceq_{\Omega} \mathfrak{B} \cup \{\xx^\bbb\}.
\]
Moreover, the strict inequality $\xx^\aaa >_{\Omega} \xx^\bbb$ guarantees the strict inequality $\mathfrak{A} \cup \{\xx^\aaa\} \ssucc_{\Omega} \mathfrak{B} \cup \{\xx^\bbb\}$.
\end{lemma}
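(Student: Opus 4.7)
The plan is to reformulate $\ssucceq_\Omega$ via a counting function, which turns the lemma into a clean indicator-function argument.

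For any finite set $\mathfrak{S} \subset \mathbb{T}^n_r$ of cardinality $q$ and any $\xx^\gamma \in \mathbb{T}^n_r$, define
$$
\phi_\mathfrak{S}(\xx^\gamma) = \bigl|\{\xx^\delta \in \mathfrak{S} : \xx^\delta \geq_\Omega \xx^\gamma\}\bigr|.
$$
First I would prove the equivalence: $\mathfrak{S} \ssucceq_\Omega \mathfrak{T}$ if and only if $\phi_\mathfrak{S}(\xx^\gamma) \geq \phi_\mathfrak{T}(\xx^\gamma)$ for every $\xx^\gamma \in \mathbb{T}^n_r$. One direction is transparent: evaluating the inequality at $\xx^\gamma = \xx^{\bbb_\ell}$ forces at least $\ell$ monomials of $\mathfrak{S}$ to be $\geq_\Omega \xx^{\bbb_\ell}$, hence $\xx^{\aaa_\ell} \geq_\Omega \xx^{\bbb_\ell}$. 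Conversely, from $\xx^{\aaa_\ell} \geq_\Omega \xx^{\bbb_\ell}$ for all $\ell$, every $\xx^{\bbb_\ell}$ that dominates $\xx^\gamma$ has a counterpart $\xx^{\aaa_\ell}$ dominating $\xx^\gamma$ as well.

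Next, write $\mathfrak{A}' = \mathfrak{A} \cup \{\xx^\aaa\}$ and $\mathfrak{B}' = \mathfrak{B} \cup \{\xx^\bbb\}$. Since $\xx^\aaa \notin \mathfrak{A}$ and $\xx^\bbb \notin \mathfrak{B}$,
$$
\phi_{\mathfrak{A}'}(\xx^\gamma) = \phi_\mathfrak{A}(\xx^\gamma) + \chi_{\xx^\aaa \geq_\Omega \xx^\gamma}, \qquad \phi_{\mathfrak{B}'}(\xx^\gamma) = \phi_\mathfrak{B}(\xx^\gamma) + \chi_{\xx^\bbb \geq_\Omega \xx^\gamma},
$$
where $\chi$ denotes the $\{0,1\}$-indicator. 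Two cases dispose of the first statement. If $\xx^\aaa \geq_\Omega \xx^\gamma$, then $\phi_{\mathfrak{A}'}(\xx^\gamma) = \phi_\mathfrak{A}(\xx^\gamma)+1 \geq \phi_\mathfrak{B}(\xx^\gamma)+1 \geq \phi_{\mathfrak{B}'}(\xx^\gamma)$. If instead $\xx^\aaa <_\Omega \xx^\gamma$, then the hypothesis $\xx^\aaa \geq_\Omega \xx^\bbb$ gives $\xx^\bbb <_\Omega \xx^\gamma$, so the right-hand indicator vanishes and $\phi_{\mathfrak{B}'}(\xx^\gamma) = \phi_\mathfrak{B}(\xx^\gamma) \leq \phi_\mathfrak{A}(\xx^\gamma) \leq \phi_{\mathfrak{A}'}(\xx^\gamma)$. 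By the equivalence above, $\mathfrak{A}' \ssucceq_\Omega \mathfrak{B}'$.

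For the strict version, evaluate at $\xx^\gamma = \xx^\aaa$: then $\phi_{\mathfrak{A}'}(\xx^\aaa) = \phi_\mathfrak{A}(\xx^\aaa) + 1$, while the assumption $\xx^\aaa >_\Omega \xx^\bbb$ forces $\chi_{\xx^\bbb \geq_\Omega \xx^\aaa} = 0$, so $\phi_{\mathfrak{B}'}(\xx^\aaa) = \phi_\mathfrak{B}(\xx^\aaa) \leq \phi_\mathfrak{A}(\xx^\aaa) < \phi_{\mathfrak{A}'}(\xx^\aaa)$. A strict inequality at some $\xx^\gamma$ translates (by the same equivalence applied contrapositively) into at least one strict inequality among the sequence-wise comparisons, hence $\mathfrak{A}' \ssucc_\Omega \mathfrak{B}'$. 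The only obstacle is verifying the counting-function characterization of $\ssucceq_\Omega$; once that is in hand, the rest is a two-line indicator argument and there are no case-by-case subtleties about where $\xx^\aaa$ and $\xx^\bbb$ slot into the sorted sequences.
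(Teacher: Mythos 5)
Your proof is correct, and it takes a genuinely different route from the paper's. The paper proves the lemma by locating the insertion indices $i$ and $j$ of $\xx^\aaa$ and $\xx^\bbb$ in the sorted sequences and then checking the termwise inequalities directly in three cases ($i<j$, $i=j$, $i>j$), with explicit diagrams tracking how the pairing of positions shifts. You instead reformulate $\ssucceq_\Omega$ through the counting function $\phi_{\mathfrak{S}}(\xx^\gamma)=\vert\{\xx^\delta\in\mathfrak{S} : \xx^\delta\geq_\Omega\xx^\gamma\}\vert$; the equivalence you state is correct (both directions rely on $\geq_\Omega$ being a total order, so that the elements of a sorted set dominating a fixed $\xx^\gamma$ form an initial segment), and once it is in place the lemma reduces to a two-case indicator computation with no bookkeeping about where the new monomials slot in. Your handling of the strict part is also sound: a strict inequality $\phi_{\mathfrak{A}'}(\xx^\aaa)>\phi_{\mathfrak{B}'}(\xx^\aaa)$ shows the two sets differ, and $\ssucceq_\Omega$ between distinct sets of equal cardinality forces at least one termwise inequality to be strict, which is exactly the paper's definition of $\ssucc_\Omega$. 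What your approach buys is brevity and robustness — it generalizes immediately to inserting several monomials at once, which is how the lemma is actually used in Proposition \ref{prop:BorelDef implies order} and Theorem \ref{thm:segment case} — at the cost of having to establish the counting-function characterization as a preliminary step; the paper's version is longer but entirely self-contained at the level of the sorted sequences.
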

\begin{proof} Let us consider the indices $i$ and $j$ defined by
\[
i = \min \{ \ell\ \vert\ \xx^\aaa >_\Omega \xx^{\aaa_\ell}\} \qquad\text{and}\qquad j = \min \{\ell\ \vert\  \xx^\bbb >_\Omega \xx^{\bbb_\ell}\}.
\]
We can write
\[
\mathfrak{A} \cup \{\xx^\aaa\} = \{\xx^{\aaa'_1},\ldots,\xx^{\aaa'_{q+1}}\},\quad\text{where~} \xx^{\aaa'_\ell} = 
{\small
\begin{cases}
\xx^{\aaa_\ell},& \ell = 1,\ldots,i-1,\\ \xx^\aaa,& \ell = i,\\ \xx^{\aaa_{\ell-1}},& \ell = i+1,\ldots,q+1.
\end{cases}
}
\]
and
\[
\mathfrak{B} \cup \{\xx^\bbb\} = \{\xx^{\bbb'_1},\ldots,\xx^{\bbb'_{q+1}}\},\quad\text{where~} \xx^{\bbb'_\ell} = 
{\small
\begin{cases}
\xx^{\bbb\ell},& \ell = 1,\ldots,j-1,\\ \xx^\bbb,& \ell = j,\\ \xx^{\bbb_{\ell-1}},& \ell = j+1,\ldots,q+1.
\end{cases}
}
\]

If $i < j$, the statement does not really depend on the assumption $\xx^\aaa \geq_{\Omega} \xx^\beta$. In fact, one has
\[
{\small
\begin{cases}
\xx^{\aaa'_\ell} = \xx^{\aaa_\ell} \geq_\Omega \xx^{\bbb_\ell} = \xx^{\bbb'_\ell},& \ell =1 ,\ldots, i-1\\
\xx^{\aaa'_i} = \xx^{\aaa} >_\Omega \xx^{\aaa_i} \geq_\Omega \xx^{\bbb_i} = \xx^{\bbb'_i},& \ell=i, \\
\xx^{\aaa'_\ell} = \xx^{\aaa_{\ell-1}} >_\Omega \xx_{\aaa_\ell}  \geq_\Omega \xx^{\bbb_\ell} =  \xx^{\bbb'_\ell},& \ell = i+1,\ldots,j-1,\\
\xx^{\aaa'_j} = \xx^{\aaa_{j-1}} \geq_\Omega \xx^{\bbb_{j-1}} >_\Omega \xx^\bbb = \xx^{\bbb'_{j}},& \ell=j,\\
\xx^{\aaa'_\ell} = \xx^{\aaa_{\ell-1}} \geq_\Omega \xx^{\bbb_{\ell-1}} = \xx^{\bbb'_\ell},& \ell =j+1 ,\ldots, q+1,
\end{cases}
}
\quad \Rightarrow\quad \mathfrak{A} \cup \{\xx^\aaa\} \ssucc_\Omega \mathfrak{B} \cup \{\xx^\bbb\}.
\]
 \begin{center}
\begin{tikzpicture}[node distance=2mm and 1mm,>=latex]
\node (a1) at (0,0) [inner sep=2pt,minimum width=0.85cm,] {\small $\xx^{\aaa_{1}}$};
\node (a2) at (1.5,0) [inner sep=2pt,minimum width=1.3cm,] {\small $\cdots$};
\draw [->] (a1) -- (a2); 
\node (a3) at (3,0) [inner sep=2pt,minimum width=0.85cm,] {\small $\xx^{\aaa_{i-1}}$};
\draw [->] (a2) -- (a3); 
\node (a4) at (4.25,0) [inner sep=2pt,minimum width=0.85cm,] {\small $\xx^{\aaa}$};
\draw [->] (a3) -- (a4); 
\node (a5) at (5.5,0) [inner sep=2pt,minimum width=0.85cm,] {\small $\xx^{\aaa_{i}}$};
\draw [->] (a4) -- (a5); 
\node (a6) at (7,0) [inner sep=2pt,minimum width=1.3cm,] {\small $\cdots$};
\draw [->] (a5) -- (a6);
 \node (a7) at (8.5,0) [inner sep=2pt,minimum width=0.85cm,] {\small $\xx^{\aaa_{j-2}}$};
\draw [->] (a6) -- (a7); 
\node (a8) at (9.75,0) [inner sep=2pt,minimum width=0.85cm,] {\small $\xx^{\aaa_{j-1}}$};
\draw [->] (a7) -- (a8); 
\node (a9) at (11,0) [inner sep=2pt,minimum width=0.85cm,] {\small $\xx^{\aaa_j}$};
\draw [->] (a8) -- (a9); 
\node (a10) at (12.5,0) [inner sep=2pt,minimum width=1.3cm,] {\small $\cdots$};
\draw [->] (a9) -- (a10);
\node (a11) at (14,0) [inner sep=2pt,minimum width=0.85cm,] {\small $\xx^{\aaa_q}$};
\draw [->] (a10) -- (a11);

\begin{scope}[shift={(0,-1)}]
\node (b1) at (0,0) [inner sep=2pt,minimum width=0.85cm,] {\small $\xx^{\bbb_{1}}$};
\node (b2) at (1.5,0) [inner sep=2pt,minimum width=1.3cm,] {\small $\cdots$};
\draw [->] (b1) -- (b2); 
\node (b3) at (3,0) [inner sep=2pt,minimum width=0.85cm,] {\small $\xx^{\bbb_{i-1}}$};
\draw [->] (b2) -- (b3); 
\node (b4) at (4.25,0) [inner sep=2pt,minimum width=0.85cm,] {\small $\xx^{\bbb_i}$};
\draw [->] (b3) -- (b4); 
\node (b5) at (5.5,0) [inner sep=2pt,minimum width=0.85cm,] {\small $\xx^{\bbb_{i+1}}$};
\draw [->] (b4) -- (b5); 
\node (b6) at (7,0) [inner sep=2pt,minimum width=1.3cm,] {\small $\cdots$};
\draw [->] (b5) -- (b6);
 \node (b7) at (8.5,0) [inner sep=2pt,minimum width=0.85cm,] {\small $\xx^{\bbb_{j-1}}$};
\draw [->] (b6) -- (b7); 
\node (b8) at (9.75,0) [inner sep=2pt,minimum width=0.85cm,] {\small $\xx^{\bbb}$};
\draw [->] (b7) -- (b8); 
\node (b9) at (11,0) [inner sep=2pt,minimum width=0.85cm,] {\small $\xx^{\bbb_j}$};
\draw [->] (b8) -- (b9); 
\node (b10) at (12.5,0) [inner sep=2pt,minimum width=1.3cm,] {\small $\cdots$};
\draw [->] (b9) -- (b10);
\node (b11) at (14,0) [inner sep=2pt,minimum width=0.85cm,] {\small $\xx^{\bbb_q}$};
\draw [->] (b10) -- (b11);
\end{scope}

\draw [latex-,dashed] (b1) -- (a1);
\draw [latex-,dashed] (b3) -- (a3);
\draw [latex-,dashed] (b4) -- (a5);
\draw [latex-,dashed] (b7) -- (a8);
\draw [latex-,dashed] (b9) -- (a9);
\draw [latex-,dashed] (b11) -- (a11);

\draw [,latex-,thick] (b4) -- (a4);
\draw [,latex-,thick] (b5) -- (a5);
\draw [,latex-,thick] (b7) -- (a7);
\draw [,latex-,thick] (b8) -- (a8);

\draw[] (9.375,-1.25)  --  (9.375,-0.8) -- (10.125,-0.2) -- (10.125,-1.25);
\draw[] (3.875,0.25)  --  (3.875,-0.8) -- (4.625,-0.2) -- (4.625,0.25);

\end{tikzpicture}
\end{center}

If $i = j$, then the statement is straightforward and $\xx^\aaa >_{\Omega} \xx^\bbb$ guarantees  $\mathfrak{A} \cup \{\xx^\aaa\}  \ssucc_\Omega \mathfrak{B} \cup \{\xx^\bbb\}$.

Finally, if $i > j$
\[
{\small
\begin{cases}
\xx^{\aaa'_\ell} = \xx^{\aaa_\ell} \geq_\Omega \xx^{\bbb_\ell} = \xx^{\bbb'_\ell},& \ell =1 ,\ldots, j-1\\
\xx^{\aaa'_j} = \xx^{\aaa_j} >_\Omega \xx^{\aaa} \geq_\Omega \xx^{\bbb} = \xx^{\bbb'_j},& \ell=j, \\
\xx^{\aaa'_\ell} = \xx^{\aaa_{\ell}} >_\Omega \xx^{\aaa}  \geq_\Omega \xx^{\bbb} > \xx^{\bbb_{\ell-1}} =  \xx^{\bbb'_\ell},& \ell = j+1,\ldots,i-1,\\
\xx^{\aaa'_i} = \xx^{\aaa} \geq_\Omega \xx^{\bbb} >_\Omega \xx^{\bbb_{i-1}} = \xx^{\bbb'_{i}},& \ell=i,\\
\xx^{\aaa'_\ell} = \xx^{\aaa_{\ell-1}} \geq_\Omega \xx^{\bbb_{\ell-1}} = \xx^{\bbb'_\ell},& \ell =i+1 ,\ldots, q+1,
\end{cases}
}
\quad \Rightarrow\quad \mathfrak{A} \cup \{\xx^\aaa\} \ssucc_\Omega \mathfrak{B} \cup \{\xx^\bbb\}.
\]
\begin{center}
\hfill
\begin{tikzpicture}[node distance=2mm and 1mm,>=latex]
\node (a1) at (0,0) [inner sep=2pt,minimum width=0.85cm,] {\small $\xx^{\aaa_{1}}$};
\node (a2) at (1.5,0) [inner sep=2pt,minimum width=1.3cm,] {\small $\cdots$};
\draw [->] (a1) -- (a2); 
\node (a3) at (3,0) [inner sep=2pt,minimum width=0.85cm,] {\small $\xx^{\aaa_{j-1}}$};
\draw [->] (a2) -- (a3); 
\node (a4) at (4.25,0) [inner sep=2pt,minimum width=0.85cm,] {\small $\xx^{\aaa_{j}}$};
\draw [->] (a3) -- (a4); 
\node (a5) at (5.5,0) [inner sep=2pt,minimum width=0.85cm,] {\small $\xx^{\aaa_{j+1}}$};
\draw [->] (a4) -- (a5); 
\node (a6) at (7,0) [inner sep=2pt,minimum width=1.3cm,] {\small $\cdots$};
\draw [->] (a5) -- (a6);
 \node (a7) at (8.5,0) [inner sep=2pt,minimum width=0.85cm,] {\small $\xx^{\aaa_{i-1}}$};
\draw [->] (a6) -- (a7); 
\node (a8) at (9.75,0) [inner sep=2pt,minimum width=0.85cm,] {\small $\xx^{\aaa}$};
\draw [->] (a7) -- (a8); 
\node (a9) at (11,0) [inner sep=2pt,minimum width=0.85cm,] {\small $\xx^{\aaa_i}$};
\draw [->] (a8) -- (a9); 
\node (a10) at (12.5,0) [inner sep=2pt,minimum width=1.3cm,] {\small $\cdots$};
\draw [->] (a9) -- (a10);
\node (a11) at (14,0) [inner sep=2pt,minimum width=0.85cm,] {\small $\xx^{\aaa_q}$};
\draw [->] (a10) -- (a11);

\begin{scope}[shift={(0,-1)}]
\node (b1) at (0,0) [inner sep=2pt,minimum width=0.85cm,] {\small $\xx^{\bbb_{1}}$};
\node (b2) at (1.5,0) [inner sep=2pt,minimum width=1.3cm,] {\small $\cdots$};
\draw [->] (b1) -- (b2); 
\node (b3) at (3,0) [inner sep=2pt,minimum width=0.85cm,] {\small $\xx^{\bbb_{j-1}}$};
\draw [->] (b2) -- (b3); 
\node (b4) at (4.25,0) [inner sep=2pt,minimum width=0.85cm,] {\small $\xx^{\bbb}$};
\draw [->] (b3) -- (b4); 
\node (b5) at (5.5,0) [inner sep=2pt,minimum width=0.85cm,] {\small $\xx^{\bbb_{j}}$};
\draw [->] (b4) -- (b5); 
\node (b6) at (7,0) [inner sep=2pt,minimum width=1.3cm,] {\small $\cdots$};
\draw [->] (b5) -- (b6);
 \node (b7) at (8.5,0) [inner sep=2pt,minimum width=0.85cm,] {\small $\xx^{\bbb_{i-2}}$};
\draw [->] (b6) -- (b7); 
\node (b8) at (9.75,0) [inner sep=2pt,minimum width=0.85cm,] {\small $\xx^{\bbb_{i-1}}$};
\draw [->] (b7) -- (b8); 
\node (b9) at (11,0) [inner sep=2pt,minimum width=0.85cm,] {\small $\xx^{\bbb_i}$};
\draw [->] (b8) -- (b9); 
\node (b10) at (12.5,0) [inner sep=2pt,minimum width=1.3cm,] {\small $\cdots$};
\draw [->] (b9) -- (b10);
\node (b11) at (14,0) [inner sep=2pt,minimum width=0.85cm,] {\small $\xx^{\bbb_q}$};
\draw [->] (b10) -- (b11);
\end{scope}

\draw [latex-,dashed] (b1) -- (a1);
\draw [latex-,dashed] (b3) -- (a3);
\draw [latex-,dashed] (b9) -- (a9);
\draw [latex-,dashed] (b11) -- (a11);

\draw [thick,latex-,thick] (b4) -- (a4);
\draw [thick,latex-,,thick] (b5) -- (a5);
\draw [thick,latex-,,thick] (b7) -- (a7);
\draw [thick,latex-,,thick] (b8) -- (a8);

\draw[]  (3.875,-1.25)  --  (3.875,-0.2) -- (4.625,-0.8) -- (4.625,-1.25);
\draw[]  (9.375,0.25)  --  (9.375,-0.2) -- (10.125,-0.8) -- (10.125,0.25) ;

\node (b) at (9.75,0) [inner sep=6pt] {};
\node (a) at (4.25,-1) [inner sep=6pt] {};
\draw [->,dashed] (b) to[out=-155,in=25] (a);

\end{tikzpicture} \hfill \qedhere
\end{center}
\end{proof}

\begin{proposition}\label{prop:BorelDef implies order}
The order $\ssucceq_\Omega$ is a refinement of the order $\succeq_{\Omega}$ on $\SI{p(t)}{n}$, i.e.
\begin{equation}
J \succeq_\Omega J' \qquad\Longrightarrow\qquad J \ssucceq_\Omega J'.
\end{equation}
\end{proposition}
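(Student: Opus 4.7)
The plan is to reduce the statement to a one-edge case in the $\Omega$-degeneration graph and then chain the resulting inequalities. More precisely, I would first observe that the relation $\ssucceq_\Omega$ on $\mathcal{M}_q^r$ is transitive (immediate from its coordinatewise definition), and hence so is the induced relation on $\SI{p(t)}{n}$. By definition, $J \succeq_\Omega J'$ means that there exists a directed path
\[
J = J^{(0)} \xrightarrow{\text{\tiny$\Omega$}} J^{(1)} \xrightarrow{\text{\tiny$\Omega$}} \cdots \xrightarrow{\text{\tiny$\Omega$}} J^{(k)} = J'
\]
in $\DG{p(t)}{n}{\Omega}$, so it suffices to prove $J^{(\ell-1)} \ssucceq_\Omega J^{(\ell)}$ for every single edge, and then compose.

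So I reduce to the case of a single edge $[J_{\aaa} \xrightarrow{\text{\tiny$\Omega$}} J'_{\aaa'}]$. I would write
\[
\mathfrak{J} = (\mathfrak{J}\cap\mathfrak{J}') \sqcup (\mathfrak{J}\setminus\mathfrak{J}'),\qquad \mathfrak{J}' = (\mathfrak{J}\cap\mathfrak{J}') \sqcup (\mathfrak{J}'\setminus\mathfrak{J}),
\]
and invoke Borel adjacency together with the set $\mathcal{E}_{J,J'}$ of decreasing-move compositions to parametrise the symmetric differences as $\{\mathsf{E}(\xx^\aaa)\,\vert\,\mathsf{E}\in\mathcal{E}_{J,J'}\}$ and $\{\mathsf{E}(\xx^{\aaa'})\,\vert\,\mathsf{E}\in\mathcal{E}_{J,J'}\}$ respectively. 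The key observation is that each $\mathsf{E}$ acts by multiplication by a fixed generalized monomial $\tfrac{\xx^{\underline{p}}}{\xx^{\underline{q}}}$, so because $\Omega$ is a multiplicative term order and $\xx^{\aaa} >_\Omega \xx^{\aaa'}$ by hypothesis, we obtain $\mathsf{E}(\xx^\aaa) >_\Omega \mathsf{E}(\xx^{\aaa'})$ for every $\mathsf{E}\in\mathcal{E}_{J,J'}$.

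At this point I would apply Lemma \ref{lem:addMonomials} inductively. Enumerate $\mathcal{E}_{J,J'} = \{\mathsf{E}_1,\ldots,\mathsf{E}_s\}$ in any fixed order and set $\mathfrak{A}_0 = \mathfrak{B}_0 = \mathfrak{J}\cap\mathfrak{J}'$, and then for $\ell = 1,\ldots,s$
\[
\mathfrak{A}_\ell = \mathfrak{A}_{\ell-1} \cup \{\mathsf{E}_\ell(\xx^\aaa)\},\qquad \mathfrak{B}_\ell = \mathfrak{B}_{\ell-1} \cup \{\mathsf{E}_\ell(\xx^{\aaa'})\}.
\]
Since the elements of $\mathfrak{J}\setminus\mathfrak{J}'$ are all distinct and disjoint from $\mathfrak{J}\cap\mathfrak{J}'$, the added monomial $\mathsf{E}_\ell(\xx^\aaa)$ is genuinely new to $\mathfrak{A}_{\ell-1}$ (and likewise for $\mathsf{E}_\ell(\xx^{\aaa'})$ with respect to $\mathfrak{B}_{\ell-1}$), while $\mathsf{E}_\ell(\xx^\aaa) >_\Omega \mathsf{E}_\ell(\xx^{\aaa'})$ by the previous paragraph. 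The base case $\mathfrak{A}_0 \ssucceq_\Omega \mathfrak{B}_0$ is trivial, so Lemma \ref{lem:addMonomials} propagates $\mathfrak{A}_\ell \ssucceq_\Omega \mathfrak{B}_\ell$ (in fact strictly) through each step. After $s$ additions we arrive at $\mathfrak{A}_s = \mathfrak{J}$ and $\mathfrak{B}_s = \mathfrak{J}'$, giving $\mathfrak{J} \ssucceq_\Omega \mathfrak{J}'$, i.e.\ $J \ssucceq_\Omega J'$, which closes the one-edge case and hence the proposition.

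There is no real obstacle: Lemma \ref{lem:addMonomials} is doing all the combinatorial heavy lifting, and the only conceptual step is recognising that an edge in $\DG{p(t)}{n}{\Omega}$ is precisely the data of a pair of bijectively paired sets of monomials comparable term-by-term via the same multiplicative factor, which is exactly the input format the lemma expects. The mildly delicate point worth writing explicitly is that the multiplicative property of $\Omega$ upgrades the single comparison $\xx^\aaa >_\Omega \xx^{\aaa'}$ between Borel maxima to the full collection of comparisons $\mathsf{E}(\xx^\aaa) >_\Omega \mathsf{E}(\xx^{\aaa'})$ needed to feed the lemma.
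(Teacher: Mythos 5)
Your proof is correct and follows essentially the same route as the paper's: reduce to a single edge by transitivity, decompose $\mathfrak{J}$ and $\mathfrak{J}'$ as $(\mathfrak{J}\cap\mathfrak{J}')$ plus the symmetric differences, use multiplicativity of $\Omega$ to get $\mathsf{E}(\xx^\aaa) >_\Omega \mathsf{E}(\xx^{\aaa'})$ for all $\mathsf{E}\in\mathcal{E}_{J,J'}$, and apply Lemma \ref{lem:addMonomials} repeatedly starting from the common intersection. Your write-up is simply a more explicit version of the paper's argument.
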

\begin{proof}
If $J = J'$, then obviously $\mathfrak{J} = \mathfrak{J}'$. For the transitive property of $\succeq_\Omega$ and $\ssucceq_\Omega$, it suffices to prove the implication for pairs of Borel adjacent ideals ideals $J,J'$ such that $[J_{\aaa}{\xrightarrow{\text{\tiny$\Omega$}}}J'_{\aaa'}]$ is a directed edge of $\DG{p(t)}{n}{\Omega}$. By definition, we have
\[
\mathfrak{J} = (\mathfrak{J} \cap \mathfrak{J}') \cup (\mathfrak{J}\setminus \mathfrak{J}')\quad\text{and}\quad\mathfrak{J}' = (\mathfrak{J} \cap \mathfrak{J}') \cup (\mathfrak{J}'\setminus \mathfrak{J}).
\]
We obtain the thesis, starting from $\mathfrak{J} \cap \mathfrak{J}'$ and applying repeatedly Lemma \ref{lem:addMonomials} on pairs
\[
\textsf{E}(\xx^\aaa) >_{\Omega} \textsf{E}(\xx^{\aaa'}),\qquad\forall\ \mathsf{E} \in \mathcal{E}_{J,J'}. \qedhere
\]
\end{proof}

In the following, maximal elements of $\SI{p(t)}{n}$ with respect to $\ssucceq_\Omega$ and $\succeq_\Omega$ play a crucial role. We introduce the following notation:
\begin{align}
& \max_{\ssucceq_\Omega} \SI{p(t)}{n} = \left\{ J \in \SI{p(t)}{n}\ \middle\vert\ \nexists\ J'\neq J \in \SI{p(t)}{n} \text{~s.t.~} J' \ssucc_{\Omega} J\right\},\\
& \max_{\succeq_\Omega} \SI{p(t)}{n} = \left\{ J \in \SI{p(t)}{n}\ \middle\vert\ \nexists\ J'\neq J \in \SI{p(t)}{n} \text{~s.t.~} J' \succ_{\Omega} J\right\}.
\end{align}
By Proposition \ref{prop:BorelDef implies order}, we have the inclusion $ \max_{\ssucceq_\Omega} \SI{p(t)}{n} \subseteq  \max_{\succeq_\Omega} \SI{p(t)}{n}$. We underline that computing the set $ \max_{\ssucceq_\Omega}  \SI{p(t)}{n}$ from Definition \ref{def:ssucc} is quite involved. Whereas, computing the set $\max_{\succeq_\Omega}  \SI{p(t)}{n}$ is much easier. Indeed, a maximal element with respect to $\succeq_{\Omega}$ corresponds to a vertex in $\DG{p(t)}{n}{\Omega}$ with no incoming edges (in graph theory, one says that the in-degree of the vertex is 0).

\subsection{Connectedness of the Hilbert scheme}

We recall that a strongly stable ideal $J \in \SI{p(t)}{n}$ is called $\Omega$-hilb-segment ideal, for some term order $\Omega$, if $\xx^\aaa >_\Omega \xx^\bbb$ for every $\xx^\aaa \in \mathfrak{J}$ and every $\xx^{\bbb} \in \comp{\mathfrak{J}}$. Moreover, notice that at least one hilb-segment ideal exists  for every Hilbert scheme $\Hilb{p(t)}{n}$. Indeed, the unique lexicographic ideal in $\SI{p(t)}{n}$ is the $\mathtt{DegLex}$-hilb-segment ideal.

\begin{theorem}\label{thm:segment case}
Let $\Omega$ be a term order such that there exists the $\Omega$-hilb-segment ideal $L \in \SI{p(t)}{n}$. Then,
\[
\max_{\ssucceq_{\Omega}} \SI{p(t)}{n} = \max_{\succeq_{\Omega}} \SI{p(t)}{n} = \{L\}.
\]
\end{theorem}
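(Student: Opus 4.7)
The plan is to split the double equality into two assertions, handling $\max_{\ssucceq_\Omega} \SI{p(t)}{n} = \{L\}$ directly from the hilb-segment property and using Proposition~\ref{prop:BorelDef implies order} as a bridge to the $\succeq_\Omega$-statement. The first assertion is almost immediate: the $\Omega$-hilb-segment condition says that $\mathfrak{L}$ is precisely the set of the $q(r)$ largest monomials of $\mathbb{T}^n_r$ in the $\Omega$-order. For any $J \in \SI{p(t)}{n}$, writing $\mathfrak{L} = \{\xx^{\aaa_1} >_\Omega \cdots >_\Omega \xx^{\aaa_q}\}$ and $\mathfrak{J} = \{\xx^{\bbb_1} >_\Omega \cdots >_\Omega \xx^{\bbb_q}\}$, a short counting argument shows $\xx^{\aaa_\ell} \geq_\Omega \xx^{\bbb_\ell}$ for each $\ell$: the opposite inequality $\xx^{\bbb_\ell} >_\Omega \xx^{\aaa_\ell}$ would force the top $\ell$ elements of $\mathfrak{J}$ into $\mathfrak{L}$ and make all $\ell$ of them strictly larger than $\xx^{\aaa_\ell}$, contradicting the fact that $\mathfrak{L}$ has only $\ell-1$ elements strictly above its $\ell$-th member. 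Whenever $J \neq L$, any $\xx^{\bbb_\ell} \in \mathfrak{J}\setminus\mathfrak{L}$ yields a strict inequality, so $L \ssucc_\Omega J$ and $\max_{\ssucceq_\Omega} \SI{p(t)}{n} = \{L\}$. Proposition~\ref{prop:BorelDef implies order} then gives $\{L\} \subseteq \max_{\succeq_\Omega} \SI{p(t)}{n}$.

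For the reverse inclusion I would exhibit, for every $J \in \SI{p(t)}{n}$ with $J \neq L$, an incoming edge at $J$ in $\DG{p(t)}{n}{\Omega}$, which prevents $J$ from being $\succeq_\Omega$-maximal. The construction is based on the procedure in Remark~\ref{rk:propertiesAdjacent}~(\textit{v}). Pick a Borel-maximal element $\xx^{\aaa'} \in \mathfrak{L}\setminus\mathfrak{J}$; because $\Omega$ refines $\geq_B$ and $\mathfrak{L}$ is an $\Omega$-initial segment, any Borel-greater $\xx^\delta \in \comp{\mathfrak{J}}$ would satisfy $\xx^\delta >_\Omega \xx^{\aaa'}$ and hence also lie in $\mathfrak{L}\setminus\mathfrak{J}$, contradicting maximality; so $\xx^{\aaa'}$ is automatically Borel-maximal in $\comp{\mathfrak{J}}$. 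Setting $k = \min \xx^{\aaa'}$, the equality $\vert \mathfrak{J}_{\geqslant k}\vert = \vert \mathfrak{L}_{\geqslant k}\vert$ (both ideals share the same cardinalities $\vert \mathfrak{J}_i\vert$ since they share the Hilbert polynomial) together with $\xx^{\aaa'} \in \mathfrak{L}_{\geqslant k}\setminus \mathfrak{J}_{\geqslant k}$ guarantees that $\mathfrak{J}_{\geqslant k}\setminus \mathfrak{L}_{\geqslant k}$ is non-empty, and any Borel-minimal $\xx^\bbb$ of this set is Borel-incomparable with $\xx^{\aaa'}$: being Borel below would pull $\xx^{\aaa'}$ into $\mathfrak{J}$, while being Borel above would pull $\xx^\bbb$ into $\mathfrak{L}_{\geqslant k}$. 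Running the recipe then produces a Borel adjacent $J'$ with $\xx^{\aaa'}$ and $\xx^\bbb$ as the respective Borel maxima of $\mathfrak{J}'\setminus\mathfrak{J}$ and $\mathfrak{J}\setminus\mathfrak{J}'$; since $\xx^{\aaa'} \in \mathfrak{L}$ and $\xx^\bbb \in \comp{\mathfrak{L}}$, the hilb-segment property yields $\xx^{\aaa'} >_\Omega \xx^\bbb$ immediately and produces the incoming edge $[J'_{\aaa'}\xrightarrow{\Omega} J_\bbb]$.

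The main obstacle I expect is the combinatorial verification that the recipe of Remark~\ref{rk:propertiesAdjacent}~(\textit{v}) actually succeeds from the chosen $\xx^{\aaa'}$: the admissibility conditions $(\dagger)$ and $(\ddagger)$ must be established and, more delicately, one must argue that $\xx^\bbb$ can be taken Borel-minimal in all of $\mathfrak{J}_{\geqslant k}$ and not merely in $\mathfrak{J}_{\geqslant k}\setminus\mathfrak{L}_{\geqslant k}$. The hilb-segment hypothesis enters the proof precisely twice: first to promote Borel-maximality inside $\mathfrak{L}\setminus\mathfrak{J}$ to Borel-maximality inside $\comp{\mathfrak{J}}$, and second to supply the final $\Omega$-comparison $\xx^{\aaa'} >_\Omega \xx^\bbb$ which orients the constructed Borel adjacent edge toward $J$.
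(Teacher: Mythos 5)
Your first half is fine: the elementwise counting argument for $\max_{\ssucceq_\Omega}\SI{p(t)}{n}=\{L\}$ is a correct (and slightly more self-contained) substitute for the paper's appeal to Lemma \ref{lem:addMonomials}, and the deduction $\{L\}\subseteq\max_{\succeq_\Omega}\SI{p(t)}{n}$ via Proposition \ref{prop:BorelDef implies order} is exactly the paper's route.

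The second half, however, has a genuine gap, and it is precisely the one you flag at the end as ``the main obstacle I expect'': the sentence ``Running the recipe then produces a Borel adjacent $J'$\,'' is the entire content of the hard direction, and as stated your one-shot construction is false. Conditions $(\dagger)$ and $(\ddagger)$ of Remark \ref{rk:propertiesAdjacent}\textit{(\ref{rk:propertiesAdjacent_v})} can fail for a Borel-maximal element of $\mathfrak{L}\setminus\mathfrak{J}$ paired with a Borel-minimal element of $\mathfrak{J}_{\geqslant k}\setminus\mathfrak{L}_{\geqslant k}$: Example \ref{ex:edgeSpanningTree} exhibits exactly this, where the Borel-maximal (indeed $\Omega$-maximal) monomial $x_1^4x_2^4\in\mathfrak{L}\setminus\mathfrak{J}$ fails $(\dagger)$ because the required composition $(\ed{1})^5$ is not admissible for it, and one must pass to the other Borel-maximal candidate $x_0^6x_2x_3$. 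The paper's proof therefore cannot avoid the iterative fallback (its \textit{Step 3}: discard from $\mathfrak{A}$ everything Borel-below the failed candidate and restart) together with a termination argument showing (a) the discarded set never exhausts $\mathfrak{A}$ when a failure occurs, (b) successive candidates have strictly smaller minimum variable, and (c) once the candidate lies in $\mathfrak{A}_0$ one has $\mathcal{E}=\{\textsf{id}\}$ and both conditions hold trivially. Your proposal contains none of this, nor the verification (which the paper does carry out) that the chosen $\xx^\bbb$ is Borel-minimal in all of $\mathfrak{J}_{\geqslant k}$ rather than only in $\mathfrak{J}_{\geqslant k}\setminus\mathfrak{L}_{\geqslant k}$ --- a fact that does follow from the segment property but must be argued. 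Until the failure case is handled, the existence of an incoming edge at an arbitrary $J\neq L$ is not established.
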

\begin{proof}
Let us start proving that $L$ is the unique maximal element in $ \SI{p(t)}{n}$ with respect to $\ssucceq_{\Omega}$. For any $J \neq L \in \SI{p(t)}{n}$, we have
\[
\mathfrak{L} = (\mathfrak{L} \cap \mathfrak{J}) \cup (\mathfrak{L} \setminus \mathfrak{J})\quad \text{and}\quad \mathfrak{J} = (\mathfrak{L} \cap \mathfrak{J}) \cup (\mathfrak{J} \setminus \mathfrak{L}).
\]
By definition of hilb-segment ideal, every monomial in $ (\mathfrak{L} \setminus \mathfrak{J})\subseteq \mathfrak{L}$ is greater than every monomial in $ (\mathfrak{J} \setminus \mathfrak{L}) \subseteq \comp{\mathfrak{L}}$. Hence, by Lemma \ref{lem:addMonomials} $\mathfrak{L} \ssucc_\Omega \mathfrak{J} \Leftrightarrow L \ssucc_\Omega J$. 

\smallskip

Since $\max_{\ssucceq_{\Omega}} \SI{p(t)}{n} \subseteq \max_{\succeq_{\Omega}} \SI{p(t)}{n}$, in order to prove that $L$ is the unique maximal ideal also respect to $\succeq_\Omega$, we show that for any $J \neq L \in \SI{p(t)}{n}$, there exists a Borel adjacent ideal $I \in \SI{p(t)}{n}$ such that $I \succ_{\Omega} J$. 
First, we describe the procedure to find $I$ and subsequently we prove correctness and termination. We use the idea discussed in Remark \ref{rk:propertiesAdjacent}\textit{(\ref{rk:propertiesAdjacent_v})}. 

\underline{\textit{Step 0.}} Denote by $\mathfrak{A} = \mathfrak{L} \setminus \mathfrak{J}$ and $\mathfrak{B} = \mathfrak{J}\setminus\mathfrak{L}$. We have that every monomial in $\mathfrak{A}$ is greater than every monomial in $\mathfrak{B}$ with respect to $\geq_\Omega$.

\smallskip

\underline{\textit{Step 1.}}
Let $\xx^\aaa = \max_{\geq_\Omega} \mathfrak{A}$ and let $\xx^\bbb = \min_{\geq_\Omega} \mathfrak{B}_k$, where $k = \min \xx^\aaa$. 

\smallskip

\underline{\textit{Step 2.}} Consider the set $\mathfrak{E} = \{ \xx^\ccc \in \mathfrak{B}\ \vert\ \xx^\bbb \geq_B \xx^\ccc\}$ and the associated set of compositions of elementary decreasing moves $\mathcal{E}$ such that $\mathfrak{E} = \{\textsf{E}(\xx^\bbb)\ \vert\ \textsf{E} \in \mathcal{E}\}$.
If $({\dagger})$ every move $\textsf{E}\in\mathcal{E}$ is also admissible for $\xx^\aaa$, i.e.~$\textsf{E}(\xx^\aaa)$ is a monomial, and $({\ddagger})$ for an admissible move $\eu{h}$, the monomial $\eu{h}\big(\textsf{E}(\xx^\aaa)\big)$ is either contained in $\mathfrak{J}$ or is of the type $\widetilde{\textsf{E}}(\xx^\aaa)$ for some $\widetilde{\textsf{E}} \in \mathcal{E}$, then the $\Omega$-degeneration graph has the edge $[I_{\aaa}{\xrightarrow{\text{\tiny$\Omega$}}} J_{\bbb}]$, where $I$ is the ideal generated by 
\[
\mathfrak{I} =\mathfrak{J} \setminus  \{\textsf{E}(\xx^\bbb)\ \vert\ \textsf{E} \in \mathcal{E}\} \cup \{\textsf{E}(\xx^\aaa)\ \vert\ \textsf{E} \in \mathcal{E}\}.
\]

\smallskip

\underline{\textit{Step 3.}} If condition $({\dagger})$ or condition $({\ddagger})$ in \textit{Step 2} is not satisfied, we start again from \textit{Step 1} with
\[
\mathfrak{A}' = \mathfrak{A} \setminus \{ \xx^\ccc \in \mathfrak{A}\ \vert\ \xx^\aaa \geq_B \xx^\ccc  \},\qquad \mathfrak{B}' = \mathfrak{B}.
\]

\textit{Correctness and termination.}  $\bullet$ The monomial $\xx^{\aaa}$ is a maximal element of $\comp{\mathfrak{J}}$ with respect to $\geq_B$. Indeed, $\xx^\aaa \in \mathfrak{A}\subset \mathfrak{L}\setminus\mathfrak{J}  \subset \comp{\mathfrak{J}}$ and for any admissible move $\eu{h}$, the monomial $\eu{h}(\xx^\aaa)$ is contained in $\mathfrak{L}\cap\mathfrak{J}$, as $\mathfrak{L}$ is closed under the action of increasing moves and $\eu{h}(\xx^\aaa) >_B \xx^\aaa$ implies $\eu{h}(\xx^\aaa) >_{\Omega} \xx^\aaa$ and $\eu{h}(\xx^\aaa)$ can not be one of the monomials removed from $\mathfrak{L}\setminus \mathfrak{J}$ in \textit{Step 3}.

\smallskip

$\bullet$ At the beginning, we have $\vert \mathfrak{A}_i \vert = \vert \mathfrak{B}_i \vert$ for all $i=0,\ldots,n$, subsequently $\vert \mathfrak{A}_i \vert \leqslant \vert \mathfrak{B}_i \vert$ for all $i=0,\ldots,n$. Hence, $\xx^\aaa \in \mathfrak{A}_k$ implies $\vert  \mathfrak{B}_k\vert \geqslant \vert  \mathfrak{A}_k \vert > 0$, so that the monomial $\xx^\bbb = \min_{\geq_\Omega} \mathfrak{B}_k$ exists. By definition, $\xx^\bbb$ is a minimal element in $\mathfrak{J} \cap \kk[x_k,\ldots,x_n]$ with respect to $\geq_B$. Since $\xx^\aaa >_\Omega \xx^\bbb$, $\xx^\aaa \in \comp{\mathfrak{J}}$ and $\xx^\bbb\in\mathfrak{J}$ are not comparable with respect to $\geq_B$.

\smallskip

$\bullet$ Let $\mathfrak{F}$ be the set of monomials $\{\textsf{E}(\xx^\aaa)\ \vert\ \textsf{E} \in \mathcal{E}\text{~admissible for~}\xx^\aaa\}$ and assume that $\vert \mathfrak{F} \vert < \vert \mathfrak{E} \vert$ (condition $({\dagger})$ in \textit{Step 2} in not satisfied).
Then, there exists a monomial $\textsf{E}(\xx^\bbb) \in \mathfrak{B}$ that is not paired with a monomial $\xx^{\aaa'} \in \mathfrak{L}\setminus\mathfrak{J}$ by some set $\mathcal{E}$. Hence, $\mathfrak{A}'$ is not empty, as $\mathfrak{F} \cap \mathfrak{A} \subset  \{ \xx^\ccc \in \mathfrak{A}\ \vert\ \xx^\aaa \geq_B \xx^\ccc  \}$.

$\bullet$ Assume that condition $({\ddagger})$ in \textit{Step 2} is not satisfied. Namely, $\vert \mathfrak{F} \vert = \vert \mathfrak{E} \vert$ but there exists a monomial $\textsf{E}(\xx^\aaa) \in \mathfrak{F}$ and an elementary move $\eu{h}$ such that $\eu{h}(\textsf{E}(\xx^\aaa))$ is not contained in $\mathfrak{J}$. If $\mathfrak{F} \subset \mathfrak{A}$, then $\eu{h}(\textsf{E}(\xx^\aaa))$ is contained in $\mathfrak{A}$ and it is not comparable with $\xx^\aaa$ with respect to $\geq_B$ and $\mathfrak{A}'$ is not empty. If there exists $\textsf{E} \in \mathcal{E}$ such that $\textsf{E}(\xx^\aaa) \notin \mathfrak{A}$, then the monomial $\textsf{E}(\xx^\bbb) \in \mathfrak{B}$ is not paired with a monomial $\xx^{\aaa'} \in \mathfrak{L}\setminus\mathfrak{J}$ by some set $\mathcal{E}$ and we apply the same argument as before. 

\smallskip

$\bullet$ Notice that the minimality of $\xx^\bbb$ among monomials in $\mathfrak{B}_k \setminus \mathfrak{A}_k$ implies that $\min \textsf{E}(\xx^\bbb) < k$ for all $\textsf{E} \neq \textsf{id} \in \mathcal{E}$ and that $\min \textsf{E}(\xx^\aaa) =  \min \textsf{E}(\xx^\bbb)$ for all $\textsf{E} \in \mathcal{E}$.

\smallskip 

$\bullet$ Conditions $({\dagger})$ and $({\ddagger})$ required in \textit{Step 2} guarantee that the set $\{\textsf{E}(\xx^\aaa)\ \vert\ \textsf{E} \in \mathcal{E}\}$ is an \lq\lq outer border\rq\rq~of $\mathfrak{J}$ and the set $\{\textsf{E}(\xx^\bbb)\ \vert\ \textsf{E} \in \mathcal{E}\}$ is an \lq\lq inner border\rq\rq~of $\mathfrak{J}$. Furthermore, as $\xx^\aaa \in \comp{\mathfrak{J}}$ and $\xx^\bbb\in\mathfrak{J}$ are not comparable with respect to $\geq_B$, the set $\mathfrak{I} =\mathfrak{J} \setminus  \{\textsf{E}(\xx^\bbb)\ \vert\ \textsf{E} \in \mathcal{E}\} \cup \{\textsf{E}(\xx^\aaa)\ \vert\ \textsf{E} \in \mathcal{E}\}$ is closed under the action of increasing elementary moves and $\vert \mathfrak{I}_i\vert = \vert \mathfrak{J}_i\vert,\ i=0,\ldots,n$. Then, $\mathfrak{I}$ corresponds to an ideal $I \in \SI{p(t)}{n}$ that is Borel adjacent to $J$ and $\xx^\aaa >_{\Omega} \xx^\bbb$ implies that $I \succ_\Omega J$.

\smallskip

$\bullet$ Each time that we encounter a failure in \textit{Step 3}, the non-emptiness of $\mathfrak{A}'$ is guaranteed by monomials with minimum variable strictly lower that $\min \xx^\aaa$. Since $J\neq L$, then $\mathfrak{J}_0 \neq \mathfrak{L}_0$ and applying repeatedly the procedure we eventually obtain $\xx^\aaa \in \mathfrak{A}_0$. In this case, $\xx^\bbb = \min_{\geq_\Omega} \mathfrak{B}_0$, $\mathcal{E} = \{\textsf{id}\}$, conditions $({\dagger})$ and $({\ddagger})$ are satisfied and we finally find  $I$ such that $I \succ_{\Omega} J$.
\end{proof}

\begin{example}\label{ex:edgeSpanningTree} Consider the polynomial ring $\kk[x_0,x_1,x_2,x_3]$ and the Hilbert polynomial $p(t) = 5t-2$. The ideal $L^\sat = (x_3^2,x_2x_3,x_2^4)$ is the $\Omega$-hilb-segment ideal with respect to the term order $\Omega$ described by the matrix
\[
M_{\Omega} = {\small \left[\begin{array}{cccc}
1 & 1 & 1 & 1 \\ 1 & 3 & 17 & 47 \\ 0 & 0 & 1 & 0 \\ 0 & 1 & 0 & 0
\end{array}\right]}.
\]
Let us determine an edge $[I {\xrightarrow{\text{\tiny$\Omega$}}} J]$ of $\DG{5t-2}{3}{\Omega}$ for $J^\sat = (x_3^2,x_2^2 x_3,x_1 x_2 x_3,x_1^2 x_3,x_2^5)$ following the procedure presented in the proof of Theorem \ref{thm:segment case}. The Gotzmann number of $p(t) = 5t-2$ is 8, so we start considering
\[
\begin{split}
\mathfrak{A} &{} = \mathfrak{L}\setminus\mathfrak{J} = \{x_1^4 x_2^4, x_0x_1^3 x_2^4, x_0^2 x_1^2 x_2^4, x_0^3x_1 x_2^4, x_0^4 x_2^4,x_0^6 x_2x_3\},\\
\mathfrak{B} &{} = \mathfrak{J}\setminus\mathfrak{L} = \{x_1^7 x_3, x_0 x_1^6 x_3, x_0^2 x_1^5 x_3, x_0^3 x_1^4 x_3, x_0^4 x_1^3 x_3, x_0^5 x_1^2 x_3\}.\\
\end{split}
\] 
We have $\max_{\geq_\Omega} \mathfrak{A} = x_1^4 x_2^4$, $\min x_1^4 x_2^4 = 1$, $\min_{\geq_\Omega} \mathfrak{B}_1 = x_1^7 x_3$ and
\[
\mathcal{E} = \left\{\textsf{id}, \ed{1}, (\ed{1})^2, (\ed{1})^3, (\ed{1})^4, (\ed{1})^5\right\}
\]
Condition $(\dagger)$ of \textit{Step 2} is not satisfied because the last move in $\mathcal{E}$ is not admissible for $x_1^4 x_2^4$. Hence, we consider
\[
\mathfrak{A}' = \mathfrak{A} \setminus \{x_0^{c_0} x_1^{c_1}x_2^{c_2} x_3^{c_3} \in \mathfrak{A} \ \vert\ x_1^4 x_2^4 \geq_{B}x_0^{c_0} x_1^{c_1}x_2^{c_2} x_3^{c_3} \} = \{x_0^6 x_2x_3\} \qquad\text{and}\qquad\mathfrak{B}' =\mathfrak{B}.
\]
The next pair of monomials to examine is $\max_{\geq_{\Omega}} \mathfrak{A}' = x_0^6 x_2x_3$ and $\min_{\geq_\Omega} \mathfrak{B}'_0 = x_0^5 x_1^2 x_3$. As $\min x_0^6 x_2x_3 = \min x_0^5 x_1^2 x_3 = 0$, conditions $(\dagger)$ and $(\ddagger)$ are surely satisfied and the saturation of the ideal generated by $\mathfrak{I} = \mathfrak{J} \setminus\{x_0^5 x_1^2 x_3 \}\cup\{x_0^6 x_2x_3\}$ is $I^\sat = (x_3^2,x_2 x_3,x_1^3 x_3, x_2^5)$.
\end{example}

\begin{corollary}\label{cor:spanning tree}
The Borel graph $\mathscr{G}_{p(t)}^n$ of $\Hilb{p(t)}{n}$ is connected.
\end{corollary}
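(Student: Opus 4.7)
The plan is to deduce connectedness directly from Theorem \ref{thm:segment case} by exploiting the fact that a hilb-segment ideal always exists for at least one term order. Specifically, for any admissible Hilbert polynomial $p(t)$ the set $\SI{p(t)}{n}$ contains the (unique) lexicographic ideal $L$, and this ideal is precisely the $\mathtt{DegLex}$-hilb-segment ideal. So the hypothesis of Theorem \ref{thm:segment case} is always satisfied with $\Omega = \mathtt{DegLex}$.

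Next, I would argue that $\{L\}$ being the unique maximum of $\SI{p(t)}{n}$ with respect to $\succeq_{\mathtt{DegLex}}$ forces every strongly stable ideal $J \in \SI{p(t)}{n}$ to be connected to $L$ in $\mathscr{G}_{p(t)}^n$. Indeed, the inductive step in the proof of Theorem \ref{thm:segment case} produces, for any $J \neq L$, an ideal $I \in \SI{p(t)}{n}$ that is Borel adjacent to $J$ and satisfies $I \succ_{\mathtt{DegLex}} J$. Iterating this construction yields a finite sequence
\[
J = J_0,\ J_1,\ \ldots,\ J_k = L
\]
in which consecutive ideals are Borel adjacent and which is strictly increasing with respect to $\succeq_{\mathtt{DegLex}}$. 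Finiteness of $\SI{p(t)}{n}$ together with uniqueness of the $\succeq_{\mathtt{DegLex}}$-maximum guarantees that the sequence terminates at $L$.

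By definition of the Borel graph, this sequence is a path in $\mathscr{G}_{p(t)}^n$ joining $J$ to $L$. Since $J$ was an arbitrary vertex, every vertex of $\mathscr{G}_{p(t)}^n$ lies in the same connected component as $L$, and hence $\mathscr{G}_{p(t)}^n$ is connected. No substantive obstacle is expected here: the whole argument is a direct corollary of Theorem \ref{thm:segment case} together with the standard existence of a lexicographic ideal in $\SI{p(t)}{n}$.
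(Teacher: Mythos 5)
Your proposal is correct and follows essentially the same route as the paper: both rely on the existence of the $\mathtt{DegLex}$-hilb-segment ideal and on iterating the constructive step from the proof of Theorem \ref{thm:segment case}, which for every $J \neq L$ produces a Borel adjacent $I$ with $I \succ_{\Omega} J$, so that finiteness and acyclicity of $\succeq_{\Omega}$ force the chain to terminate at $L$. The paper merely packages the edges produced by this procedure as a spanning tree rooted at the hilb-segment ideal, whereas you trace the individual paths; the underlying argument is identical.
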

\begin{proof}
Choose a term order $\Omega$ such that $\SI{p(t)}{n}$ contains the $\Omega$-hilb-segment ideal (we recall that each $\SI{p(t)}{n}$ contains at least the $\mathtt{DegLex}$-hilb-segment ideal). Then, we consider the subgraph $\mathscr{T}_{p(t)}^n(\Omega)$ of the $\Omega$-degeneration graph $\DG{p(t)}{n}{\Omega}$ with the same set of vertices and whose edges $E_{\textnormal{d}}\big(\mathscr{T}_{p(t)}^n(\Omega)\big) \subseteq E_{\textnormal{d}}\big(\DG{p(t)}{n}{\Omega}\big)$ correspond to pairs of Borel adjacent ideals determined with the procedure introduced in the proof of Theorem \ref{thm:segment case}. The graph $\mathscr{T}_{p(t)}^n(\Omega)$ turns out to be a minimum spanning tree of $\DG{p(t)}{n}{\Omega}$, because it  is a directed graph and each vertex has exactly one incoming edge, except the one corresponding to the $\Omega$-hilb-segment ideal that is the root of the tree.  
The connectedness of $\mathscr{G}_{p(t)}^n$ follows from the connectedness of $\mathscr{T}_{p(t)}^n(\Omega)$.
\end{proof}

\begin{example}
The set $\SI{5t-2}{3}$ introduced in Example \ref{ex:idealsOrder} contains 7 ideals. There is no $\mathtt{RevLex}$-hilb-segment ideal, as the $\mathtt{RevLex}$-degeneration graph has two vertices with no incoming edges, namely $\max_{\succeq_{\mathtt{RevLex}}} \SI{5t-2}{3}= \{J_6,J_7\}$ (see Figure \ref{fig:idealsOrder}).
The ideals $J_1$, $J_3$, $J_4$, $J_5$ and $J_7$ are hilb-segment ideals with respect to term orders $\Omega_i$ described by the matrices
\[
M_{\Omega_i} = {\small \left[\begin{array}{cccc}
1 & 1 & 1 & 1 \\ \omega_{i,0} & \omega_{i,1} & \omega_{i,2} & \omega_{i,3} \\ 0 & 0 & 1 & 0 \\ 0 & 1 & 0 & 0
\end{array}\right]},\qquad i=1,3,4,5,7
\]
with $\ooo_1 = (1,2,4,19)$, $\ooo_3 = (1,4,9,44)$, $\ooo_4 = (1, 4, 12,53)$, $\ooo_5 = (1,3,11,45)$ and $\ooo_7 = (1,3,17,47)$. 
In Figure \ref{fig:spanningTree}, there is the Borel graph $\Sk{5t-2}{3}$ and the spanning trees computed with the procedure given in the proof of Theorem \ref{thm:segment case} varying the hilb-segment ideal.
\end{example}

\captionsetup[subfloat]{position=bottom,width=0pt}
\begin{figure}[!ht]
\begin{center}
\subfloat[][$\Sk{5t-2}{3}$.]{\label{fig:spanningTreeBorelGraph}
\begin{tikzpicture}[scale=0.5,>=angle 60]
\node (0) at (-0.5,1) [draw,circle,inner sep=1.5pt] {\scriptsize $J_1$};
\node (1) at (1.5,-2) [draw,circle,inner sep=1.5pt] {\scriptsize $J_2$};
\node (2) at (2,1) [draw,circle,inner sep=1.5pt] {\scriptsize $J_3$};
\node (3) at (3.25,-0.5) [draw,circle,inner sep=1.5pt] {\scriptsize $J_4$};
\node (4) at (3.5,3) [draw,circle,inner sep=1.5pt] {\scriptsize $J_5$};
\node (5) at (5.5,-1) [draw,circle,inner sep=1.5pt] {\scriptsize $J_6$};
\node (6) at (6,2) [draw,circle,inner sep=1.5pt] {\scriptsize $J_7$};

  \draw [-,thick] (1) -- (0);
  \draw [-,thick] (2) -- (1);
  \draw [-,thick] (2) -- (0);
  \draw [-,thick] (3) -- (0);
  \draw [-,thick] (3) -- (2);
  \draw [-,thick] (4) -- (0);
  \draw [-,thick] (4) -- (2);
  \draw [-,thick] (4) -- (3);
  \draw [-,thick] (5) -- (1);
  \draw [-,thick] (5) -- (3);
  \draw [-,thick] (5) -- (4);
  \draw [-,thick] (6) -- (4);

\end{tikzpicture}
}
\hspace{1cm}
\subfloat[][$\mathscr{T}_{5t-2}^{3}(\Omega_1)$.]{\label{fig:spanningTreeBorelGraph1}
\begin{tikzpicture}[scale=0.5,>=angle 60]

\node (0) at (-0.5,1) [draw,rectangle,inner sep=2.5pt] {\scriptsize $J_1$};
\node (1) at (1.5,-2) [draw,circle,inner sep=1.5pt] {\scriptsize $J_2$};
\node (2) at (2,1) [draw,circle,inner sep=1.5pt] {\scriptsize $J_3$};
\node (3) at (3.25,-0.5) [draw,circle,inner sep=1.5pt] {\scriptsize $J_4$};
\node (4) at (3.5,3) [draw,circle,inner sep=1.5pt] {\scriptsize $J_5$};
\node (5) at (5.5,-1) [draw,circle,inner sep=1.5pt] {\scriptsize $J_6$};
\node (6) at (6,2) [draw,circle,inner sep=1.5pt] {\scriptsize $J_7$};

  \draw [<-,thick] (1) -- (0);
  \draw [-,dotted] (2) -- (1);
  \draw [<-,thick] (2) -- (0);
  \draw [-,dotted] (3) -- (0);
  \draw [<-,thick] (3) -- (2);
  \draw [-,dotted] (4) -- (0);
  \draw [-,dotted] (4) -- (2);
  \draw [<-,thick] (4) -- (3);
  \draw [-,dotted] (5) -- (1);
  \draw [<-,thick] (5) -- (3);
  \draw [-,dotted] (5) -- (4);
  \draw [<-,thick] (6) -- (4);
  
\end{tikzpicture}
}
\hspace{1cm}
\subfloat[][$\mathscr{T}_{5t-2}^{3}(\Omega_3)$.]{\label{fig:spanningTreeBorelGraph3}
\begin{tikzpicture}[scale=0.5,>=angle 60]

\node (0) at (-0.5,1) [draw,circle,inner sep=1.5pt] {\scriptsize $J_1$};
\node (1) at (1.5,-2) [draw,circle,inner sep=1.5pt] {\scriptsize $J_2$};
\node (2) at (2,1) [draw,rectangle,inner sep=2.5pt] {\scriptsize $J_3$};
\node (3) at (3.25,-0.5) [draw,circle,inner sep=1.5pt] {\scriptsize $J_4$};
\node (4) at (3.5,3) [draw,circle,inner sep=1.5pt] {\scriptsize $J_5$};
\node (5) at (5.5,-1) [draw,circle,inner sep=1.5pt] {\scriptsize $J_6$};
\node (6) at (6,2) [draw,circle,inner sep=1.5pt] {\scriptsize $J_7$};

  \draw [-,dotted] (1) -- (0);
  \draw [->,thick] (2) -- (1);
  \draw [->,thick] (2) -- (0);
  \draw [-,dotted] (3) -- (0);
  \draw [<-,thick] (3) -- (2);
  \draw [-,dotted] (4) -- (0);
  \draw [-,dotted] (4) -- (2);
  \draw [<-,thick] (4) -- (3);
  \draw [-,dotted] (5) -- (1);
  \draw [<-,thick] (5) -- (3);
  \draw [-,dotted] (5) -- (4);
  \draw [<-,thick] (6) -- (4);

\end{tikzpicture}
}

\subfloat[][$\mathscr{T}_{5t-2}^{3}(\Omega_4)$.]{\label{fig:spanningTreeBorelGraph4}
\begin{tikzpicture}[scale=0.5,>=angle 60]
\node (0) at (-0.5,1) [draw,circle,inner sep=1.5pt] {\scriptsize $J_1$};
\node (1) at (1.5,-2) [draw,circle,inner sep=1.5pt] {\scriptsize $J_2$};
\node (2) at (2,1) [draw,circle,inner sep=1.5pt] {\scriptsize $J_3$};
\node (3) at (3.25,-0.5) [draw,rectangle,inner sep=2.5pt] {\scriptsize $J_4$};
\node (4) at (3.5,3) [draw,circle,inner sep=1.5pt] {\scriptsize $J_5$};
\node (5) at (5.5,-1) [draw,circle,inner sep=1.5pt] {\scriptsize $J_6$};
\node (6) at (6,2) [draw,circle,inner sep=1.5pt] {\scriptsize $J_7$};

  \draw [-,dotted] (1) -- (0);
  \draw [->,thick] (2) -- (1);
  \draw [->,thick] (2) -- (0);
  \draw [-,dotted] (3) -- (0);
  \draw [->,thick] (3) -- (2);
  \draw [-,dotted] (4) -- (0);
  \draw [-,dotted] (4) -- (2);
  \draw [<-,thick] (4) -- (3);
  \draw [-,dotted] (5) -- (1);
  \draw [<-,thick] (5) -- (3);
  \draw [-,dotted] (5) -- (4);
  \draw [<-,thick] (6) -- (4);

\end{tikzpicture}
}
\hspace{1cm}
\subfloat[][$\mathscr{T}_{5t-2}^{3}(\Omega_5)$.]{\label{fig:spanningTreeBorelGraph5}
\begin{tikzpicture}[scale=0.5,>=angle 60]

\node (0) at (-0.5,1) [draw,circle,inner sep=1.5pt] {\scriptsize $J_1$};
\node (1) at (1.5,-2) [draw,circle,inner sep=1.5pt] {\scriptsize $J_2$};
\node (2) at (2,1) [draw,circle,inner sep=1.5pt] {\scriptsize $J_3$};
\node (3) at (3.25,-0.5) [draw,circle,inner sep=1.5pt] {\scriptsize $J_4$};
\node (4) at (3.5,3) [draw,rectangle,inner sep=2.5pt] {\scriptsize $J_5$};
\node (5) at (5.5,-1) [draw,circle,inner sep=1.5pt] {\scriptsize $J_6$};
\node (6) at (6,2) [draw,circle,inner sep=1.5pt] {\scriptsize $J_7$};

  \draw [-,dotted] (1) -- (0);
  \draw [->,thick] (2) -- (1);
  \draw [->,thick] (2) -- (0);
  \draw [-,dotted] (3) -- (0);
  \draw [->,thick] (3) -- (2);
  \draw [-,dotted] (4) -- (0);
  \draw [-,dotted] (4) -- (2);
  \draw [->,thick] (4) -- (3);
  \draw [-,dotted] (5) -- (1);
  \draw [-,dotted] (5) -- (3);
  \draw [<-,thick] (5) -- (4);
  \draw [<-,thick] (6) -- (4);
  
\end{tikzpicture}
}
\hspace{1cm}
\subfloat[][$\mathscr{T}_{5t-2}^{3}(\Omega_7)$.]{\label{fig:spanningTreeBorelGraph7}
\begin{tikzpicture}[scale=0.5,>=angle 60]

\node (0) at (-0.5,1) [draw,circle,inner sep=1.5pt] {\scriptsize $J_1$};
\node (1) at (1.5,-2) [draw,circle,inner sep=1.5pt] {\scriptsize $J_2$};
\node (2) at (2,1) [draw,circle,inner sep=1.5pt] {\scriptsize $J_3$};
\node (3) at (3.25,-0.5) [draw,circle,inner sep=1.5pt] {\scriptsize $J_4$};
\node (4) at (3.5,3) [draw,circle,inner sep=1.5pt] {\scriptsize $J_5$};
\node (5) at (5.5,-1) [draw,circle,inner sep=1.5pt] {\scriptsize $J_6$};
\node (6) at (6,2) [draw,rectangle,inner sep=2.5pt] {\scriptsize $J_7$};

  \draw [-,dotted] (1) -- (0);
  \draw [->,thick] (2) -- (1);
  \draw [->,thick] (2) -- (0);
  \draw [-,dotted] (3) -- (0);
  \draw [->,thick] (3) -- (2);
  \draw [-,dotted] (4) -- (0);
  \draw [-,dotted] (4) -- (2);
  \draw [->,thick] (4) -- (3);
  \draw [-,dotted] (5) -- (1);
  \draw [-,dotted] (5) -- (3);
  \draw [<-,thick] (5) -- (4);
  \draw [->,thick] (6) -- (4);

\end{tikzpicture}
}
\caption{The spanning trees of $\Sk{5t-2}{3}$ determined using Theorem \ref{thm:segment case} and varying the hilb-segment ideal.}
\label{fig:spanningTree}
\end{center}
\end{figure}
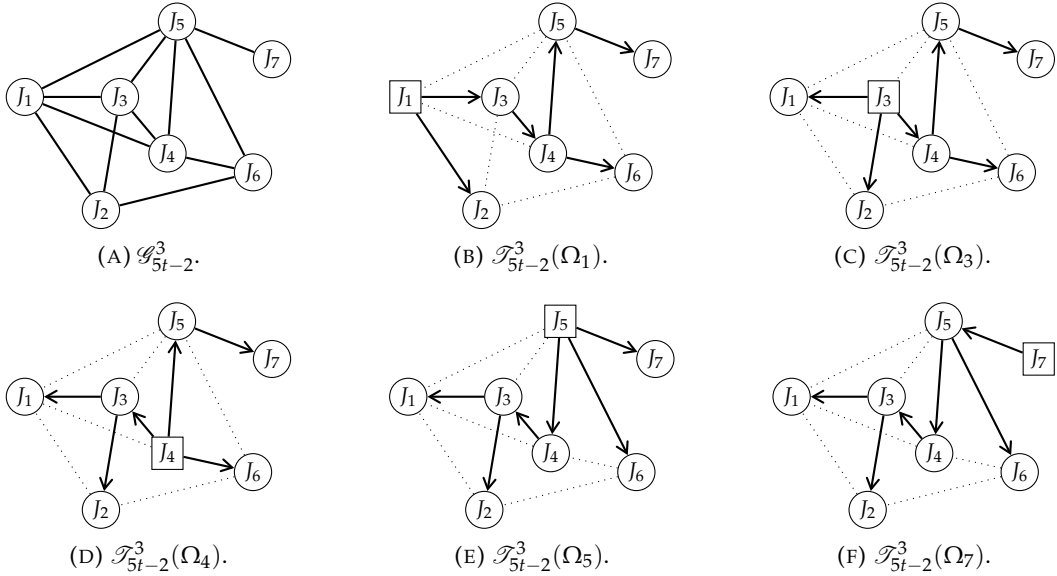

\begin{theorem}\label{thm:chainConnected}
The Hilbert scheme $\Hilb{p(t)}{n}$ is rationally chain connected.
\end{theorem}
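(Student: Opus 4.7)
The strategy is to reduce the statement to Corollary \ref{cor:spanning tree} by producing, for each point of the Hilbert scheme, a short chain of rational curves linking it to some point defined by a strongly stable ideal. Fix two arbitrary closed points $[X],[Y] \in \Hilb{p(t)}{n}$.

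The first step is to connect $[X]$ to the point $[J]$ corresponding to the generic initial ideal $J = \Gin{\Omega}(I_X) \in \SI{p(t)}{n}$ for a chosen term order $\Omega$ (say $\mathtt{DegLex}$). For a sufficiently general element $g \in \textnormal{GL}_\kk(n+1)$, Galligo's theorem gives $\In{\Omega}(g \cdot I_X) = J$, which is strongly stable. The affine pencil $t \mapsto (1-t)\,\textsf{id} + t\,g$ takes values in $\textnormal{GL}_\kk(n+1)$ outside of finitely many values of $t \in \mathbb{A}^1$, and composing with $h \mapsto [h \cdot X] \in \Hilb{p(t)}{n}$ gives a morphism from a cofinite open subset of $\mathbb{A}^1$ to the Hilbert scheme. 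By the projectivity of $\Hilb{p(t)}{n}$ (valuative criterion), this extends uniquely to a morphism $\PP^1 \to \Hilb{p(t)}{n}$ whose image is a rational curve (or a point) passing through both $[X]$ and $[g\cdot X]$. The classical Gr\"obner degeneration of $g \cdot I_X$ with respect to $\Omega$ (see \cite[Theorem 15.17]{Eisenbud}) then produces a flat family over $\mathbb{A}^1$ with generic fibre $\Proj \kk[\xx]/(g\cdot I_X)$ and special fibre $\Proj \kk[\xx]/J$; extending the induced morphism $\mathbb{A}^1 \to \Hilb{p(t)}{n}$ to $\PP^1$ by the same projectivity argument gives a second rational curve joining $[g\cdot X]$ and $[J]$.

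The second step applies the same construction to $[Y]$, producing a strongly stable ideal $J' \in \SI{p(t)}{n}$ rationally chain connected to $[Y]$. By Corollary \ref{cor:spanning tree} the Borel graph $\mathscr{G}_{p(t)}^n$ is connected, so there is a path $J = J_0, J_1, \ldots, J_s = J'$ in $\mathscr{G}_{p(t)}^n$ in which each consecutive pair $J_i, J_{i+1}$ is Borel adjacent. Theorem \ref{thm:mainDef} attaches to each such pair a Borel deformation $X_{J_i,J_{i+1}} \subset \PP^1 \times_\kk \PP^n \to \PP^1$, which induces a morphism $\varphi_i: \PP^1 \to \Hilb{p(t)}{n}$ with $\varphi_i([1:0]) = [J_i]$ and $\varphi_i([0:1]) = [J_{i+1}]$; its image is a rational curve (or a point) on the Hilbert scheme.

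Concatenating all these rational curves yields a chain linking $[X]$ to $[Y]$, proving that $\Hilb{p(t)}{n}$ is rationally chain connected. The only delicate point is to ensure that the two morphisms constructed in the first step, a priori defined only on open subsets of $\mathbb{A}^1$, really do extend to $\PP^1$ with images containing the prescribed points; this is however automatic from the valuative criterion applied to the projective scheme $\Hilb{p(t)}{n}$, so the main conceptual content is entirely carried by Theorem \ref{thm:mainDef} and Corollary \ref{cor:spanning tree}.
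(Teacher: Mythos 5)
Your proof is correct and follows essentially the same route as the paper: degenerate each point to a strongly stable ideal via its generic initial ideal, then link the resulting Borel-fixed points through a chain of Borel deformations obtained from the connectedness of the Borel graph (Corollary \ref{cor:spanning tree}) and Theorem \ref{thm:mainDef}. The only difference is that you insert an extra rational curve (the pencil in $\textnormal{GL}_\kk(n+1)$) to pass from $[X]$ to $[g\cdot X]$ before applying the Gr\"obner degeneration, which makes the first step slightly more careful than the paper's one-line appeal to a flat family joining $I_X$ directly to its generic initial ideal.
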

\begin{proof}
We need to show that for any pair of closed points $[X],[Y] \in \Hilb{p(t)}{n}$ there exists a sequence of rational curves $C_0,\ldots,C_s$ such that $[X] \in C_0$, $[Y] \in C_s$ and $C_i \cap C_{i-1} \neq \emptyset,\ \forall\ i=1,\ldots,s$. It is equivalent to prove that there exists a sequence of rational curves $C_0,\ldots,C_s$ connecting any point of $\Hilb{p(t)}{n}$ with a fixed point. Hence, choose a point $[L] \in \Hilb{p(t)}{n}$ corresponding to the $\Omega$-hilb-segment ideal for some term order $\Omega$ (e.g.~the lexicogaphic ideal).

Given a point $[X] \in \Hilb{p(t)}{n}$, denote by $I_X \subset \kk[\xx]$ the saturated ideal defining $X$. If $I_X$ is not a strongly stable ideal, consider the generic initial ideal $J$ of $I_X$ with respect to an arbitrary term order. It is well-known that there exists a flat family of ideals parametrized by the affine line $\mathbb{A}^1 = \Spec \kk[T]$ such that the fiber over the point $T=1$ is $I_X$ and the fiber over the point $T=0$ is $J$. Let $\phi_X: \mathbb{A}^1 \to \Hilb{p(t)}{n}$ the associated morphism. As $\phi_X$ is non-constant, the closure $C_X = \overline{\phi_X(\mathbb{A}^1)}$ of the image of $\phi_X$ is a rational curve contained in $ \Hilb{p(t)}{n}$ \cite[Proposition 9.8]{AG}.

It remains to show that there is a sequence of rational curves connecting a point $[J] \in \Hilb{p(t)}{n}$ to $[L]$ for all $J \in \SI{p(t)}{n}$.
Let $\mathscr{T}_{p(t)}^{n}(\Omega)$ be the spanning tree of the Borel graph $\Sk{p(t)}{n}$ constructed in Corollary \ref{cor:spanning tree} and consider the list of edges 
$[(L{=}J_0) {\xrightarrow{\text{\tiny$\Omega$}}} J_1],[J_1 {\xrightarrow{\text{\tiny$\Omega$}}} J_2],\ldots,[J_{s-1} {\xrightarrow{\text{\tiny$\Omega$}}} (J_s{=}J)]$ that form the unique path in $\mathscr{T}_{p(t)}^{n}(\Omega)$ going from the root of the tree $L$ to the vertex $J$. For every edge $[J_i{\xrightarrow{\text{\tiny$\Omega$}}}J_{i+1}],\ i = 0,\ldots,s-1$, consider the flat family $X_{J_{i},J_{i+1}} \to \PP^1$ described in Theorem \ref{thm:mainDef} such that the fiber over $[1:0]$ is the scheme $\Proj \kk[\xx]/J_{i}$ and the fiber over $[0:1]$ is $\Proj \kk[\xx]/J_{i+1}$. The associated morphism $\varphi_{J_{i},J_{i+1}} :\PP^1 \to \Hilb{p(t)}{n}$ is non-constant and the image $C_i = \varphi_{J_{i},J_{i+1}}(\PP^1)$ is a rational curve contained in $\Hilb{p(t)}{n}$ (see also Remark \ref{rk:veronese}). The point $[J_i]$ is contained in the intersection $C_{i-1} \cap C_{i}$ for $i = 1,\ldots,s-1$, so that the sequence of curves $C_0,\ldots,C_s$ gives the chain connecting $[J]$ to $[L]$.
\end{proof}

\begin{remark}
The connectedness of $\Hilb{p(t)}{n}$ has been proved first by Hartshorne \cite{HartshorneThesis} and afterwards by Peeva and Stillman \cite{PeevaStillman}. 
Common ideas of all proofs are
\begin{enumerate}[1.]
\item for any point of $\Hilb{p(t)}{n}$ consider a specialization to a point defined by a strongly stable ideal;
\item determine a sequence of deformations/specializations to move from a strongly stable ideal to another with the goal of getting closer at each step to a fixed strongly stable ideal.
\end{enumerate}
Hartshorne's proof make use of polarization to define the deformation/specialization procedure and a hard part of his argument is to show that applying repeatedly his procedure one reaches the lexicographic ideal. Peeva and Stillman propose a replacement criterion of generators of strongly stable ideals driven by the graded lexicographic order. Hence, in their proof is obvious that at each step the new ideal is closer to the lexicographic ideal than the starting ideal. The main point of their proof is to show that each replacement involves strongly stable ideals and can be realized by means of a deformation/specialization step.

The idea of our proof is very similar to the one of Peeva and Stillman. We now try to point out the main differences.
\begin{itemize}
\item Our replacement criterion of generators is much more flexible because it is not driven by a given term order, but it is based on the combinatorial properties of strongly stable ideals. Term orders help at a later time to move around in the whole set of strongly stable ideals.
\item In our proof the lexicographic ideal can be replaced by any other hilb-segment ideal. In general, for a given Hilbert polynomial there are lots of hilb-segment ideals and some of them can be better suited than the lexicographic ideal to study the Hilbert scheme. For instance, in the case of Hilbert scheme of points, the saturated lexicographic ideal describe a smooth point in the irreducible component of general points, but it has the Hilbert function of aligned points. Whereas the hilb-segment ideal with respect to the graded reverse lexicographic order describe a point in the irreducible component of general points that can be singular, but it has the Hilbert function of general points.
\item In terms of Gr\"obner deformations, we can say that our replacement criterion corresponds to a binomial ideal with exactly two possible initial ideals that are both strongly stable.  Peeva and Stillman replacement criterion corresponds to a binomial ideal with two possible initial ideals: one is always strongly stable, but the other may not be. Hence, they may need an additional Gr\"obner degeneration to a generic initial ideal to restore the strong stability property. \bs
\end{itemize}
\end{remark}

\subsection{Punctual Hilbert schemes}

The case of constant Hilbert polynomials is quite special and allows to prove stronger properties about partial orders $\succeq_\Omega$ and degeneration graphs.

\begin{lemma}\label{lem:maximumConstant}
For each term order $\Omega$, the set of ideals $\SI{d}{n}$ contains the $\Omega$-hilb segment ideal. Hence, there is always the maximum in $\SI{d}{n}$ with respect to $\succeq_{\Omega}$ and $\ssucceq_{\Omega}$.
\end{lemma}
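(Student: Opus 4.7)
The plan is to construct the $\Omega$-hilb-segment ideal explicitly and then deduce the maximality statement from Theorem \ref{thm:segment case}. Since $p(t) = d$ has Gotzmann number $d$, every ideal in $\SI{d}{n}$ is determined by its set of degree-$d$ monomial generators. Let $\mathfrak{L} \subset \mathbb{T}^n_d$ be the set of the $q(d) = \binom{n+d}{n}-d$ largest monomials of degree $d$ with respect to $\Omega$, and set $L := (\mathfrak{L})$. By construction, $\xx^\aaa >_\Omega \xx^\bbb$ for every $\xx^\aaa \in \mathfrak{L}$ and every $\xx^\bbb \in \comp{\mathfrak{L}}$, so once we verify that $L \in \SI{d}{n}$ it will be the $\Omega$-hilb-segment by Definition \ref{def:hilbSegment}, and Theorem \ref{thm:segment case} will immediately give $\max_{\ssucceq_\Omega} \SI{d}{n} = \max_{\succeq_\Omega} \SI{d}{n} = \{L\}$.

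That $\mathfrak{L}$ is a Borel set is immediate: every graded term order with $x_0 < \cdots < x_n$ refines the Borel order $\geq_B$, so an admissible increasing elementary move on $\xx^\bbb \in \mathfrak{L}$ produces a monomial $\geq_\Omega \xx^\bbb$, which must remain in the top $q(d)$. The key step of the plan is showing that the complement $\comp{\mathfrak{L}}$ (of cardinality $d$) is entirely contained in $(\mathbb{T}^n_d)_0$, i.e.\ that every $x_0$-free monomial of degree $d$ lies in $\mathfrak{L}$. By Lemma \ref{lem:BorelOrder} the monomial $x_1^d$ is the Borel-minimum among $x_0$-free monomials of degree $d$, so by Borel-closure of $\mathfrak{L}$ it is enough to place $x_1^d$ inside $\mathfrak{L}$. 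Using only $x_0 <_\Omega x_1$ and multiplicativity of $\Omega$, I exhibit the strict descending chain
\[
 x_1^d >_\Omega x_0 x_1^{d-1} >_\Omega x_0^2 x_1^{d-2} >_\Omega \cdots >_\Omega x_0^{d-1} x_1 >_\Omega x_0^d,
\]
producing $d$ distinct monomials strictly $\Omega$-smaller than $x_1^d$. Hence at most $|\mathbb{T}^n_d| - (d+1) = q(d)-1$ monomials can be strictly $\Omega$-larger than $x_1^d$, so $x_1^d$ is among the top $q(d)$ and therefore belongs to $\mathfrak{L}$.

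It remains to verify that $L$ has Hilbert polynomial $d$; I would do this by direct comparison with the ideal $M = (x_0,\ldots,x_n)^d$, whose set of degree-$d$ generators is all of $\mathbb{T}^n_d$ and which satisfies $|M_m| = \binom{n+m}{n}$ for $m \geqslant d$. Since $\mathfrak{L}_i = (\mathbb{T}^n_d)_i$ for $i \geqslant 1$ while $|(\mathbb{T}^n_d)_0 \setminus \mathfrak{L}_0| = d$, the decomposition formula $|L_m| = \sum_i |\mathfrak{L}_i|\binom{i+m-d}{i}$ recalled in Section \ref{sec:preliminaries} yields
\[
 |L_m| = |M_m| - d\cdot\tbinom{m-d}{0} = \tbinom{n+m}{n} - d, \qquad m \geqslant d,
\]
so $p_L(t) = d$ and $L \in \SI{d}{n}$. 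The one nontrivial ingredient in this plan is identifying the explicit $\Omega$-descending chain of length $d+1$ through the $(x_0,x_1)$-plane; everything else (Borel-closure of $\mathfrak{L}$, Borel minimality of $x_1^d$ among $x_0$-free monomials, and the bookkeeping against $|M_m|$) is a direct consequence of definitions or of formulas already available in Section \ref{sec:preliminaries}.
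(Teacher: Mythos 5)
Your construction is exactly the one in the paper's proof: take $\mathfrak{L}$ to be the complement of the $d$ smallest degree-$d$ monomials with respect to $\Omega$, observe that it is Borel-closed because any graded term order refines $\geq_B$, note that $\comp{\mathfrak{L}}$ consists only of monomials divisible by $x_0$, and then invoke Theorem \ref{thm:segment case} for the maximality claims. The only difference is in the bookkeeping: where the paper argues abstractly that no down-closed set of $d$ monomials can contain one with minimum variable greater than $0$ and then cites \cite[Theorem 3.13]{CLMR} for the Hilbert polynomial, you make both steps self-contained (via the chain $x_1^d >_\Omega x_0x_1^{d-1} >_\Omega \cdots >_\Omega x_0^d$ together with the Borel-minimality of $x_1^d$ among $x_0$-free monomials, and via the decomposition formula for $|L_m|$), and both verifications are correct.
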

\begin{proof}
Given a term order $\Omega$, let $\mathfrak{L}$ be the set of all monomials in $\kk[\xx]$ of degree $d$ except the $d$ smallest monomials with respect to $\Omega$. The set $\mathfrak{L}$ is closed under the action of increasing elementary moves. In fact, consider two monomials $\xx^\aaa,\xx^\bbb$ of degree $d$. If $\xx^\aaa \in \mathfrak{L}$ and $\xx^\bbb \geq_B \xx^\aaa$, then $\xx^\bbb \geq_\Omega \xx^\aaa$ and $\xx^\bbb \in \mathfrak{L}$. Furthermore, all monomials in $\comp{\mathfrak{L}}$ have minimum equal to 0. Indeed, $\comp{\mathfrak{L}}$ is closed under the action of decreasing moves and there does not exist any such set containing a monomial with minimum greater than 0. This implies that the ideal $(\mathfrak{L})$ is strongly stable and has Hilbert polynomial $p(t) = d$ \cite[Theorem 3.13]{CLMR}.

The second part of the statement follows from the first part applying Theorem \ref{thm:segment case}. 
\end{proof}

Notice that Lemma \ref{lem:maximumConstant} does not say that all ideals in $\SI{d}{n}$ are hilb-segment ideals. See \cite[Proposition 3.16]{CLMR} for the simplest examples of strongly-stable ideals that are not hilb-segment ideals in the case of constant Hilbert polynomials.

We now give some information about the maximum distance between vertices of the Borel graph. We recall that the distance between two vertices of a graph is the number of edges in the shortest path connecting them.

\begin{proposition}\label{prop:distanceVertices}
The distance between vertices $J,J'$ of the Borel graph $\Sk{d}{n}$ is at most $\vert \mathfrak{J} \setminus \mathfrak{J}' \vert$.
\end{proposition}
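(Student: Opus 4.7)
The plan is induction on $k := |\mathfrak{J} \setminus \mathfrak{J}'|$. The base $k = 0$ gives $J = J'$, and the inductive step reduces to producing a strongly stable ideal $J'' \in \SI{d}{n}$ that is Borel adjacent to $J$ and satisfies $|\mathfrak{J}'' \setminus \mathfrak{J}'| = k - 1$; concatenating the edge $\{J, J''\}$ with an inductively-given path of length at most $k-1$ from $J''$ to $J'$ then yields the desired path of length at most $k$.

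The crucial feature of the punctual setting that makes this scheme work is the following. For $p(t) = d$ constant we have $\Delta^i p(r) = 0$ for every $i \geqslant 1$, so the formula recalled at the end of Section \ref{sec:preliminaries} gives $|\mathfrak{J}_{\geqslant 1}| = \binom{n+d-1}{n-1}$, which is exactly the total number of degree $d$ monomials with $\min \geqslant 1$. Hence every $J \in \SI{d}{n}$ contains in degree $d$ all such monomials, and both $\mathfrak{J} \setminus \mathfrak{J}'$ and $\mathfrak{J}' \setminus \mathfrak{J}$ are concentrated in the stratum of monomials with $\min = 0$; in particular, $\min \xx^\aaa = 0$ for every monomial $\xx^\aaa$ in either symmetric-difference piece.

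To construct $J''$, we pick a $\geqslant_B$-minimal element $\xx^\aaa \in \mathfrak{J} \setminus \mathfrak{J}'$ and a $\geqslant_B$-maximal element $\xx^{\aaa'} \in \mathfrak{J}' \setminus \mathfrak{J}$, and set $\mathfrak{J}'' := (\mathfrak{J} \setminus \{\xx^\aaa\}) \cup \{\xx^{\aaa'}\}$. Borel-closedness of $\mathfrak{J}''$ rests on the two extremality choices: an admissible move $\eu{h}$ applied to $\xx^\ccc \in \mathfrak{J} \setminus \{\xx^\aaa\}$ cannot land on $\xx^\aaa$, since otherwise $\xx^\ccc = \ed{h+1}(\xx^\aaa) <_B \xx^\aaa$ would lie in $\mathfrak{J}$, and combining $\xx^\aaa \notin \mathfrak{J}'$ with the Borel-closedness of $\mathfrak{J}'$ forces $\xx^\ccc \in \mathfrak{J} \setminus \mathfrak{J}'$, contradicting the minimality of $\xx^\aaa$; and $\eu{h}(\xx^{\aaa'}) \in \mathfrak{J}'$ by Borel-closedness, so the maximality of $\xx^{\aaa'}$ in $\mathfrak{J}' \setminus \mathfrak{J}$ puts it in $\mathfrak{J} \cap \mathfrak{J}' \subseteq \mathfrak{J}''$. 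Since $\min \xx^\aaa = \min \xx^{\aaa'} = 0$, the distribution $(|\mathfrak{J}''_i|)$ agrees with $(|\mathfrak{J}_i|)$ and $J''$ has Hilbert polynomial $d$. The pair $(J, J'')$ differs by a single monomial on each side, so Remark \ref{rk:propertiesAdjacent}\textit{(\ref{rk:propertiesAdjacent_i})} grants Borel adjacency, and $\mathfrak{J}'' \setminus \mathfrak{J}' = (\mathfrak{J} \setminus \mathfrak{J}') \setminus \{\xx^\aaa\}$ has cardinality $k - 1$, closing the induction.

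The main obstacle, and the reason this statement lives in the punctual section, is the preservation of the Hilbert polynomial under a single-monomial swap: this is what forces the alignment $\min \xx^\aaa = \min \xx^{\aaa'} = 0$. Outside the punctual case, the extremal monomials in $\mathfrak{J} \setminus \mathfrak{J}'$ and $\mathfrak{J}' \setminus \mathfrak{J}$ can live in different $\min$-strata, so a single-monomial swap would alter the distribution $(|\mathfrak{J}_i|)$; one would then have to move parallel configurations of monomials via a non-trivial set $\mathcal{E}_{J,J''}$ of decreasing moves, and the resulting one-step change in the symmetric difference could exceed $2$, so the bound $|\mathfrak{J} \setminus \mathfrak{J}'|$ is no longer the natural one.
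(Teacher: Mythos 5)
Your proof is correct and follows essentially the same strategy as the paper's: induction on $k=\vert\mathfrak{J}\setminus\mathfrak{J}'\vert$, using the punctual fact that $\comp{\mathfrak{J}}$ lies entirely in the $\min=0$ stratum to perform a single-monomial swap between a $\geq_B$-extremal element of $\mathfrak{J}\setminus\mathfrak{J}'$ and one of $\mathfrak{J}'\setminus\mathfrak{J}$, then invoking Remark \ref{rk:propertiesAdjacent}\textit{(\ref{rk:propertiesAdjacent_i})}. The only (immaterial) difference is that you modify $J$ and walk toward $J'$, whereas the paper modifies $J'$ and walks toward $J$; your explicit verification of Borel-closedness of the swapped set is a welcome addition the paper leaves implicit.
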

\begin{proof}
We prove the statement exhibiting a path of length $\vert \mathfrak{J} \setminus \mathfrak{J}' \vert$ between $J$ and $J'$.  Notice that for every ideal $J\in\SI{d}{n}$, the minimum of monomials in $\comp{\mathfrak{J}}$ is 0. Hence, $\mathfrak{J} \setminus \mathfrak{J}' \subset \mathfrak{J}_0$ and $\mathfrak{J}' \setminus \mathfrak{J} \subset \mathfrak{J}'_0$. 

We proceed by induction on $k = \vert \mathfrak{J} \setminus \mathfrak{J}' \vert$. If $k=1$, $J$ and $J'$ are Borel adjacent ideals (see Remark \ref{rk:propertiesAdjacent}\textit{(\ref{rk:propertiesAdjacent_i})}) and the Borel graph $\Sk{d}{n}$ contains the edge $[J{-}J']$. Now, assume that the statement is true for pairs of ideal $J,J'$ such that $\vert \mathfrak{J} \setminus\mathfrak{J}'\vert \leqslant k-1$.

Given $J$ and $J'$ with $\vert \mathfrak{J} \setminus\mathfrak{J}'\vert = k$, consider a maximal element with respect to $\geq_B$ in $\mathfrak{J} \setminus \mathfrak{J}'$ and $\xx^\bbb$ a minimal element with respect to $\geq_B$ in $\mathfrak{J}' \setminus \mathfrak{J}$. The monomial $\xx^\aaa$ is a maximal element of $\comp{\mathfrak{J}'}$ and $\xx^\bbb$ is a minimal element of $\mathfrak{J}'$. Then, consider the set $\mathfrak{J}'' = \mathfrak{J}'\setminus\{\xx^\bbb\} \cup \{\xx^\aaa\}$. It is closed under the action of increasing elementary moves and all monomials in $\comp{\mathfrak{J}''}$ have minimum variable equal to $0$. The ideal $J''$ generated by $\mathfrak{J}''$ is in $\SI{d}{n}$. Sets $\mathfrak{J}'$ and $\mathfrak{J}''$ differ in one element, so $J'$ and $J''$ are Borel adjacent, i.e.~$[J'{-}J] \in E(\Sk{d}{n})$.

Finally, notice that
\[
\mathfrak{J}\setminus\mathfrak{J}'' = \mathfrak{J} \setminus (\mathfrak{J}'\setminus\{\xx^\bbb\} \cup \{\xx^\aaa\}) = (\mathfrak{J} \setminus \mathfrak{J}') \setminus \{\xx^\aaa\}\quad\Rightarrow\quad \vert \mathfrak{J}\setminus\mathfrak{J}'' \vert = k-1.
\] 
By the inductive assumption, there is a path of length $k-1$ from $J$ to $J''$, so that the distance between $J$ and $J'$ is at most $k$.
\end{proof}

\begin{corollary}\label{cor:maxDistance}
The distance between any two vertices of the Borel graph $\Sk{d}{n}$ is at most 
\[
d - \min \left\{ s \ \middle\vert\ \binom{n+s-1}{n}\geqslant d\right\}. 
\]
\end{corollary}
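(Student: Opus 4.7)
The plan is to combine Proposition \ref{prop:distanceVertices} with a uniform lower bound on the size of the intersection $\comp{\mathfrak{J}} \cap \comp{\mathfrak{J}'}$ of the complementary co-Borel sets. Proposition \ref{prop:distanceVertices} already yields the distance bound $|\mathfrak{J} \setminus \mathfrak{J}'|$, so everything reduces to showing
\[
|\mathfrak{J} \setminus \mathfrak{J}'| \leqslant d - s^{\ast}, \qquad s^{\ast} := \min\left\{s \ \middle|\ \tbinom{n+s-1}{n}\geqslant d\right\}.
\]

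The first step is a size bookkeeping argument. For $p(t)=d$ the Gotzmann number equals $d$, so $|\mathfrak{J}|=|\mathfrak{J}'|=\binom{n+d}{n}-d$ and $|\comp{\mathfrak{J}}|=|\comp{\mathfrak{J}'}|=d$. A standard inclusion–exclusion then gives the identity
\[
|\mathfrak{J} \setminus \mathfrak{J}'| = d - |\comp{\mathfrak{J}} \cap \comp{\mathfrak{J}'}|,
\]
reducing the problem to showing $|\comp{\mathfrak{J}} \cap \comp{\mathfrak{J}'}| \geqslant s^{\ast}$.

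The key step — and the part I expect to be the main obstacle — is to exhibit $s^{\ast}$ monomials that are \emph{forced} to belong to every co-Borel subset of $\mathbb{T}^n_d$ of size $d$. My candidate is the chain
\[
F = \left\{x_0^{d-t}x_1^t \ \middle|\ 0 \leqslant t \leqslant s^{\ast}-1\right\}.
\]
To see that $F \subseteq \comp{\mathfrak{J}}$ for every $J \in \SI{d}{n}$, I argue by contradiction: if $x_0^{d-t}x_1^t \in \mathfrak{J}$ for some $t \leqslant s^{\ast}-1$, then strong stability together with Lemma \ref{lem:BorelOrder} forces $\mathfrak{J}$ to contain every $\xx^\bbb \in \mathbb{T}^n_d$ with $|\bbb|_1 \geqslant t$. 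The complement of this Borel up-set consists of monomials with $b_1+\cdots+b_n \leqslant t-1$, which number exactly $\binom{n+t-1}{n}$ by a hockey-stick summation. Consequently
\[
|\mathfrak{J}| \geqslant \tbinom{n+d}{n} - \tbinom{n+t-1}{n} > \tbinom{n+d}{n}-d,
\]
where the strict inequality uses $\binom{n+t-1}{n} < d$, valid because $t \leqslant s^{\ast}-1$ and $s^{\ast}$ is minimal. This contradicts $|\mathfrak{J}|=\binom{n+d}{n}-d$.

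Having forced $F \subseteq \comp{\mathfrak{J}} \cap \comp{\mathfrak{J}'}$, I conclude $|\comp{\mathfrak{J}} \cap \comp{\mathfrak{J}'}| \geqslant |F| = s^{\ast}$, whence $|\mathfrak{J} \setminus \mathfrak{J}'| \leqslant d - s^{\ast}$. Combined with Proposition \ref{prop:distanceVertices}, this gives the desired bound on the diameter of $\Sk{d}{n}$. The delicate point is choosing the forced family: one needs to notice that among monomials of degree $d$ with a prescribed tail-degree $|\aaa|_1=t$, those of the form $x_0^{d-t}x_1^t$ are exactly the ones with the largest Borel up-set, so they are the best candidates to drive the forcing argument; once this is observed the count is routine.
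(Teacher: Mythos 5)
Your proof is correct. It rests on the same two pillars as the paper's: Proposition \ref{prop:distanceVertices} reduces everything to bounding $\vert\mathfrak{J}\setminus\mathfrak{J}'\vert$, and both arguments then identify the very same forced chain $\{x_0^d, x_0^{d-1}x_1,\ldots,x_0^{d-s^\ast+1}x_1^{s^\ast-1}\}$ inside $\comp{\mathfrak{J}}\cap\comp{\mathfrak{J}'}$, using the same count $\binom{n+t-1}{n}$. Where you differ is in how you justify that this chain lies in every $\comp{\mathfrak{J}}$: the paper argues through the saturation, invoking the fact that a saturated strongly stable ideal with constant Hilbert polynomial has a pure power $x_1^j$ among its generators, and then identifies the minimal possible $j$ by exhibiting the $\mathtt{RevLex}$-hilb-segment ideal as the extremal case. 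You instead work entirely at the truncation level $d$: via Lemma \ref{lem:BorelOrder} the Borel up-set of $x_0^{d-t}x_1^t$ in $\mathbb{T}^n_d$ is $\{\xx^\bbb : \vert\bbb\vert_1\geqslant t\}$, whose complement has only $\binom{n+t-1}{n}<d$ elements when $t\leqslant s^\ast-1$, contradicting $\vert\comp{\mathfrak{J}}\vert=d$. Your route is more self-contained (no appeal to properties of saturations or to segment ideals) and makes the sharpness mechanism transparent; the paper's route has the advantage of simultaneously exhibiting the extremal ideal realizing the bound, which you only gesture at. Either way the corollary follows, and I see no gap.
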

\begin{proof}
By Proposition \ref{prop:distanceVertices} the bound on the maximum distance between vertices is given by the pair of ideals with the smallest intersection. A saturated strongly stable ideal $J^\sat$ with constant Hilbert polynomial has a power $x_1^j$ among the generators (and $j$ is the highest degree of a generator). This implies that $\comp{\mathfrak{J}}$ surely contains $\{x_0^{d-j+1}x_1^{j-1},\ldots,$ $x_0^{d-1}x_1,x_0^d\}$ plus other $d-j$ monomials with maximum variable greater that 1.

Now, consider $J,J' \in \SI{d}{n}$ and let $x_1^j$ and $x_1^{j'}$ the powers of $x_1$ appearing among the generators of $J^\sat$ and $J'^\sat$. Assuming $j < j'$, we have that 
\[
\{x_0^{d-j+1}x_1^{j-1},\ldots,x_0^{d-1}x_1,x_0^d\} \subseteq \comp{\mathfrak{J}} \cap \comp{\mathfrak{J}'}
\]
and
\[
 \vert \mathfrak{J} \setminus \mathfrak{J}' \vert =  \vert \comp{\mathfrak{J}} \setminus \comp{\mathfrak{J}'} \vert = \vert\comp{\mathfrak{J}} \vert -  \vert \comp{\mathfrak{J}} \cap \comp{\mathfrak{J}'} \vert \geqslant d - j.
\]

The saturated lexicographic ideal $L^\sat \in \SI{d}{n}$ is generated by $(x_n,\ldots,x_2,x_1^d)$ and $\comp{\mathfrak{L}} = \{x_0x_1^{d-1},$ $\ldots,x_0^d\}$. Consequently, $\comp{\mathfrak{J}} \cap \comp{\mathfrak{L}} = \{x_0^{d-j+1}x_1^{j-1},\ldots,x_0^{d-1}x_1,x_0^d\}$ and $ \vert \mathfrak{J} \setminus \mathfrak{L} \vert  = d-j$.
In order to maximize $\vert \mathfrak{J} \setminus \mathfrak{L} \vert$, we minimize $j$ looking for an ideal $J$ such that $\comp{\mathfrak{J}}$ contains the $d$ monomials with the highest power of the last variable $x_0$.  We can do this considering the $\mathtt{RevLex}$-hilb-segment ideal. Indeed, with respect to the graded reverse lexicographic order, the monomials
\begin{equation}\label{eq:lastRevLex}
\{x_0^d\} \cup x_0^{d-1}\cdot\kk[x_1,\ldots,x_n]_1 \cup \cdots \cup x_0^{d-j+1}\cdot\kk[x_1,\ldots,x_n]_{j-1}
\end{equation}
form the largest set of monomials with a power of $x_0$ greater than $d-j$. Then, we take the minimum $j$ such that the number of monomials in \eqref{eq:lastRevLex}
\[
1 + n + \cdots + \binom{n-1 + j-1}{n-1} = \binom{n+j-1}{n}
\]
is at least $d$. Such $j$ is the minimum for which the saturation of an ideal $J\in\SI{d}{n}$ has $x_1^j$ among its generators.
\end{proof}

\begin{example}
{(1)} Consider the Hilbert scheme $\Hilb{8}{3}$ that parametrizes subschemes of $\PP^3$ with Hilbert polynomial $p(t)=8$. The set $\SI{8}{3}$ contains 12 strongly stable ideals, 10 of which are hilb-segment ideals with respect to suitable term orders. The Gr\"obner fan $\GF(\Hilb{8}{3})$ has 55 extremal rays and 70 cones of maximal dimension. Lemma \ref{lem:maximumConstant} implies that there are several degeneration graphs that share the same maximum ideal. In Figure \ref{fig:8pointsP3}{\sc\subref{fig:8pointsP3gf}}, there is the Gr\"obner fan of $\Hilb{8}{3}$ with the maximal cones corresponding to term orders $\Omega$ inducing the same maximum $\max_{\succeq} \SI{8}{3}$ grouped together.

 {(2)} The ideal in $\SI{8}{3}$ with the lowest power of $x_1$ among the generators of its saturation is the $\mathtt{RevLex}$-hilb-segment ideal
\[
J_{12} = (x_3^{2},x_2x_3, x_2^3, x_1 x_2^2, x_1^2 x_3, x_1^2 x_2 , x_1^{3})_{\geqslant 8}.
\]
By Corollary \ref{cor:maxDistance}, the  distance between vertices of the Borel graph $\Sk{8}{3}$ is at most 5. In Figure \ref{fig:8pointsP3}{\sc\subref{fig:8pointsP3bg}}, the path from the $\mathtt{RevLex}$-hilb-segment ideal $J_{12}$ to the $\mathtt{DegLex}$-hilb-segment ideal $J_1$ constructed in the proof of Proposition \ref{prop:distanceVertices} is drawn with a thick line.

In fact, the distance between $J_1$ and $J_{12}$ is 3 (and 3 is the maximum distance between vertices of $\Sk{8}{3}$). There are two shortcuts: $[J_{5}{-}J_{12}]$ instead of $[J_5{-}J_{10}],[J_{10}{-}J_{12}]$ and $[J_1{-}J_3]$ instead of $[J_1{-}J_2],[J_2{-}J_3]$.
\end{example}

\captionsetup[subfloat]{position=bottom,width=0.8\textwidth}
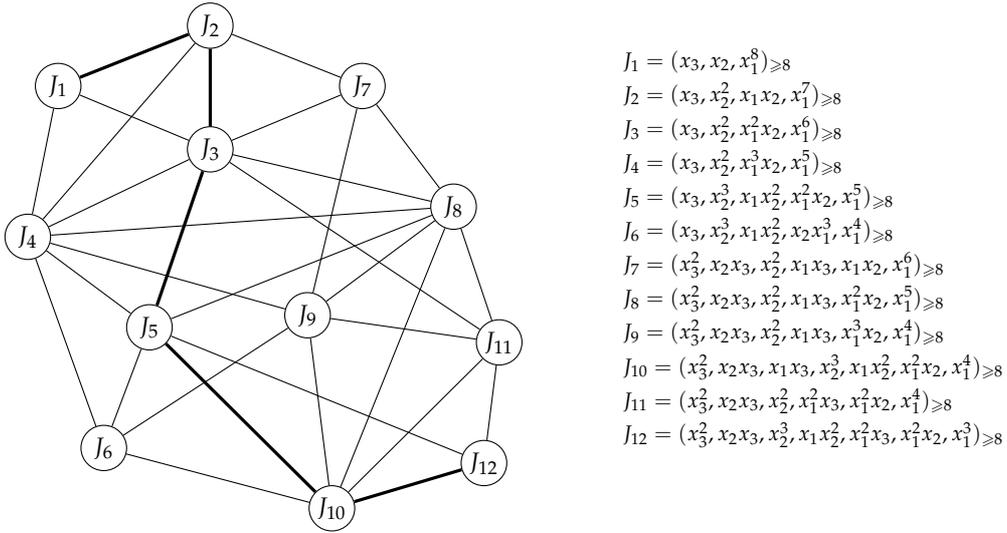
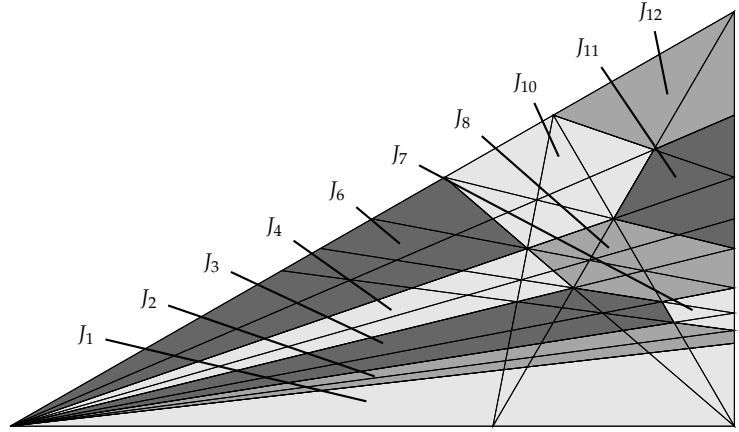
\begin{figure}[!ht]
\begin{center}
\subfloat[][The Borel graph $\Sk{8}{3}$. The path connecting $J_{1}$ and $J_{12}$ constructed in the proof of Proposition \ref{prop:distanceVertices} is highlighted with a thick line.]{\label{fig:8pointsP3bg}
\begin{tikzpicture}
\begin{scope}[scale=0.8]
\node (0) at (0.5,1) [draw,circle,inner sep=0pt,minimum size=0.6cm] {\footnotesize $J_{1}$};
\node (1) at (3,2) [draw,circle,inner sep=0pt,minimum size=0.6cm] {\footnotesize $J_{2}$};
\node (2) at (3,-0.05) [draw,circle,inner sep=0pt,minimum size=0.6cm] {\footnotesize $J_{3}$};
\node (3) at (0,-1.5) [draw,circle,inner sep=0pt,minimum size=0.6cm] {\footnotesize $J_{4}$};
\node (4) at (2,-3) [draw,circle,inner sep=0pt,minimum size=0.6cm] {\footnotesize $J_{5}$};
\node (5) at (1.25,-5) [draw,circle,inner sep=0pt,minimum size=0.6cm] {\footnotesize $J_{6}$};
\node (6) at (5.5,1) [draw,circle,inner sep=0pt,minimum size=0.6cm] {\footnotesize $J_{7}$};
\node (7) at (7,-1) [draw,circle,inner sep=0pt,minimum size=0.6cm] {\footnotesize $J_{8}$};
\node (8) at (4.6,-2.8) [draw,circle,inner sep=0pt,minimum size=0.6cm] {\footnotesize $J_{9}$};
\node (9) at (5,-6) [draw,circle,inner sep=0pt,minimum size=0.6cm] {\footnotesize $J_{10}$};
\node (10) at (7.75,-3.25) [draw,circle,inner sep=0pt,minimum size=0.6cm] {\footnotesize $J_{11}$};
\node (11) at (7.5,-5.25) [draw,circle,inner sep=0pt,minimum size=0.6cm] {\footnotesize $J_{12}$};

 \draw [very thick] (0) -- (1);
  \draw [] (0) -- (2);
  \draw [] (0) -- (3);
  \draw [very thick] (1) -- (2);
  \draw [] (1) -- (3);
  \draw [] (1) -- (6);
  \draw [] (2) -- (3);
  \draw [] (2) -- (6);
  \draw [very thick] (2) -- (4);
  \draw [] (2) -- (7);
  \draw [] (2) -- (10);
  \draw [] (3) -- (4);
  \draw [] (3) -- (7);
  \draw [] (3) -- (5);
  \draw [] (3) -- (8);
  \draw [] (6) -- (7);
  \draw [] (6) -- (8);
  \draw [] (4) -- (7);
  \draw [] (4) -- (5);
  \draw [very thick] (4) -- (9);
  \draw [] (4) -- (11);
  \draw [] (7) -- (10);
  \draw [] (7) -- (8);
  \draw [] (7) -- (9);
  \draw [] (10) -- (11);
  \draw [] (5) -- (8);
  \draw [] (5) -- (9);
  \draw [] (8) -- (10);
  \draw [] (8) -- (9);
  \draw [] (9) -- (10);
  \draw [very thick] (9) -- (11);
\end{scope}

\begin{scope}[scale=0.9,shift={(11.5,1.25)}]
\node  at (0,0) [] {\parbox{5.05cm}{\scriptsize $J_{1} = (x_3,x_2,x_1^{8})_{\geqslant 8}$}};
\node  at (0,-0.5) [] {\parbox{5.05cm}{\scriptsize $J_{2} = (x_3,x_2^2,x_1x_2,x_1^{7})_{\geqslant 8}$}};
\node  at (0,-1) [] {\parbox{5.05cm}{\scriptsize $J_{3} = (x_3,x_2^{2},x_1^{2}x_2,x_1^{6})_{\geqslant 8}$}};
\node  at (0,-1.5) [] {\parbox{5.05cm}{\scriptsize $J_{4} = (x_3,x_2^{2},x_1^{3}x_2,x_1^{5})_{\geqslant 8}$}};
\node  at (0,-2) [] {\parbox{5.05cm}{\scriptsize $J_{5} = (x_3,x_2^3, x_1 x_2^2,x_1^{2}x_2,x_1^{5})_{\geqslant 8}$}};
\node  at (0,-2.5) [] {\parbox{5.05cm}{\scriptsize $J_{6} = (x_3,x_2^{3},x_1x_2^{2},x_2x_1^{3},x_1^{4})_{\geqslant 8}$}};
\node  at (0,-3) [] {\parbox{5.05cm}{\scriptsize $J_{7} = (x_3^2, x_2 x_3, x_2^2, x_1x_3,x_1x_2,x_1^{6})_{\geqslant 8}$}};
\node  at (0,-3.5) [] {\parbox{5.05cm}{\scriptsize $J_{8} = (x_3^{2} , x_2 x_3, x_2^{2},x_1x_3,x_1^{2}x_2,x_1^{5})_{\geqslant 8}$}};
\node  at (0,-4) [] {\parbox{5.05cm}{\scriptsize $J_{9} = (x_3^{2} , x_2 x_3, x_2^{2},x_1x_3,x_1^{3}x_2,x_1^{4})_{\geqslant 8}$}};
\node  at (0,-4.5) [] {\parbox{5.05cm}{\scriptsize $J_{10} = (x_3^{2},x_2x_3,x_1x_3,x_2^{3}, x_1x_2^{2},x_1^{2}x_2,x_1^{4})_{\geqslant 8}$}};
\node  at (0,-5) [] {\parbox{5.05cm}{\scriptsize $J_{11} = (x_3^{2},x_2x_3,x_2^{2}, x_1^{2}x_3, x_1^{2} x_2,x_1^{4})_{\geqslant 8}$}};
\node  at (0,-5.5) [] {\parbox{5.05cm}{\scriptsize $J_{12} = (x_3^{2},x_2x_3, x_2^3, x_1 x_2^2, x_1^2 x_3, x_1^2 x_2 , x_1^{3})_{\geqslant 8}$}};
\end{scope}
\end{tikzpicture}
}

\subfloat[][The Gr\"obner fan $\GF(\Hilb{8}{3})$. Adjacent polygons colored with the same shade of gray corresponds to term orders $\Omega$ whose degeneration graphs have the same maximum in $\SI{8}{3}$ with respect to $\succeq_\Omega$.]{\label{fig:8pointsP3gf}
\begin{tikzpicture}[scale=11]

		\draw [ultra thin,fill=black!10] (0,-.4) -- (0,-.5) --  (-.866,-.5)-- cycle;
		\draw [ultra thin,fill=black!35] (0,-.4) --  (0,-.384615)  -- (-.0721667,-.375) -- (-.866,-.5)--cycle;
		\draw [ultra thin,fill=black!60] (-.192444,-.333333) -- (-.0866,-.35)  --(-.0721667,-.375)-- (-.866,-.5)--cycle;
		\draw [ultra thin,fill=black!10] (-.192444,-.333333) -- (-.247429,-.285714) -- (-.866,-.5)--cycle;
		\draw [ultra thin,fill=black!60] (-.3464,-.2) -- (-.247429,-.285714) -- (-.866,-.5)--cycle;
		\draw [ultra thin,fill=black!10]  (0,-.333333) --  (-.0866,-.35) -- (-.0721667,-.375) --  (0,-.384615) --cycle;
		\draw [ultra thin,fill=black!35]   (-.247429,-.285714) --  (-.192444,-.333333)  -- (-.0866,-.35)  --(0,-.333333)-- (0,-.285714)--  (-.144333,-.25)-- cycle;
		\draw [ultra thin,fill=black!10]   (-.3464,-.2) -- (-.2165,-.125)  -- (-.0962222,-.166667)-- (-.144333,-.25) -- (-.247429,-.285714)  -- cycle;
		\draw [ultra thin,fill=black!60]    (-.144333,-.25)  -- (0,-.285714) -- (0,-.125) -- (-.0962222,-.166667) -- cycle;
		\draw [ultra thin,fill=black!35]   (0,0) -- (-.2165,-.125) -- (-.0962222,-.166667) -- (0,-.125) --  cycle;

	\node (12) at (-0.1,-0.0) [] {\scriptsize $J_{12}$};
	\draw[thick] (12) -- (-0.08,-0.1);
	\node (11) at (-0.175,-0.0433) [] {\scriptsize $J_{11}$};
	\draw[thick] (11) -- (-0.07,-0.2);
	\node (10) at (-0.25,-0.0866) [] {\scriptsize $J_{10}$};
	\draw[thick] (10) -- (-0.21,-0.175);
	\node (8) at (-0.325,-0.13) [] {\scriptsize $J_{8}$};
	\draw[thick] (8) -- (-0.15,-0.285);
	\node (7) at (-0.400,-0.1733) [] {\scriptsize $J_{7}$};
	\draw[thick] (7) -- (-0.05,-0.36);
	\node (6) at (-0.475,-0.2166) [] {\scriptsize $J_{6}$};
	\draw[thick] (6) -- (-0.4,-0.28);
	\node (4) at (-0.55,-0.26) [] {\scriptsize $J_{4}$};
	\draw[thick] (4) -- (-0.41,-0.36);
	\node (3) at (-0.625,-0.3033) [] {\scriptsize $J_{3}$};
	\draw[thick] (3) -- (-0.42,-0.4);
	\node (2) at (-0.7,-0.3466) [] {\scriptsize $J_{2}$};
	\draw[thick] (2) -- (-0.43,-0.44);
	\node (1) at (-0.775,-0.39) [] {\scriptsize $J_{1}$};
	\draw[thick] (1) -- (-0.44,-0.47);
		
              \draw [-,ultra thin] (-.866,-.5) -- (-.288667,-.5) -- (-.275545,-.431818) -- cycle; 
              \draw [-,ultra thin] (-.866,-.5) -- (-.273474,-.421053) -- (-.275545,-.431818) -- cycle; 
              \draw [-,ultra thin] (-.288667,-.5) -- (-.247429,-.428571) -- (-.275545,-.431818) -- cycle; 
              \draw [-,ultra thin] (-.866,-.5) -- (-.270625,-.40625) -- (-.273474,-.421053) -- cycle; 
              \draw [-,ultra thin] (-.240556,-.416667) -- (-.247429,-.428571) -- (-.275545,-.431818) -- (-.273474,-.421053) -- cycle; 
              \draw [-,ultra thin] (0,-.5)  -- (-.101882,-.411765)  -- (-.247429,-.428571) -- (-.288667,-.5) -- cycle; 
              \draw [-,ultra thin] (-.866,-.5) -- (-.266462,-.384615) -- (-.270625,-.40625) -- cycle; 
              \draw [-,ultra thin] (-.230933,-.4) -- (-.240556,-.416667) -- (-.273474,-.421053) -- (-.270625,-.40625) -- cycle; 
              \draw [-,ultra thin] (-.115467,-.4)  -- (-.101882,-.411765) -- (-.247429,-.428571)-- (-.240556,-.416667) -- cycle; 
              \draw [-,ultra thin] (0,-.5) -- (-.054125,-.40625) -- (-.101882,-.411765) -- cycle; 
              \draw [-,ultra thin] (-.866,-.5) -- (-.2598,-.35) -- (-.266462,-.384615) -- cycle; 
              \draw [-,ultra thin] (-.2165,-.375)  -- (-.230933,-.4) -- (-.270625,-.40625)-- (-.266462,-.384615) -- cycle; 
              \draw [-,ultra thin] (-.133231,-.384615)-- (-.115467,-.4) -- (-.240556,-.416667) -- (-.230933,-.4)  -- cycle; 
              \draw [-,ultra thin] (-.0618571,-.392857) -- (-.054125,-.40625) -- (-.101882,-.411765)-- (-.115467,-.4)  -- cycle; 
              \draw [-,ultra thin] (0,-.5) -- (0,-.4) -- (-.054125,-.40625) -- cycle; 
              \draw [-,ultra thin] (-.866,-.5) -- (-.2598,-.35) -- (-.319053,-.342105) -- cycle; 
              \draw [-,ultra thin] (-.2598,-.35)   -- (-.20619,-.357143) -- (-.2165,-.375) -- (-.266462,-.384615)-- cycle; 
              \draw [-,ultra thin] (-.157455,-.363636) -- (-.133231,-.384615) -- (-.230933,-.4)  -- (-.2165,-.375) -- cycle; 
              \draw [-,ultra thin] (-.0721667,-.375)  -- (-.0618571,-.392857) -- (-.115467,-.4)-- (-.133231,-.384615) -- cycle; 
              \draw [-,ultra thin] (0,-.384615)  -- (0,-.4) -- (-.054125,-.40625)-- (-.0618571,-.392857) -- cycle; 
              \draw [-,ultra thin] (-.2598,-.35) -- (-.254706,-.323529) -- (-.319053,-.342105) -- cycle; 
              \draw [-,ultra thin] (-.866,-.5) -- (-.384889,-.333333) -- (-.319053,-.342105) -- cycle; 
              \draw [-,ultra thin] (-.157455,-.363636) -- (-.2165,-.375) -- (-.20619,-.357143) -- cycle; 
              \draw [-,ultra thin] (-.192444,-.333333) -- (-.2598,-.35) -- (-.20619,-.357143) -- cycle; 
              \draw [-,ultra thin] (-.157455,-.363636) -- (-.0721667,-.375) -- (-.133231,-.384615) -- cycle; 
              \draw [-,ultra thin] (-.0721667,-.375) -- (0,-.384615) -- (-.0618571,-.392857) -- cycle; 
              \draw [-,ultra thin] (-.32475,-.3125)  -- (-.254706,-.323529) -- (-.319053,-.342105) -- (-.384889,-.333333)-- cycle; 
              \draw [-,ultra thin] (-.192444,-.333333) -- (-.2598,-.35) -- (-.254706,-.323529) -- cycle; 
              \draw [-,ultra thin] (-.866,-.5) -- (-.384889,-.333333) -- (-.458471,-.323529) -- cycle; 
              \draw [-,ultra thin] (-.192444,-.333333) -- (-.157455,-.363636) -- (-.20619,-.357143) -- cycle; 
              \draw [-,ultra thin] (-.0866,-.35) -- (-.157455,-.363636) -- (-.0721667,-.375) -- cycle; 
              \draw [-,ultra thin] (0,-.363636) -- (-.0721667,-.375) -- (0,-.384615) -- cycle; 
              \draw [-,ultra thin] (-.247429,-.285714) -- (-.32475,-.3125) -- (-.254706,-.323529) -- cycle; 
              \draw [-,ultra thin] (-.32475,-.3125) -- (-.404133,-.3) -- (-.458471,-.323529)-- (-.384889,-.333333)  -- cycle; 
              \draw [-,ultra thin] (-.192444,-.333333) -- (-.2165,-.3125) -- (-.254706,-.323529) -- cycle; 
              \draw [-,ultra thin] (-.866,-.5) -- (-.54125,-.3125) -- (-.458471,-.323529) -- cycle; 
              \draw [-,ultra thin] (-.192444,-.333333) -- (-.0866,-.35) -- (-.157455,-.363636) -- cycle; 
              \draw [-,ultra thin] (-.0866,-.35) -- (0,-.363636) -- (-.0721667,-.375) -- cycle; 
              \draw [-,ultra thin] (-.247429,-.285714) -- (-.2165,-.3125) -- (-.254706,-.323529) -- cycle; 
              \draw [-,ultra thin] (-.247429,-.285714)  -- (-.333077,-.269231) -- (-.404133,-.3) -- (-.32475,-.3125) -- cycle; 
              \draw [-,ultra thin] (-.494857,-.285714)  -- (-.404133,-.3) -- (-.458471,-.323529) -- (-.54125,-.3125) -- cycle; 
              \draw [-,ultra thin] (-.192444,-.333333) -- (-.1732,-.3) -- (-.2165,-.3125) -- cycle; 
              \draw [-,ultra thin] (-.10825,-.3125) -- (-.192444,-.333333) -- (-.0866,-.35) -- cycle; 
              \draw [-,ultra thin] (0,-.333333) -- (-.0866,-.35) -- (0,-.363636) -- cycle; 
              \draw [-,ultra thin] (-.247429,-.285714) -- (-.1732,-.3) -- (-.2165,-.3125) -- cycle; 
              \draw [-,ultra thin] (-.247429,-.285714) -- (-.288667,-.25) -- (-.333077,-.269231) -- cycle; 
              \draw [-,ultra thin] (-.433,-.25)  -- (-.333077,-.269231) -- (-.404133,-.3) -- (-.494857,-.285714) -- cycle; 
              \draw [-,ultra thin] (-.10825,-.3125) -- (-.192444,-.333333) -- (-.1732,-.3) -- cycle; 
              \draw [-,ultra thin] (-.10825,-.3125) -- (0,-.333333) -- (-.0866,-.35) -- cycle; 
              \draw [-,ultra thin] (-.144333,-.25) -- (-.247429,-.285714) -- (-.1732,-.3) -- cycle; 
              \draw [-,ultra thin] (-.247429,-.285714) -- (-.236182,-.227273) -- (-.288667,-.25) -- cycle; 
              \draw [-,ultra thin] (-.3464,-.2) -- (-.288667,-.25) -- (-.333077,-.269231) -- (-.433,-.25) -- cycle; 
              \draw [-,ultra thin] (-.10825,-.3125) -- (-.123714,-.285714) -- (-.1732,-.3) -- cycle; 
              \draw [-,ultra thin] (0,-.285714) -- (-.10825,-.3125) -- (0,-.333333) -- cycle; 
              \draw [-,ultra thin] (-.144333,-.25) -- (-.123714,-.285714) -- (-.1732,-.3) -- cycle; 
              \draw [-,ultra thin] (-.144333,-.25) -- (-.247429,-.285714) -- (-.236182,-.227273) -- cycle; 
              \draw [-,ultra thin] (-.3464,-.2) -- (-.236182,-.227273) -- (-.288667,-.25) -- cycle; 
              \draw [-,ultra thin] (0,-.285714)  -- (-.0666154,-.269231) -- (-.123714,-.285714) -- (-.10825,-.3125) -- cycle; 
              \draw [-,ultra thin] (-.144333,-.25) -- (-.0666154,-.269231) -- (-.123714,-.285714) -- cycle; 
              \draw [-,ultra thin] (-.144333,-.25) -- (-.1732,-.2) -- (-.236182,-.227273) -- cycle; 
              \draw [-,ultra thin] (-.2165,-.125) -- (-.3464,-.2) -- (-.236182,-.227273) -- cycle; 
              \draw [-,ultra thin] (0,-.285714) -- (0,-.25) -- (-.0666154,-.269231) -- cycle; 
              \draw [-,ultra thin] (0,-.2)  -- (0,-.25) -- (-.0666154,-.269231)-- (-.144333,-.25) -- cycle; 
              \draw [-,ultra thin] (-.144333,-.25) -- (-.0962222,-.166667) -- (-.1732,-.2) -- cycle; 
              \draw [-,ultra thin] (-.2165,-.125) -- (-.1732,-.2) -- (-.236182,-.227273) -- cycle; 
              \draw [-,ultra thin] (0,-.2) -- (-.144333,-.25) -- (-.0962222,-.166667) -- cycle; 
              \draw [-,ultra thin] (-.2165,-.125) -- (-.0962222,-.166667) -- (-.1732,-.2) -- cycle; 
              \draw [-,ultra thin] (0,-.2) -- (0,-.125) -- (-.0962222,-.166667) -- cycle; 
              \draw [-,ultra thin] (0,0) -- (-.2165,-.125) -- (-.0962222,-.166667) -- cycle; 
              \draw [-,ultra thin] (0,0) -- (0,-.125) -- (-.0962222,-.166667) -- cycle; 

      \end{tikzpicture}
}
\caption{The Borel graph and the Gr\"obner fan of the Hilbert scheme $\Hilb{8}{3}$.}
\label{fig:8pointsP3}
\end{center}
\end{figure}

\subsection{Irreducibility of the Hilbert scheme}

We recall a nice results from \cite{BCR-GG} that explains how to use maximal strongly stable ideals with respect to $\ssucceq_\Omega$ to study the irreducibility of $\Hilb{p(t)}{n}$. For any term order $\Omega$, we denote by $\mathfrak{m}_{p(t)}^n(\Omega)$ the number of ideals in $\max_{\ssucceq_\Omega} \SI{p(t)}{n}$. 

\begin{proposition}[{\cite[Proposition 9]{BCR-GG}}]\label{prop:gg}
Let $\Omega$ be a term order. The Hilbert scheme  $\Hilb{p(t)}{n}$ has at least $\mathfrak{m}_{p(t)}^n(\Omega)$ irreducible components.
\end{proposition}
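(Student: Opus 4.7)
The plan is to construct an injection from $\max_{\ssucceq_\Omega}\SI{p(t)}{n}$ to the set of irreducible components of $\Hilb{p(t)}{n}$, whose existence immediately yields the desired lower bound. First I would observe that for each $J \in \max_{\ssucceq_\Omega}\SI{p(t)}{n}$ the point $[J]$ is a closed point of $\Hilb{p(t)}{n}$, so it lies on at least one irreducible component $H_J$; the injection will send $J$ to any such $H_J$. The main task is to prove that for distinct maximal ideals $J\neq J' \in \max_{\ssucceq_\Omega}\SI{p(t)}{n}$ no single irreducible component contains both $[J]$ and $[J']$.

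To argue this, I would assume by contradiction that $[J],[J'] \in H$ for some irreducible component $H$ and consider the generic initial ideal $J_H := \Gin{\Omega}(I_\eta)$ of the generic point $\eta$ of $H$ (the double-generic initial ideal of $H$). By Galligo's theorem $J_H$ is Borel-fixed, hence strongly stable in characteristic $0$; by flatness of Gr\"obner degenerations $J_H$ has Hilbert polynomial $p(t)$, so that $J_H \in \SI{p(t)}{n}$. The crucial input from \cite[Proposition 9]{BCR-GG} is that $J_H$ is a $\ssucceq_\Omega$-lower bound for the generic initial ideals of every point of $H$: for each closed point $[X] \in H$ one has $\Gin{\Omega}(I_X) \ssucceq_\Omega J_H$.

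Applying this to $[X] = [J]$ and $[X] = [J']$, and using that the generic initial ideal of a Borel-fixed ideal is the ideal itself, I would obtain $J \ssucceq_\Omega J_H$ and $J' \ssucceq_\Omega J_H$. Since $J$ and $J'$ are both maximal in $\SI{p(t)}{n}$ under $\ssucceq_\Omega$ and $J_H$ belongs to $\SI{p(t)}{n}$, this forces $J = J_H = J'$, contradicting $J \neq J'$. Thus $J \mapsto H_J$ is injective, and $\Hilb{p(t)}{n}$ has at least $\mathfrak{m}_{p(t)}^n(\Omega)$ irreducible components.

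The main obstacle is conceptual rather than technical: the entire argument rests on the semi-continuity statement of \cite[Proposition 9]{BCR-GG}, whose proof requires the careful analysis of the $\textnormal{GL}_\kk(n+1)$-action on exterior powers of $\kk[\xx]_r$ recalled in the introduction. Once that input is available, the rest of the proof is a short comparison of generic initial ideals with the maximal elements of $(\SI{p(t)}{n},\ssucceq_\Omega)$, exploiting nothing more than the fact that $J_H$ is itself strongly stable with Hilbert polynomial $p(t)$.
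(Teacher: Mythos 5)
First, note that the paper itself does not prove this statement: Proposition \ref{prop:gg} is quoted directly from \cite[Proposition 9]{BCR-GG}, so your proposal is really a reconstruction of the argument in that reference. Your overall strategy --- assign to each $\ssucceq_\Omega$-maximal ideal a component containing it, and prove injectivity by comparing two maximal ideals lying on a common component $H$ with the double-generic initial ideal $J_H$ of $H$ --- is indeed the strategy of \cite{BCR-GG}, and the auxiliary facts you invoke ($J_H$ is strongly stable with Hilbert polynomial $p(t)$, and $\Gin{\Omega}(J)=J$ for a Borel-fixed $J$) are correct.

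There is, however, a genuine error in the key step: the semicontinuity property of the double-generic initial ideal goes in the opposite direction from the one you state. The Pl\"ucker vector of a special point $[X]\in H$ (in generic coordinates) is a specialization of that of the generic point, so its support is contained in the generic support, and the $\ssucceq_\Omega$-maximum over the smaller set is dominated by the maximum over the larger one. Hence $J_H \ssucceq_\Omega \Gin{\Omega}(I_X)$ for every $[X]\in H$: the double-gin is an \emph{upper} bound for the gins of the points of $H$, not a lower bound. (This is consistent with Propositions \ref{prop:grobnerStratumClosure} and \ref{prop:BorelDef implies order} of the present paper, where the more degenerate ideal is always the $\ssucceq_\Omega$-smaller one.) With the inequality as you wrote it, $J \ssucceq_\Omega J_H$, the conclusion $J=J_H$ is a non sequitur: maximality of $J$ forbids elements strictly \emph{above} $J$ and says nothing about elements below it, so $J \ssucc_\Omega J_H$ with $J\neq J_H$ is perfectly possible. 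Once the direction is corrected to $J_H \ssucceq_\Omega J$ and $J_H \ssucceq_\Omega J'$, maximality of $J$ and $J'$ immediately forces $J = J_H = J'$, and the rest of your argument goes through.
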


To make Proposition \ref{prop:gg} meaningful and effective, one has to look for the term order $\Omega$ that gives the best lower bound on the number of irreducible components of $\Hilb{p(t)}{n}$. From a computational point of view, finding such $\Omega$ from the statement seems as difficult as finding a needle in the haystack. In this context, the problem becomes treatable. 

Proposition \ref{prop:BorelDef implies order} and the inclusion $\max_{\ssucceq_\Omega}\SI{p(t)}{n} \subseteq \max_{\succeq_\Omega}\SI{p(t)}{n}$ suggest to examine maximal cones of the Gr\"obner fan $\GF(\Hilb{p(t)}{n})$. For each cone $\mathcal{C}\in\GF(\Hilb{p(t)}{n})$ of maximal dimension, we want to determine
\[
\mathfrak{m}_{p(t)}^{n}(\mathcal{C}) := \max\left\{ \mathfrak{m}_{p(t)}^n(\Omega)\ \middle\vert\ \Omega\text{~term order s.t.~} \mathcal{C} = \overline{\mathcal{C}_{p(t)}^n(\Omega)}\right\}.
\]
First, we consider an interior point $\ooo \in \mathcal{C}$, i.e.~$\mathcal{C} = \overline{\mathcal{C}_{p(t)}^n(\ooo)}$. By Proposition \ref{prop:maxConesTO}, the $\ooo$-degeneration graph is a directed graph. Second, we compute the set $\textsf{M}$ of vertices with no incoming edge in $\DG{p(t)}{n}{\ooo}$. We have $\mathfrak{m}_{p(t)}^{n}(\mathcal{C}) \leqslant \vert \textsf{M} \vert$. Third, we look for the largest subset $\textsf{M}'\subsetneq \textsf{M}$ such that there exists a term order $\Omega$ such that $\mathcal{C}_{p(t)}^n(\Omega) = \mathcal{C}_{p(t)}^n(\ooo)$ and the ideals in $\textsf{M}'$ are not comparable with respect to $\ssucceq_\Omega$. Hence, $\mathfrak{m}_{p(t)}^{n}(\mathcal{C}) = \vert \textsf{M}' \vert$. Finally, we compute the maximum of $\mathfrak{m}_{p(t)}^{n}(\mathcal{C})$ for $\mathcal{C}$ varying among cones of maximal dimension of $\GF(\Hilb{p(t)}{n})$.

\begin{lemma}\label{lem:lowerBound}
The Hilbert scheme $\Hilb{p(t)}{n}$ has at least $\mathfrak{m}_{p(t)}^n$ irreducible components, where
\begin{equation}\label{eq:lowerBound}
\mathfrak{m}_{p(t)}^n := \max \left\{ \mathfrak{m}_{p(t)}^n(\mathcal{C})\ \middle\vert\  \mathcal{C} \in \GF(\Hilb{p(t)}{n}) \text{~of maximal dimension}\right\}.
\end{equation}
\end{lemma}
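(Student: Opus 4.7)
The plan is to derive this lemma as a direct consequence of Proposition \ref{prop:gg}, using the structure of the Gr\"obner fan to organize the optimization over term orders. By Proposition \ref{prop:maxConesTO}, every term order $\Omega$ determines a maximal-dimensional cone $\overline{\mathcal{C}_{p(t)}^n(\Omega)}$ of $\GF(\Hilb{p(t)}{n})$, and conversely every maximal-dimensional cone arises this way. Hence the supremum of $\mathfrak{m}_{p(t)}^n(\Omega)$ taken over all term orders $\Omega$ coincides with $\mathfrak{m}_{p(t)}^n$ as defined in \eqref{eq:lowerBound}.

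First I would argue that, for every maximal-dimensional cone $\mathcal{C}$, there is a term order $\Omega_\mathcal{C}$ with $\overline{\mathcal{C}_{p(t)}^n(\Omega_\mathcal{C})} = \mathcal{C}$ realizing the value $\mathfrak{m}_{p(t)}^n(\mathcal{C}) = \mathfrak{m}_{p(t)}^n(\Omega_\mathcal{C})$. This is automatic: the quantity $\mathfrak{m}_{p(t)}^n(\Omega)$ is a positive integer bounded above by the finite cardinality $\vert \SI{p(t)}{n} \vert$, so the supremum in the definition of $\mathfrak{m}_{p(t)}^n(\mathcal{C})$ is attained on some term order. Applying Proposition \ref{prop:gg} to $\Omega_\mathcal{C}$ then yields that $\Hilb{p(t)}{n}$ has at least $\mathfrak{m}_{p(t)}^n(\mathcal{C})$ irreducible components.

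Next, I would take the maximum over maximal cones. Since $\GF(\Hilb{p(t)}{n})$ has only finitely many maximal cones (the fan is supported on the finite set of degeneration graphs on the finite vertex set $\SI{p(t)}{n}$, cf.~Theorem \ref{thm:groebnerFan}), the maximum in \eqref{eq:lowerBound} is attained at some cone $\mathcal{C}^\ast$. Choosing the corresponding term order $\Omega_{\mathcal{C}^\ast}$, Proposition \ref{prop:gg} gives the desired lower bound $\mathfrak{m}_{p(t)}^n = \mathfrak{m}_{p(t)}^n(\mathcal{C}^\ast) = \mathfrak{m}_{p(t)}^n(\Omega_{\mathcal{C}^\ast})$ on the number of irreducible components of $\Hilb{p(t)}{n}$.

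There is essentially no obstacle here; the statement is a bookkeeping reformulation that exchanges \emph{varying the term order} for \emph{varying the maximal cone of the Gr\"obner fan}. The genuine content lies entirely upstream, in Proposition \ref{prop:gg} and in the finiteness and combinatorial parametrization supplied by the Gr\"obner fan. The practical value of this reformulation is computational: the Gr\"obner fan reduces the search space from the infinite set of term orders to the finite set of maximal cones, making the bound effectively computable (as illustrated by the discussion preceding the lemma).
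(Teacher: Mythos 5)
Your proof is correct and takes essentially the same route as the paper: the lemma is an immediate consequence of Proposition \ref{prop:gg}, once one observes via Proposition \ref{prop:maxConesTO} and its converse (every maximal cone is $\overline{\mathcal{C}_{p(t)}^n(\Omega)}$ for some term order $\Omega$) that maximizing over term orders coincides with maximizing over maximal cones, and that the maxima are attained because the values are positive integers bounded by the finite cardinality $\vert \SI{p(t)}{n} \vert$. The paper in fact gives no separate proof, treating the statement as exactly the bookkeeping reformulation you describe in the discussion preceding the lemma.
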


\begin{example}\label{ex:6t-3 part 1}
Consider the Hilbert scheme $\Hilb{6t-3}{3}$ parametrizing 1-dimensional subschemes of $\PP^3$ of degree $6$ and arithmetic genus $4$. The Borel graph $\Sk{6t-3}{3}$ has 31 vertices and 110 edges (see Figure \ref{fig:6t-3}{\sc\subref{fig:6t-3 bg}}) and the Gr\"obner fan $\GF(\Hilb{6t-3}{3})$ has 268 cones of maximal dimension and 186 extremal rays (see Figure \ref{fig:6t-3}{\sc\subref{fig:6t-3 gf}}). For every cone, 
$\mathfrak{m}_{6t-3}^3(\mathcal{C})$ coincides with the cardinality of vertices with no incoming edges in the $\ooo$-degeneration graph for some $\ooo$ in the interior of $\mathcal{C}$. We have $251$ cones with $\mathfrak{m}_{6t-3}^3(\mathcal{C}) = 1$, $13$ cones with $\mathfrak{m}_{6t-3}^3(\mathcal{C}) = 2$ and 4 cones with $\mathfrak{m}_{7t-5}^3(\mathcal{C}) = 3$. Therefore, $\mathfrak{m}_{6t-3}^{3} = 3$ and we can affirm that the Hilbert scheme $\Hilb{6t-3}{3}$ has at least 3 irreducible components.
 \end{example}
 
We computed a lot of examples and we always found that $\mathfrak{m}_{p(t)}^n(\mathcal{C})$ coincides with the number of vertices with no incoming edge of the degeneration graph $\DG{p(t)}{n}{\ooo}$, where $\ooo$ is any vector in the interior of $\mathcal{C}$. Hence, we propose the following conjecture.

\begin{conjecture}\label{conj:v1}
For every cone of maximal dimension $\mathcal{C} \in \GF(\Hilb{p(t)}{n})$, there exists a term order $\Omega$ such that $\mathcal{C} = \overline{\mathcal{C}_{p(t)}^n(\Omega)}$ and
\[
\mathfrak{m}_{p(t)}^n(\mathcal{C}) = \mathfrak{m}_{p(t)}^n(\Omega) = \left\vert \max_{\ssucceq_\Omega} \SI{p(t)}{n} \right\vert = \left\vert \max_{\succeq_\Omega} \SI{p(t)}{n} \right\vert. 
\]
\end{conjecture}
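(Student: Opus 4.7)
The plan is to prove that for every maximal cone $\mathcal{C} \in \GF(\Hilb{p(t)}{n})$ one can produce a term order $\Omega$ with $\overline{\mathcal{C}_{p(t)}^n(\Omega)} = \mathcal{C}$ for which $\max_{\ssucceq_\Omega}\SI{p(t)}{n} = \max_{\succeq_\Omega}\SI{p(t)}{n}$. The containment $\subseteq$ holds by Proposition \ref{prop:BorelDef implies order}, so the work is in exhibiting an $\Omega$ under which every $\succeq_\Omega$-maximal ideal is also $\ssucceq_\Omega$-maximal.

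The first move is to choose a vector $\ooo$ in the relative interior of $\mathcal{C}$ that is generic in the sense that $\langle \aaa, \ooo\rangle \neq \langle \bbb, \ooo\rangle$ for all distinct $\xx^\aaa, \xx^\bbb \in \mathbb{T}^n_r$. Such a choice is possible because $\mathcal{C}$ is relatively open by Lemma \ref{lem:openPolyhedral} and only finitely many proper hyperplanes must be avoided. Defining $\Omega$ via \eqref{eq:TOfromWO} with leading weight $\ooo$ and any tie-breaker $\Lambda$, the order $\geq_\Omega$ on $\mathbb{T}^n_r$ is governed entirely by $\ooo$, so $\mathfrak{J} \ssucceq_\Omega \mathfrak{J}'$ becomes the combinatorial condition that the decreasing rearrangement of $\{\langle \aaa, \ooo\rangle : \xx^\aaa \in \mathfrak{J}\}$ pointwise dominates that of $\{\langle \bbb, \ooo\rangle : \xx^\bbb \in \mathfrak{J}'\}$.

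The main step is a contradiction argument. Suppose $J \in \max_{\succeq_\Omega}\SI{p(t)}{n}$ but $J' \ssucc_\Omega J$ for some $J' \neq J$. Since $|\mathfrak{J}_i| = |\mathfrak{J}'_i|$ for all $i$ (see the discussion after Example \ref{ex:borelSets}), removing the shared monomials $\mathfrak{J} \cap \mathfrak{J}'$ from both ordered sequences preserves the pointwise inequalities and yields a sorting bijection $\phi: \mathfrak{J}\setminus\mathfrak{J}' \to \mathfrak{J}'\setminus\mathfrak{J}$ with $\langle \phi(\xx^\aaa), \ooo\rangle \geq \langle \xx^\aaa, \ooo\rangle$, strictly for at least one $\aaa$. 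I would then try to factor $\phi$ through a sequence of Borel-adjacent substitutions $J = J_0, J_1, \dots, J_s = J'$, each replacing the $\mathcal{E}$-orbit of a Borel maximum of $\mathfrak{J}_t \setminus \mathfrak{J}_{t+1}$ with the corresponding orbit in $\mathfrak{J}_{t+1} \setminus \mathfrak{J}_t$, and each strictly increasing total $\ooo$-weight. Such a factorization would give a directed path in $\DG{p(t)}{n}{\Omega}$ from $J$ to $J'$, contradicting the maximality of $J$.

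The main obstacle is producing this factorization. The bijection $\phi$ is essentially global, whereas each Borel-adjacent substitution requires closure under decreasing elementary moves on the removed side, closure under increasing elementary moves on the added side, and overall preservation of strong stability at every intermediate step. A matroid-style exchange argument on the poset $(\mathbb{T}^n_r, \geq_B)$, performing substitutions from Borel minima upward, is the natural strategy, but controlling the interaction of $\phi$-pairs whose Borel substructures overlap is delicate, and cases in which the $\ooo$-weight inequality is tight for many pairs require a finer analysis of the facet structure of $\mathcal{C}$. This combinatorial obstruction is the heart of the conjecture and explains why, despite overwhelming computational evidence such as that in Example \ref{ex:6t-3 part 1}, the authors leave it open.
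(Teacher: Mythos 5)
You have not proved the statement, and you say so yourself: the decisive step --- factoring the sorting bijection $\phi$ into a chain of Borel-adjacent, $\ooo$-weight-monotone substitutions that stays inside $\SI{p(t)}{n}$ at every intermediate stage --- is exactly what is missing, and it is the whole content of the conjecture. The paper offers no proof either; the statement is left open with only computational evidence, so there is nothing to compare your argument against except to confirm that your reduction is sound and your sticking point is the genuine one. Concretely: the reduction to showing $\max_{\succeq_\Omega}\SI{p(t)}{n}\subseteq\max_{\ssucceq_\Omega}\SI{p(t)}{n}$ for one suitable $\Omega$ in the cone is correct (the reverse inclusion is Proposition \ref{prop:BorelDef implies order}, $\mathfrak{m}_{p(t)}^n(\Omega)=\vert\max_{\ssucceq_\Omega}\SI{p(t)}{n}\vert$ by definition, and $\vert\max_{\succeq_\Omega}\SI{p(t)}{n}\vert$ is constant on the cone), and a generic $\ooo$ in the interior of a maximal cone does exist and does force $\geq_\Omega$ on $\mathbb{T}^n_r$ to be determined by $\ooo$. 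But the exchange argument fails precisely where you say it does: the intermediate sets $\mathfrak{J}_t$ produced by transposing $\phi$-pairs need not be Borel sets, the $\mathcal{E}$-orbits of distinct pairs overlap, and preserving $\vert(\mathfrak{J}_t)_i\vert$ for all $i$ at each step is not automatic. The paper's own partial results are exactly the cases where such a factorization is achieved --- Theorem \ref{thm:segment case} when the target is a hilb-segment ideal, and the proposition on $\SI{p(t)}{n,H}$ within a fixed hyperplane-section class --- and both rely on special structure ($\xx^\aaa$ being a Borel maximum of the whole complement, or the exchanged monomials all having minimum $0$) that is unavailable in general.

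One smaller but substantive error: your claimed contradiction runs in the wrong direction. An edge of $\DG{p(t)}{n}{\Omega}$ points from the ideal whose differing monomials are $\Omega$-larger toward the one whose differing monomials are $\Omega$-smaller, so a chain $J=J_0,J_1,\ldots,J_s=J'$ in which each step strictly increases total $\ooo$-weight produces edges $[J_{t+1}\to J_t]$; what contradicts the $\succeq_\Omega$-maximality of $J$ is the resulting \emph{incoming} edge $[J_1\to J_0]$ at $J$, not "a directed path from $J$ to $J'$". As written, a path of outgoing edges from $J$ contradicts nothing. The intended argument is salvageable with the orientation corrected, but since the factorization itself is not established, the proposal remains a strategy rather than a proof.
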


 If the conjecture were true, we could compute $\mathfrak{m}_{p(t)}^n(\mathcal{C})$ looking at the $\ooo$-degeneration graph for a single vector $\ooo$ in the interior of $\mathcal{C}$ and compute $\mathfrak{m}_{p(t)}^n$ considering a finite number of degeneration graph (one for each maximal cone of the Gr\"obner fan).
However, computing the Gr\"obner fan $\GF(\Hilb{p(t)}{n})$ can become computationally demanding (see Table \ref{tab:gfan cpuTime}) and the Gr\"obner fan may have a huge number of maximal cones, making the naif procedure ineffective. Moreover, in the previous section we saw that there are directed degeneration graphs with a unique maximal element. Hence, the corresponding maximal cones can be not considered a priori. We now focus on the search for weight vectors $\ooo\in \mathcal{W}$ inducing a direct $\ooo$-degeneration graph with more than one vertex with no incoming edges.

\begin{definition}
Let $J \subset \kk[\xx]$ be a strongly stable ideal in $\SI{p(t)}{n}$. We call \emph{maximality cone} of $J$ (M-cone of $J$ for short) the set
\[
\MC(J) :=  \left\{ \ooo \in \mathcal{W} \ \middle\vert\ J \text{~has no incoming edges in~} \mathscr{G}_{p(t)}^n(\ooo) \right\} 
\] 
and we call \emph{segment cone} of $J$ (S-cone of $J$ for short) the set
\[
\SC(J) :=  \left\{ \ooo \in \mathcal{W} \ \middle\vert\  \langle \aaa , \ooo\rangle > \langle \bbb , \ooo\rangle,\ \forall\ \xx^\aaa \in \mathfrak{J},\ \forall\ \xx^\bbb \in \comp{\mathfrak{J}} \right\}.
\] 
\end{definition}

Both sets are either empty or open polyhedral cones of maximal dimension. The maximality cone of $J$ is the set of solutions of the system of inequalities
\[
\begin{cases}
\omega_0 > 0\\
\omega_i > \omega_{i-1},& i=1,\ldots,n,\\
\langle \aaa , \ooo\rangle > \langle \aaa' , \ooo\rangle,& \forall\ [J_{\aaa}{-}J'_{\aaa'}] \in E(\mathscr{G}_{p(t)}^n),
\end{cases}
\]
and it is equal to interior of the union of cones of $\GF(\Hilb{p(t)}{n})$ corresponding to degeneration graphs in which the vertex $\mathfrak{J}$ has no incoming edges.

The segment cone is polyhedral by definition. Notice that it is not necessary to consider all inequalities $\langle \aaa,\ooo\rangle > \langle \bbb , \ooo\rangle,\ \forall\ \xx^\aaa \in \mathfrak{J},\ \forall\ \xx^\bbb \in \comp{\mathfrak{J}}$, but it suffices to restrict to those corresponding to $\xx^\aaa \in \mathfrak{J}$ minimal and $\xx^\bbb \in \comp{\mathfrak{J}}$ maximal with respect to the Borel order $\geq_B$. In general, an S-cone can have any type of relation with the cones of maximal dimension of $\GF(\Hilb{p(t)}{n})$ (see Example \ref{ex:6t-3 part 2}).

The segment cone of an ideal is contained in the maximality cone and we are interested in ideals for which the inclusion is proper.
\begin{definition}\label{def:irregular}
We say that an ideal $J \in \SI{p(t)}{n}$ is \emph{regular} if $\MC(J) = \SC(J)$ and \emph{irregular} if $\MC(J)\setminus \SC(J) \neq \emptyset$, i.e.~$\SC(J) \subsetneq \MC(J)$. 
\end{definition}

In light of Definition \ref{def:irregular}, in order to determine $\mathfrak{m}_{p(t)}^n$ we can consider irregular ideals in $\SI{p(t)}{n}$ and look for subsets $\{J_1,\ldots,J_s\}$ such that
\[
\MC(J_1,\ldots,J_s) :=  \bigcap_{i = 1}^s \big( \MC(J_i)\setminus \SC(J_i) \big) =  \bigcap_{i = 1}^s \MC(J_i)\neq \emptyset.
\]
For each subset with this property, we have to check that there exists a term order $\Omega$  such that ideals $J_1,\ldots,J_s$ are maximal elements for $\succeq_\Omega$ and not comparable with respect to $\ssucceq_\Omega$. The cardinality of the largest set of ideals with this property is $\mathfrak{m}_{p(t)}^{n}$. Conjecture \ref{conj:v1} can be restated as follows.

\begin{conjecture}\label{conj:v2}
For every set of ideals $J_1,\ldots,J_s \in \SI{p(t)}{n}$ such that $\MC(J_1,\ldots,J_2) \neq \emptyset$, there exists a term order $\Omega$ such that $ \{J_1,\ldots,J_s\} = \max_{\ssucceq_{\Omega}} \SI{p(t)}{n}$.
\end{conjecture}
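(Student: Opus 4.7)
The plan is to construct the desired term order $\Omega$ by refining an interior vector of $\MC(J_1,\ldots,J_s)$ with a carefully chosen tie-breaker, and to establish the $\ssucceq_\Omega$-maximality picture using Proposition \ref{prop:BorelDef implies order} together with a reduction based on Lemma \ref{lem:addMonomials}. The key preliminary observation is that a comparison of two strongly stable ideals under $\ssucceq_\Omega$ is governed entirely by their symmetric difference:
\[
\mathfrak{J}_i \ssucceq_\Omega \mathfrak{J}_j \quad\Longleftrightarrow\quad \mathfrak{J}_i\setminus\mathfrak{J}_j \ssucceq_\Omega \mathfrak{J}_j\setminus\mathfrak{J}_i.
\]
Indeed, since the two bases have the same cardinality, a monomial in $\mathfrak{J}_i \cap \mathfrak{J}_j$ must occupy the same position in both sorted lists (otherwise $\ssucceq_\Omega$ would force equalities with strictly smaller monomials), so common monomials cancel out of the inequality.

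The proof then proceeds as follows. First, fix $\ooo$ in the relative interior of $\MC(J_1,\ldots,J_s)$, so that $\DG{p(t)}{n}{\ooo}$ is a directed acyclic graph with $J_1,\ldots,J_s$ among its source vertices, and build $\Omega$ via equation \eqref{eq:TOfromWO} with a tie-breaker $\Lambda$ to be chosen. Any such $\Omega$ satisfies $\mathcal{C}_{p(t)}^n(\Omega)=\mathcal{C}_{p(t)}^n(\ooo)$, hence each $J_i$ lies in $\max_{\succeq_\Omega}\SI{p(t)}{n}$. The remaining argument splits into two claims: (A) every $J'\notin\{J_1,\ldots,J_s\}$ is strictly $\ssucc_\Omega$-dominated by some $J_i$; (B) the $J_i$'s are pairwise $\ssucceq_\Omega$-incomparable.

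For (A), if $J'$ has an incoming edge in $\DG{p(t)}{n}{\ooo}$, iterating Proposition \ref{prop:BorelDef implies order} along an oriented path from a source down to $J'$ yields a chain of strict $\ssucc_\Omega$-dominations terminating at some source; if that source is $J_i$ we are done, otherwise we reduce to the case where $J'$ is itself an extra source of $\DG{p(t)}{n}{\ooo}$ lying outside $\{J_1,\ldots,J_s\}$. For such $J'$, by the reduction above, establishing $J_i \ssucc_\Omega J'$ amounts to showing that the sorted sequence of $\mathfrak{J}_i\setminus\mathfrak{J}'$ dominates that of $\mathfrak{J}'\setminus\mathfrak{J}_i$ componentwise under $\geq_\Omega$. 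Since the weight vector $\ooo$ alone does not decide the comparison between non-Borel-adjacent sources, the tie-breaker $\Lambda$ must be tuned to break the weight-ties on the correct side. Claim (B) is entirely analogous: for $i\neq j$, incomparability translates into an \emph{interleaving} of the sorted $\Omega$-values of $\mathfrak{J}_i\setminus\mathfrak{J}_j$ and $\mathfrak{J}_j\setminus\mathfrak{J}_i$, again arranged by a suitable choice of $\Lambda$.

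The main obstacle is producing a single $\Lambda$ that simultaneously satisfies the demands of (A) and (B) across every pair $(i,j)$ in $\{J_1,\ldots,J_s\}$ and every extraneous source $J'$. A promising route is induction on $s$: the base case $s=1$ is handled by Theorem \ref{thm:segment case} whenever $J_1$ is a hilb-segment ideal, and at each inductive step one enlarges $\{J_1,\ldots,J_s\}$ by a compatible source, showing that the locus of admissible tie-breakers shrinks but remains non-empty precisely because $\MC(J_1,\ldots,J_s)\neq\emptyset$. Extracting from this polyhedral non-emptiness a combinatorial statement about the sizes and positions of the monomials in the various symmetric differences — strong enough to rule out configurations where every admissible tie-breaker simultaneously violates either the domination condition of (A) or the interleaving condition of (B) — is the delicate point that justifies the conjectural status of the result.
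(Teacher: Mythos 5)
This statement is a \emph{conjecture} in the paper: the authors offer only computational evidence for it and no proof, so there is nothing of theirs to compare your argument against. Your proposal is accordingly a strategy rather than a proof, and you say so yourself: the ``delicate point'' you isolate at the end --- producing a single tie-breaker $\Lambda$ that simultaneously realizes the domination conditions of your claim (A) and the incomparability conditions of your claim (B) for all pairs and all extraneous sources --- is precisely the open content of the conjecture. Your inductive sketch gives no mechanism for converting the non-emptiness of the open polyhedron $\MC(J_1,\ldots,J_s)$ (a condition on weight vectors, i.e.\ on the partial orders $\geq_{\ooo}$) into the existence of a refinement $\Lambda$ with the required properties on the sorted symmetric differences; that conversion is the entire difficulty, and leaving it unresolved is consistent with the statement's conjectural status.

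Two remarks on the parts you do argue. Your cancellation claim $\mathfrak{J}_i \ssucceq_\Omega \mathfrak{J}_j \Leftrightarrow \mathfrak{J}_i\setminus\mathfrak{J}_j \ssucceq_\Omega \mathfrak{J}_j\setminus\mathfrak{J}_i$ is true, but your justification is not: a common monomial need not occupy the same position in the two sorted lists (take $\mathfrak{A}=\{m_1,m_2\}$ and $\mathfrak{B}=\{m_2,m_3\}$ with $m_1>_\Omega m_2>_\Omega m_3$; here $m_2$ sits in position $2$ of $\mathfrak{A}$ and position $1$ of $\mathfrak{B}$, yet $\mathfrak{A}\ssucc_\Omega\mathfrak{B}$). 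The clean argument is to reformulate Definition \ref{def:ssucc} as a counting condition, namely $\mathfrak{A}\ssucceq_\Omega\mathfrak{B}$ if and only if $\vert\{\xx^\aaa\in\mathfrak{A}\ \vert\ \xx^\aaa\geq_\Omega m\}\vert\geqslant\vert\{\xx^\bbb\in\mathfrak{B}\ \vert\ \xx^\bbb\geq_\Omega m\}\vert$ for every monomial $m$ of degree $r$, from which common elements cancel on both sides; the direction you actually need is also a direct iteration of Lemma \ref{lem:addMonomials}. Your reduction in (A) is sound: every non-source of $\DG{p(t)}{n}{\ooo}$ is strictly $\ssucc_\Omega$-dominated by some source via Proposition \ref{prop:BorelDef implies order}, and two distinct sources cannot be Borel adjacent (a term order would orient the edge between them), so the unresolved comparisons are exactly those the degeneration graph does not see. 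The logical skeleton (A)$+$(B) does imply the conclusion, since a $J'\ssucc_\Omega J_i$ with $J'$ outside the set would, by (A) and transitivity, contradict (B). But the skeleton is all you have; the conjecture remains open.
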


\begin{example}[continues Example \ref{ex:6t-3 part 1}]\label{ex:6t-3 part 2}
Among the $31$ elements of $\SI{6t-3}{3}$, we have
\begin{itemize}
\item 8 regular ideals (5 of them have empty M-cone, 3 of them are hilb-segment ideals with the M-cone and S-cone coinciding);
\item 23 irregular ideals (10 of which are hilb-segment ideals).
\end{itemize}
There are 59 cones of maximal dimension of $\GF(\Hilb{6t-3}{3})$ whose intersection with at least one segment cone is a cone of maximal dimension (see Figure \ref{fig:6t-3}{\sc\subref{fig:6t-3 gf}}). For these cones, $\mathfrak{m}_{6t-3}^3(\mathcal{C})$ is surely 1.
 
 In all degeneration graphs corresponding to the four cones with $\mathfrak{m}_{6t-3}^3(\mathcal{C}) = 3$, the vertices with no incoming edge correspond to ideals
 \[
 \begin{split}
&J_{26}\footnotemark = (x_3^3, x_2 x_3^2, x_2^2 x_3, x_1 x_3^2, x_1^2 x_2 x_3,x_1^3 x_3,{x}_{2}^{6})_{\geqslant 12},\\
&J_{30} = ({x}_{3}^{3},x_2{x}_{3}^{2},x_2^2 x_3,x_1 x_3^2, x_1 x_2 x_3,{x}_{2}^{5})_{\geqslant 12},\\
&J_{31} = ({x}_{3}^{2},x_2^2 {x}_{3},{x}_{2}^{4})_{\geqslant 12}.
 \end{split}
 \] 
\footnotetext{The index labeling an ideal is the position of the ideal in the list of strongly stable ideals in $\kk[x_0,x_1,x_2,x_3]$ with Hilbert polynomial $p(t)=6t-3$  produced by the algorithm implemented in the \textit{Macaulay2} package \texttt{StronglyStableIdeals.m2} \cite{AL}.} The interior of union of these four cones of $\GF(\Hilb{6t-3}{3})$ is equal to the intersection of maximality cones 
 \[
 \MC(J_{26},J_{30},J_{31}) = \MC(J_{26})\cap \MC(J_{30})\cap\MC(J_{31})
 \]
 (see Figure \ref{fig:6t-3}{\sc\subref{fig:6t-3 gf}}).
\end{example}

We now give a necessary condition for two irregular ideals $J$ and $J'$ to have non-empty intersection $\MC(J) \cap \MC(J')$. For a strongly stable ideal $J$, we denote by $J^\sat_{x_0}$ the saturation of the ideal $J + (x_0)$ in $\kk[x_1,\ldots,x_n]$. The ideal $J + (x_0)$ describes the hyperplane section of the scheme $\Proj \kk[\xx]/J$ with the hyperplane defined by the equation $x_0 = 0$. Notice that if two ideals $J$ and $J'$ have the same hyperplane section, then $\mathfrak{J}_{\geqslant 1} = \mathfrak{J}'_{\geqslant 1}$, so that $\mathfrak{J} \setminus \mathfrak{J}' \subset \mathfrak{J}_0$ and $\mathfrak{J}' \setminus \mathfrak{J} \subset \mathfrak{J}'_0$.

We denote by $\HS{p(t)}{n}$ the set of strongly stable ideals $H \subset \kk[x_1,\ldots,x_n]$ describing a possible hyperplane section of an ideal in $\SI{p(t)}{n}$. The set $\HS{p(t)}{n}$ is a subset of $\SI{\Delta p(t)}{n-1}$ and an ideal $H \subset \kk[x_1,\ldots,x_n]$ in $\SI{\Delta p(t)}{n-1}$ belongs to $\HS{p(t)}{n}$ if the Hilbert polynomial of $H\cdot \kk[\xx]$ is equal to $p(t)-h$ with $h \geqslant 0$ (see \cite{CLMR,Efficient,AL} for more details). 

For all $H^\sat \in \HS{p(t)}{n}$, we denote by $\SI{p(t)}{n,H}$ the subset 
\[
\SI{p(t)}{n,H} := \left\{ J \in \SI{p(t)}{n}\ \middle\vert\ J^\sat_{x_0} = H^\sat \right\}
\]
and by $\DG{p(t)}{n,H}{\Omega}$ the subgraph of $\DG{p(t)}{n}{\Omega}$ containing only vertices in $\SI{p(t)}{n,H}$ and edges among them.

\begin{proposition}
For any term order $\Omega$ and for any ideal $H \in \HS{p(t)}{n}$, the set of ideals $\SI{p(t)}{n,H}$ has maximum with respect to both $\ssucceq_{\Omega}$ and $\succeq_{\Omega}$.
\end{proposition}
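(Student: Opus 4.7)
The plan is to construct the maximum $J^*$ explicitly as a segment built on top of the fixed hyperplane section $H$, adapting the strategy of the proof of Theorem~\ref{thm:segment case}. Every $J \in \SI{p(t)}{n,H}$ satisfies $\mathfrak{J}_{\geqslant 1} = H_r$, so only the $x_0$-divisible part $\mathfrak{J}_0$ can vary, with fixed cardinality $k := q(r) - \vert H_r \vert$. I would first identify the set of admissible candidate monomials
\[
\mathfrak{D}_H := \bigl\{ x_0^{b_0} \xx^\ccc \in \mathbb{T}^n_r \bigm| b_0 \geqslant 1,\ \xx^\ccc \in \kk[x_1,\ldots,x_n]_{r-b_0},\ x_1^{b_0} \xx^\ccc \in H_r \bigr\},
\]
and check that $\mathfrak{J}_0 \subseteq \mathfrak{D}_H$ for every $J \in \SI{p(t)}{n,H}$: iterating $\eu{0}$ on any $\xx^\bbb \in \mathfrak{J}_0$ keeps us in $\mathfrak{J}$ by Borel closedness, and when the $x_0$-exponent drops to zero the outcome must land in $H_r$.

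Next, I would define $J^*$ by letting $\mathfrak{J}^*_0$ be the $k$ largest monomials of $\mathfrak{D}_H$ with respect to $\Omega$, and verify that $\mathfrak{J}^* = H_r \cup \mathfrak{J}^*_0$ is closed under increasing elementary moves. For $\xx^\bbb \in \mathfrak{J}^*_0$ and an admissible $\eu{i}$ with either $i \geqslant 1$, or $i = 0$ with $x_0$-exponent at least $2$, the image still lies in $\mathfrak{D}_H$ thanks to the Borel closedness of $H_r$, and being Borel-greater than $\xx^\bbb$ it is $\Omega$-greater and therefore still in the top-$k$ set. For $\eu{0}$ applied to a monomial with $x_0$-exponent exactly $1$, the image lies in $H_r$ by the very definition of $\mathfrak{D}_H$. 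This proves $J^* \in \SI{p(t)}{n,H}$.

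Maximality of $J^*$ with respect to $\ssucceq_\Omega$ then follows from iterated application of Lemma~\ref{lem:addMonomials}: the top-$k$ choice yields $\mathfrak{J}^*_0 \ssucceq_\Omega \mathfrak{J}_0$, and appending the common set $H_r$ to both sides preserves the inequality. For the order $\succeq_\Omega$, I would adapt the border-replacement argument of Theorem~\ref{thm:segment case}: given $J \neq J^*$ in $\SI{p(t)}{n,H}$, every monomial of $\mathfrak{J}^*_0 \setminus \mathfrak{J}_0$ is $\Omega$-greater than every monomial of $\mathfrak{J}_0 \setminus \mathfrak{J}^*_0$, so the inner-border/outer-border swap between a Borel-minimum of $\mathfrak{J}_0 \setminus \mathfrak{J}^*_0$ and a Borel-maximum of $\mathfrak{J}^*_0 \setminus \mathfrak{J}_0$ produces a Borel-adjacent ideal $I$ with $I \succ_\Omega J$. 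Since only monomials in $\mathfrak{J}_0$ are touched, $I$ remains in $\SI{p(t)}{n,H}$, and iterating terminates at $J^*$.

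The main obstacle is precisely this last step: one has to verify that the border swap stays inside $\mathfrak{D}_H$ and respects all the admissibility hypotheses required for Borel adjacency. Both issues should reduce to two facts: all monomials involved have $\min = 0$, so the decreasing moves $\ed{j}$ used to form the borders act on indices $\geqslant 1$ and preserve membership in $\mathfrak{D}_H$ via Borel closedness of $H_r$; and the defining condition of $\mathfrak{D}_H$ is exactly what is needed to ensure compatibility between the action of $\eu{0}$ and the boundary with $H_r$.
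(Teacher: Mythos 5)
Your proof is correct and follows essentially the same route as the paper: the paper builds the maximum by taking the degree-$r$ monomials of $H\cdot\kk[\xx]$ and deleting the $h$ smallest ones of minimum $0$ with respect to $\Omega$, which is exactly your top-$k$ selection from $\mathfrak{D}_H$, since $H^{\sat}$ is saturated with respect to $x_1$ and hence $x_1^{b_0}\xx^{\ccc}\in H_r \Leftrightarrow \xx^{\ccc}\in H^\sat$. Both arguments then obtain $\ssucceq_{\Omega}$-maximality from Lemma \ref{lem:addMonomials} and $\succeq_{\Omega}$-maximality from the border-swap procedure of Theorem \ref{thm:segment case}, which here reduces to a single-monomial exchange ($\mathcal{E}=\{\textsf{id}\}$) because all monomials involved have minimum $0$.
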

\begin{proof}
First, we prove that there is the maximum with respect to $\ssucceq_{\Omega}$ by constructing it. Consider the ideal $H' = H\cdot \kk[\xx]$ and the set $\mathfrak{H}'$ of its monomials of degree $r$. By the assumption, we have that $h = \vert\mathfrak{H}' \vert - q(r) \geqslant 0$. If $h=0$, then $H'$ is the unique ideal in $\SI{p(t)}{n,H}$ and it is also maximal with respect to $\ssucceq_{\Omega}$. If $h>0$, the homogeneous piece of degree $r$ of ideals in $\SI{p(t)}{n,H}$ can be obtained from $\mathfrak{H}'$ removing $h$ monomials $\{\xx^{\aaa_1},\ldots,\xx^{\aaa_h}\}$ with minimum 0 such that $\mathfrak{H}'\setminus \{\xx^{\aaa_1},\ldots,\xx^{\aaa_h}\}$ remains closed under increasing Borel elementary moves.

Let us call $L$ the ideal whose set of monomials of degree $r$ is $\mathfrak{H}'\setminus\{\xx^{\aaa_1},\ldots,\xx^{\aaa_h}\}$, where the monomials we remove are the $h$ smallest monomials with respect to $\Omega$ in $\mathfrak{H}'$ with minimum 0. By construction $L$ is strongly stable and contained in $\SI{p(t)}{n,H}$. For any other ideal $J \in \SI{p(t)}{n,H}$, we have 
\[
\mathfrak{J} = \mathfrak{H}'\setminus\{\xx^{\bbb_1},\ldots,\xx^{\bbb_h}\}\quad\text{and}\quad\mathfrak{L} \cap \mathfrak{J} = \mathfrak{H}' \setminus (\{\xx^{\aaa_1},\ldots,\xx^{\aaa_h}\} \cup \{\xx^{\bbb_1},\ldots,\xx^{\bbb_h}\} ) 
\]
that imply
\[
\begin{split}
\mathfrak{L} &{}= (\mathfrak{L} \cap \mathfrak{J}) \cup \left(\{\xx^{\bbb_1},\ldots,\xx^{\bbb_h}\} \setminus \{\xx^{\aaa_1},\ldots,\xx^{\aaa_h}\}\right),\\
\mathfrak{J} &{}= (\mathfrak{L} \cap \mathfrak{J}) \cup \left(\{\xx^{\aaa_1},\ldots,\xx^{\aaa_h}\} \setminus \{\xx^{\bbb_1},\ldots,\xx^{\bbb_h}\}\right).
\end{split}
\]
By construction, all elements in $\{\xx^{\bbb_1},\ldots,\xx^{\bbb_h}\} \setminus \{\xx^{\aaa_1},\ldots,\xx^{\aaa_h}\}$ are greater with respect to $\geq_\Omega$ than all monomials in $\{\xx^{\aaa_1},\ldots,\xx^{\aaa_h}\} \setminus \{\xx^{\bbb_1},\ldots,\xx^{\bbb_h}\}$ . By Lemma \ref{lem:addMonomials}, $L \ssucceq_{\Omega} J$, for all $J \in \SI{p(t)}{n,H}$.

\smallskip

In order to prove that $L$ is also the maximum with respect to $\succeq_{\Omega}$, we repeat the argument used in Theorem \ref{thm:segment case}. Applying the procedure introduced in the proof of the aforementioned theorem, we encounter $\xx^\aaa = \max_{\geq_\Omega} (\mathfrak{L}\setminus\mathfrak{J})$ and $\xx^\bbb = \min_{\geq_\Omega} (\mathfrak{J}\setminus\mathfrak{L})$ with minimum 0, that satisfy conditions $(\dagger)$ and $(\ddagger)$. The ideal $I$ generated by $\mathfrak{I} = \mathfrak{J} \setminus\{\xx^\bbb\}\cup \{\xx^\aaa\}$ is Borel-adjacent to $J$, it is contained in $\SI{p(t)}{n,H}$ as $\mathfrak{I}_{\geqslant 1} = \mathfrak{J}_{\geqslant 1}$, and $\xx^\aaa >_{\Omega} \xx^\bbb$ implies that $[I{\xrightarrow{\text{\tiny$\Omega$}}}J]$ is an edge of $\DG{p(t)}{n,H}{\Omega}$, so that $I \succ_\Omega J$.
\end{proof}

\begin{corollary}\label{cor:necessCond}
Let $J$ and $J'$ be two irregular ideals in $\SI{p(t)}{n}$. If $\MC(J) \cap \MC(J') \neq \emptyset$, then $J^\sat_{x_0} \neq J'^\sat_{x_0}$.
\end{corollary}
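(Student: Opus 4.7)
The plan is to prove the contrapositive. Assuming $J^\sat_{x_0}=J'^\sat_{x_0}=:H^\sat$, so that $J$ and $J'$ both lie in $\SI{p(t)}{n,H}$ with $J\ne J'$, I will show $\MC(J)\cap\MC(J')=\emptyset$ by exploiting the uniqueness of the $\succeq_\Omega$-maximum of $\SI{p(t)}{n,H}$ granted by the previous proposition. If the intersection were non-empty, one would exhibit a term order $\Omega$ with respect to which both $J$ and $J'$ are $\succeq_\Omega$-maximal inside $\SI{p(t)}{n,H}$, contradicting uniqueness of the maximum.

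To make this precise, I would first pick a hypothetical $\ooo\in\MC(J)\cap\MC(J')$. Since each M-cone is defined by strict linear inequalities, the intersection is open in $\mathcal{W}$, so after a small perturbation I may assume $\ooo$ lies in the interior of a cone of maximal dimension of $\GF(\Hilb{p(t)}{n})$. For such a vector $\langle \aaa - \aaa',\ooo\rangle\neq 0$ on every Borel-adjacent pair, so $\DG{p(t)}{n}{\ooo}$ is a purely directed graph. Applying the construction in \eqref{eq:TOfromWO} with an arbitrary tie-breaker then yields a term order $\Omega$ whose orientation on Borel-adjacent edges agrees with that induced by $\ooo$, so that $\DG{p(t)}{n}{\Omega}=\DG{p(t)}{n}{\ooo}$.

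At this point the conclusion falls out of the previous proposition. The set $\SI{p(t)}{n,H}$ has a unique $\succeq_\Omega$-maximum $L$. Since $J\neq J'$ and both belong to $\SI{p(t)}{n,H}$, at least one of them --- say $J'$ --- is distinct from $L$; by definition of $\succeq_\Omega$ this produces a directed path in $\DG{p(t)}{n,H}{\Omega}$ from $L$ to $J'$, and in particular an edge of $\DG{p(t)}{n,H}{\Omega}$ whose head is $J'$. Since the restricted graph is a sub-graph of $\DG{p(t)}{n}{\Omega}=\DG{p(t)}{n}{\ooo}$, the same edge is an incoming edge of $J'$ in $\DG{p(t)}{n}{\ooo}$, contradicting $\ooo\in\MC(J')$.

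The only slightly delicate point I anticipate is checking that the incoming edge produced inside the restricted graph is literally an incoming edge of $J'$ in the ambient graph. This is immediate from the construction in the proof of the previous proposition: the ideal preceding $J'$ is obtained via a single Borel-adjacent exchange that preserves $\mathfrak{I}_{\geqslant 1}=\mathfrak{J}'_{\geqslant 1}$, so the corresponding edge belongs verbatim to both $\DG{p(t)}{n,H}{\Omega}$ and $\DG{p(t)}{n}{\Omega}$. I also observe that the irregularity hypothesis is not actually invoked in the argument: for two distinct regular ideals with the same $x_0$-saturation one already has $\MC(J)\cap\MC(J')\subseteq \SC(J)\cap\SC(J')=\emptyset$, thanks to the uniqueness of the hilb-segment ideal for a given weight, so the content of the corollary really addresses the irregular case.
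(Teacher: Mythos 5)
Your proof is correct and is essentially the argument the paper intends (the corollary is left without an explicit proof, being the immediate contrapositive consequence of the preceding proposition): two distinct ideals with the same hyperplane section cannot both be $\succeq_\Omega$-maximal in $\SI{p(t)}{n,H}$, so one of them acquires an incoming edge. Your extra care in perturbing $\ooo$ into the interior of a maximal cone of the Gr\"obner fan before invoking \eqref{eq:TOfromWO} is exactly the right way to pass from the weight vector defining $\MC(J)\cap\MC(J')$ to a term order for which the proposition applies, and your side remark that irregularity is not actually needed (only $J\neq J'$) is also accurate.
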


\begin{example}[continues Example \ref{ex:6t-3 part 2}]
The hyperplane sections of the maximal elements are
\[
(J_{26})_{x_0}^\sat  = (x_3,x_2^6),\qquad (J_{30})_{x_0}^\sat  = (x_3^2,x_2x_3,x_2^5)\qquad\text{and}\qquad (J_{31})_{x_0}^\sat=  (x_3^2,x_2^2x_3,x_2^4).
\]
There is no other ideal in $\HS{6t-3}{3}$, so 3 is the maximum number of components of $\Hilb{6t-3}{3}$ that can be detected with this method. 
\end{example}

\begin{remark}
In the case of constant Hilbert polynomials, Lemma \ref{lem:maximumConstant} implies that all ideals in $\SI{d}{n}$ are regular. Namely, either $J \in \SI{d}{n}$ is a hilb-segment ideal, so that $\SC(J) \neq \emptyset$ and $\MC(J) = \SC(J)$, or $\MC(J) = \emptyset$. Hence, we have that $\mathfrak{m}_{d}^n = 1$ for all $n$ and all $d$. This is confirmed by Corollary \ref{cor:necessCond}, as all ideals in $\SI{d}{n}$ share the same (empty) hyperplane section, namely $\HS{d}{n} = \{(1)\}$.

Consequently, Proposition \ref{lem:lowerBound} (or \cite[Proposition 9]{BCR-GG}) can not be used to prove the non-irreducibility of punctual Hilbert schemes. This was already stated in \cite[Section 7.1]{BCR-GG}. but the argument was based on a previous results by Reeves saying that the set  $\SI{d}{n}$ consists of ideals defining points which all lie on a single irreducible component of $\Hilb{d}{n}$ \cite[Theorem 6]{Reeves}. We point out that our proof does not rely on that result. \bs
\end{remark}

\begin{example}[{Cf.~\cite[Example 8]{BCR-GG}}] Consider the Hilbert scheme $\Hilb{7t-5}{3}$ parametrizing 1-dimensional subschemes of $\PP^3$ of degree $7$ and arithmetic genus $6$. The computation of the Gr\"obner fan $\GF(\Hilb{7t-5}{3})$ is quite involved and the number of cones of maximal dimension is large (see Table \ref{tab:gfan cpuTime}{\sc\subref{tab:gfan cpuTime curves}}). Hence, we try to compute $\mathfrak{m}_{7t-5}^3$ looking for sets of irregular ideals with non-empty intersection of their maximality cones.

By Corollary \ref{cor:necessCond}, we know that we have to consider irregular ideals with different plane section. The set $\HS{7t-5}{3}$ is made of 4 ideals:
\[
H^\sat_1 = (x_3,x_2^7),\quad H^\sat_2 = (x_3^2,x_2x_3,x_2^6), \quad H^\sat_3 = (x_3^2,x_2^2 x_3,x_2^5),\quad H^\sat_4 = (x_3^2,x_2^3x_3,x_2^4). 
\]
The set $\SI{7t-5}{3,H_4}$ contains only the ideal $J_{112}\footnotemark[1] = (x_3^2,x_2^3x_3,x_2^4)_{\geqslant 16}$. Such ideal is irregular and its M-cone $\MC(J_{112})$ has extremal rays spanned by $(1,1,0,0)$, $(2,1,0,0)$, $(1,1,1,0)$ and $(1,1,1,1)$.

\footnotetext{The index labeling an ideal is the position of the ideal in the list of strongly stable ideals in $\kk[x_0,x_1,x_2,x_3]$ with Hilbert polynomial $p(t)=7t-5$  produced by the algorithm implemented in the \textit{Macaulay2} package \texttt{StronglyStableIdeals.m2} \cite{AL}.}

The set $\SI{7t-5}{3,H_3}$ contains 3 elements: $J_{109},J_{110},J_{111}\footnotemark[1]$.  The ideal $J_{110}$ can be discarded, because it is Borel adjacent to $J_{112}$. The other 2 ideals have distance 2 from $J_{112}$. The ideal $J_{109}$ is irregular but the maximality cone $\MC(J_{109})$ intersects $\MC(J_{112})$ only along the ray spanned by $(1,1,1,1)$. The ideal $J_{111} = (x_3^3,x_2 x_3^2,x_2^2 x_3, x_1 x_3^2,x_2^5)_{\geqslant 16}$ is irregular and its M-cone contains $\MC(J_{112})$, so that $\MC(J_{111},J_{112}) = \MC(J_{112})$.

The set $\SI{7t-5}{3,H_2}$ contains 14 elements (from $J_{95}$ to $J_{108}$\footnotemark[1]). Among the irregular ideals, there is only one ideal whose M-cone intersects $\MC(J_{111},J_{112})$ in a cone of maximal dimension. The maximality cone of the ideal $J_{108} = (x_3^3,x_2 x_3^2,x_2^2 x_3,x_1^2 x_3^2,$ $ x_1^2 x_2 x_3, x_2^6)_{\geqslant 16}$ has extremal rays spanned by $(1,1,0,0)$, $(1,1,1,0)$, $(3,2,1,0)$ and $(1,1,1,1)$. This cone is contained in $\MC(J_{111},J_{112})$, so that $\MC(J_{108},J_{111},J_{112}) = \MC(J_{108})$.

The set $\SI{7t-5}{3,H_1}$ contains the remaining 94 ideals (from $J_{1}$ to $J_{94}$\footnotemark[1]) and 44 of them are irregular. Only 2 ideals have M-cone intersecting $\MC(J_{108},J_{111},J_{112})$ in a cone of maximal dimension.
The ideal $J_{93} = (x_3^3,x_2 x_3^2, x_2^3 x_3, x_1 x_2^2 x_3, x_1^2 x_3^2, x_1^2 x_2 x_3, x_1^4 x_3, x_2^7)_{\geqslant 16}$ leads to the cone $\MC(J_{93},J_{108},J_{111},J_{112})$ with extremal rays spanned by $(2,2,1,0)$, $(3,2,1,0)$, $(3,3,2,0)$, $(5,4,3,0)$ and $(1,1,1,1)$. The ideal $J_{94} = (x_3^3,x_2^2 x_3^2, x_2^3 x_3, x_1 x_2 x_3^2, x_1^2 x_3^2, x_1^2 x_2 x_3, x_1^3 x_3, x_2^7)_{\geqslant 16}$ leads to the cone $\MC(J_{94},$ $J_{108},J_{111},J_{112})$ with extremal rays spanned by $(1,1,1,0)$, $(3,3,2,0)$, $(5,4,3,0)$ and $(1,1,1,1)$.

In both cases, a term order $\Omega$ obtained from $\geq_\ooo$ and ties broken by $\mathtt{DegLex}$, where $\ooo$ is a vector in the interior of the maximality cones $\MC(J_{93},J_{108},J_{111},J_{112})$ and $\MC(J_{94},J_{108},J_{111},J_{112})$ makes the 4 ideals maximal elements with respect to $\ssucceq_{\Omega}$. Finally, $\mathfrak{m}_{7t-5}^3 = 4$. The same result has been showed in Example 8 of \cite{BCR-GG} using the graded reverse lexicographic order. The cone of $\GF(\Hilb{7t-5}{3})$ corresponding to $\mathtt{RevLex}$ is contained in $\MC(J_{94},J_{108},J_{111},J_{112})$.
\end{example}

\captionsetup[subfloat]{width=0.9\textwidth}
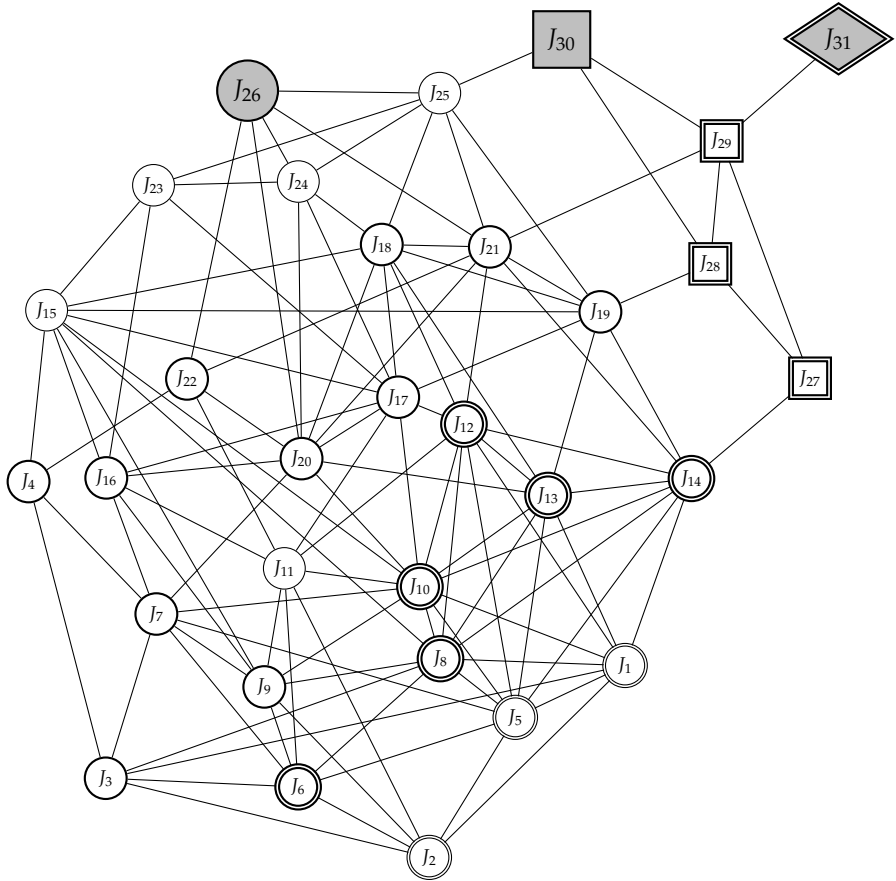
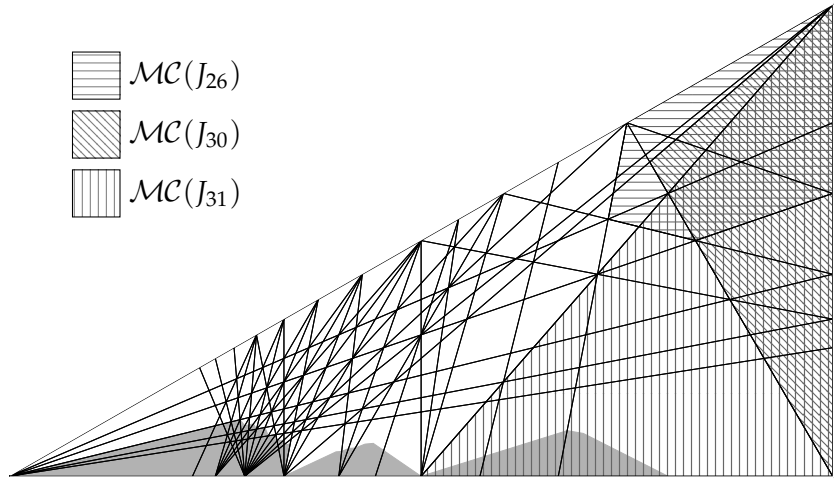
\begin{figure}
\begin{center}
\subfloat[][The Borel graph $\Sk{6t-3}{3}$. Regular ideals are drawn with a thin line, while irregular ideals are drawn with a thick line. Hilb-segment ideals are marked with a double line. Circle shaped vertices correspond to ideals with plane section $(x_3,x_2^6)$, squared shaped vertices correspond to ideals with plane section $(x_3^2,x_2x_3,x_2^5)$ and the diamond shaped vertex correspond to the ideal with plane section $(x_3^2,x_2^2 x_3, x_2^4)$. The gray bigger vertices are the maximal elements discussed in Example \ref{ex:6t-3 part 2}.]{\label{fig:6t-3 bg}
\begin{tikzpicture}[yscale=0.925,xscale=0.95]
\node (0) at (243.81bp,75.988bp) [double,circle,draw,inner sep=0pt,minimum size=0.55cm,thin] {\tiny $J_{1}$};
  \node (1) at (166.98bp,-2.0bp) [double,draw,circle,inner sep=0pt,minimum size=0.55cm,thin] {\tiny $J_{2}$};
  \node (2) at (40.01bp,30.169bp) [draw,circle,inner sep=0pt,minimum size=0.55cm,thick] {\tiny $J_{3}$};
  \node (4) at (200.73bp,54.911bp) [double,draw,circle,inner sep=0pt,minimum size=0.55cm,thin] {\tiny $J_{5}$};
  \node (7) at (171.38bp,78.734bp) [double,draw,circle,inner sep=0pt,minimum size=0.55cm,thick] {\tiny $J_{8}$};
  \node (9) at (163.31bp,108.03bp) [double,draw,circle,inner sep=0pt,minimum size=0.55cm,thick] {\tiny $J_{10}$};
  \node (11) at (180.58bp,174.05bp) [double,draw,circle,inner sep=0pt,minimum size=0.55cm,thick] {\tiny $J_{12}$};
  \node (12) at (213.59bp,145.13bp) [double,draw,circle,inner sep=0pt,minimum size=0.55cm,thick] {\tiny $J_{13}$};
  \node (13) at (269.87bp,151.96bp) [double,draw,circle,inner sep=0pt,minimum size=0.55cm,thick] {\tiny $J_{14}$};
  \node (5) at (115.51bp,26.658bp) [double,draw,circle,inner sep=0pt,minimum size=0.55cm,thick] {\tiny $J_{6}$};
  \node (8) at (102.24bp,67.28bp) [draw,circle,inner sep=0pt,minimum size=0.55cm,thick] {\tiny $J_{9}$};
  \node (10) at (110.1bp,115.46bp) [draw,circle,inner sep=0pt,minimum size=0.55cm,thin] {\tiny $J_{11}$};
  \node (3) at (9.725bp,150.72bp) [draw,circle,inner sep=0pt,minimum size=0.55cm,thick] {\tiny $J_{4}$};
  \node (6) at (59.879bp,96.907bp) [draw,circle,inner sep=0pt,minimum size=0.55cm,thick] {\tiny $J_{7}$};
  \node (14) at (16.81bp,220.36bp) [draw,circle,inner sep=0pt,minimum size=0.55cm,thin] {\tiny $J_{15}$};
  \node (21) at (71.915bp,192.43bp) [draw,circle,inner sep=0pt,minimum size=0.55cm,thick] {\tiny $J_{22}$};
  \node (15) at (40.227bp,152.22bp) [draw,circle,inner sep=0pt,minimum size=0.55cm,thick] {\tiny $J_{16}$};
  \node (19) at (116.85bp,160.05bp) [draw,circle,inner sep=0pt,minimum size=0.55cm,thick] {\tiny $J_{20}$};
  \node (16) at (154.78bp,185.01bp) [draw,circle,inner sep=0pt,minimum size=0.55cm,thick] {\tiny $J_{17}$};
  \node (17) at (148.34bp,247.05bp) [draw,circle,inner sep=0pt,minimum size=0.55cm,thick] {\tiny $J_{18}$};
  \node (20) at (190.75bp,246.09bp) [draw,circle,inner sep=0pt,minimum size=0.55cm,thick] {\tiny $J_{21}$};
  \node (18) at (234.13bp,219.7bp) [draw,circle,inner sep=0pt,minimum size=0.55cm,thick] {\tiny $J_{19}$};
  \node (26) at (316.4bp,192.72bp) [double,draw,rectangle,inner sep=0pt,minimum size=0.5cm,thick] {\tiny $J_{27}$};
  \node (22) at (58.762bp,271.14bp) [draw,circle,inner sep=0pt,minimum size=0.55cm,thin] {\tiny $J_{23}$};
  \node (23) at (115.58bp,272.65bp) [draw,circle,inner sep=0pt,minimum size=0.55cm,thin] {\tiny $J_{24}$};
  \node (24) at (171.08bp,308.59bp) [draw,circle,inner sep=0pt,minimum size=0.55cm,thin] {\tiny $J_{25}$};
  \node (27) at (277.24bp,239.19bp) [double,draw,rectangle,inner sep=0pt,minimum size=0.5cm,thick] {\tiny $J_{28}$};
  \node (25) at (95.676bp,309.58bp) [draw,circle,inner sep=0pt,minimum size=0.8cm,thick,fill=black!25] {\footnotesize $J_{26}$};
  \node (28) at (281.73bp,289.15bp) [double,draw,rectangle,inner sep=0pt,minimum size=0.5cm,thick] {\tiny $J_{29}$};
  \node (29) at (218.95bp,330.37bp) [draw,circle,rectangle,inner sep=0pt,minimum size=0.75cm,thick,fill=black!25] {\footnotesize $J_{30}$};
  \node (30) at (327.6bp,330.63bp) [double,draw,diamond,shape aspect=1.5,inner sep=3pt,thick,fill=black!25] {\footnotesize $J_{31}$};
  \draw [] (0) -- (1);
  \draw [] (0) -- (2);
  \draw [] (0) -- (4);
  \draw [] (0) -- (7);
  \draw [] (0) -- (9);
  \draw [] (0) -- (11);
  \draw [] (0) -- (12);
  \draw [] (0) -- (13);
  \draw [] (1) -- (2);
  \draw [] (1) -- (4);
  \draw [] (1) -- (5);
  \draw [] (1) -- (8);
  \draw [] (1) -- (10);
  \draw [] (2) -- (3);
  \draw [] (2) -- (5);
  \draw [] (2) -- (6);
  \draw [] (2) -- (7);
  \draw [] (3) -- (6);
  \draw [] (3) -- (14);
  \draw [] (3) -- (21);
  \draw [] (4) -- (5);
  \draw [] (4) -- (6);
  \draw [] (4) -- (7);
  \draw [] (4) -- (9);
  \draw [] (4) -- (11);
  \draw [] (4) -- (12);
  \draw [] (4) -- (13);
  \draw [] (5) -- (6);
  \draw [] (5) -- (7);
  \draw [] (5) -- (8);
  \draw [] (5) -- (10);
  \draw [] (6) -- (8);
  \draw [] (6) -- (9);
  \draw [] (6) -- (15);
  \draw [] (6) -- (19);
  \draw [] (7) -- (8);
  \draw [] (7) -- (9);
  \draw [] (7) -- (11);
  \draw [] (7) -- (12);
  \draw [] (7) -- (13);
  \draw [] (7) -- (14);
  \draw [] (8) -- (9);
  \draw [] (8) -- (10);
  \draw [] (8) -- (14);
  \draw [] (8) -- (15);
  \draw [] (9) -- (10);
  \draw [] (9) -- (11);
  \draw [] (9) -- (12);
  \draw [] (9) -- (13);
  \draw [] (9) -- (14);
  \draw [] (9) -- (16);
  \draw [] (9) -- (19);
  \draw [] (10) -- (11);
  \draw [] (10) -- (15);
  \draw [] (10) -- (16);
  \draw [] (10) -- (21);
  \draw [] (11) -- (12);
  \draw [] (11) -- (13);
  \draw [] (11) -- (16);
  \draw [] (11) -- (17);
  \draw [] (11) -- (20);
  \draw [] (12) -- (13);
  \draw [] (12) -- (17);
  \draw [] (12) -- (18);
  \draw [] (12) -- (19);
  \draw [] (13) -- (18);
  \draw [] (13) -- (20);
  \draw [] (13) -- (26);
  \draw [] (14) -- (15);
  \draw [] (14) -- (16);
  \draw [] (14) -- (17);
  \draw [] (14) -- (18);
  \draw [] (14) -- (22);
  \draw [] (15) -- (16);
  \draw [] (15) -- (19);
  \draw [] (15) -- (22);
  \draw [] (16) -- (17);
  \draw [] (16) -- (18);
  \draw [] (16) -- (19);
  \draw [] (16) -- (22);
  \draw [] (16) -- (23);
  \draw [] (17) -- (18);
  \draw [] (17) -- (19);
  \draw [] (17) -- (20);
  \draw [] (17) -- (23);
  \draw [] (17) -- (24);
  \draw [] (18) -- (20);
  \draw [] (18) -- (24);
  \draw [] (18) -- (27);
  \draw [] (19) -- (20);
  \draw [] (19) -- (21);
  \draw [] (19) -- (23);
  \draw [] (19) -- (25);
  \draw [] (20) -- (21);
  \draw [] (20) -- (24);
  \draw [] (20) -- (25);
  \draw [] (20) -- (28);
  \draw [] (21) -- (25);
  \draw [] (22) -- (23);
  \draw [] (22) -- (24);
  \draw [] (23) -- (24);
  \draw [] (23) -- (25);
  \draw [] (24) -- (25);
  \draw [] (24) -- (29);
  \draw [] (26) -- (27);
  \draw [] (26) -- (28);
  \draw [] (27) -- (28);
  \draw [] (27) -- (29);
  \draw [] (28) -- (29);
  \draw [] (28) -- (30);
\end{tikzpicture}
}

\subfloat[][The Gr\"obner fan $\GF(\Hilb{6t-3}{3})$. The maximality cones of $J_{26}$, $J_{30}$ and $J_{31}$ are highlighted with horizontal, oblique and vertical lines. The gray area corresponds to the union of all segment cones.]{ \label{fig:6t-3 gf}
\begin{tikzpicture}[scale=12.5]
\begin{scope}[scale=0.025,shift={(-32,-4)}]
\draw [black,pattern color=black!50,pattern=horizontal lines] (0,0) rectangle (2,2);
\draw [black,pattern color=black!50,pattern=north west lines] (0,-2.5) rectangle (2,-0.5);
\draw [black,pattern color=black!50,pattern=vertical lines] (0,-5) rectangle (2,-3);
\node at (4.75,1) [] {$\MC(J_{26})$};
\node at (4.75,-1.5) [] {$\MC(J_{30})$};
\node at (4.75,-4) [] {$\MC(J_{31})$};
\end{scope}

\draw [draw=black!30,fill=black!30,ultra thin] (-.866,-.5) -- (-.618571,-.5) -- (-.633659,-.463415) -- cycle;
\draw [draw=black!30,fill=black!30,ultra thin] (-.866,-.5) -- (-.632846,-.442308) -- (-.629818,-.454545) -- (-.633659,-.463415) -- cycle;
\draw [draw=black!30,fill=black!30,ultra thin] (-.618571,-.5) -- (-.622438,-.453125) -- (-.629818,-.454545) -- (-.633659,-.463415) -- cycle;
\draw [draw=black!30,fill=black!30,ultra thin] (-.62352,-.44) -- (-.632846,-.442308) -- (-.629818,-.454545) -- (-.622438,-.453125)  -- (-.618571,-.446429) -- cycle;
\draw [draw=black!30,fill=black!30,ultra thin] (-.618571,-.5) -- (-.6062,-.45) -- (-.609407,-.444444)   -- (-.618571,-.446429)-- (-.622438,-.453125) -- cycle;
\draw [draw=black!30,fill=black!30,ultra thin] (-.6062,-.45)  -- (-.618571,-.5)  -- (-.593829,-.457143)-- (-.597241,-.448276) --  cycle;
\draw [draw=black!30,fill=black!30,ultra thin] (-.618571,-.5) -- (-.58455,-.4625) -- (-.585824,-.455882) -- (-.593829,-.457143)  -- cycle;
\draw [draw=black!30,fill=black!30,ultra thin] (-.618571,-.5) -- (-.577333,-.466667) -- (-.577333,-.461538) -- (-.58455,-.4625)  -- cycle;
\draw [draw=black!30,fill=black!30,ultra thin] (-.577333,-.5) -- (-.618571,-.5) -- (-.577333,-.466667) -- (-.570773,-.465909) -- cycle;
\draw [draw=black!30,fill=black!30,ultra thin] (-.5196,-.5) -- (-.577333,-.5)  -- (-.522586,-.474138)  -- (-.516561,-.473684)-- cycle;
\draw [draw=black!30,fill=black!30,ultra thin] (-.5196,-.5) -- (-.50228,-.47) -- (-.509412,-.470588) -- (-.516561,-.473684) -- cycle;
\draw [draw=black!30,fill=black!30,ultra thin] (-.433,-.5) -- (-.483349,-.465116) -- (-.492045,-.465909) -- (-.50228,-.47) -- (-.5196,-.5)  -- cycle;
\draw [draw=black!30,fill=black!30,ultra thin] (-.433,-.5) -- (-.1732,-.5) -- (-.262424,-.454545) -- (-.279355,-.451613)  -- cycle;

	\draw [black,pattern color=black!60,pattern=horizontal lines] (0,0)-- (0,-.2) --  (-.144333,-.25)   --  (-.236182,-.227273)-- (-.2165,-.125)  --cycle;
	\draw [black,pattern color=black!60,pattern=north west lines] (0,0)--(0,-.5)  -- (-.192444,-.166667) --cycle;
	\draw [black,pattern color=black!60,pattern=vertical lines] (0,0)--(0,-.5) --	(-.433,-.5) -- cycle;

              \draw [-,thin]    (-.866,-.5) -- (-.673556,-.5) -- (-.658913,-.467391) -- cycle; 
              \draw [-,thin] (-.673556,-.5) -- (-.6495,-.5) --  (-.63919,-.464286) -- (-.658913,-.467391) -- cycle; 
              \draw [-,thin]    (-.866,-.5) -- (-.655351,-.459459) -- (-.658913,-.467391) -- cycle; 
              \draw [-,thin]    (-.6495,-.5) -- (-.633659,-.463415) -- (-.63919,-.464286) -- cycle; 
              \draw [-,thin]  (-.655351,-.459459) -- (-.636765,-.455882) -- (-.63919,-.464286) -- (-.658913,-.467391) -- cycle; 
              \draw [-,thin]    (-.866,-.5) -- (-.6495,-.446429) -- (-.655351,-.459459) -- cycle; 
              \draw [-,thin]    (-.6495,-.5) -- (-.633659,-.463415) -- (-.631458,-.46875) -- cycle; 
              \draw [-,thin]    (-.636765,-.455882) -- (-.633659,-.463415) -- (-.63919,-.464286) -- cycle; 
              \draw [-,thin]  (-.6495,-.446429) --(-.641481,-.444444) --  (-.636765,-.455882) -- (-.655351,-.459459) -- cycle; 
              \draw [-,thin] (-.6495,-.425) -- (-.866,-.5) --  (-.6495,-.446429) -- (-.644894,-.43617) -- cycle; 
              \draw [-,thin]    (-.62785,-.4625) -- (-.633659,-.463415) -- (-.631458,-.46875) -- cycle; 
              \draw [-,thin]    (-.6495,-.5) -- (-.631458,-.46875) -- (-.629818,-.472727) -- cycle; 
              \draw [-,thin]    (-.629818,-.454545) -- (-.636765,-.455882) -- (-.633659,-.463415) -- cycle; 
              \draw [-,thin]    (-.632846,-.442308) -- (-.641481,-.444444) -- (-.636765,-.455882) -- cycle; 
              \draw [-,thin]    (-.641481,-.444444) -- (-.6495,-.446429) -- (-.644894,-.43617) -- cycle; 
              \draw [-,thin]    (-.638105,-.421053) -- (-.6495,-.425) -- (-.644894,-.43617) -- cycle; 
              \draw [-,thin]    (-.866,-.5) -- (-.6495,-.425) -- (-.656061,-.409091) -- cycle; 
              \draw [-,thin] (-.626468,-.468085) -- (-.62785,-.4625) --  (-.631458,-.46875) -- (-.629818,-.472727) -- cycle; 
              \draw [-,thin]    (-.629818,-.454545) -- (-.62785,-.4625) -- (-.633659,-.463415) -- cycle; 
              \draw [-,thin]    (-.6495,-.5) -- (-.629818,-.472727) -- (-.628548,-.475806) -- cycle; 
              \draw [-,thin]    (-.632846,-.442308) -- (-.629818,-.454545) -- (-.636765,-.455882) -- cycle; 
              \draw [-,thin]  (-.638105,-.421053) -- (-.632846,-.442308) -- (-.641481,-.444444) -- (-.644894,-.43617) -- cycle; 
              \draw [-,thin] (-.6495,-.425) -- (-.638105,-.421053) -- (-.642516,-.403226) -- (-.656061,-.409091) -- cycle; 
              \draw [-,thin]    (-.866,-.5) -- (-.666154,-.384615) -- (-.656061,-.409091) -- cycle; 
              \draw [-,thin]    (-.621744,-.461538) -- (-.62785,-.4625) -- (-.626468,-.468085) -- cycle; 
              \draw [-,thin] (-.625444,-.472222) -- (-.626468,-.468085) --  (-.629818,-.472727) -- (-.628548,-.475806) -- cycle; 
              \draw [-,thin]    (-.622438,-.453125) -- (-.629818,-.454545) -- (-.62785,-.4625) -- cycle; 
              \draw [-,thin]    (-.618571,-.5) -- (-.6495,-.5) -- (-.628548,-.475806) -- cycle; 
              \draw [-,thin]    (-.62352,-.44) -- (-.632846,-.442308) -- (-.629818,-.454545) -- cycle; 
              \draw [-,thin]    (-.625444,-.416667) -- (-.638105,-.421053) -- (-.632846,-.442308) -- cycle; 
              \draw [-,thin]    (-.638105,-.421053) -- (-.627103,-.396552) -- (-.642516,-.403226) -- cycle; 
              \draw [-,thin]  (-.666154,-.384615) --(-.6495,-.375) --  (-.642516,-.403226) -- (-.656061,-.409091) -- cycle; 
              \draw [-,thin]   (-.621261,-.467391) -- (-.621744,-.461538) -- (-.626468,-.468085) -- (-.625444,-.472222) -- cycle; 
              \draw [-,thin]    (-.622438,-.453125) -- (-.621744,-.461538) -- (-.62785,-.4625) -- cycle; 
              \draw [-,thin]    (-.618571,-.5) -- (-.625444,-.472222) -- (-.628548,-.475806) -- cycle; 
              \draw [-,thin]    (-.62352,-.44) -- (-.622438,-.453125) -- (-.629818,-.454545) -- cycle; 
              \draw [-,thin]    (-.625444,-.416667) -- (-.62352,-.44) -- (-.632846,-.442308) -- cycle; 
              \draw [-,thin]    (-.625444,-.416667) -- (-.638105,-.421053) -- (-.627103,-.396552) -- cycle; 
              \draw [-,thin]  (-.6495,-.375) --(-.629818,-.363636) --  (-.627103,-.396552) -- (-.642516,-.403226) -- cycle; 
              \draw [-,thin]    (-.615316,-.460526) -- (-.621744,-.461538) -- (-.621261,-.467391) -- cycle; 
              \draw [-,thin]    (-.618571,-.5) -- (-.621261,-.467391) -- (-.625444,-.472222) -- cycle; 
              \draw [-,thin]    (-.614581,-.451613) -- (-.622438,-.453125) -- (-.621744,-.461538) -- cycle; 
              \draw [-,thin]    (-.613417,-.4375) -- (-.62352,-.44) -- (-.622438,-.453125) -- cycle; 
              \draw [-,thin]    (-.611294,-.411765) -- (-.625444,-.416667) -- (-.62352,-.44) -- cycle; 
              \draw [-,thin]    (-.625444,-.416667) -- (-.618571,-.392857) -- (-.627103,-.396552) -- cycle; 
              \draw [-,thin]    (-.6062,-.35) -- (-.629818,-.363636) -- (-.627103,-.396552) -- cycle; 
              \draw [-,thin]    (-.614581,-.451613) -- (-.615316,-.460526) -- (-.621744,-.461538) -- cycle; 
              \draw [-,thin]    (-.618571,-.5) -- (-.615316,-.460526) -- (-.621261,-.467391) -- cycle; 
              \draw [-,thin]    (-.613417,-.4375) -- (-.614581,-.451613) -- (-.622438,-.453125) -- cycle; 
              \draw [-,thin]    (-.611294,-.411765) -- (-.613417,-.4375) -- (-.62352,-.44) -- cycle; 
              \draw [-,thin]  (-.625444,-.416667) -- (-.611294,-.411765) --  (-.609407,-.388889) -- (-.618571,-.392857) -- cycle; 
              \draw [-,thin]    (-.6062,-.35) -- (-.618571,-.392857) -- (-.627103,-.396552) -- cycle; 
              \draw [-,thin]    (-.6062,-.45) -- (-.614581,-.451613) -- (-.615316,-.460526) -- cycle; 
              \draw [-,thin]    (-.618571,-.5) -- (-.608541,-.459459) -- (-.615316,-.460526) -- cycle; 
              \draw [-,thin]    (-.602435,-.434783) -- (-.613417,-.4375) -- (-.614581,-.451613) -- cycle; 
              \draw [-,thin]    (-.595375,-.40625) -- (-.611294,-.411765) -- (-.613417,-.4375) -- cycle; 
              \draw [-,thin]    (-.611294,-.411765) -- (-.599538,-.384615) -- (-.609407,-.388889) -- cycle; 
              \draw [-,thin]    (-.6062,-.35) -- (-.609407,-.388889) -- (-.618571,-.392857) -- cycle; 
              \draw [-,thin]    (-.6062,-.45) -- (-.608541,-.459459) -- (-.615316,-.460526) -- cycle; 
              \draw [-,thin]    (-.602435,-.434783) -- (-.6062,-.45) -- (-.614581,-.451613) -- cycle; 
              \draw [-,thin]    (-.618571,-.5) -- (-.601389,-.458333) -- (-.608541,-.459459) -- cycle; 
              \draw [-,thin]    (-.595375,-.40625) -- (-.602435,-.434783) -- (-.613417,-.4375) -- cycle; 
              \draw [-,thin]    (-.595375,-.40625) -- (-.611294,-.411765) -- (-.599538,-.384615) -- cycle; 
              \draw [-,thin]    (-.6062,-.35) -- (-.599538,-.384615) -- (-.609407,-.388889) -- cycle; 
              \draw [-,thin]  (-.6062,-.45) --(-.597241,-.448276) --  (-.601389,-.458333) -- (-.608541,-.459459) -- cycle; 
              \draw [-,thin]    (-.590455,-.431818) -- (-.602435,-.434783) -- (-.6062,-.45) -- cycle; 
              \draw [-,thin]    (-.618571,-.5) -- (-.593829,-.457143) -- (-.601389,-.458333) -- cycle; 
              \draw [-,thin]    (-.595375,-.40625) -- (-.602435,-.434783) -- (-.592526,-.421053) -- cycle; 
              \draw [-,thin]    (-.595375,-.40625) -- (-.58888,-.38) -- (-.599538,-.384615) -- cycle; 
              \draw [-,thin] (-.577333,-.333333) -- (-.6062,-.35) -- (-.599538,-.384615) -- cycle; 
              \draw [-,thin]   (-.597241,-.448276) --(-.587643,-.446429) -- (-.593829,-.457143) -- (-.601389,-.458333) -- cycle; 
              \draw [-,thin]    (-.590455,-.431818) -- (-.597241,-.448276) -- (-.6062,-.45) -- cycle; 
              \draw [-,thin]    (-.590455,-.431818) -- (-.602435,-.434783) -- (-.592526,-.421053) -- cycle; 
              \draw [-,thin]    (-.618571,-.5) -- (-.585824,-.455882) -- (-.593829,-.457143) -- cycle; 
              \draw [-,thin]    (-.577333,-.4) -- (-.595375,-.40625) -- (-.592526,-.421053) -- cycle; 
              \draw [-,thin]    (-.595375,-.40625) -- (-.577333,-.375) -- (-.58888,-.38) -- cycle; 
              \draw [-,thin]    (-.577333,-.333333) -- (-.58888,-.38) -- (-.599538,-.384615) -- cycle; 
              \draw [-,thin]    (-.587643,-.446429) -- (-.585824,-.455882) -- (-.593829,-.457143) -- cycle; 
              \draw [-,thin]    (-.590455,-.431818) -- (-.587643,-.446429) -- (-.597241,-.448276) -- cycle; 
              \draw [-,thin]    (-.577333,-.4) -- (-.590455,-.431818) -- (-.592526,-.421053) -- cycle; 
              \draw [-,thin]    (-.618571,-.5) -- (-.585824,-.455882) -- (-.58455,-.4625) -- cycle; 
              \draw [-,thin]    (-.577333,-.4) -- (-.595375,-.40625) -- (-.577333,-.375) -- cycle; 
              \draw [-,thin]    (-.577333,-.333333) -- (-.577333,-.375) -- (-.58888,-.38) -- cycle; 
              \draw [-,thin]    (-.577333,-.444444) -- (-.587643,-.446429) -- (-.585824,-.455882) -- cycle; 
              \draw [-,thin]    (-.577333,-.428571) -- (-.590455,-.431818) -- (-.587643,-.446429) -- cycle; 
              \draw [-,thin]    (-.577333,-.4) -- (-.590455,-.431818) -- (-.577333,-.416667) -- cycle; 
              \draw [-,thin]  (-.577333,-.454545) -- (-.585824,-.455882) -- (-.58455,-.4625) -- cycle; 
              \draw [-,thin]  (-.618571,-.5) -- (-.58455,-.4625) -- (-.583609,-.467391) -- cycle; 
              \draw [-,thin]  (-.577333,-.4) -- (-.564783,-.369565) -- (-.577333,-.375) -- cycle; 
              \draw [-,thin]  (-.54125,-.3125) -- (-.577333,-.333333) -- (-.577333,-.375) -- cycle; 
              \draw [-,thin]  (-.577333,-.444444) -- (-.577333,-.454545) -- (-.585824,-.455882) -- cycle; 
              \draw [-,thin]  (-.577333,-.428571) -- (-.577333,-.444444) -- (-.587643,-.446429) -- cycle; 
              \draw [-,thin]  (-.577333,-.428571) -- (-.590455,-.431818) -- (-.577333,-.416667) -- cycle; 
              \draw [-,thin]  (-.556714,-.392857) -- (-.577333,-.4) -- (-.577333,-.416667) -- cycle; 
              \draw [-,thin]  (-.577333,-.461538) -- (-.577333,-.454545) -- (-.58455,-.4625) -- (-.583609,-.467391) -- cycle; 
              \draw [-,thin]  (-.618571,-.5) -- (-.583609,-.467391) -- (-.582885,-.471154) -- cycle; 
              \draw [-,thin]  (-.577333,-.4) -- (-.551091,-.363636) -- (-.564783,-.369565) -- cycle; 
              \draw [-,thin]  (-.54125,-.3125) -- (-.564783,-.369565) -- (-.577333,-.375) -- cycle; 
              \draw [-,thin]  (-.566231,-.442308) -- (-.577333,-.444444) -- (-.577333,-.454545) -- cycle; 
              \draw [-,thin]  (-.5629,-.425) -- (-.577333,-.428571) -- (-.577333,-.444444) -- cycle; 
              \draw [-,thin]  (-.556714,-.392857) -- (-.577333,-.428571) -- (-.577333,-.416667) -- cycle; 
              \draw [-,thin]  (-.556714,-.392857) -- (-.577333,-.4) -- (-.551091,-.363636) -- cycle; 
              \draw [-,thin]  (-.568312,-.453125) -- (-.577333,-.454545) -- (-.577333,-.461538) -- cycle; 
              \draw [-,thin] (-.577333,-.466667) --  (-.577333,-.461538) --  (-.583609,-.467391) -- (-.582885,-.471154) -- cycle; 
              \draw [-,thin]  (-.577333,-.5) -- (-.618571,-.5) -- (-.582885,-.471154) -- cycle; 
              \draw [-,thin]  (-.54125,-.3125) -- (-.551091,-.363636) -- (-.564783,-.369565) -- cycle; 
              \draw [-,thin]  (-.5629,-.425) -- (-.566231,-.442308) -- (-.577333,-.444444) -- cycle; 
              \draw [-,thin]  (-.566231,-.442308) -- (-.568312,-.453125) -- (-.577333,-.454545) -- cycle; 
              \draw [-,thin]  (-.556714,-.392857) -- (-.5629,-.425) -- (-.577333,-.428571) -- cycle; 
              \draw [-,thin]  (-.556714,-.392857) -- (-.536095,-.357143) -- (-.551091,-.363636) -- cycle; 
              \draw [-,thin]  (-.569737,-.460526) -- (-.568312,-.453125) --  (-.577333,-.461538) -- (-.577333,-.466667) -- cycle; 
              \draw [-,thin]  (-.577333,-.5) -- (-.577333,-.466667) -- (-.582885,-.471154) -- cycle; 
              \draw [-,thin]  (-.494857,-.285714) -- (-.54125,-.3125) -- (-.551091,-.363636) -- cycle; 
              \draw [-,thin]  (-.546947,-.421053) -- (-.5629,-.425) -- (-.566231,-.442308) -- cycle; 
              \draw [-,thin]  (-.55424,-.44) -- (-.566231,-.442308) -- (-.568312,-.453125) -- cycle; 
              \draw [-,thin]  (-.532923,-.384615) -- (-.556714,-.392857) -- (-.5629,-.425) -- cycle; 
              \draw [-,thin]  (-.556714,-.392857) -- (-.5196,-.35) -- (-.536095,-.357143) -- cycle; 
              \draw [-,thin]  (-.494857,-.285714) -- (-.536095,-.357143) -- (-.551091,-.363636) -- cycle; 
              \draw [-,thin]  (-.577333,-.5) -- (-.569737,-.460526) -- (-.577333,-.466667) -- cycle; 
              \draw [-,thin]  (-.55871,-.451613) -- (-.568312,-.453125) -- (-.569737,-.460526) -- cycle; 
              \draw [-,thin]  (-.546947,-.421053) -- (-.55424,-.44) -- (-.566231,-.442308) -- cycle; 
              \draw [-,thin]  (-.532923,-.384615) -- (-.546947,-.421053) -- (-.5629,-.425) -- cycle; 
              \draw [-,thin]  (-.55424,-.44) -- (-.55871,-.451613) -- (-.568312,-.453125) -- cycle; 
              \draw [-,thin]  (-.532923,-.384615) -- (-.556714,-.392857) -- (-.5196,-.35) -- cycle; 
              \draw [-,thin]  (-.494857,-.285714) -- (-.5196,-.35) -- (-.536095,-.357143) -- cycle; 
              \draw [-,thin]  (-.577333,-.5) -- (-.55871,-.451613) -- (-.569737,-.460526) -- cycle; 
              \draw [-,thin]  (-.529222,-.416667) -- (-.546947,-.421053) -- (-.55424,-.44) -- cycle; 
              \draw [-,thin]  (-.505167,-.375) -- (-.532923,-.384615) -- (-.546947,-.421053) -- cycle; 
              \draw [-,thin]  (-.54125,-.4375) -- (-.55424,-.44) -- (-.55871,-.451613) -- cycle; 
              \draw [-,thin]  (-.532923,-.384615) -- (-.501368,-.342105) -- (-.5196,-.35) -- cycle; 
              \draw [-,thin]  (-.494857,-.285714) -- (-.5196,-.35) -- (-.499615,-.326923) -- cycle; 
              \draw [-,thin]  (-.577333,-.5) -- (-.548467,-.45) -- (-.55871,-.451613) -- cycle; 
              \draw [-,thin]  (-.505167,-.375) -- (-.529222,-.416667) -- (-.546947,-.421053) -- cycle; 
              \draw [-,thin]  (-.529222,-.416667) -- (-.54125,-.4375) -- (-.55424,-.44) -- cycle; 
              \draw [-,thin]  (-.505167,-.375) -- (-.532923,-.384615) -- (-.501368,-.342105) -- cycle; 
              \draw [-,thin]  (-.54125,-.4375) -- (-.548467,-.45) -- (-.55871,-.451613) -- cycle; 
              \draw [-,thin]  (-.501368,-.342105) -- (-.5196,-.35) -- (-.499615,-.326923) -- cycle; 
              \draw [-,thin]  (-.433,-.25) -- (-.494857,-.285714) -- (-.499615,-.326923) -- cycle; 
              \draw [-,thin]  (-.577333,-.5) -- (-.513185,-.444444) -- (-.548467,-.45) -- cycle; 
              \draw [-,thin]  (-.505167,-.375) -- (-.529222,-.416667) -- (-.507655,-.396552) -- cycle; 
              \draw [-,thin]  (-.509412,-.411765) -- (-.529222,-.416667) -- (-.54125,-.4375) -- cycle; 
              \draw [-,thin]  (-.505167,-.375) -- (-.481111,-.333333) -- (-.501368,-.342105) -- cycle; 
              \draw [-,thin]  (-.54125,-.4375) -- (-.511727,-.431818) --  (-.513185,-.444444) -- (-.548467,-.45) -- cycle; 
              \draw [-,thin]  (-.433,-.25) -- (-.501368,-.342105) -- (-.499615,-.326923) -- cycle; 
              \draw [-,thin]  (-.5196,-.5) -- (-.577333,-.5) -- (-.513185,-.444444) -- cycle; 
              \draw [-,thin]  (-.472364,-.363636) -- (-.505167,-.375) -- (-.507655,-.396552) -- cycle; 
              \draw [-,thin]  (-.509412,-.411765) -- (-.529222,-.416667) -- (-.507655,-.396552) -- cycle; 
              \draw [-,thin]  (-.509412,-.411765) -- (-.511727,-.431818) -- (-.54125,-.4375) -- cycle; 
              \draw [-,thin]  (-.505167,-.375) -- (-.458471,-.323529) -- (-.481111,-.333333) -- cycle; 
              \draw [-,thin]  (-.433,-.25) -- (-.481111,-.333333) -- (-.501368,-.342105) -- cycle; 
              \draw [-,thin]  (-.494857,-.428571) -- (-.511727,-.431818) -- (-.513185,-.444444) -- cycle; 
              \draw [-,thin]  (-.5196,-.5) -- (-.499615,-.442308) -- (-.513185,-.444444) -- cycle; 
              \draw [-,thin]  (-.472364,-.363636) --  (-.507655,-.396552) -- (-.509412,-.411765)  -- (-.481111,-.388889) -- cycle; 
              \draw [-,thin]  (-.472364,-.363636) -- (-.505167,-.375) -- (-.458471,-.323529) -- cycle; 
              \draw [-,thin]  (-.509412,-.411765) -- (-.487125,-.40625) --  (-.494857,-.428571) -- (-.511727,-.431818) -- cycle; 
              \draw [-,thin]  (-.433,-.25) -- (-.458471,-.323529) -- (-.481111,-.333333) -- cycle; 
              \draw [-,thin]  (-.494857,-.428571) -- (-.499615,-.442308) -- (-.513185,-.444444) -- cycle; 
              \draw [-,thin]  (-.5196,-.5) -- (-.48496,-.44) -- (-.499615,-.442308) -- cycle; 
              \draw [-,thin]  (-.487125,-.40625) -- (-.509412,-.411765) -- (-.481111,-.388889) -- cycle; 
              \draw [-,thin]  (-.433,-.35) -- (-.472364,-.363636) -- (-.481111,-.388889) -- cycle; 
              \draw [-,thin]  (-.472364,-.363636)-- (-.458471,-.323529) -- (-.433,-.3125)  -- (-.433,-.326923) -- cycle; 
              \draw [-,thin]  (-.461867,-.4) -- (-.487125,-.40625) -- (-.494857,-.428571) -- cycle; 
              \draw [-,thin]  (-.433,-.25) -- (-.458471,-.323529) -- (-.433,-.295455) -- cycle; 
              \draw [-,thin]   (-.494857,-.428571) --(-.4763,-.425) -- (-.48496,-.44) -- (-.499615,-.442308) -- cycle; 
              \draw [-,thin]  (-.5196,-.5) -- (-.481111,-.5) -- (-.460638,-.43617)  -- (-.48496,-.44) -- cycle; 
              \draw [-,thin]  (-.433,-.35) -- (-.461867,-.4) -- (-.487125,-.40625) -- (-.481111,-.388889) -- cycle; 
              \draw [-,thin]  (-.433,-.35) -- (-.472364,-.363636) -- (-.433,-.326923) -- cycle; 
              \draw [-,thin] (-.433,-.326923) -- (-.433,-.3125) --  (-.419032,-.306452) -- (-.422439,-.317073) -- cycle; 
              \draw [-,thin]  (-.433,-.3125) -- (-.458471,-.323529) -- (-.433,-.295455) -- cycle; 
              \draw [-,thin]  (-.461867,-.4) -- (-.4763,-.425) -- (-.494857,-.428571) -- cycle; 
              \draw [-,thin]  (-.433,-.25) -- (-.433,-.295455) -- (-.405938,-.265625) -- (-.402791,-.255814) -- cycle; 
              \draw [-,thin]  (-.455789,-.421053) -- (-.4763,-.425) -- (-.48496,-.44) -- (-.460638,-.43617) -- cycle; 
              \draw [-,thin]  (-.481111,-.5) -- (-.433,-.5)   -- (-.433,-.431818)  -- (-.460638,-.43617) -- cycle; 
              \draw [-,thin]  (-.433,-.35) -- (-.461867,-.4) -- (-.444103,-.384615) -- cycle; 
              \draw [-,thin]  (-.433,-.35) -- (-.433,-.326923) -- (-.422439,-.317073) -- cycle; 
              \draw [-,thin]  (-.404133,-.3) -- (-.419032,-.306452) -- (-.422439,-.317073) -- cycle; 
              \draw [-,thin]  (-.433,-.3125) -- (-.433,-.295455)  -- (-.405938,-.265625) -- (-.419032,-.306452) -- cycle; 
              \draw [-,thin] (-.4763,-.425) -- (-.461867,-.4)  -- (-.447931,-.396552) -- (-.455789,-.421053) --  cycle; 
              \draw [-,thin]  (-.405938,-.265625) -- (-.397892,-.256757) -- (-.402791,-.255814) -- cycle; 
              \draw [-,thin]  (-.433,-.25) -- (-.393636,-.227273) -- (-.402791,-.255814) -- cycle; 
              \draw [-,thin] (-.455789,-.421053)  -- (-.433,-.416667) -- (-.433,-.431818) -- (-.460638,-.43617) -- cycle; 
              \draw [-,thin]  (-.433,-.5) -- (-.433,-.431818) -- (-.42293,-.430233) -- cycle; 
              \draw [-,thin]  (-.461867,-.4) -- (-.447931,-.396552) -- (-.444103,-.384615) -- cycle; 
              \draw [-,thin]  (-.433,-.35) -- (-.433,-.375) -- (-.444103,-.384615) -- cycle; 
              \draw [-,thin]  (-.433,-.35) -- (-.404133,-.3) -- (-.422439,-.317073) -- cycle; 
              \draw [-,thin]  (-.404133,-.3) -- (-.419032,-.306452) -- (-.405938,-.265625) -- (-.397892,-.256757) -- cycle; 
              \draw [-,thin]  (-.433,-.392857) -- (-.433,-.416667) -- (-.455789,-.421053) -- (-.447931,-.396552) -- cycle; 
              \draw [-,thin]  (-.393636,-.227273) -- (-.397892,-.256757) -- (-.402791,-.255814) -- cycle; 
              \draw [-,thin]  (-.433,-.416667) -- (-.433,-.431818)  -- (-.42293,-.430233) -- (-.420629,-.414286) -- cycle; 
              \draw [-,thin]  (-.433,-.5) -- (-.412381,-.428571) -- (-.42293,-.430233) -- cycle; 
              \draw [-,thin]  (-.433,-.392857) -- (-.433,-.375) -- (-.444103,-.384615) -- (-.447931,-.396552)  -- cycle; 
              \draw [-,thin]  (-.433,-.35)  -- (-.433,-.375) -- (-.412381,-.357143) -- (-.410211,-.342105)-- cycle; 
              \draw [-,thin]  (-.433,-.35) -- (-.404133,-.3) -- (-.408491,-.330189) -- cycle; 
              \draw [-,thin]  (-.404133,-.3) -- (-.38104,-.26) -- (-.397892,-.256757) -- cycle; 
              \draw [-,thin]  (-.433,-.392857) -- (-.433,-.416667) -- (-.420629,-.414286)  -- (-.416963,-.388889) -- cycle; 
              \draw [-,thin]  (-.3464,-.2) -- (-.393636,-.227273) -- (-.397892,-.256757) -- cycle; 
              \draw [-,thin]  (-.407529,-.411765) -- (-.412381,-.428571)  -- (-.42293,-.430233) -- (-.420629,-.414286) -- cycle; 
              \draw [-,thin]  (-.433,-.5) -- (-.364632,-.421053) -- (-.412381,-.428571) -- cycle; 
              \draw [-,thin]  (-.433,-.392857) -- (-.433,-.375)  -- (-.412381,-.357143) -- (-.416963,-.388889) -- cycle; 
              \draw [-,thin]  (-.384889,-.333333) -- (-.410211,-.342105) -- (-.412381,-.357143) -- cycle; 
              \draw [-,thin]  (-.433,-.35) -- (-.410211,-.342105) -- (-.408491,-.330189) -- cycle; 
              \draw [-,thin]  (-.371143,-.285714) -- (-.404133,-.3)  -- (-.408491,-.330189)-- (-.376522,-.304348) -- cycle; 
              \draw [-,thin]  (-.3464,-.2) -- (-.38104,-.26) -- (-.397892,-.256757) -- cycle; 
              \draw [-,thin]  (-.404133,-.3) -- (-.364632,-.263158) -- (-.38104,-.26) -- cycle; 
              \draw [-,thin]  (-.399692,-.384615) -- (-.407529,-.411765)  -- (-.420629,-.414286) -- (-.416963,-.388889) -- cycle; 
              \draw [-,thin]  (-.3464,-.4) -- (-.364632,-.421053) -- (-.412381,-.428571) -- (-.407529,-.411765) -- cycle; 
              \draw [-,thin]  (-.433,-.5) -- (-.371143,-.5) -- (-.351081,-.418919) -- (-.364632,-.421053) -- cycle; 
              \draw [-,thin]  (-.384889,-.333333) -- (-.399692,-.384615) -- (-.416963,-.388889) -- (-.412381,-.357143) -- cycle; 
              \draw [-,thin]  (-.384889,-.333333) -- (-.376522,-.304348)  -- (-.408491,-.330189) -- (-.410211,-.342105) --  cycle; 
              \draw [-,thin]  (-.333077,-.269231) -- (-.371143,-.285714) -- (-.376522,-.304348) -- cycle; 
              \draw [-,thin]  (-.371143,-.285714) -- (-.404133,-.3) -- (-.364632,-.263158) -- cycle; 
              \draw [-,thin]  (-.3464,-.2) -- (-.364632,-.263158) -- (-.38104,-.26) -- cycle; 
              \draw [-,thin]  (-.399692,-.384615)  -- (-.407529,-.411765)-- (-.3464,-.4) -- (-.33887,-.369565) -- cycle; 
              \draw [-,thin]  (-.3464,-.4) -- (-.364632,-.421053) -- (-.351081,-.418919) -- cycle; 
              \draw [-,thin]  (-.288667,-.5) -- (-.270625,-.40625)  -- (-.351081,-.418919) -- (-.371143,-.5) -- cycle; 
              \draw [-,thin]  (-.32475,-.3125) -- (-.384889,-.333333) -- (-.399692,-.384615) -- (-.33887,-.369565) -- cycle; 
              \draw [-,thin]  (-.384889,-.333333) -- (-.376522,-.304348) -- (-.333077,-.269231) -- (-.314909,-.272727) -- cycle; 
              \draw [-,thin]  (-.333077,-.269231) -- (-.371143,-.285714) -- (-.364632,-.263158) -- cycle; 
              \draw [-,thin]  (-.3464,-.2) -- (-.364632,-.263158) -- (-.30729,-.209677) -- cycle; 
              \draw [-,thin]  (-.314909,-.363636) -- (-.3464,-.4) -- (-.33887,-.369565) -- cycle; 
              \draw [-,thin]  (-.266462,-.384615) -- (-.270625,-.40625) -- (-.351081,-.418919) -- (-.3464,-.4) -- cycle; 
              \draw [-,thin]  (0,-.5) -- (-.288667,-.5) --  (-.270625,-.40625) -- (-.0721667,-.375) -- cycle; 
              \draw [-,thin]  (-.247429,-.285714) -- (-.32475,-.3125)  -- (-.33887,-.369565)-- (-.314909,-.363636) -- cycle; 
              \draw [-,thin]  (-.32475,-.3125) -- (-.384889,-.333333) -- (-.314909,-.272727) -- cycle; 
              \draw [-,thin]  (-.333077,-.269231) -- (-.31176,-.26) -- (-.314909,-.272727) -- cycle; 
              \draw [-,thin]  (-.333077,-.269231) -- (-.364632,-.263158)-- (-.30729,-.209677) -- (-.299769,-.211538)  -- (-.309286,-.25) -- cycle; 
              \draw [-,thin]  (-.3464,-.2) -- (-.288667,-.166667) --  (-.296914,-.2) -- (-.30729,-.209677) -- cycle; 
              \draw [-,thin]  (-.2598,-.35) -- (-.314909,-.363636)  -- (-.3464,-.4)-- (-.266462,-.384615) -- cycle; 
              \draw [-,thin]  (-.0866,-.35) -- (-.266462,-.384615)  -- (-.270625,-.40625)-- (-.0721667,-.375) -- cycle; 
              \draw [-,thin]  (0,-.5) -- (0,-.363636) -- (-.0721667,-.375) -- cycle; 
              \draw [-,thin]  (-.247429,-.285714) -- (-.2598,-.35) -- (-.314909,-.363636) -- cycle; 
              \draw [-,thin]  (-.247429,-.285714) -- (-.32475,-.3125) -- (-.314909,-.272727) -- cycle; 
              \draw [-,thin]  (-.288667,-.25) -- (-.31176,-.26) -- (-.314909,-.272727) -- cycle; 
              \draw [-,thin]  (-.333077,-.269231) -- (-.31176,-.26) -- (-.309286,-.25) -- cycle; 
              \draw [-,thin]  (-.270625,-.21875) -- (-.299769,-.211538) -- (-.309286,-.25) -- cycle; 
              \draw [-,thin]  (-.299769,-.211538) -- (-.30729,-.209677) -- (-.296914,-.2) -- cycle; 
              \draw [-,thin]  (-.2165,-.125) -- (-.288667,-.166667) -- (-.296914,-.2) -- cycle; 
              \draw [-,thin]  (-.10825,-.3125) -- (-.0866,-.35)  -- (-.266462,-.384615) -- (-.2598,-.35)  -- cycle; 
              \draw [-,thin]  (0,-.333333)  -- (0,-.363636) -- (-.0721667,-.375) -- (-.0866,-.35) -- cycle; 
              \draw [-,thin]  (-.247429,-.285714) -- (-.10825,-.3125) -- (-.2598,-.35) -- cycle; 
              \draw [-,thin]  (-.247429,-.285714) -- (-.236182,-.227273) -- (-.288667,-.25) -- (-.314909,-.272727) -- cycle; 
              \draw [-,thin]   (-.31176,-.26)  -- (-.309286,-.25)  -- (-.270625,-.21875) -- (-.256593,-.222222) -- (-.288667,-.25)  -- cycle; 
              \draw [-,thin]  (-.2165,-.125)  -- (-.227895,-.184211)-- (-.270625,-.21875) -- (-.299769,-.211538) -- (-.296914,-.2) -- cycle; 
              \draw [-,thin]  (-.10825,-.3125) -- (0,-.333333) -- (-.0866,-.35) -- cycle; 
              \draw [-,thin]  (-.144333,-.25) -- (-.247429,-.285714) -- (-.10825,-.3125) -- cycle; 
              \draw [-,thin]  (-.247429,-.285714) -- (-.236182,-.227273) -- (-.203765,-.235294) -- cycle; 
              \draw [-,thin]  (-.236182,-.227273) -- (-.288667,-.25) -- (-.256593,-.222222) -- cycle; 
              \draw [-,thin]  (-.230933,-.2)   -- (-.256593,-.222222) -- (-.270625,-.21875) -- (-.227895,-.184211) -- cycle; 
              \draw [-,thin]  (-.2165,-.125) -- (-.227895,-.184211) -- (-.196818,-.159091) -- cycle; 
              \draw [-,thin]  (0,-.285714) -- (-.10825,-.3125) -- (0,-.333333) -- cycle; 
              \draw [-,thin]  (-.144333,-.25) -- (0,-.285714) -- (-.10825,-.3125) -- cycle; 
              \draw [-,thin]  (-.144333,-.25) -- (-.247429,-.285714) -- (-.203765,-.235294) -- cycle; 
              \draw [-,thin]  (-.1732,-.2) -- (-.236182,-.227273) -- (-.203765,-.235294) -- cycle; 
              \draw [-,thin]  (-.236182,-.227273) -- (-.230933,-.2) -- (-.256593,-.222222) -- cycle; 
              \draw [-,thin]  (-.230933,-.2) -- (-.192444,-.166667) --  (-.196818,-.159091) -- (-.227895,-.184211) -- cycle; 
              \draw [-,thin]  (-.2165,-.125) -- (-.196818,-.159091) -- (-.1732,-.14) -- cycle; 
              \draw [-,thin]  (0,-.2) -- (-.144333,-.25) -- (0,-.285714) -- cycle; 
              \draw [-,thin]  (-.144333,-.25) -- (-.1732,-.2) -- (-.203765,-.235294) -- cycle; 
              \draw [-,thin]  (-.1732,-.2) -- (-.236182,-.227273) -- (-.230933,-.2) -- (-.192444,-.166667) -- cycle; 
              \draw [-,thin]  (-.192444,-.166667)  -- (-.196818,-.159091) -- (-.1732,-.14)-- (-.164952,-.142857) -- cycle; 
              \draw [-,thin]  (0,0) -- (-.2165,-.125) -- (-.1732,-.14) -- cycle; 
              \draw [-,thin]  (0,-.2) -- (-.144333,-.25)  -- (-.1732,-.2) -- (-.0962222,-.166667)-- cycle; 
              \draw [-,thin]  (-.1732,-.2)  -- (-.192444,-.166667)  -- (-.164952,-.142857) -- (-.133231,-.153846) -- cycle; 
              \draw [-,thin]  (0,0) -- (-.164952,-.142857) -- (-.1732,-.14) -- cycle; 
              \draw [-,thin]  (0,-.2) -- (0,-.125) -- (-.0962222,-.166667) -- cycle; 
              \draw [-,thin]  (-.0962222,-.166667) -- (-.1732,-.2) -- (-.133231,-.153846) -- cycle; 
              \draw [-,thin]  (0,0) -- (-.133231,-.153846) -- (-.164952,-.142857) -- cycle; 
              \draw [-,thin]  (0,0) -- (0,-.125) -- (-.0962222,-.166667) -- (-.133231,-.153846) -- cycle; 

	\draw[white,ultra thick,yshift=-0.068pt] (-0.866,-0.5) -- (0.001,-0.5);
	\draw[white,ultra thick,xshift=0.068pt] (0,0) -- (0,-0.501);
	\draw[white,ultra thick,yshift=0.076pt] (-0.866,-0.5) -- (0,0);
	
\end{tikzpicture}
}
\caption{The Borel graph and the Gr\"obner fan of the Hilbert scheme $\Hilb{6t-3}{3}$.}
\label{fig:6t-3}
\end{center}
\end{figure}

\bibliographystyle{amsplain}

\providecommand{\bysame}{\leavevmode\hbox to3em{\hrulefill}\thinspace}
\providecommand{\MR}{\relax\ifhmode\unskip\space\fi MR }
\providecommand{\MRhref}[2]{%
  \href{http://www.ams.org/mathscinet-getitem?mr=#1}{#2}
}
\providecommand{\href}[2]{#2}

\end{document}